\documentclass[11pt]{amsart}

\usepackage{amsmath}
\usepackage{amsfonts,amssymb,enumerate,color,bm,hhline,makecell,array}
\usepackage{array}
\usepackage{mathrsfs}
\usepackage{comment}
\usepackage[all,ps,cmtip]{xy} 
\usepackage[normalem]{ulem}
\usepackage[colorlinks=true,citecolor=blue, urlcolor=blue, linkcolor=blue, pagebackref]{hyperref}
\usepackage{tikz-cd}
\usepackage{amscd}
\usepackage[shortlabels]{enumitem}
\usepackage{tensor}
\usepackage{tikz}
\usetikzlibrary{matrix,arrows}
\usepackage{extarrows}
\usepackage{stmaryrd} 
\usepackage[new]{old-arrows} 

\usepackage{graphicx}

\usepackage{mathtools}

\usepackage{graphicx}

\newcommand{\CC}{{\mathbb C}}

\newcommand{\NN}{{\mathbb N}}
\newcommand{\PP}{{\mathbb P}}
\newcommand{\QQ}{{\mathbb Q}}
\newcommand{\RR}{{\mathbb R}}
\newcommand{\ZZ}{{\mathbb Z}}

\let\mathcal\mathscr

\def\cA{\mathcal{A}}
\def\cB{\mathcal{B}}

\def\cD{\mathcal{D}}
\def\cE{\mathcal{E}}
\def\cF{\mathcal{F}}
\def\cG{\mathcal{G}}
\def\cH{\mathcal{H}}
\def\cI{\mathcal{I}}

\def\cK{\mathcal{K}}
\def\cL{\mathcal{L}}
\def\cM{\mathcal{M}}

\def\cO{\mathcal{O}}
\def\cP{\mathcal{P}}
\def\cQ{\mathcal{Q}}

\def\cS{\mathcal{S}}

\def\cU{\mathcal{U}}

\def\cX{\mathcal{X}}

\def\bD{\ensuremath{\mathbf D}}

\def\gotg{\mathfrak g}

\def\goth{\mathfrak h}
\def\gotl{\mathfrak l}

\def\goto{\mathfrak o}

\def\gots{\mathfrak s}

\def\sF{{\mathsf F}}
\def\sG{{\mathsf G}}
\def\sP{{\mathsf P}}

\def\phi{{\varphi}}

\DeclareMathOperator{\Amp}{Amp}
\DeclareMathOperator{\an}{an}
\DeclareMathOperator{\Ann}{Ann}
\DeclareMathOperator{\Aut}{Aut}

\DeclareMathOperator{\BKR}{BKR}

\DeclareMathOperator{\ch}{ch}
\DeclareMathOperator{\CH}{CH}
\DeclareMathOperator{\cl}{cl}

\DeclareMathOperator{\Coh}{Coh}

\DeclareMathOperator{\Def}{Def}

\DeclareMathOperator{\divisore}{div}

\DeclareMathOperator{\ext}{ext}
\DeclareMathOperator{\Ext}{Ext}

\DeclareMathOperator{\Gr}{Gr}
\newcommand{\GR}{\mathbb{G}\mathrm{r}}

\DeclareMathOperator{\Hom}{Hom}
\DeclareMathOperator{\Id}{Id}

\def\Im{\mathop{\rm Im}\nolimits}

\DeclareMathOperator{\ks}{ks}
\DeclareMathOperator{\Kum}{Kum}
\DeclareMathOperator{\KKK}{K3}

\DeclareMathOperator{\Nef}{Nef}
\DeclareMathOperator{\NS}{NS}

\DeclareMathOperator{\obs}{obs}

\DeclareMathOperator{\Pic}{Pic}

\DeclareMathOperator{\Rep}{Rep}

\DeclareMathOperator{\sss}{ss}
\DeclareMathOperator{\st}{st}

\DeclareMathOperator{\Spec}{Spec}

\DeclareMathOperator{\Sym}{Sym}

\DeclareMathOperator{\td}{td}
\DeclareMathOperator{\Tr}{Tr}

\newcommand{\es}{{\emptyset}}

\newcommand{\la}{\langle}

\newcommand{\ov}{\overline}
\newcommand{\ra}{\rangle}

\newcommand{\wt}{\widetilde}

\def\lra{\longrightarrow}
\def\llra{\hbox to 10mm{\rightarrowfill}}
\def\lllra{\hbox to 15mm{\rightarrowfill}}

\def\llla{\hbox to 10mm{\leftarrowfill}}
\def\lllla{\hbox to 15mm{\leftarrowfill}}

\def\hra{\hookrightarrow}

\newtheorem{lmm}{Lemma}[subsection]
\newtheorem{thm}[lmm]{Theorem}
\newtheorem*{MainThm*}{Main Theorem}
\newtheorem{crl}[lmm]{Corollary}
\newtheorem{prp}[lmm]{Proposition}
\newtheorem*{clm}{Claim}

\theoremstyle{definition}
\newtheorem{dfn}[lmm]{Definition}
\newtheorem{rmk}[lmm]{Remark}
\newtheorem{cnj}[lmm]{Conjecture}

\newtheorem{expl}[lmm]{Example}

\newtheorem{hyp}[lmm]{Hypotheses}

\theoremstyle{remark}
\newtheorem{cmm}[lmm]{Comment}
\newtheorem*{cmm*}{Comment}
\newtheorem*{rmk*}{Remark}
\newtheorem*{lmmred*}{Lemma}

\makeatletter
\@namedef{subjclassname@2020}{%
  \textup{2020} Mathematics Subject Classification}
\makeatother

\makeatletter
\def\@tocline#1#2#3#4#5#6#7{\relax
  \ifnum #1>\c@tocdepth 
  \else
    \par \addpenalty\@secpenalty\addvspace{#2}%
    \begingroup \hyphenpenalty\@M
    \@ifempty{#4}{%
      \@tempdima\csname r@tocindent\number#1\endcsname\relax
    }{%
      \@tempdima#4\relax
    }%
    \parindent\z@ \leftskip#3\relax \advance\leftskip\@tempdima\relax
    \rightskip\@pnumwidth plus4em \parfillskip-\@pnumwidth
    #5\leavevmode\hskip-\@tempdima
      \ifcase #1
       \or\or \hskip 1em \or \hskip 2em \else \hskip 3em \fi%
      #6\nobreak\relax
    \dotfill\hbox to\@pnumwidth{\@tocpagenum{#7}}\par
    \nobreak
    \endgroup
  \fi}
\makeatother

\allowdisplaybreaks

\usepackage{enumitem}
\setlist[itemize]{noitemsep,nolistsep}
\setlist[enumerate]{noitemsep,nolistsep}
\usepackage{colortbl}
\usepackage{enumerate}

\title{Moduli of sheaves on hyperk\"ahler manifolds}

\begin{document} 

\author{Kieran G.~O'Grady}
\address{\parbox{0.9\textwidth}{Sapienza Universit\`a di Roma, Dipartimento di Matematica Guido Castelnuovo\\[1pt]
P.le A. Moro 5, 00185 Roma, Italia
\vspace{1mm}}}
\email{{ogrady@mat.uniroma1.it}}

\subjclass[2020]{14J42, 14J45}
\keywords{Moduli of semistable sheaves, Hyperk\"ahler manifolds}
\thanks{}

\begin{abstract}
We survey recent advances in the theory of moduli spaces of stable sheaves on hyperk\"ahler manifolds of dimension greater than $2$. We start by recalling the well-known theory in dimension $2$, i.e.~for $K3$ surfaces, emphasizing the techniques which can be extended to higher dimensions. 
\end{abstract}

\maketitle

\tableofcontents
\setcounter{tocdepth}{2}


\section{Introduction}\label{sec:intro}
\subsection{Background and goal of the paper}
\setcounter{equation}{0}
Algebraic vector bundles  appeared quite lately in the history of Algebraic Geometry, see~\cite{weil:fibre-spaces}. In retrospect algebraic line bundles had been present since Abel's proof~\cite{abel:hyperellint} of one half of the Abel-Jacobi Theorem, which  may be interpreted as a forerunner of the theory of moduli spaces of semistable sheaves on a polarized projective variety $(X,h)$ developed by Mumford, Narasimhan, Seshadri, Gieseker, Maruyama, Simpson (and others). The latter theory associates to  $(X,h)$ a plethora of moduli spaces of semistable sheaves on $(X,h)$, i.e.~projective schemes parametrizing isomorphism classes of stable sheaves on $X$  (and also  $S$-equivalence classes of semistable sheaves) with fixed numerical invariants. 
Associating to $(X,h)$ these moduli spaces  is  analogous to associating to a group $G$ the category $\Rep(G)$ of its  representations.
There are  (at least) two reasons to be interested in these moduli spaces. The first is that they give precious geometric information regarding $X$. 
The second is that  moduli spaces  often are very interesting  varieties  in their own right. 

What is the status of our knowledge of these moduli spaces? Suppose for simplicity that $X$ is smooth. If $X$ is curve then the moduli spaces are as nice as possible, meaning that they are non empty, irreducible of the expected dimension if the genus $g$ of $X$ is at least $2$ (if $g\le 1$ they are either empty or have very explicit descriptions), and many intriguing results are known about them.
If $X$ is a surface the moduli spaces may present various pathological phenomena, but
 general results are available. These state that if the expected dimension of the moduli space is large enough (equivalently: the sheaves in question are very twisted) then it is non-empty, irreducible, of the expected dimension, and generically smooth. For surfaces of non maximal Kodaira dimension more detailed information is available, notably for $K3$ or abelian surfaces.  There are no general results valid for $X$ of dimension greater than $2$. 

A Hyperk\"ahler (HK) manifold is a  simply connnected compact K\"ahler manifold $X$ carrying a holomorphic symplectic form whose class spans  
$H^0(X,\Omega^2_X)$. \emph{Two dimensional HK manifold} is sinonymous of \emph{$K3$ surface}. Higher dimensional HK manifolds behave very much like surfaces, and in some respects are \lq\lq surface-like\rq\rq.

In the last few years there have been several developments in the theory of semistable sheaves on higher dimensional HK manifolds, more precisely of \lq\lq modular sheaves\rq\rq\, to be soon introduced. The emerging theory    promises to be as intriguing and beautiful as that of sheaves on $K3$ surfaces. The goal of the present paper is to give an introduction to some of the methods and results of this theory.
\subsection{Outline of the paper}
\setcounter{equation}{0}
In Section~\ref{sec:kappatre} we recap some of the main results of the theory of moduli of sheaves on $K3$ surfaces. 
The results are the model for what one would like to achieve in higher dimensions. In addition, some of the methods that are employed in dimension $2$ can be extended to higher dimensions.
A third reason for going through the theory for $K3$ surfaces is that interesting stable vector bundles  on projective HK manifolds of Type $K3^{[n]}$ (i.e.~deformations of the Hilbert scheme $S^{[n]}$ parametrizing lenght-$n$ subschemes of a $K3$ surface $S$) are produced from vector bundles on $K3$ surfaces  via the Bridgeland-King-Reid (BKR) equivalence. 

In Section~\ref{sec:alfinlaciccia} we introduce and discuss the three main classes of sheaves on HK manifolds for which significant results have been proved, namely modular torsion-free sheaves (sheaves for which a certain codimension-$4$ characteristic class satisfies a purely topological condition),  projectively hyperholomorphic vector bundles, and atomic sheaves. Roughly speaking these classes are strictly decreasing, i.e.~an atomic stable vector bundle is projectively hyperholomorphic, and the latter is modular. 
Variation of slope (semi)stability
of modular sheaves   behaves as if they were sheaves on surfaces, and moreover stable modular sheaves on HK manifolds equipped with a Lagrangian fibration behave very much like stable sheaves on elliptic $K3$ surfaces. This motivates the belief that a theory resembling that valid for $K3$ surfaces can be developed in higher dimensions.
In Section~\ref{sec:alfinlaciccia} we give \lq\lq synthetic\rq\rq\ examples of modular sheaves on $S^{[n]}$, where
$S$ is a  $K3$ surface, via the BKR equivalence. 

In Section~\ref{sec:alfinteoremi} we state the main results that have been proved about moduli of stable projectively hyperholomorphic vector bundles on HK manifolds. Here we should  mention that if a slope-stable vector bundle $\cF$ on a HK manifold $X$ is projectively hyperholomorphic, then it extends to every deformation of $X$ keeping $c_1(\cF)$ of type $(1,1)$, and the projectivization $\PP(\cF)$ extends to all deformations of $X$. Every  slope-stable vector bundle on a $K3$ surface is projectively hyperholomorphic, while in higher dimensions 
a randomly chosen slope-stable vector bundle is not projectively hyperholomorphic. The  results that we present are all about   projectively hyperholomorphic vector bundles. 
The examples of Section~\ref{sec:alfinlaciccia} are chosen carefully, so that when they are slope stable they are projectively hyperholomorphic.
We present in some detail our results  on existence and uniqueness of slope-stable stable rigid vector bundles on general suitably polarized 
HK manifolds of Type $K3^{[n]}$. We quickly go over Bottini's work which produces HK manifolds of Type OG10 (one of the two \lq\lq sporadic\rq\rq\ deformation classes) as (an irreducible component of) a moduli space of stable (twisted) vector bundles on HK manifolds of Type $K3^{[2]}$. We  mention  Markman's work on (twisted) hyperholomorphic vector bundles on couples of HK manifolds  of Type $K3^{[n]}$: it has produced highly non-trivial results, e.g.~the proof (by Markman) of the Shafarevich conjecture for couples 
 of  HK manifolds (both of) of Type $K3^{[n]}$, and the proof by Maulik, Shen, Yin and Zhang of the $D$-equivalence conjecture for HK manifolds  of Type $K3^{[n]}$. Lastly we go over work in progress of ours whose goal is to show that one gets an arbitrary HK manifold  of Type $K3^{[a^2+1]}$ as the normalization of a (component of) of a suitable moduli space of (twisted) hyperholomorphic vector bundles on a HK manifold  of Type $K3^{[2]}$ (and also  
 to give the analogue of Markman's result above for couples 
$(X,Y)$ of  HK manifolds  of Type $K3^{[a^2+1]}$ and $K3^{[b^2+1]}$).
\subsection{Conventions and notation}
\setcounter{equation}{0}
We denote by $V_n,W_n,\ldots$ complex vector spaces of dimension $n$.

We work over the field of complex numbers.
Hence by scheme  we mean a complex scheme, etc. If $T$ is scheme $t\in T$  is a \emph{closed} point of $T$.  

Sheaf means coherent sheaf unless we  state the contrary. Similarly line/vector bundle  means algebraic (or holomorphic) line/vector bundle   unless we  state the contrary. Abusing notation we  line/vector bundle are considered synonyms of invertible/locally-free sheaf.

Let $\cE,\cF$ be  sheaves on a projective variety $X$. We let
\begin{equation}
\chi_X(\cE,\cF)\coloneqq
\sum_p (-1)^p\dim\Ext^p_X(\cE,\cF).
\end{equation}

\section{Moduli of semistable sheaves  on $K3$ surfaces}\label{sec:kappatre}
\subsection{Deformations of sheaves}\label{subsec:deformofasci}
\setcounter{equation}{0}
Let $X$ be an irreducible smooth projective variety. A \emph{family of  sheaves on $X$  parametrized by a scheme $T$}
consists of a $T$-flat sheaf   $\sF$ on $X\times T$. If $t\in T$ we let  $\sF_t\coloneqq\sF_{|X\times\{t\}}$. We think of $\sF$ as a \lq\lq continuous\rq\rq\ family of sheaves $\sF_t$ (with \lq\lq non reduced\rq\rq\ deformations if $T$ is non reduced).
Let $\cF$ be a  sheaf on $X$. Let $T$ be a pointed scheme, i.e.~a scheme with a  chosen  $0\in T$. A \emph{deformation} of 
$\cF$ parametrized by $T$ is a family of  sheaves on $X$  parametrized by $T$ together with an isomorphism 
$\sF_0\xrightarrow{\sim}\cF$. 
Now suppose that $\cF$ is simple, i.e.~that $\Hom_X(\cF,\cF)=\CC\Id_{\cF}$. Then there exist a scheme $B$ and a deformation of $\cF$  parametrized by $B$   with the following properties (see~\cite{artamkin:defdifasci,mukai:lisciosimpl} and~\cite[2.A.5]{huy-lehn:librofasci}):
\begin{enumerate}
\item
Given   a deformation  $\sG$ of $\cF$ parametrized by $T$
there exist a morphism of germs 
$f\colon (T,0)\to (B,0)$  and  an isomorphism 
$\phi\colon f^{*}\sF\xrightarrow{\sim}\sG_{|X\times (B,0)}$ (here $ f^{*}\sF$ is shorthand for $(\Id_X\times f)^{*}\sF$).
\item
Given $\sG$ as above, there is a unique morphism $f$ with the  property stated above. 
\end{enumerate}
The properties above determine 
the algebraic germ  
$(B,0)$ and the deformation $\sG_{|X\times (B,0)}$ of $\cF$ up to isomorphism: this is the \emph{universal deformation space} of $\cF$. We denote 
$(B,0)$ by $\Def(\cF)$.
One 
describes  
$(B,0)$ as follows. Let
\begin{equation}
\Ext^p_X(\cF,\cF) \xrightarrow{\Tr^p}H^p(X,\cO_X)
\end{equation}
be the \emph{trace map} (if $\cF$ is locally-free $\Tr^p$ is the map $H^p(X,End\cF) \to H^p(X,\cO_X)$ induced in cohomology by the trace morphism 
$Tr\colon End\cF\to\cO_X$), and let $\Ext^p_X(\cF,\cF)_0$ be the kernel of $\Tr^p$. There is an analytic \emph{obstruction map}
\begin{equation}\label{mappaostr}
(\Ext^1_X(\cF,\cF),0) \xrightarrow{\obs_{\cF}} (\Ext^2_X(\cF,\cF)_0,0)
\end{equation}
 with vanishing differential such that the \emph{analytic germ} $(B,0)_{\an}$ is given by
\begin{equation}
(B,0)_{\an}\cong (\obs_{\cF}^{-1}(0),0).
\end{equation}
Since $\obs_{\cF}$ has vanishing differential, it follows that the Zariski tangent space to $B$ at $0$ is identified with $\Ext^1_X(\cF,\cF)$. We also get that  if $\Ext^2_X(\cF,\cF)_0$ vanishes then $\cF$ is unobstructed, i.e.~the deformation space is smooth of dimension equal to 
the dimension of $\Ext^1_X(\cF,\cF)$. 
\begin{thm}[Mukai-Artamkin]\label{thm:mukart}
Let $S$ be  a projective $K3$ surface, and let
$\cF$ be a simple torsion-free sheaf on $S$. Then the deformation space $\Def(\cF)$ is smooth, and its dimension is given by
\begin{equation}\label{dimdefk3}
\dim\Def(\cF)=2-\chi_S(\cF,\cF).
\end{equation}
\end{thm}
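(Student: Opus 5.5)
The plan is to reduce smoothness to the vanishing of $\Ext^2_S(\cF,\cF)_0$, which by the description of $\Def(\cF)$ recalled above (the obstruction map $\obs_{\cF}$ takes values in $\Ext^2_X(\cF,\cF)_0$) implies that $\Def(\cF)$ is smooth with tangent space $\Ext^1_S(\cF,\cF)$. The dimension formula will then follow from a direct Euler characteristic count.

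\emph{Step 1: $\Ext^2_S(\cF,\cF)$ is one-dimensional.} Since $S$ is a $K3$ surface, $\omega_S\cong\cO_S$. Serre duality on the smooth projective surface $S$ holds for arbitrary coherent sheaves, and gives
\begin{equation*}
\Ext^2_S(\cF,\cF)\cong\Hom_S(\cF,\cF\otimes\omega_S)^{\vee}=\Hom_S(\cF,\cF)^{\vee}\cong\CC,
\end{equation*}
where the last isomorphism uses that $\cF$ is simple.

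\emph{Step 2: the trace map $\Tr^2\colon\Ext^2_S(\cF,\cF)\to H^2(S,\cO_S)$ is nonzero, hence an isomorphism.} This is the step where some care is needed, since $\cF$ is torsion-free but not necessarily locally free. I would argue by duality. The map $\Tr^2$ is Serre dual to the map $H^0(S,\cO_S)\to\Hom_S(\cF,\cF)$ sending $1\mapsto\Id_{\cF}$. This compatibility of trace with Serre duality holds for perfect complexes, and every coherent sheaf on a smooth variety has a finite locally free resolution, so it applies to $\cF$. The map $1\mapsto\Id_{\cF}$ is nonzero because $\cF\neq0$, so $\Tr^2$ is surjective.

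Alternatively, one can use that the composition of $\Tr^2$ with the map $H^2(S,\cO_S)\to\Ext^2_S(\cF,\cF)$, $\alpha\mapsto\alpha\cdot\Id_{\cF}$, is multiplication by $\operatorname{rk}\cF$. This is nonzero because a torsion-free sheaf has positive rank, and this is exactly where torsion-freeness enters. Either way, since $\Ext^2_S(\cF,\cF)$ and $H^2(S,\cO_S)$ are both one-dimensional, $\Tr^2$ is an isomorphism, so $\Ext^2_S(\cF,\cF)_0=0$ and $\Def(\cF)$ is smooth.

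\emph{Step 3: the dimension count.} We have $\dim\Def(\cF)=\dim\Ext^1_S(\cF,\cF)$. Moreover $\dim\Ext^0_S(\cF,\cF)=1$ by simplicity, $\dim\Ext^2_S(\cF,\cF)=1$ by Step 1, and the higher $\Ext$ groups vanish because $S$ is a smooth surface. Hence
\begin{equation*}
\chi_S(\cF,\cF)=1-\dim\Ext^1_S(\cF,\cF)+1,
\end{equation*}
which is equivalent to~\eqref{dimdefk3}.

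The main obstacle is Step 2: justifying the nonvanishing of $\Tr^2$ for non-locally-free $\cF$. I would handle it through a locally free resolution and the Serre-duality compatibility of trace described above.
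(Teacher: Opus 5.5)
Your proof is correct and is essentially the paper's argument. Serre duality and simplicity give $\dim\Ext^2_S(\cF,\cF)=1$, and the transpose of $\Tr^2$ is the map $H^0(S,\cO_S)\to\Hom_S(\cF,\cF)$, $1\mapsto\Id_{\cF}$, so $\Ext^2_S(\cF,\cF)_0=0$; smoothness and the Euler-characteristic count then follow (your side remark that the duality argument only needs $\cF\neq 0$, with torsion-freeness used only in your alternative rank argument, is accurate).
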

\begin{proof}
Since the canonical bundle of $S$ is trivial, Serre duality gives the isomorphism $\Ext^2_S(\cF,\cF)\cong\Hom_S(\cF,\cF)^{\vee}$ and the latter is 
one-dimensional because $\cF$ is simple. Thus
\begin{equation}\label{entrambiuno}
\dim \Ext^2_S(\cF,\cF)=\dim\Hom_S(\cF,\cF)=1.
\end{equation}
The map  $H^0(S,K_S)=H^0(S,\cO_S)\to \Hom_S(\cF,\cF)$ defined by $f\mapsto f\Id_{\cF}$ is identified by Serre duality with the transpose
 of $\Tr^2$, and it is an isomorphism because $\cF$ is simple and torsion-free. It follows that  
\begin{equation}\label{oboemarcello}
 \Ext^2_S(\cF,\cF)_0=0.
\end{equation}
Hence 
 $\Def(\cF)$ is smooth of dimension equal to $\dim\Ext^1_S(\cF,\cF)$. By~\eqref{entrambiuno} the latter is equal to the right hand side of~\eqref{dimdefk3}.
\end{proof}
Let $\cF$ be a sheaf on an irreducible smooth projective variety $X$. We need to consider also deformations of the couple $(X,\cF)$ in the analytic category. Such a deformation consists of a deformation of $X$, i.e.~a proper flat map $f\colon \cX\to T$, where $T$ is a pointed analytic space with base point $0\in T$, and an isomorphism 
$f^{-1}(0)\eqqcolon X_0\xrightarrow{\sim}X$, together with a $T$-flat sheaf $\sF$ on $\cX$ with an isomorphism 
$\sF_{|X_0}\xrightarrow{\sim}\cF$. We wish to compare such deformations with deformations of $X$. For simplicity we assume that $X$ has no non-zero global holomorphic vector fields, and hence it has a universal deformation space $\Def(X)$ (see for example~\cite{manetti-libro}). We recall that 
$\Def(X)$ is an analytic germ with Zariski tangent space $H^1(X,\Theta_X)$, and that if $X$ is K\"ahler with trivial canonical bundle then it is smooth by the Bogomolov-Tian-Todorov Theorem.  Suppose also that $\cF$ is simple. Then there is a universal deformation space $\Def(X,\cF)$ for deformations of the couple $(X,\cF)$ (see~\cite{iacono-manetti:defcoppie}), equipped with the forgetful map
\begin{equation}\label{ricordosolodet}
\Def(X,\cF) \lra \Def(X,\det\cF).
\end{equation}
\begin{thm}[\cite{huang:modcoppie,iacono-manetti:defcoppie}]\label{thm:defcoppia}
Let $(X,\cF)$ be a couple consisting  either of a smooth projective variety $X$  and 
  a simple  sheaf $\cF$  on $X$, or a compact complex manifold $X$ and a simple locally-free sheaf $\cF$  on $X$.  Suppose that the trace map $\Tr^1$ is surjective and the trace map $\Tr^2$ is injective. Then the forgetful map in~\eqref{ricordosolodet}  is smooth. 
\end{thm}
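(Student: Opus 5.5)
We will show that $\Def(X,\cF)\to\Def(X,\det\cF)$ is smooth with a $T^1$--$T^2$ criterion: a morphism of deformation functors is smooth if it is surjective on tangent spaces and injective on obstruction spaces, with the obstructions compatible. The plan is to read both functors through their controlling objects and compare them via the trace.

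\textbf{Controlling objects.} Deformations of the couple $(X,\cF)$ are governed by the hypercohomology of the Atiyah extension
\begin{equation*}
0\to \mathcal{E}nd(\cF)\to \mathcal{A}(\cF)\to \Theta_X\to 0
\end{equation*}
in the locally-free case. For a general coherent $\cF$ one uses the corresponding complex built from a locally-free resolution, or the DG-Lie algebra of Iacono--Manetti. So the tangent space is $\mathbb{H}^1(\mathcal{A}(\cF))$ and the obstructions lie in $\mathbb{H}^2(\mathcal{A}(\cF))$. The same construction for $L=\det\cF$ gives $\mathcal{A}(L)$, with $\mathcal{E}nd(L)=\cO_X$. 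The trace $Tr\colon \mathcal{E}nd(\cF)\to\cO_X$ extends to a morphism of extensions $\mathcal{A}(\cF)\to\mathcal{A}(L)$ inducing the identity on $\Theta_X$. At the DG level it should be a morphism of DG-Lie algebras, since trace kills commutators and is compatible with the action of vector fields. This gives a morphism of deformation functors that realises the forgetful map. It is also compatible with obstruction theories, because a morphism of DG-Lie algebras induces compatible obstruction maps.

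\textbf{Diagram chase.} Both extensions have the same quotient $\Theta_X$, so their long exact sequences form a commutative ladder
\begin{equation*}
\begin{array}{ccccccccc}
H^1(\Theta_X)&\to&\Ext^2(\cF,\cF)&\to&\mathbb{H}^2(\mathcal{A}(\cF))&\to&H^2(\Theta_X)&\to&\Ext^3(\cF,\cF)\\
\| & & \downarrow{\scriptstyle \Tr^2} & & \downarrow & & \| & & \downarrow{\scriptstyle \Tr^3}\\
H^1(\Theta_X)&\to&H^2(\cO_X)&\to&\mathbb{H}^2(\mathcal{A}(L))&\to&H^2(\Theta_X)&\to&H^3(\cO_X)
\end{array}
\end{equation*}
together with the analogous ladder in degree $1$.

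\textbf{Surjectivity on tangent spaces.} Take $\xi\in\mathbb{H}^1(\mathcal{A}(L))$ with image $v\in H^1(\Theta_X)$. The image of $v$ in $H^2(\cO_X)$ vanishes. Its image in $\Ext^2(\cF,\cF)$ maps to that class under $\Tr^2$, so it vanishes by injectivity of $\Tr^2$. Hence $v$ lifts to some $\eta\in\mathbb{H}^1(\mathcal{A}(\cF))$. The difference $\xi-\operatorname{im}(\eta)$ comes from $H^1(\cO_X)$, and surjectivity of $\Tr^1$ lets us correct $\eta$ by an element of $\Ext^1(\cF,\cF)$. This is the standard four-lemma argument.

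\textbf{Injectivity on obstructions.} Rather than the full $\mathbb{H}^2$ map, I would check injectivity only on the relative obstruction space, i.e.\ the cokernel of the map on tangent spaces. Alternatively, argue directly with small extensions $A'\to A$. Suppose a deformation over $A$ is given whose image lifts over $A'$. Then the obstruction class $o\in\mathbb{H}^2(\mathcal{A}(\cF))$ maps to $0$ in $\mathbb{H}^2(\mathcal{A}(L))$, hence to $0$ in $H^2(\Theta_X)$. So $o$ comes from $\Ext^2(\cF,\cF)$, from some class $o'$ whose trace lies in the image of $H^1(\Theta_X)\to H^2(\cO_X)$.

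\textbf{Main obstacle.} Showing that $o$ itself vanishes here is the delicate step. Injectivity of $\Tr^2$ alone does not kill the image of $H^1(\Theta_X)$. One must use the freedom to change the lift of $X$ over $A'$, with fixed lift of $L$, by elements of $H^1(\Theta_X)$ compatible with $L$. This shifts $o$ by exactly the corresponding classes, so after a suitable choice one gets $o'$ with $\Tr^2(o')=0$, hence $o'=0$ and $o=0$. This is the content of the standard criterion in Iacono--Manetti, and I would invoke it there.

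Finally, the analytic locally-free case on a compact complex manifold is handled by the same argument with Dolbeault DG-Lie algebras.
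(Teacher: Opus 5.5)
Your outline is essentially the argument of the cited works of Huang and Iacono--Manetti, and the paper itself gives no proof beyond that citation: the standard smoothness criterion applied to the trace morphism $\cA(\cF)\to\cA(\det\cF)$ of controlling objects, with a tangent-space chase that is fine as written, and with the genuinely nontrivial input rightly deferred to Iacono--Manetti (that for non-locally-free $\cF$ this morphism realizes $\cF\mapsto\det\cF$ compatibly with obstructions). One correction: the step you call the main obstacle is not one, and your claim that injectivity of $\Tr^2$ does not kill the image of $H^1(\Theta_X)$ is false, because your left square commutes ($\Tr^2\circ\delta_{\cF}=\delta_{L}$), so $\Tr^2(o')=\delta_L(v)$ forces $o'=\delta_{\cF}(v)$, which dies in $\mathbb{H}^2(\cA(\cF))$ --- that is, the four lemma on the ladder you already wrote (identities on $H^1(\Theta_X)$ and $H^2(\Theta_X)$, $\Tr^2$ injective) gives injectivity of $\mathbb{H}^2(\cA(\cF))\to\mathbb{H}^2(\cA(\det\cF))$ directly, with no need to vary the lift of $X$ (and if ``compatible with $L$'' means $\delta_L(v)=0$, such shifts would not change $\Tr^2(o')$ at all; the shift actually needed is by $v$ with $\delta_L(v)=\Tr^2(o')$).
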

\begin{crl}\label{crl:defcoppia}
Let $S$ be a $K3$ surface and let $\cF$ be a simple torsion-free sheaf on $S$. The forgetful map in~\eqref{ricordosolodet} (with $X=S$) is smooth. 
\end{crl}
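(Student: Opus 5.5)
We apply Theorem~\ref{thm:defcoppia} with $X=S$. A $K3$ surface is smooth projective, and $\cF$ is simple by hypothesis. It remains to check the two trace conditions: $\Tr^1$ is surjective and $\Tr^2$ is injective. (A $K3$ surface also has no nonzero global vector fields, since $\Theta_S\cong\Omega^1_S$ and $h^{1,0}(S)=0$. So $\Def(S)$ exists, and it is smooth by Bogomolov--Tian--Todorov.)

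\emph{Surjectivity of $\Tr^1$.} Its target is $H^1(S,\cO_S)$, which vanishes because $S$ is a $K3$ surface. Hence $\Tr^1$ is trivially surjective.

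\emph{Injectivity of $\Tr^2$.} Its kernel is $\Ext^2_S(\cF,\cF)_0$, and this was shown to vanish in~\eqref{oboemarcello} in the proof of Theorem~\ref{thm:mukart}. Recall the argument. By Serre duality with $K_S\cong\cO_S$, the transpose of $\Tr^2$ is the map $H^0(S,\cO_S)\to\Hom_S(\cF,\cF)$, $f\mapsto f\Id_{\cF}$. This map is injective because $\cF$ is torsion-free, hence nonzero. It is surjective because $\cF$ is simple. So it is an isomorphism, and therefore $\Tr^2$ is an isomorphism; in particular it is injective.

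Both hypotheses of Theorem~\ref{thm:defcoppia} hold, so the forgetful map $\Def(S,\cF)\to\Def(S,\det\cF)$ is smooth. No step is a real obstacle. The one point needing care is that the identification of the transpose of $\Tr^2$ with $f\mapsto f\Id_{\cF}$ holds for an arbitrary torsion-free sheaf, not only a locally free one. This is the same duality statement already used in Theorem~\ref{thm:mukart}, so we may take it as given.
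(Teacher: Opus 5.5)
Your proof is correct and is the paper's argument: $\Tr^1$ is surjective because $H^1(S,\cO_S)=0$, and $\Tr^2$ is an isomorphism by the Serre-duality argument in the proof of Theorem~\ref{thm:mukart}, so Theorem~\ref{thm:defcoppia} applies. One harmless slip: $\cF\neq 0$ follows from simplicity ($\Hom_S(\cF,\cF)=\CC\Id_{\cF}$ is one-dimensional), not from torsion-freeness.
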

\begin{proof}
The trace map $\Tr^1$ is surjective because $H^1(S,\cO_S)=0$. In the proof of Theorem~\ref{thm:mukart} we have  shown that $\Tr^2$ is an isomorphism. Hence the corollary follows from Theorem~\ref{thm:defcoppia}.
\end{proof}
The content of the above result is that we can extend the sheaf $\cF$ to all (small) deformations of $S$ that keep the first Chern class of $\cF$ of type 
$(1,1)$. Note that  $\Def(S,\det\cF)=\Def(S)$ only if  $\det\cF$ is trivial. In all other cases 
$\Def(S,\det\cF)$ has codimension $1$ in $\Def(S)$, i.e.~there is \lq\lq one deformation direction of $S$\rq\rq\ in which $\cF$ does not extend. If $\cF$ is locally-free  then the  projectivization  $\PP(\cF)$ extends to all (small) deformations of $S$. In order to put  this result in context we recall a general result. Let $X$ be a connected complex manifold, and let $f\colon Y\to X$ be a fibration in $m$-dimensional projective spaces, i.e.~locally $f$ is given by the projection $U\times\PP^m\to U$, where \lq\lq locally\rq\rq\ means in the classical topology. We have the forgetful 
map (see~\cite[Expl.~8.2.14]{manetti-libro})
\begin{equation}\label{fibratobase}
\Def(Y) \lra \Def(X).
\end{equation}
(Every deformation of $Y$ is a deformation of the fibration $f\colon Y\to X$.)
If $X$ is a complex manifold  and $\cF$ is a vector bundle on $X$ we let $End^0(\cF)\subset End(\cF)$ be the sub vector bundle of traceless endomorphisms. 
Note that $\Ext^2(\cF,\cF)_0$ is isomorphic to $H^2(X,End^0(\cF))$.
\begin{thm}[Horikawa]\label{thm:evvivahorikawa}
Let $X$ be a connected complex manifold with no nonzero global (holomorphic) tangent field. Let $\cF$ be a vector bundle on $X$ 
such that $H^2(X,End^0(\cF))=0$. Then the  forgetful  map $\Def(\PP(\cF)) \lra \Def(X)$ is 
 smooth.
\end{thm}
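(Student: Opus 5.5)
Let $Y\coloneqq\PP(\cF)$ with projection $f\colon Y\to X$. I will compare the deformation theories of $Y$ and $X$ through the relative tangent sequence
\begin{equation*}
0\lra \Theta_{Y/X}\lra \Theta_Y\lra f^{*}\Theta_X\lra 0 .
\end{equation*}
The map $\Def(Y)\to\Def(X)$ of~\eqref{fibratobase} induces on tangent spaces the map $H^1(Y,\Theta_Y)\to H^1(Y,f^*\Theta_X)$. On obstruction spaces it induces $H^2(Y,\Theta_Y)\to H^2(Y,f^*\Theta_X)$.

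\textbf{Step 1: push forward to $X$.} The fibres of $f$ are projective spaces $\PP^m$. Hence $R^if_*\cO_Y=0$ for $i>0$ and $f_*\cO_Y=\cO_X$, so the projection formula gives $H^p(Y,f^*\Theta_X)\cong H^p(X,\Theta_X)$. For the relative tangent bundle I will use the relative Euler sequence
\begin{equation*}
0\lra\cO_Y\lra f^*\cF^{\vee}\otimes\cO_Y(1)\lra\Theta_{Y/X}\lra0
\end{equation*}
(with the appropriate convention for $\PP(\cF)$). Pushing forward and using $H^i(\PP^m,\Theta_{\PP^m})=0$ for $i>0$, I get $f_*\Theta_{Y/X}\cong End(\cF)/\cO_X\cong End^0(\cF)$ and $R^if_*\Theta_{Y/X}=0$ for $i>0$. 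Therefore $H^p(Y,\Theta_{Y/X})\cong H^p(X,End^0(\cF))$.

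\textbf{Step 2: the standard smoothness criterion.} A morphism of deformation functors is smooth if it is surjective on tangent spaces and injective on obstruction spaces, provided the obstruction theories are compatible. More simply, it is enough that the relative obstruction space vanishes. The long exact sequence of the tangent sequence gives
\begin{equation*}
H^1(Y,\Theta_Y)\to H^1(X,\Theta_X)\to H^2(X,End^0(\cF))=0\to H^2(Y,\Theta_Y)\to H^2(X,\Theta_X).
\end{equation*}
So by hypothesis the tangent map is surjective and the obstruction map is injective.

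To make this rigorous I will argue with small extensions. Let $A'\to A$ be a small extension with kernel $\CC$, let $\cY_A\to\cX_A$ be a deformation of the fibration over $A$, and let $\cX_{A'}$ lift $\cX_A$. The obstruction to lifting $\cY_A$ over $\cX_{A'}$ as a $\PP^m$-bundle lies in $H^2(X,End^0(\cF))$. Indeed, deforming a projective bundle over a fixed deformed base is governed by the sheaf of Lie algebras $f_*\Theta_{Y/X}=End^0(\cF)$, equivalently by $PGL$-valued Čech cocycles. This obstruction therefore vanishes, so $\cY_A$ lifts. Combined with the fact that every deformation of $Y$ is a deformation of the fibration, this shows that $\Def(Y)\to\Def(X)$ is smooth.

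\textbf{Main obstacle.} The delicate point is this last identification. One needs a lift of a projective bundle along a given lift of the base, and this must be treated in the analytic category, where $\cF$ itself need not extend but $\PP(\cF)$ can. I would handle it with Čech cocycles. Cover $\cX_{A'}$ by Stein open sets $U_i$ on which $\cY_A$ is trivial. Lift the transition automorphisms of $U_{ij}\times\PP^m$ over $A'$; this is possible because the automorphism sheaf is smooth over the base and the $U_{ij}$ are Stein. The failure of the cocycle condition on triple overlaps is then a $2$-cocycle with values in $End^0(\cF)$, whose class vanishes by hypothesis. The hypothesis that $X$ has no global vector fields serves only to guarantee that $\Def(X)$ is universal, so that the forgetful map is well defined.
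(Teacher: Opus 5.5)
Your proposal is correct and follows the paper's proof. Both use the relative tangent sequence of $f\colon \PP(\cF)\to X$ and the computation $f_{*}\Theta_{Y/X}\cong End^0(\cF)$ with $R^qf_{*}\Theta_{Y/X}=0$ for $q>0$. This gives $H^2(Y,\Theta_{Y/X})\cong H^2(X,End^0(\cF))=0$, and hence surjectivity of $H^1(Y,\Theta_Y)\to H^1(Y,f^{*}\Theta_X)$ and injectivity of $H^2(Y,\Theta_Y)\to H^2(Y,f^{*}\Theta_X)$. The only difference is that you sketch the small-extension/\v{C}ech lifting argument yourself, where the paper simply invokes Horikawa's criterion (Theorem 6.1 of his paper, or Manetti's Corollary 8.2.14).
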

\begin{proof}
Let $Y\coloneqq \PP(\cF)$, and let $f\colon Y\to X$ be the structure map. We have  the exact sequence of locally free sheaves on $X$
\begin{equation}\label{succdiff}
0\lra \Theta_{Y/X}\lra \Theta_Y\overset{df}{\lra} f^{*}\Theta_X\lra 0.
\end{equation}
By~\cite[Theorem 6.1]{horikawa-II} (see also~\cite[Cor.~8.2.14]{manetti-libro}) it suffices to prove that 
the map  $H^1(Y,\Theta_Y)\to H^1(Y,f^{*}\Theta_X)$ is surjective and that the map  $H^2(Y,\Theta_Y)\to H^2(Y,f^{*}\Theta_X)$ is injective. By the exact sequence in~\eqref{succdiff} it suffices to show that $H^2(Y,\Theta_{Y/X})=0$. 
By the Leray spectral sequence abutting to $H^2(Y,\Theta_{Y/X})$ we are done if we prove that 
\begin{equation}\label{nemequittepas}
H^p(Y,R^q f_{*}(\Theta_{Y/X}))=0\qquad p+q=2.
\end{equation}
We have
\begin{equation}
R^q f_{*}(\Theta_{Y/X}))\cong 
\begin{cases}
End^0 \cF & \text{if $q=0$,} \\
0 & \text{if $q>0$.} 
\end{cases}
\end{equation}
Thus~\eqref{nemequittepas} holds by hypothesis.
\end{proof}
\begin{crl}\label{crl:martysupreme}
Let $S$ be a $K3$ surface and let $\cF$ be a simple vector bundle on $S$. The forgetful 
map $\Def(\PP(\cF)) \lra \Def(S)$ is smooth.
\end{crl}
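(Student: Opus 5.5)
The corollary will follow directly from Theorem~\ref{thm:evvivahorikawa} with $X=S$. Two hypotheses have to be checked: that $S$ has no nonzero global holomorphic vector fields, and that $H^2(S,End^0(\cF))=0$.

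For the first, note that the canonical bundle of $S$ is trivial, so $\Theta_S\cong\Omega^1_S\otimes K_S^{\vee}\cong\Omega^1_S$. Hence $H^0(S,\Theta_S)\cong H^0(S,\Omega^1_S)$, and this space has dimension $h^{1,0}(S)=h^{0,1}(S)=h^1(S,\cO_S)=0$ because $S$ is a $K3$ surface. In particular $\Def(S)$ exists, and by Bogomolov--Tian--Todorov it is smooth, although smoothness of the base is not needed here.

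For the second, since $\cF$ is locally free we have the identification $H^2(S,End^0(\cF))\cong\Ext^2_S(\cF,\cF)_0$ recalled before Theorem~\ref{thm:evvivahorikawa}. A vector bundle is torsion-free, and $\cF$ is simple, so the proof of Theorem~\ref{thm:mukart} applies verbatim. Serre duality with $K_S\cong\cO_S$ identifies the transpose of $\Tr^2$ with the map $H^0(S,\cO_S)\to\Hom_S(\cF,\cF)$, $f\mapsto f\Id_{\cF}$. This map is an isomorphism by simplicity, so $\Tr^2$ is an isomorphism and $\Ext^2_S(\cF,\cF)_0=0$, which is~\eqref{oboemarcello}.

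Both hypotheses of Theorem~\ref{thm:evvivahorikawa} therefore hold, and the forgetful map $\Def(\PP(\cF))\to\Def(S)$ is smooth. There is no real obstacle. The only point needing care is the identification of $\Ext^2_S(\cF,\cF)_0$ with $H^2(S,End^0(\cF))$ for locally free $\cF$. This holds because $End(\cF)=End^0(\cF)\oplus\cO_S$, the splitting being given by the trace together with $\frac{1}{r}\Id$ in characteristic zero, and under this splitting $\Tr^2$ is the projection onto $H^2(S,\cO_S)$.
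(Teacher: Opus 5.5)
Your proof is correct and is the argument the paper intends, since it states this corollary without proof right after Theorem~\ref{thm:evvivahorikawa}. You apply that theorem with $X=S$, using $H^0(S,\Theta_S)=0$ and $H^2(S,End^0(\cF))\cong\Ext^2_S(\cF,\cF)_0=0$ from~\eqref{oboemarcello} in the proof of Theorem~\ref{thm:mukart}; your remarks on $\Theta_S\cong\Omega^1_S$ and the splitting $End(\cF)=End^0(\cF)\oplus\cO_S$ fill in details the paper leaves implicit.
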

\subsection{Stability and semistability}
\setcounter{equation}{0}
\subsubsection{Gieseker-Maruyama (semi)stability, and moduli spaces}
Let $(X,L)$ be a polarized irreducible smooth projective variety, i.e.~$X$ is an irreducible  smooth projective variety and $L$ is an ample line bundle on $X$. We always assume that $c_1(L)$ is primitive, i.e.~$c_1(L)=d\alpha$ with $\alpha\in H^2(X,\ZZ)$ implies that $d=\pm 1$. Let $\cF$ be a non zero torsion-free sheaf on $X$: the \emph{reduced Hilbert polynomial} 
$p_\cF\in\QQ[t]$ is defined by setting
\begin{equation}\label{polhilbrid}
p_\cF(n)\coloneqq\frac{\chi(X,\cF\otimes L^{\otimes n})}{r(\cF)},
\end{equation}
where $r(\cF)$ is the rank of $\cF$ (which is non zero because $\cF$ is non zero and torsion-free).
\begin{dfn}
A non zero torsion-free sheaf $\cF$ on $X$ is \emph{$L$ Gieseker-Maruyama  semistable} if for every non zero subsheaf 
$\cE\subset \cF$ 
\begin{equation}
p_\cE(n)\le p_\cF(n)\qquad \forall n\gg 0.
\end{equation}
It is \emph{$L$ Gieseker-Maruyama stable} if, in addition, strict inequality holds whenever $\cE\not=\cF$. It is \emph{$L$ Gieseker-Maruyama  unstable} if it is not 
$L$ Gieseker-Maruyama semistable.
\end{dfn}
We write  $L$ GM (semi)stable instead of $L$ Gieseker-Maruyama  semistable, and if $L$ is clear from the context we avoid mentioning it. The importance of GM semistability is attested by the following celebrated result.
\begin{thm}[Gieseker~\cite{gieseker:modfasci} - Maruyama~\cite{maruyama:modfasci}]\label{thm:modfasci}
Let $(X,L)$ be a polarized irreducible smooth projective variety, and let
\begin{equation}
v=(v_0,v_1,\ldots)\in\bigoplus _p H^{p,p}_{\QQ}(X)
\end{equation}
with $v_0>0$. 
There is a projective \emph{coarse moduli space} scheme $M_v(X,L)$ for torsion-free sheaves $\cF$ on $X$ such that 
$\ch(\cF)=v$. This means the following:
\begin{enumerate}
\item
To every  scheme $T$ and a family $\sF$ of  sheaves on $X$  parametrized by $T$
 such that  for $t\in T$ the sheaf $\sF_t\coloneqq\sF_{|X\times\{t\}}$ is semistable with $\ch(\sF_t)=v$ (we say that $\sF$ is a \emph{family of semistable sheaves on $X$ with Chern character $v$ parametrized by $T$}) there is associated a  classifying morphism $m_{\sF}\colon T\to M_v(X,L)$ in such a way that the following hold:
\begin{enumerate}
\item
If $\sF'$ is obtained from $\sF$ via a base change $g\colon T'\to T$, i.e.~$\sF'\cong g^{*}(\sF)$, then $m_{\sF'}=m_{\sF}\circ g$.
\item
If $\cL$ is a line bundle on $T$ then $m_{\sF\boxtimes\cL}=m_{\sF}$.
\item
$M_v(X,L)$ is maximal with the above properties, i.e.~if $\cM$ is a scheme with a contravariant functor $F$ from the category of families... as above, then $F$ factors via a morphism $M_v(X,L)\to\cM$.
\end{enumerate}
\item
Restricting the classifying morphism to stable sheaves $\cF$ on $X=X\times\Spec\CC$ with $\ch(\sF_t)=v$  one gets a bijection between the set of isomorphism classes of such sheaves and the set of closed points of an open subset $M_v(X,L)^{\rm st}_{\rm red}$ of the (reduced) $M_v(X,L)_{\rm red}$. 
\item
Restricting the classifying morphism to semistable non stable sheaves $\cF$ on $X=X\times\Spec\CC$ with 
$\ch(\sF_t)=v$  one gets a bijection between the set of {$S$-equivalence} (a relation 
weaker than isomorphism) classes 
(see~\cite[Sect.~1.5]{huy-lehn:librofasci}) of  such sheaves  and the set of closed points of  
$M_v(X,L)_{\rm red}\setminus M_v(X,L)^{\rm st}_{\rm red}$. 
\end{enumerate}
\end{thm}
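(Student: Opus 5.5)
The plan is the standard GIT construction of Gieseker, Maruyama and Simpson (as in~\cite{huy-lehn:librofasci}): realize all semistable sheaves with Chern character $v$ as quotients of a fixed sheaf, parametrized by a Quot scheme, and take a GIT quotient by the group that changes the choice of quotient presentation.

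\textbf{Step 1: boundedness.} I will show that the family of semistable torsion-free sheaves $\cF$ on $(X,L)$ with $\ch(\cF)=v$ is bounded. Concretely, I want an $m_0$ such that for $m\ge m_0$ every such $\cF$ is $m$-regular in the sense of Castelnuovo--Mumford. I plan to get this from the Le Potier--Simpson estimate (or Langer's effective bound), which bounds $h^0$ of semistable sheaves and of their restrictions to general hyperplane sections by an expression in the slopes. Then $\cF\otimes L^{m}$ is globally generated, has no higher cohomology, and $h^0(\cF\otimes L^m)=P(m)$, where $P$ is the Hilbert polynomial determined by $v$ via Hirzebruch--Riemann--Roch.

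\textbf{Step 2: the parameter space.} Fix $V\coloneqq\CC^{P(m)}$ and $\cH\coloneqq V\otimes L^{-m}$. Every semistable $\cF$ is then a quotient $\cH\twoheadrightarrow\cF$ with $V\xrightarrow{\sim}H^0(\cF\otimes L^m)$. I take the locally closed subset $R$ of the projective scheme $\mathrm{Quot}(\cH,P)$ of such quotients that are semistable and induce an isomorphism on $H^0$. Since $\ch$ is locally constant in flat families, the condition $\ch(\cF)=v$ cuts out a union of components, which I keep. The group $GL(V)$ acts on $R$, and the orbits are exactly the isomorphism classes. Through the Grothendieck embedding into a Grassmannian, $\mathrm{Quot}(\cH,P)$ carries $SL(V)$-linearized ample line bundles $\cL_\ell$ for $\ell\gg0$.

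\textbf{Step 3: comparing GIT stability with GM stability (the main obstacle).} I must prove that for $m,\ell\gg0$ a point $[\cH\to\cF]$ of the closure $\ov R$ is GIT (semi)stable exactly when $\cF$ is GM (semi)stable and $V\to H^0(\cF\otimes L^m)$ is an isomorphism. I will use the Hilbert--Mumford criterion. A one-parameter subgroup corresponds to a weighted filtration of $V$, and the criterion reduces to comparing $\dim V'$ with $P(m)\,r(\cF')/r(\cF)$ for the subsheaves $\cF'$ generated by subspaces $V'\subset V$. Boundedness of the family of saturated subsheaves with Hilbert polynomial near the critical one, again from the Le Potier--Simpson estimate, turns the inequality ``$h^0(\cF'(m))/r(\cF')\le h^0(\cF(m))/r(\cF)$ for all $\cF'$'' into GM semistability. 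This uniform choice of $m$ is the delicate part.

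\textbf{Step 4: properties of the quotient.} Set $M_v(X,L)\coloneqq \ov R^{ss}/\!\!/SL(V)$, which is projective since $\ov R$ is projective. A family $\sF$ over $T$ gives, locally on $T$, a frame of the locally free sheaf $p_*(\sF\otimes L^m)$ and hence a map to $R$. Different frames differ by $GL(V)$, and twisting by a line bundle from $T$ does not change the point, so these maps glue to $m_\sF$; this gives properties (a) and (b). Property (c) follows from the universal property of categorical quotients. Over stable points the stabilizers are scalars (stable sheaves are simple) and the orbits are closed, so the closed points there correspond to isomorphism classes, which is item (2). For item (3), I will show that the orbit closure of a semistable $\cF$ contains the point of $\mathrm{gr}(\cF)$, its Jordan--Hölder graded object, by degenerating along a filtration, and that closed orbits are exactly the polystable ones. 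Since GIT quotient points correspond to closed orbits, the points are $S$-equivalence classes.
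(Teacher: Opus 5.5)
The paper gives no proof of this theorem. It is quoted from Gieseker and Maruyama, with \cite{huy-lehn:librofasci} as background. Your outline is the standard construction of \cite[Ch.~4]{huy-lehn:librofasci}, and it is correct as a plan. Its steps are: uniform $m$-regularity via the Le Potier--Simpson estimate; the $GL(V)$-invariant locus $R\subset\mathrm{Quot}(\cH,P)$, linearized via the Grassmannian embedding; the Hilbert--Mumford comparison of GIT and Gieseker--Maruyama (semi)stability; and closed orbits identified with polystable sheaves, which yields $S$-equivalence classes.

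The only difference from the original papers named in the statement is technical. There the linearization came from Gieseker's determinant map $[\cH\to\cF]\mapsto\bigl[\bigwedge^{r}V\to H^0(X,\det\cF\otimes L^{rm})\bigr]$. You instead follow Simpson's simplification, which uses the Grothendieck embedding of the Quot scheme into a Grassmannian.
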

\begin{dfn}\label{dfn:puntoesse}
If  $\cF$ is an $L$ semistable torsion-free sheaf on $X$ with $\ch(\cF)=v$ 
we let $[\cF]\in M_v(X,L)$ be the point corresponding to $\cF$ according to Items~(2) and~(3) of Theorem~\ref{thm:modfasci}. 
\end{dfn}
\begin{expl}
Let $\alpha\in H^{1,1}_{\ZZ}(X)$, and let $v(\alpha)=e^{\alpha}$. Then $M_{v(\alpha)}(X,L)=\Pic^{\alpha}(X)$ is independent of $L$, and it is  isomorphic to the Jacobian $H^{0,1}(X)/H^1(X;\ZZ)$. This is the prototoypical example of a moduli space. There exists a universal (Poincar\'e) line bundle $\mathsf L$ on 
$X\times M_{v(\alpha)}(X,L)$, i.e.~such that the associated morphism  $m_{\mathsf L}\colon M_{v(\alpha)}(X,L)\to M_{v(\alpha)}(X,L)$ is the identity. In general there is no  universal sheaf on $X\times M_{v}(X,L)$ (even if all the sheaves parametrized by $M_{v}(X,L)$ happen to be stable). 
\end{expl}
\begin{rmk}
One of the steps in the proof of Theorem~\ref{thm:modfasci} is a boundedness result for torsion-free semistable sheaves on $X$  with the assigned Chern class $v$, i.e.~the existence of a scheme of finite type $T$ and 
a family of semistable sheaves on $X$ with Chern character $v$ parametrized by $T$
 which \lq\lq contains\rq\rq\ all such families of sheaves on $X$, i.e.~such that if  $\sF'$ is a similar family of sheaves on $X$ 
parametrized by $ T'$ 
then there exists a morphism   $T'\to T$ such that  $\sF'$ is isomorphic (up to tensorization by the pull-back of a line bundle on $T'$) to the family obtained from $\sF$  via the base change $T'\to T$.  Maruyama proved that if we limit ourselves to locally-free semistable sheaves, 
i.e.~locally-free sheaves on $X\times T$, then in order 
to obtain boundedness it suffices to fix the first three terms of $v$, i.e.~rank, $\ch_1$ and $\ch_2$, 
see~\cite{maruyama:fascilimitati}.
\end{rmk}
\subsubsection{Slope (semi)stability}
Let $(X,L)$ be a polarized irreducible smooth projective variety.  Let $d\coloneqq \dim X$. Applying the Hirzebruch-Riemann-Roch Theorem, one gets the equality
\begin{equation}\label{polhilbridotto}
p_{\cF}(t)\equiv\left(\int_X\frac{c_1(L)^d}{d!}\right)t^d+
\left(\int_X\left(\frac{c_1(\cF)}{r(\cF)}+c_1(X)\right)\frac{c_1(L)^{d-1}}{(d-1)!}\right)t^{d-1}\pmod{t^{d-2}},
\end{equation}
where $r(\cF)$ is the rank of $\cF$. Hence the leading coefficient of the reduced Hilbert polynomial $p_{\cF}$ is independent of $\cF$, while the next coefficient only depends on the \emph{$L$ slope of} $\cF$, dfined to be
\begin{equation}\label{pendenza}
\mu_{L}(\cF)\coloneqq
\int_X\frac{c_1(\cF) \cdot c_1(L)^{d-1}}{r(\cF)}.
\end{equation}
We denote $\mu_{L}(\cF)$ also by $\mu_{c_1(L)}(\cF)$.
This leads to the following definition.
\begin{dfn}\label{dfn:penstab}
A non zero torsion-free sheaf $\cF$ on $X$ is \emph{$L$ slope semistable} if for every non zero subsheaf 
$\cE\subset \cF$ 
we have $\mu_L(\cE)\le \mu_L(\cF)$.
It is \emph{$L$ slope stable} if, in addition, strict inequality holds whenever $r(\cE)<r(\cF)$.  It is \emph{$L$ slope unstable} if it is not 
$L$ slope semistable.

\end{dfn}
Let  $\cF$ be a non zero torsion-free sheaf on $X$. We have the following chain of implications 
\begin{equation*}
\text{$\cF$  slope stable }\implies \text{$\cF$   GM stable }\implies \text{$\cF$    GM semistable }
\implies \text{$\cF$   slope semistable,}
\end{equation*}
where (semi)stability is with respect to a fixed polarization $L$.
\begin{rmk}\label{rmk:stabcurva}
Let $(X,L)$ be a polarized projective curve (irreducible and smooth). Then $L$ slope (semi)stability is the same as $L$ GM (semi)stability because~\eqref{polhilbridotto} is an equality, and moreover it does not depend on $L$. For this reason  when dealing with a curve we talk of (semi)stability tout court. Note  that if $\cF$ is a vector bundle on $X$, and the numbers $r(\cF)$, $\deg(\cF)\coloneqq c_1(\cF)$ are coprime, then $\cF$ is either stable or unstable. 
\end{rmk}
\begin{expl}
Let $X$ be a smooth (irreducible) projctive curve. If $\cF$ is a vector bundle on $X$ fitting into a non split exact sequence
\begin{equation}
0\lra \cO_X\lra \cF\lra\cE\lra 0,
\end{equation}
where $\cE$ is a stable vector bundle with $\deg(\cE)=1$, then $\cF$ is stable (easy exercise). If the genus $g(X)$ is at least $1$ then such non split exact sequences exist with $\cE$ of arbitrary positive rank (induction on the rank). 
If  $g(X)=0$  then there are no stable vector bundles of rank greater than $1$.
\end{expl}
\begin{rmk}\label{rmk:stabaperto}
If  $T$ is a  scheme  and  $\sF$ is family of  sheaves on $X$  parametrized by $T$ then the subsets  $T^{\sss},T^{\st}\subset T$ of 
  $t\in T$ such that the sheaf $\sF_t$ is semistable (respectively stable) is open,  
 see~\cite[Prop.~2.3.1]{huy-lehn:librofasci}. 
\end{rmk}
The result below (for a proof see~\cite[Cor.1.2.8]{huy-lehn:librofasci}) gives a link between stability and simplicity of a sheaf .
\begin{prp}\label{prp:stabsemp}
Let $(X,L)$ be  a polarized irreducible smooth projective variety. If $\cF$ is an $L$ stable sheaf on $X$ then $\cF$ is simple.
\end{prp}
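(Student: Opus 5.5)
The plan is the classical argument: a stable sheaf admits no nontrivial endomorphisms because the image of any endomorphism would destabilize it, and then we use that $\CC$ is algebraically closed. Here stable means $L$ GM stable; slope stability implies GM stability, so this case covers both.

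\emph{First step (key lemma).} Let $\cF,\cG$ be $L$ GM stable torsion-free sheaves with $p_\cF=p_\cG$, and let $\phi\colon\cF\to\cG$ be a nonzero morphism. We show that $\phi$ is an isomorphism. Set $\cI\coloneqq\Im(\phi)$ and $\cK\coloneqq\ker(\phi)$. Since $\cI\subset\cG$ is nonzero and $\cG$ is torsion-free, $\cI$ is torsion-free with $r(\cI)>0$. Stability of $\cG$ gives $p_\cI\le p_\cG$, with strict inequality if $\cI\neq\cG$.

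Next we use the exact sequence $0\to\cK\to\cF\to\cI\to0$. Hilbert polynomials are additive, so $r(\cF)p_\cF=r(\cK)p_\cK+r(\cI)p_\cI$. If $\cK\neq0$, stability of $\cF$ gives $p_\cK<p_\cF$ for $n\gg0$, and therefore $p_\cI>p_\cF$. (The comparison is meant for $n\gg0$, i.e.\ lexicographically on coefficients.) If instead $\cK=0$, then $\cI\cong\cF$ and $p_\cI=p_\cF$. In either case $p_\cI\ge p_\cF=p_\cG$.

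Comparing with the bound from $\cG$ forces $\cK=0$ and $\cI=\cG$, so $\phi$ is an isomorphism.

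\emph{Second step.} Apply the lemma with $\cG=\cF$. Then $\End(\cF)=\Hom_X(\cF,\cF)$ is a finite-dimensional $\CC$-algebra (since $X$ is projective) in which every nonzero element is invertible, i.e.\ a division algebra. Take $\phi\in\End(\cF)$. The map $\phi$ acts linearly on the finite-dimensional space $\End(\cF)$ by composition, so it has an eigenvalue $\lambda\in\CC$; equivalently, we can take a root $\lambda$ of the minimal polynomial of $\phi$. Then $\phi-\lambda\Id_\cF$ is a zero divisor in $\End(\cF)$, hence not invertible, hence zero by the first step. So $\phi=\lambda\Id_\cF$, and $\cF$ is simple.

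\emph{Main obstacle.} The only delicate point is the additivity argument in the first step. It needs care with the ordering of polynomials "for $n\gg0$". It also needs the observation that a sheaf of rank $0$ cannot occur as a nonzero subsheaf of a torsion-free sheaf, which is what makes $p_\cI$ and $p_\cK$ well defined. Once that is checked, the rest is routine linear algebra.
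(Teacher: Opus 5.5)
Your proof is correct and is essentially the argument behind the reference the paper cites (Huybrechts--Lehn, Prop.~1.2.7 and Cor.~1.2.8): a nonzero morphism between GM stable sheaves with the same reduced Hilbert polynomial is an isomorphism, so $\Hom_X(\cF,\cF)$ is a finite-dimensional division algebra over the algebraically closed field $\CC$, hence equal to $\CC\Id_{\cF}$. Your reduction of the slope-stable case to the GM-stable case, using the chain of implications stated in the paper, is also fine.
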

 The  result below makes the connection between the deformation space of a stable sheaf 
 $\cF$ and the local structure at $[\cF]$ of the relevant moduli space.    (For a proof 
 see~\cite[Thm.4.5.1]{huy-lehn:librofasci}.)
\begin{prp}\label{prp:modedef}
Let $(X,L)$ be  a polarized irreducible smooth projective variety. Let $\cF$ be an $L$ stable sheaf on $X$, and let $v=\ch(\cF)$. Then the germ of $M_v(X,L)$ at the point $[\cF]$ representing the isomorphism class of $\cF$  is isomorphic to the universal deformation space of $\cF$.
\end{prp}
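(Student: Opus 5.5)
The plan is to compare the two germs through their universal properties. Proposition~\ref{prp:stabsemp} shows that $\cF$ is simple, so the universal deformation $\sG$ of $\cF$ over $\Def(\cF)=(B,0)$ exists, and the two questions are how to map $(B,0)$ into the moduli space and how to map back.

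\textbf{Map from deformations to moduli.} By Remark~\ref{rmk:stabaperto}, stability is open in flat families, so after shrinking $B$ every fiber $\sG_b$ is stable with $\ch(\sG_b)=v$, the Chern character being locally constant. The classifying morphism of Theorem~\ref{thm:modfasci} then gives $m_{\sG}\colon (B,0)\to (M_v(X,L),[\cF])$.

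\textbf{Map from moduli to deformations.} I would use the GIT construction of $M_v(X,L)$. Let $R$ be the open subset of the Quot scheme of quotients $\cH\otimes\cO_X(-m)\to\cF'$ with $\cF'$ stable and $\cH\to H^0(\cF'(m))$ an isomorphism. Then $M^{\rm st}=R/PGL(N)$, and since stable points have stabilizer $\CC^*$ (again by simplicity), $R\to M^{\rm st}$ is a principal $PGL(N)$-bundle in the étale topology, by Luna's étale slice theorem. Take an étale slice $Z\subset R$ through a point $q$ over $[\cF]$, transverse to the orbit. This gives étale-locally a section of $R\to M$ near $[\cF]$, and the universal quotient restricted to $Z$ is a deformation of $\cF$ parametrized by $(Z,q)$. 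Here $(Z,q)\cong (M,[\cF])$ as germs, taking completions or henselizations if the section is only étale. Versality of $B$ gives a morphism $g\colon(Z,q)\to (B,0)$.

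\textbf{Showing the maps are inverse.} The composite $m_{\sG}\circ g$ is the identity of $(Z,q)=(M,[\cF])$ by functoriality (1a). Conversely, $g\circ m_{\sG}$ is an endomorphism $h$ of $(B,0)$ such that $h^*\sG$ is isomorphic to $\sG$ up to twisting by a line bundle on the base, which is harmless since $\sG$ is a deformation and can be renormalized at $0$. The uniqueness statement (2) of the universal deformation then forces $h=\Id$, so both maps are isomorphisms of germs. Formally, it suffices to check that they induce inverse maps of complete local rings, i.e.~of functors on Artinian local schemes. This is the universal property restricted to Artinian bases, together with the fact that the slice-plus-universal-quotient pro-represents the deformation functor.

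\textbf{Main obstacle.} The hard step is the descent: showing that $R\to M^{\rm st}$ is a $PGL(N)$-torsor with free action, and that a slice exists whose family is a deformation of $\cF$ inducing an isomorphism on germs. This needs Luna's theorem and the identification of $T_qR$ with $\Hom(\cH\otimes\cO(-m),\cF)$ modulo $\mathfrak{gl}(\cH)$, which surjects onto $\Ext^1(\cF,\cF)$ with kernel the orbit tangent space $\mathfrak{pgl}$. I would first carry this out at the level of tangent-obstruction theories, comparing $T_qR$ and its obstruction space with $\Ext^1$ and $\Ext^2$, to see that the slice is formally smooth over $\Def(\cF)$ of relative dimension $0$.
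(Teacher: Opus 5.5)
Your proposal is correct and is essentially the standard argument in Huybrechts--Lehn, which the paper simply cites instead of giving a proof: $R^{\rm st}\to M^{\rm st}$ is a principal $PGL(N)$-bundle by Luna's slice theorem, and a slice through $q$ is compared with $\Def(\cF)$ via the tangent--obstruction theory of the Quot scheme. Two points to tighten. First, the claim that $h^{*}\sG\cong\sG$ up to a twist does not follow from the coarse-moduli property of $M_v(X,L)$ alone; it needs the torsor structure, and is really the formal smoothness of $Z\to\Def(\cF)$ that you establish at the end, which makes the $h$-argument redundant. Second, the tangent space is $T_qR=\Hom(\cK,\cF)$ with $\cK=\ker(\cH\otimes\cO_X(-m)\to\cF)$; it surjects onto $\Ext^1(\cF,\cF)$ with kernel $\Hom(\cH\otimes\cO_X(-m),\cF)/\Hom(\cF,\cF)\cong\mathfrak{pgl}(\cH)$.
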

\subsection{Variation of slope (semi)stability}\label{subsec:varstab}
\setcounter{equation}{0}
\subsubsection{Motivation and overview}
Let $X$ be an irreducible smooth projective variety. (Semi)stability of a (torsion-free) sheaf with respect to an ample line bundle $L$  depends  only on the ray $\QQ_{+} c_1(L)\subset \Amp(X)$. If $X$ is a curve there is no dependency (see Remark~\ref{rmk:stabcurva}). Starting with $\dim X=2$ slope stability  does vary when we vary  the ray in the ample cone.
\begin{expl}
Let $X$ be a(n irreducible smooth projective) surface. Let $\cF$ be a rank $2$ vector bundle fitting into a non split exact sequence
\begin{equation}\label{fascioambiguo}
0\lra \cO_X\lra \cF\overset{g}{\lra} \cI_Z\otimes \cL\lra 0,
\end{equation}
where $Z\subset X$ is a zero-dimensional subscheme and $\cL$ is a line bundle. Suppose that $h_1,h_2$ be ample classes on $X$  such that
\begin{equation}\label{epifania}
 c_1(\cL)\cdot h_1=1,\qquad  c_1(\cL)\cdot h_2<0.
\end{equation}
 Then $\cF$ is $h_1$ slope stable and $h_2$ slope unstable. The latter statement holds because by the inequality in~\eqref{epifania} we have 
 $\mu_{h_2}(\cO_X)=0>\mu_{h_2}(\cF)$. Let us prove that  $\cF$ is $h_1$ slope stable. Suppose the contrary, i.e.~there exists a subsheaf $\cE\subset\cF$ with $r(\cE)=1$ and $\mu_{h_1}(\cE)\ge \mu_{h_1}(\cF)$. Since $\cF$ is locally-free the inclusion $\cE\subset\cF$ factors through an inclusion 
 $\cE^{\vee\vee}\subset\cF$. Since $\mu_{h_1}(\cE)=\mu_{h_1}(\cE^{\vee\vee})$ and $\cE^{\vee\vee}$ is a line bundle we may assume 
from the start  that $\cE$ is a line bundle. Since $\mu_{h_1}(\cE)\ge \mu_{h_1}(\cF)=1/2$ the sheaf $\cE$ is not a subsheaf of 
 $\cO_X$ It follows that
the sheaf 
$g(\cE)\subset \cI_Z\otimes \cL$ has rank $1$.  Since we have a non zero map $\cE\to\cL$ between line bundles,  $c_1(\cE)=c_1(\cL)-\cl(D)$ where $D$ is an effective divisor. The inequality $\mu_{h_1}(\cE)\ge  \mu_{h_1}(\cF)$ gives
$D\cdot h_1\le 0$, and hence $D=0$. Thus $\cE\cong\cL$ (it follows that 
 $Z=\es$) and  hence  the exact sequence in~\eqref{fascioambiguo} is 
split. This contradicts the hypothesis.

An explicit instance of the above construction is the following. Let $(E_1,p_1)$, $(E_2,p_2)$ be  elliptic curves. Let $X\coloneqq E_1\times E_2$, and let $C_1\coloneqq E_1\times\{p_2\}$, 
$C_2\coloneqq \{p_1\}\times E_2$. If $Z\subset X$ is a l.c.i.~subscheme (e.g.~$Z=\es$) 
 there exists a vector bundle $\cF$ fitting into a non split exact sequence
\begin{equation}\label{magicest}
0\lra\cO_X\lra\cF\lra \cI_Z\otimes\cO_X(C_1-C_2)\lra 0.
\end{equation}
The polarizations $h_1\coloneqq \cO_S(C_1+2C_2)$ and $h_2\coloneqq \cO_S(2C_1+C_2)$ satisfy the hypotheses above. Hence $\cF$ is $h_1$ slope stable and $h_2$ slope unstable.
\end{expl}
 If $X$ is a surface the variation of (semi)stability is very well behaved, see~\cite{huy-lehn:librofasci} and  references therein. If $\dim X>2$  variation of stability is, in general, substantially more complex, see~\cite{grebrosstoma:fibvettaltadim}. 
   In the present subsection we discuss variation  of slope (semi)stability for 
 an irreducible smooth projective surface, and $X$ denotes such a variety   unless we state the contrary. The argument is technical and dry, but it is a key element in many proofs on moduli of sheaves.
\begin{rmk}
 Bridgeland introduced stability conditions for objects in the derived category of a smooth projective variety. In a (precise) sense this is  a generalization of Gieseker-Maruyama stability for sheaves. 
 Such stability conditions, when  defined  and controllable, have been extremely fruitful, see~\cite{bayermacri:icm} for a survey.  Variation of Bridgeland stability,  a high octane version of variation of slope stability for sheaves on surfaces,    plays a key r\^ole in the theory. 
\end{rmk}
\subsubsection{Slope with respect to nef classes}
 It is convenient to introduce  slope (semi)stability of a sheaf $\cF$ on a surface $X$ with respect to a nef class  $h\in\NS(X)_{\QQ}$.  The \emph{$h$ slope of} 
$\cF$ is given by
\begin{equation}
\mu_{h}(\cF)\coloneqq
\int_X\frac{c_1(\cF) \cdot h}{r(\cF)}.
\end{equation}
\begin{dfn}
Let $X$ be a smooth projective surface, and let $h\in\NS(X)_{\QQ}$ be a nef class.  A non zero  torsion-free sheaf $\cF$ on $X$ is \emph{$h$ slope semistable} if for every non zero subsheaf 
$\cE\subset \cF$ 
we have $\mu_h(\cE)\le \mu_h(\cF)$.
It is \emph{$h$ slope stable} if, in addition, strict inequality holds whenever $r(\cE)<r(\cF)$. It is \emph{$h$ slope unstable} if it is not 
$h$ slope semistable.
\end{dfn}
Of course if $h$ is ample this is the same notion of slope (semi)stability of Definition~\ref{dfn:penstab}. 
\begin{rmk}\label{rmk:stabnonampio}
Suppose that $\pi\colon X\to Y$ is a surjective non constant map to a smooth projective variety, and  that $\pi_{*}\cO_X=\cO_Y$. Let  $h_Y$ be an ample class on $Y$, and let 
$h_X=\pi^{*}h_Y$. There are two  possible cases:
\begin{enumerate}
\item[(a)]
$\dim Y=2$, and hence $\pi$ is a composition of blow-downs.
\item[(b)]
$\dim Y=1$, and hence a general fiber of $\pi$ is an irreducible smooth curve. 
\end{enumerate}
If~(a) holds then a sheaf $\cF$ is $h_X$ slope (semi)stable if and only if the push-forward $\pi_{*}\cF$ is $h_Y$ slope (semi)stable. This holds because of the following facts: $\mu_{h_X}(\cF)=\mu_{h_Y}(\pi_{*}\cF)=\mu_{h_X}(\pi^{*}\pi_{*}\cF)$, 
if  $\cE\subset\cF$ then $\pi_{*}\cE\subset\pi_{*}\cF$,  if  $\cG\subset\pi_{*}\cF$ then 
$\pi^{*}\cG\subset\pi^{*}\pi_{*}\cF\subset\cF$ and $\mu_{h_Y}(\cG)=\mu_{h_X}(\pi^{*}\cG)$.
If~(b) holds, we may assume that $h_Y$ is the class of a point, and hence $h_X$ is the class of a fiber of $\pi$.
 Let  $\cE$ be a (non zero) torsion-free sheaf on $X$. If $C$ is a general fiber of $\pi$ then the restriction $\cE_{|C}$ is a vector bundle, and
\begin{equation}
\mu_{h_X}(\cE)=\int_X\frac{c_1(\cE)\cdot C}{r(\cE)}=\int_C\frac{\deg(\cE_{|C})}{r(\cE)}=\mu(\cE_{|C}).
\end{equation}
 It follows that if  $\cF$ is a torsion-free sheaf on $X$ whose restriction to a general fiber of $\pi$ is a slope (semi)stable,  then $\cF$ is $h_X$ slope (semi)stable. 
\end{rmk}
\subsubsection{Suitability}
The \emph{discriminant} of a   torsion-free sheaf $\cF$ on a connected complex manifold $X$ (of any dimension) is given by
\begin{equation}\label{eccodisc}
 \Delta(\cF)\coloneqq 2r(\cF) c_2(\cF)-(r(\cF)-1) c_1(\cF)^2=\ch_1(\cF)^2-2r(\cF)\ch_2(\cF).
\end{equation}
Note that if $\cL$ is a line bundle on $X$ then $\Delta(\cF\otimes\cL)=\Delta(\cF)$. The celebrated Bogomolov inequality for the discriminant  underpins the results in the present subsection. 
\begin{thm}[Bogomolov's inequality.~\cite{bogomolov:discpos,bogomolov:reststab,huy-lehn:librofasci}]\label{thm:disegbog}
Let $(X,h)$ be an irreducible  smooth polarized projective variety of dimension $d\ge 2$. If 
$\cF$ is a torsion-free  $h$ slope semistable sheaf on $X$ then
\begin{equation}\label{bogdiseg}
\int_X \Delta(\cF)\cdot h^{d-2}\ge 0.
\end{equation}
If $d=2$ then~\eqref{bogdiseg} holds also if  $h$ is big and nef.
\end{thm}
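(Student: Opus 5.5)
The plan is the classical route for surfaces first, then reduction to surfaces by restriction to complete intersections. The paper cites Bogomolov and Huybrechts–Lehn rather than giving a proof, so the sketch only needs the main steps.

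\emph{Step 1: reductions.} The discriminant is invariant under twisting by line bundles, and (semi)stability is unchanged by twisting. It is also unchanged by passing to the reflexive hull $\cF^{\vee\vee}$. On a surface the hull has $\Delta(\cF^{\vee\vee})=\Delta(\cF)-2r\,\ell(\cF^{\vee\vee}/\cF)\le\Delta(\cF)$, so it suffices to treat reflexive sheaves. In higher dimension $d$, I would restrict to a general complete intersection surface $S=H_1\cap\dots\cap H_{d-2}$ with $H_i\in|mh|$, $m\gg0$. The Mehta–Ramanathan restriction theorem says $\cF_{|S}$ is $h_{|S}$ slope semistable, and $\cF_{|S}$ is torsion-free. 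We have $\int_X\Delta(\cF)h^{d-2}=m^{-(d-2)}\int_S\Delta(\cF_{|S})$, so the inequality reduces to the case $d=2$. (Alternatively one can bypass Mehta–Ramanathan by using Bogomolov's own effective restriction theorem, but I would simply cite the restriction theorem.)

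\emph{Step 2: the surface case, via the Bogomolov instability argument.} Let $X$ be a surface and suppose $\Delta(\cF)<0$. Consider $\cE\coloneqq End\,\cF$, or better, for large $n$ the symmetric powers $\Sym^n$ of the traceless part, after passing to a reflexive model. Riemann–Roch gives $\chi(\Sym^n\cF\otimes\det\cF^{-n/r})$ growing like $-c\,\Delta(\cF)\,n^{r+1}$ with $c>0$. By Serre duality, $h^2$ of this sheaf is $h^0$ of the dual twisted by $K_X$. Both $h^0$ terms are controlled by semistability: $\Sym^n$ of a semistable sheaf is semistable in characteristic $0$, and the twist has slope $0$. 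So $h^0$ is bounded by $O(n^{r})$, too small. This would contradict the $n^{r+1}$ growth, which gives $\Delta\ge0$.

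Concretely, the cleaner classical route I would follow is Bogomolov's. If $\Delta<0$, the class $\xi$ in the positive cone coming from $c_1$ of a suitable twist of $\cO_{\PP(\cF)}(1)$ on $\PP(\cF)$ has positive top self-intersection. Then Riemann–Roch makes $h^0(\cO(n\xi))$ or $h^0(\cO(-n\xi)\otimes K)$ grow maximally. Pushing forward, this yields a section of $\Sym^n\cF\otimes\cL$ with $\cL\cdot h<-n\mu$ or its dual, i.e.\ a destabilizing subsheaf. This contradicts semistability of $\Sym^n\cF$.

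\emph{Step 3: the nef and big case.} If $h$ is only big and nef on a surface, I would approximate $h$ by ample classes $h_\epsilon=h+\epsilon a$. By the boundedness of destabilizing subsheaves (Grothendieck's lemma), an $h$-semistable $\cF$ is $h_\epsilon$-semistable for small $\epsilon$ up to finitely many walls. Otherwise I would take the $h_\epsilon$ Harder–Narasimhan filtration for small $\epsilon$: its factors are $h_\epsilon$-semistable, and their slopes with respect to $h$ coincide. I would then apply the ample case to each factor and use the standard formula $\Delta(\cF)/r=\sum\Delta(\cF_i)/r_i-\frac1r\sum_{i<j} r_ir_j(\mu_i-\mu_j)^2$, written with $c_1$ classes. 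The correction term is $-(\xi_{ij})^2$, where $\xi_{ij}\cdot h=0$, and the Hodge index theorem (valid since $h^2>0$) gives $\xi_{ij}^2\le0$. So $\Delta(\cF)\ge0$. The main obstacle is Step 2, specifically the semistability of symmetric powers (or the Euler characteristic estimate on $\PP(\cF)$); that is where the real content lies.
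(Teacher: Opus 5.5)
The paper gives no proof of this statement; it only cites Bogomolov and Huybrechts--Lehn. Your plan is the standard argument of the latter: reflexive hull, Mehta--Ramanathan to cut down to a surface, and Riemann--Roch for $E_n\coloneqq\Sym^n\cF\otimes\det\cF^{-n/r}$ played against bounds on sections. Here $\chi(E_n)=-\frac{\Delta(\cF)}{2r}\cdot\frac{n^{r+1}}{(r+1)!}+O(n^{r})$, and the bounds on sections come from semistability of symmetric powers in characteristic $0$.

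Steps~1 and~3 are fine. In particular the big-and-nef case is correct. Any $\cE\subset\cF$ with $\mu_{h+\epsilon a}(\cE)\ge\mu_{h+\epsilon a}(\cF)$ has $\mu_a(\cE)\ge\mu_a(\cF)$, so Grothendieck's lemma leaves finitely many numerical types. Hence for small $\epsilon$ all Harder--Narasimhan factors have $h$-slope $\mu_h(\cF)$, and the identity for $\Delta/r$ plus Hodge index finish the argument. You should drop the aside about Bogomolov's effective restriction theorem: that theorem is itself derived from the inequality, so using it here is circular. Step~2, however, does not close as written, for two reasons.

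\emph{The Serre-dual term.} The term $h^2(E_n)=h^0(\Sym^n\cF^{\vee}\otimes\det\cF^{n/r}\otimes K_X)$ is $h^0$ of a semistable sheaf of slope $K_X\cdot h$, not $0$. For a semistable $G$ of slope $\le 0$ there is the elementary bound $h^0(G)\le r(G)=O(n^{r-1})$: the image of $H^0(G)\otimes\cO_X\to G$ is globally generated of degree $0$, hence trivial. For positive slope, ``controlled by semistability'' is not an argument, and crude inputs are too weak. A Grauert--M\"ulich bound $\mu_{\max}(G_{|C})\le\mu(G_{|C})+O(r(G))$ only yields $O(n^{2r-2})$, which does not beat $n^{r+1}$ once $r\ge 3$. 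What works is the following:
\begin{itemize}
\item fix one smooth curve $C\in|mh|$ with $\cF_{|C}$ semistable, applying Mehta--Ramanathan to $\cF$ and not to $E_n$, so that $m$ is independent of $n$;
\item use semistability of $\Sym^n(\cF^{\vee}_{|C})$ on $C$;
\item bound $h^0(X,G)\le\sum_{0\le k\le k_0}h^0(C,G(-kC)_{|C})=O(r(G))$, where $k_0$ is fixed by requiring $G(-(k_0+1)C)$ to have negative slope.
\end{itemize}

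\emph{The $\PP(\cF)$ variant.} This route is misstated. The sheaf $\cO(-n\xi)\otimes K_{\PP(\cF)}$ has degree $-n-r$ on the fibres, so it never has sections. Moreover, on a variety of dimension $r+1\ge 3$, positivity of $\xi^{r+1}$ does not make $\pm\xi$ big. Since $H^i(\PP(\cF),\cO(n\xi))=H^i(X,E_n)$ vanishes for $i>2$, Riemann--Roch together with Serre duality on $X$ gives the following dichotomy. Either $h^0(E_n)$ or $h^0(\Sym^n\cF^{\vee}\otimes\det\cF^{n/r}\otimes K_X)$ grows like $n^{r+1}$. Equivalently, the normalized tautological class is big on $\PP(\cF)$ or on $\PP(\cF^{\vee})$; for $r=2$ these coincide. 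Even then, many sections of a slope-zero semistable sheaf are not yet a contradiction. You need Kodaira's lemma to make $m\xi-\pi^{*}A$ effective for an ample $A$ on $X$, i.e.\ to get $A\hookrightarrow\Sym^m\cF\otimes\det\cF^{-m/r}$ (or the analogue for $\cF^{\vee}$), which destabilizes.

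With either repair the proof goes through; in the second one, the $\PP(\cF)$ alternative can also be excluded directly by $h^0(E_n)\le r(E_n)$. The remaining deep inputs are then only Mehta--Ramanathan and semistability of symmetric powers in characteristic $0$.
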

Suppose that $X$ is a surface.  The orientation class of $X$ defines an isomorphism $H^4(X;\ZZ)\xrightarrow{\sim} \ZZ$, and hence 
it identifies $\Delta(\cF)$  with an integer.  
 We let
\begin{equation}\label{esmeralda}
{\mathsf a}(\cF)\coloneqq r(\cF)^2 \cdot \Delta(\cF)/4.
\end{equation}  
\begin{prp}\label{prp:lizzy}
Let $X$ be a smooth projective surface, and let  $h\in\NS(X)$ be big and nef. Let $\cF$ be an $h$ slope semistable   torsion-free sheaf  on $X$. Suppose that $\cE\subset\cF$ is a subsheaf such that $0<r(\cE)<r(\cF)$ and $\mu_h(\cE)=\mu_h(\cF)$. Let 
\begin{equation}\label{eccolambda}
\lambda_{\cE,\cF}\coloneqq r(\cF)\cdot c_1(\cE)-r(\cE)\cdot c_1(\cF).
\end{equation}
Then 
\begin{equation}\label{lizzy}
-{\mathsf a}(\cF)\le \lambda_{\cE,\cF}\cdot \lambda_{\cE,\cF}\le 0,
\end{equation}
and the right inequality is an equality  only if $\lambda_{\cE,\cF}=0$.
\end{prp}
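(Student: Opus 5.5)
Both inequalities come from two inputs: the Hodge index theorem, which gives the right inequality, and the Bogomolov inequality (Theorem~\ref{thm:disegbog}, valid for big and nef $h$ on a surface), which gives the left one. First I reduce to the case where $\cE$ is saturated, i.e.~$\cG\coloneqq\cF/\cE$ is torsion-free. Replacing $\cE$ by its saturation $\cE'$ changes $c_1(\cE)$ by an effective divisor $D$. Since $\mu_h(\cE')\le\mu_h(\cF)=\mu_h(\cE)$, we get $D\cdot h\le 0$, and hence $D\cdot h=0$. So the saturation still has slope $\mu_h(\cF)$.

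The awkward point is that $\lambda$ changes by $r(\cF)D$ with $D\cdot h=0$. I would therefore phrase the lower bound for $\lambda_{\cE',\cF}$ and then check directly that $\lambda_{\cE,\cF}^2\ge\lambda_{\cE',\cF}^2$. My first attempt at this check is to compute
\[
\lambda_{\cE,\cF}^2=\lambda_{\cE',\cF}^2-2r(\cF)\,\lambda_{\cE',\cF}\cdot D+r(\cF)^2D^2
\]
and control the cross term. If that fails, I would instead run the argument below directly for non-saturated $\cE$, using the sequence $0\to\cE\to\cF\to\cG\to 0$ where $\cG$ may have torsion.

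\textbf{Right inequality.} Set $r=r(\cF)$ and $r_1=r(\cE)$. Then
\[
\lambda\cdot h=r\,r_1\bigl(\mu_h(\cE)-\mu_h(\cF)\bigr)=0.
\]
Since $h$ is big and nef, $h^2>0$, and the Hodge index theorem says the intersection form is negative definite on $h^\perp\subset\NS(X)_\RR$. Hence $\lambda^2\le 0$, with equality only when $\lambda$ is numerically trivial. In the statement, "$\lambda=0$" should be read as numerically zero, i.e.~zero in $\NS(X)$ modulo torsion.

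\textbf{Left inequality.} Write $r_2=r-r_1$. The key step is an additivity formula for the discriminant in the form $\Delta/r$: from $\ch(\cF)=\ch(\cE)+\ch(\cG)$ one gets
\[
\frac{\Delta(\cF)}{r}=\frac{\Delta(\cE)}{r_1}+\frac{\Delta(\cG)}{r_2}-\frac{r_1r_2}{r}\Bigl(\frac{c_1(\cE)}{r_1}-\frac{c_1(\cG)}{r_2}\Bigr)^2 .
\]
Since $c_1(\cG)=c_1(\cF)-c_1(\cE)$, the difference $\frac{c_1(\cE)}{r_1}-\frac{c_1(\cG)}{r_2}$ equals $\frac{\lambda}{r_1r_2}$. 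The formula therefore becomes
\[
\frac{\Delta(\cF)}{r}=\frac{\Delta(\cE)}{r_1}+\frac{\Delta(\cG)}{r_2}-\frac{\lambda^2}{r\,r_1r_2}.
\]
Both $\cE$ and $\cG$ are $h$ slope semistable of slope $\mu_h(\cF)$. For $\cE$, a destabilizing subsheaf of $\cE$ would be a destabilizing subsheaf of $\cF$. For $\cG$, a destabilizing quotient of $\cG$ is also a quotient of $\cF$, which contradicts semistability of $\cF$. Bogomolov then gives $\Delta(\cE)\ge 0$ and $\Delta(\cG)\ge 0$. Consequently
\[
-\lambda^2\le r_1r_2\,\Delta(\cF)\le \frac{r^2}{4}\,\Delta(\cF)={\mathsf a}(\cF).
\]
This is the left inequality.

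\textbf{Main obstacle.} I expect the hardest part to be the bookkeeping: verifying the discriminant additivity identity and handling torsion in $\cG$ in the non-saturated case. If $\cG$ has torsion $\cT$, I would use $\Delta(\cG)\ge\Delta(\cG/\cT)$ up to the appropriate correction. Concretely, $\ch_2(\cT)$ contributes the term $-2r_2\ch_2$, and $c_1(\cT)=D$ is effective with $D\cdot h=0$. I would check that these extra terms have the right sign, possibly using Hodge index once more on $D$.
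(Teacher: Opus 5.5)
Your core argument is the paper's own: Hodge index on $h^{\perp}$ gives the right inequality. For the left one you use the identity $\Delta(\cF)/r=\Delta(\cE)/r_1+\Delta(\cG)/r_2-\lambda^2/(r r_1 r_2)$ together with Bogomolov applied to $\cE$ and $\cG=\cF/\cE$. Your verification of the identity and of the chain $-\lambda^2\le r_1r_2\Delta(\cF)\le{\mathsf a}(\cF)$ is correct when $\cG$ is torsion-free.

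The gap is the reduction to that case. You leave it open, and it cannot be closed when $h$ is big and nef but not ample, because there the left inequality is false for non-saturated $\cE$. By Nakai--Moishezon there is a curve $C$ with $C\cdot h=0$, and $C\cdot C<0$ by Hodge index. Take $\cF=\cO_X\oplus\cO_X$, which is $h$ slope semistable with ${\mathsf a}(\cF)=0$. Take $\cE=\cO_X(-mC)\oplus 0$ with $m\ge 1$. Then $\mu_h(\cE)=0=\mu_h(\cF)$, but $\lambda_{\cE,\cF}=-2mC$ and $\lambda_{\cE,\cF}\cdot\lambda_{\cE,\cF}=4m^2\,C\cdot C<0=-{\mathsf a}(\cF)$.

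Both of your fallback routes break on this example.
\begin{itemize}
\item Via saturation: the saturation is $\cE'=\cO_X\oplus 0$, with $\lambda_{\cE',\cF}=0$ and $D=mC$. The cross term vanishes and the term $r^2\,D\cdot D<0$ gives $\lambda_{\cE,\cF}^2<\lambda_{\cE',\cF}^2$. In general, $D\neq 0$ effective with $D\cdot h=0$ forces $D\cdot D<0$, so this term always has the wrong sign.
\item Working directly with torsion: here $\cG=\cO_{mC}\oplus\cO_X$ and $\Delta(\cG)=2m^2\,C\cdot C<0$, exactly as your identity forces. So no torsion correction can rescue Bogomolov for $\cG$.
\end{itemize}

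The correct statement needs one of two extra assumptions.
\begin{itemize}
\item Require $\cF/\cE$ torsion-free. This is what the paper's proof tacitly assumes (``we may assume that $\cF/\cE$ is torsion-free'').
\item Take $h$ ample. Then your own observation that $D$ is effective with $D\cdot h=0$ gives $D=0$, so saturating changes neither $c_1(\cE)$ nor $\lambda_{\cE,\cF}$, and the reduction is immediate. Within the paper the proposition is only invoked for ample $h$, in Proposition~\ref{prp:padovagoa}.
\end{itemize}
The right inequality and its equality case need no saturation at all.
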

\begin{proof}
The equality  $\mu_h(\cE)=\mu_h(\cF)$ is equivalent to  $\lambda_{\cE,\cF}\cdot h=0$. Hence the Hodge index Theorem gives that 
the right inequality in~\eqref{lizzy} holds, with equality only if $\lambda_{\cE,\cF}=0$. Since  $\mu_h(\cE)=\mu_h(\cF)$ and $\cF$ is $h$ slope semistable, so are $\cE$ and the quotient $\cF/\cE$ (we may assume that $\cF/\cE$ is torsion-free). Applying Bogomolov's inequality to the semistable sheaves $\cE$ and  $\cF/\cE$ one gets 
 the left inequality in~\eqref{lizzy}, see~\cite[Thm.~4.C.3, Thm.~7.3.3]{huy-lehn:librofasci}.
\end{proof}
\begin{dfn}\label{dfn:poladatta}
Let $X$ be a smooth projective surface, and let  $h_0\in\NS(X)_{\QQ}$ be  nef. 
Let  ${\mathsf a}> 0$. An ample class $h$ is \emph{${\mathsf a}$-suitable for $h_0$}  if  for all
 $\lambda\in \NS(X)$  such that $ -{\mathsf a}\le \lambda\cdot\lambda<0$ one of the following holds:
\begin{enumerate}
\item
$\lambda\cdot h>0$ and $\lambda\cdot h_0\ge 0$.
\item
 $\lambda\cdot h=0$  and $\lambda\cdot h_0= 0$.
\item
 $\lambda\cdot h<0$  and $\lambda\cdot h_0\le 0$.
\end{enumerate}
\end{dfn}
\begin{rmk}
The reason for giving the above definition is that it allows to compare $h_0$ slope semistability and $h$ slope semistability for torsion-free sheaves such that ${\mathsf a}(\cF)\le {\mathsf a}$, see Proposition~\ref{prp:paragonestab} and Subsubsections~\ref{subsubsec:muricamere}, \ref{subsubsec:stabsupfibr}. 
\end{rmk}
\begin{rmk}\label{rmk:fratelli}
   Let $X$ be a smooth projective surface, and let ${\mathsf a}> 0$. 
Let $h_0,h\in\NS(X)_{\QQ}$ with $h_0$ nef and $h$ ample. Thus $Nh_0+h$ is ample for any positive integer $N$. 
If $N$ is large enough  then $Nh_0+h$ is ${\mathsf a}$-suitable for $h_0$.  If $h_0\cdot h_0>0$ this follows from the wall-and-chamber decomposition of the ample cone 
 discussed in Subsubsection~\ref{subsubsec:muricamere}. If $h_0\cdot h_0=0$ see~\cite[Lemma~2.3]{friedman-rk2-ell}.
\end{rmk}
\begin{prp}\label{prp:paragonestab}
Let $X$ be a smooth projective surface, and let  $h_0\in\NS(X)_{\QQ}$ be  nef. 
Let  ${\mathsf a}> 0$, and let $h\in\NS(X)_{\QQ}$  be  ample and ${\mathsf a}$-suitable for $h_0$. 
 Suppose that  $\cF$ is  a torsion-free  sheaf on $X$ such that  
   ${\mathsf a}(\cF)\le {\mathsf a}$, and   which  is
  $h_0$ slope semistable  but not $h$ slope stable.  
    Then there exists a subsheaf  $\cE\subset \cF$ such that 
\begin{equation}
0<r(\cE)<r(\cF),\qquad \lambda_{\cE,\cF}\cdot h\ge 0,\qquad \lambda_{\cE,\cF}\cdot h_0= 0.
\end{equation}
\end{prp}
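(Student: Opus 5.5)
Since $\cF$ is not $h$ slope stable, there is a subsheaf $\cE\subset\cF$ with $0<r(\cE)<r(\cF)$ and $\mu_h(\cE)\ge\mu_h(\cF)$. The slope inequality is equivalent to $\lambda_{\cE,\cF}\cdot h\ge 0$, because $\lambda_{\cE,\cF}\cdot h=r(\cF)r(\cE)(\mu_h(\cE)-\mu_h(\cF))$. Likewise, $h_0$ slope semistability of $\cF$ gives $\lambda_{\cE,\cF}\cdot h_0\le 0$. We may replace $\cE$ by its saturation, so that $\cF/\cE$ is torsion-free; this only increases $c_1(\cE)\cdot h$ and $c_1(\cE)\cdot h_0$ by an effective divisor class, so both inequalities are preserved. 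It therefore remains to show that $\lambda\coloneqq\lambda_{\cE,\cF}$ satisfies $\lambda\cdot h_0=0$.

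Suppose instead that $\lambda\cdot h_0<0$. The plan is to use suitability: we want $-{\mathsf a}\le\lambda\cdot\lambda<0$. Then Definition~\ref{dfn:poladatta}, applied with $\lambda\cdot h\ge0$, forces either $\lambda\cdot h>0$ and $\lambda\cdot h_0\ge0$, or $\lambda\cdot h=0$ and $\lambda\cdot h_0=0$. Both contradict $\lambda\cdot h_0<0$. The difficulty is that Proposition~\ref{prp:lizzy} only gives the bound $\lambda\cdot\lambda\ge -{\mathsf a}(\cF)$ when the slopes are equal, whereas here we only have inequalities.

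To get around this, interpolate between the two classes. Set $h_t\coloneqq(1-t)h_0+th$ for $t\in[0,1]$. The function $t\mapsto\lambda\cdot h_t$ is affine, negative at $t=0$ and nonnegative at $t=1$, so it vanishes at a unique $t_0\in(0,1]$. For $t>0$ the class $h_t$ is ample, since $h_0$ is nef and $h$ is ample. Let $t_1$ be the smallest $t$ at which $\cF$ fails to be $h_t$ slope stable. The destabilizing loci form walls, and these are locally finite because Bogomolov's inequality bounds the relevant $\lambda$'s; this is the standard wall-and-chamber argument. At $t_1$ (or, if $t_1=0$, at $h_0$ itself) the sheaf $\cF$ is strictly $h_{t_1}$ semistable. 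So there is a saturated $\cE'$ with $\mu_{h_{t_1}}(\cE')=\mu_{h_{t_1}}(\cF)$, and $\lambda'\coloneqq\lambda_{\cE',\cF}$ satisfies $\lambda'\cdot h_{t_1}=0$.

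Now apply Proposition~\ref{prp:lizzy} with the big and nef class $h_{t_1}$ (for $t_1=0$, use a perturbation argument or Remark~\ref{rmk:fratelli} if $h_0^2=0$). This gives $-{\mathsf a}\le-{\mathsf a}(\cF)\le\lambda'\cdot\lambda'\le0$. If $\lambda'=0$, then $\lambda'\cdot h_0=0$ and $\cE'$ is the required subsheaf, since $\lambda'\cdot h=0\ge0$. Otherwise $\lambda'\cdot\lambda'<0$ and suitability applies to $\lambda'$. Because $\lambda'\cdot h_{t_1}=0$ is a positive combination of $\lambda'\cdot h_0$ and $\lambda'\cdot h$, these two numbers have opposite signs or are both zero. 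Suitability excludes the opposite-sign cases, so $\lambda'\cdot h=\lambda'\cdot h_0=0$ and $\cE'$ works.

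The main obstacle is the continuity step, namely producing a $t_1$ at which equality of slopes holds. Concretely, this requires a finiteness statement: the set of classes $\lambda_{\cE,\cF}$ that could destabilize along the segment should lie in a finite set. Bogomolov's inequality applied to semistable subsheaves should give this. A cruder route is to take $\cE$ to be the maximal destabilizing subsheaf for the $h_t$ and argue directly. A further subtlety is the case $t_1=0$ when $h_0$ is not big. There I would use the hypothesis that $\cF$ is $h_0$ semistable directly, together with the fact that suitability treats $\lambda$ with $\lambda\cdot h_0=0$.
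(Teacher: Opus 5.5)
\textbf{Verdict: genuine gap in the case $t_1=0$.} The paper itself only cites Huybrechts--Lehn for $h_0\cdot h_0>0$ and Friedman's argument for $h_0\cdot h_0=0$. Your interpolation along $h_t=(1-t)h_0+th$ is a workable alternative, and your treatment of $t_1>0$ is correct once $t_1$ is attained. The gap is in the case $t_1=0$, and that is not a side case.

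\emph{Wrong choice of subsheaf at $t_1=0$.} There you take $\cE'$ with $\mu_{h_0}(\cE')=\mu_{h_0}(\cF)$. This fails even if a Bogomolov bound were available. Your sign argument needs the coefficient of $h$ in $h_{t_1}$ to be positive. When $\lambda'\cdot h_0=0$, alternative (3) of Definition~\ref{dfn:poladatta} allows $\lambda'\cdot h<0$, so $\cE'$ need not satisfy the conclusion.

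\emph{No Bogomolov bound at $h_0$ in the fibration case.} If $h_0\cdot h_0=0$, which is the case of interest, Proposition~\ref{prp:lizzy} needs a big and nef class and does not apply at $h_0$. Remark~\ref{rmk:fratelli} only produces suitable classes and is no substitute.

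\emph{This is the only case that occurs.} Define $t_1$ as the infimum of $t\in(0,1]$ at which $\cF$ is not $h_t$ slope stable. (With your definition over $[0,1]$, a strictly $h_0$-semistable $\cF$ falls into the $t_1=0$ branch even if it is $h_t$-stable for small $t>0$.) If $t_1>0$, the $\cE'$ you produce has $\lambda'\cdot h_t=0$ for every $t$. That contradicts $h_t$-stability on $(0,t_1)$. So all the work happens near $t=0$, where your proposal gives only a vague remark.

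\emph{The fix is a discreteness argument.} It also replaces your appeal to Bogomolov for finiteness; Bogomolov bounds $\lambda\cdot\lambda$ only for $\lambda$ orthogonal to a class for which $\cF$ is semistable, so it cannot control destabilizing classes along the segment.
\begin{itemize}
\item Write $\lambda=\lambda_{\cE,\cF}$ and choose $N$ with $Nh_0,Nh$ integral, so $\lambda\cdot h_0,\lambda\cdot h\in\frac{1}{N}\ZZ$.
\item Choose $M>0$ with $\lambda\cdot h\le M$ for all $\cE\subset\cF$. This is possible because the maximal $h$-slope of subsheaves of $\cF$ is finite.
\item Always $\lambda\cdot h_0\le 0$. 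If $\lambda\cdot h_0<0$, then $\lambda\cdot h_t\le -(1-t)/N+tM<0$ for $0<t<1/(1+NM)$.
\item So if $t_1=0$, a subsheaf destabilizing $\cF$ at such a $t$ has $\lambda\cdot h_0=0$. Hence $\lambda\cdot h=\lambda\cdot h_t/t\ge 0$, which is the conclusion, with no Bogomolov or suitability needed.
\end{itemize}

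\emph{Attainment for $t_1>0$.} The same bounds give $0\le\lambda\cdot h\le M$ and $\lambda\cdot h_0\ge -tM/(1-t)$ whenever $\lambda\cdot h_t\ge 0$. So for $t$ in a compact subset of $(0,1)$ only finitely many pairs $(\lambda\cdot h_0,\lambda\cdot h)$ can destabilize. Thus $t_1$ is attained when $t_1>0$ (trivially if $t_1=1$). $\cF$ is $h_{t_1}$-semistable because semistability is a closed condition along the segment. Your Proposition~\ref{prp:lizzy}-plus-suitability argument at $h_{t_1}$ then rules this case out.
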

\begin{proof}
If $h_0\cdot h_0>0$ see~\cite[Thm~4.C.3]{huy-lehn:librofasci}. If $h_0\cdot h_0=0$ the proof is analogous to that of~\cite[Thm~3.3]{friedman-so(3)-invrnts} (one also gets a proof by rewriting  the proof of~\cite[Prop.~5.9]{og:highdim} with the intersection form of the surface 
 replacing the Beauville-Bogomolov-Fujiki symmetric bilinear form). 
\end{proof}
\subsubsection{Wall-and-chamber decomposition of the ample cone}\label{subsubsec:muricamere}
Let ${\mathsf a}$ be  a positive real number. A codimension-$1$ real subspace $W\subset\NS(X)_{\RR}$ is an ${\mathsf a}$-wall if $W=\xi^{\bot}$  where $\xi\in\NS(X)$ is such that
$-{\mathsf a}\le \xi\cdot\xi<0$. 
The intersections of the  ${\mathsf a}$-walls with the ample cone $\Amp(X)$  form a locally finite collection of codimension-$1$ submanifolds (see~\cite[Lemma~4.C.2]{huy-lehn:librofasci}), and hence their union is closed in 
$\Amp(X)$.  A connected component of 
\begin{equation}
\Amp(X)\setminus\bigcup_{\stackrel{\xi\in\NS(X)}{ -{\mathsf a}\le q_X(\xi)<0}}\xi^{\bot}
\end{equation}
 is an 
\emph{open ${\mathsf a}$-chamber}.  An ample class $h\in\NS(X)_{\QQ}$ is \emph{${\mathsf a}$-generic} if it does not belong to any ${\mathsf a}$-wall, i.e.~it belongs to a (unique) open ${\mathsf a}$-chamber. 
\begin{prp}\label{prp:padovagoa}
Let $X$ be an irreducible smooth projective surface. Let $h_0,h_1\in\NS(X)_{\QQ}$ be ${\mathsf a}$-generic ample classes which belong to the same 
open ${\mathsf a}$-chamber. Suppose that $\cF$ is a torsion-free sheaf on $X$ such that ${\mathsf a}(\cF)\le {\mathsf a}$. Then the following hold:
\begin{enumerate}
\item
If $\cF$ is $h_0$ slope stable then  it is $h_1$ slope stable, and viceversa.
\item
If $\cF$ is $h_i$ slope semistable ($i\in\{0,1\}$)  and $\cE\subset\cF$ is 
a subsheaf  with $0<r(\cE)<r(\cF)$ and $\mu_{h_i}(\cE)=\mu_{h_i}(\cF)$, then
$r(\cF)c_1(\cE)-r(\cE)c_1(\cF)=0$. 
\item
If $\cF$ is $h_0$ slope semistable but non stable then it is $h_1$ slope semistable but non stable, and viceversa.
\end{enumerate}
\end{prp}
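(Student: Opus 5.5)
The key tool is Proposition~\ref{prp:lizzy}, combined with the fact that the sign of $\lambda\cdot h$ for a class $\lambda$ with $-{\mathsf a}\le\lambda\cdot\lambda<0$ is constant on each open ${\mathsf a}$-chamber.

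\emph{Sign constancy.} Fix $\xi\in\NS(X)$ with $-{\mathsf a}\le\xi\cdot\xi<0$. The function $t\mapsto \xi\cdot((1-t)h_0+th_1)$ on $[0,1]$ is linear. The segment joining $h_0$ and $h_1$ lies in the chamber: the chamber is a connected component of the complement of a locally finite union of hyperplanes in the convex cone $\Amp(X)$, hence it is convex. So the function never vanishes on the segment, and $\xi\cdot h_0$ and $\xi\cdot h_1$ are nonzero and of the same sign. I will record this as a preliminary claim.

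\emph{Item (2).} Let $\cF$ be $h_i$ slope semistable and let $\cE\subset\cF$ satisfy $0<r(\cE)<r(\cF)$ and $\mu_{h_i}(\cE)=\mu_{h_i}(\cF)$. Proposition~\ref{prp:lizzy} applies with $h=h_i$, since ample implies big and nef. It gives $-{\mathsf a}\le-{\mathsf a}(\cF)\le\lambda_{\cE,\cF}\cdot\lambda_{\cE,\cF}\le 0$, with equality on the right only if $\lambda_{\cE,\cF}=0$. If $\lambda_{\cE,\cF}\ne0$, then $\lambda_{\cE,\cF}^\perp$ is an ${\mathsf a}$-wall. Since $\lambda_{\cE,\cF}\cdot h_i=0$, the class $h_i$ lies on this wall, contradicting genericity. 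Hence $\lambda_{\cE,\cF}=0$.

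\emph{Items (1) and (3).} The main step is showing that semistability transfers between $h_0$ and $h_1$; by symmetry it is enough to go from $h_0$ to $h_1$.

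Suppose $\cF$ is $h_0$ semistable but $h_1$ unstable. Consider the segment $h_t=(1-t)h_0+th_1$. I would take $\cE$ to be the maximal destabilizing subsheaf for $h_1$ and argue by continuity: $\mu_{h_t}(\cE)-\mu_{h_t}(\cF)=\lambda_{\cE,\cF}\cdot h_t/(r(\cE)r(\cF))$ is linear in $t$, negative or zero at $t=0$, and positive at $t=1$. So it vanishes at some $t_0\in[0,1)$, and $\lambda_{\cE,\cF}\ne0$.

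To apply Proposition~\ref{prp:lizzy} at $h_{t_0}$ one needs $\cF$ to be $h_{t_0}$ semistable. Slope semistability is closed along the segment, but openness of instability makes this delicate. Cleaner alternative: use boundedness. The set of classes $c_1$ of saturated subsheaves $\cE$ with $\mu_{h_t}(\cE)\ge\mu_{h_t}(\cF)$ for some $t\in[0,1]$ is finite, by Grothendieck's lemma on bounded families of quotients on a compact set of polarizations. So there is a first $t_0$ at which $\cF$ stops being semistable; at $t_0$, $\cF$ is still semistable, and it contains $\cE$ with $\lambda_{\cE,\cF}\cdot h_{t_0}=0$ and $\lambda_{\cE,\cF}\neq0$. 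Proposition~\ref{prp:lizzy} then gives $-{\mathsf a}\le\lambda_{\cE,\cF}^2<0$, so $h_{t_0}$ lies on an ${\mathsf a}$-wall. This contradicts the sign constancy, since $h_{t_0}$ lies in the same chamber.

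Hence $h_0$-semistability and $h_1$-semistability coincide. For stability, suppose $\cF$ is semistable but not stable for $h_0$. There is then $\cE$ with $\mu_{h_0}(\cE)=\mu_{h_0}(\cF)$ and $0<r(\cE)<r(\cF)$. By (2), $\lambda_{\cE,\cF}=0$, so $\mu_{h_1}(\cE)=\mu_{h_1}(\cF)$ and $\cF$ is not $h_1$ stable either. This proves (1) and (3).

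The main obstacle is the finiteness/first-wall argument along the segment. If the boundedness citation is to be avoided, I would instead use the Harder--Narasimhan filtration for $h_1$ together with Bogomolov's inequality (Theorem~\ref{thm:disegbog}) to bound $\lambda^2$ directly.
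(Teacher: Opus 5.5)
Your proof is correct, but it takes a different route from the paper's.

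\textbf{The paper's route.} The paper first notes that two ${\mathsf a}$-generic classes in the same open ${\mathsf a}$-chamber are ${\mathsf a}$-suitable for each other. It then gets Item~(1) in one line from Proposition~\ref{prp:paragonestab}, with $h=h_1$ and $h_0$ as the nef class: the subsheaf it produces has $\lambda_{\cE,\cF}\cdot h_0=0$, which contradicts $h_0$ stability. Item~(3) is deduced from Items~(1) and~(2) via an $h_0$ slope Jordan--H\"older filtration. Its factors have vanishing $\lambda$ by Item~(2) and satisfy ${\mathsf a}(\cE_i/\cE_{i-1})\le{\mathsf a}(\cF)$, so they are $h_1$ stable by Item~(1).

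\textbf{Your route.} You prove transfer of semistability directly. You walk along the segment $h_t$ and locate the first parameter $t_0$ at which semistability fails, using Grothendieck's boundedness lemma to get finitely many candidate classes $\lambda_{\cE,\cF}$. You then apply Proposition~\ref{prp:lizzy} at $h_{t_0}$ and use convexity of the chamber to reach a contradiction. This is essentially the chamber-crossing argument behind Proposition~\ref{prp:paragonestab}, carried out inline. You then get transfer of non-stability from Item~(2) alone. This avoids the Jordan--H\"older filtration and the bound on ${\mathsf a}$ of its factors.

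\textbf{Comparison.} The paper's argument is shorter given the cited comparison result. It also yields the extra fact that an $h_0$ Jordan--H\"older filtration is an $h_1$ Jordan--H\"older filtration. Yours is self-contained apart from boundedness and Proposition~\ref{prp:lizzy}.

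\textbf{One point to add.} Proposition~\ref{prp:lizzy} is stated for $h\in\NS(X)$, so you should say why $h_{t_0}$ qualifies after scaling. The reason is that $t_0$ is the zero of the linear function $t\mapsto\lambda_{\cE,\cF}\cdot h_t$, where $\lambda_{\cE,\cF}$ is integral and $h_0,h_1$ are rational, so $t_0\in\QQ$.
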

\begin{proof}
Since $h_0,h_1$ are ${\mathsf a}$-generic  and belong to the same 
open ${\mathsf a}$-chamber, 
 $h_0$ is ${\mathsf a}$-suitable for $h_1$ and viceversa. Let us prove Item~(1) Suppose that $\cF$ is $h_0$ slope stable. If $\cF$ is not $h_1$ slope stable then by Proposition~\ref{prp:paragonestab} (with $h=h_1$)  there exists a subsheaf  $\cE\subset \cF$ such that 
$0<r(\cE)<r(\cF)$ and $\lambda_{\cE,\cF}\cdot h_0= 0$. This is a contradiction because $\lambda_{\cE,\cF}\cdot h_0= 0$ is equivalent to 
$\mu_{h_0}(\cE)=mu_{h_0}(\cF)$. Since the r\^oles of $h_0,h_1$ are symmetric this finishes the proof of Item~(1). We prove Item~(2). 
Since $\mu_{h_i}(\cE)=\mu_{h_i}(\cF)$ we have $\lambda_{\cE,\cF}\cdot h_i= 0$. 
By Proposition~\ref{prp:lizzy} and the hypothesis that $h_i$ does not belong to any ${\mathsf a}$-wall it follows that 
$\lambda_{\cE,\cF}= 0$. Lastly we prove Item~(3). Suppose that $\cF$ is $h_0$ slope semistable but non stable. By Item~(2) there exists a chain
of sheaves $0\not=\cE_1\subsetneq\cE_2\subsetneq\ldots\subsetneq\cE_m=\cF$ such that $\cE_i/\cE_{i-1}$ is $h_0$ slope stable and 
$r(\cF)c_1(\cE_i/\cE_{i-1})-r(\cE_i/\cE_{i-1})c_1(\cF)=0$ (this a \emph{slope Jordan-H\"older filtration} of $\cF$). One has 
${\mathsf a}(\cE_i/\cE_{i-1})<{\mathsf a}(\cF)\le {\mathsf a}$, and hence by Item~(1) each $\cE_i/\cE_{i-1}$ is $h_1$ slope stable. 
Since $r(\cF)c_1(\cE_i/\cE_{i-1})-r(\cE_i/\cE_{i-1})c_1(\cF)=0$ for all $i$, it follows that $\cF$ is $h_1$ slope semistable but non stable. This finishes the proof of Item~(3) because the r\^oles of $h_0,h_1$ are symmetric. 
\end{proof}
\subsubsection{Stability for surfaces fibered over a curve}\label{subsubsec:stabsupfibr}
Suppose that $X$ is an irreducible smooth projective surface equipped with a fibration $\pi\colon X\to Y$ to a smooth curve, i.e.~$\pi$ is surjective  with 
$\pi_{*}\cO_X=\cO_X$. For $y\in Y$ let $C_y\coloneqq\pi^{-1}(y)$. 
If $y\in Y$  is a general point   then
$C_y$ is a smooth irreducible curve which does not meet the finite subset of $X$ where $\cF$ is not locally-free, and hence 
$\cF_{y}\coloneqq \cF_{|C_y}$ is a vector bundle. By openness of stability (see Remark~\ref{rmk:stabaperto}) it follows that  either $\cF_{y}$ 
 is a stable vector bundle for
a general  $y\in Y$,   or it is a non stable vector bundle
  for general  $y\in Y$. 
\begin{prp}\label{prp:fvsupfib}
Let $\pi\colon X\to Y$ be  a fibration from  an irreducible smooth projective surface to a smooth curve.
 Let $h\in\NS(X)_{\QQ}$ be ample and ${\mathsf a}$-suitable for $\cl(C)$, where $C$ is a fiber of $\pi$.  Let $\cF$ be a torsion-free sheaf on $X$ such that 
 ${\mathsf a}(\cF)\le {\mathsf a}$, and the numbers
 $r(\cF)$, $c_1(\cF)\cdot C$ are coprime. Then one of the following holds:
\begin{enumerate}
\item
For a general $y\in Y$  the restriction $\cF_{y}= \cF_{|C_y}$ is  stable and $\cF$ is $h$ slope stable.
\item
For a general $y\in Y$  the restriction $\cF_{y} \cF_{|C_y}$ is  unstable and $\cF$ is $h$ slope unstable. 
\end{enumerate}
\end{prp}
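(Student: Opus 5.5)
Set $h_0\coloneqq\cl(C)$, which is nef with $h_0\cdot h_0=0$. For $\cE\subset\cF$ we have $\mu_{h_0}(\cE)=\mu(\cE_{|C_y})$ for general $y$ by Remark~\ref{rmk:stabnonampio}. The first step uses the coprimality hypothesis. Since $r(\cF)$ and $\deg(\cF_y)=c_1(\cF)\cdot C$ are coprime, Remark~\ref{rmk:stabcurva} shows that $\cF_y$ is either stable or unstable. By the dichotomy recorded just before the statement (openness of stability) we may treat the two cases uniformly in $y$ general. Coprimality also gives, for any $\cE\subset\cF$ with $0<r(\cE)<r(\cF)$,
\begin{equation*}
\lambda_{\cE,\cF}\cdot h_0=r(\cF)\,(c_1(\cE)\cdot C)-r(\cE)\,(c_1(\cF)\cdot C)\neq 0,
\end{equation*}
since $r(\cF)$ would otherwise divide $r(\cE)(c_1(\cF)\cdot C)$. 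In particular, $h_0$ slope semistability of $\cF$ is the same as $h_0$ slope stability.

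Case (1): suppose $\cF_y$ is stable for general $y$. By Remark~\ref{rmk:stabnonampio}(b), $\cF$ is $h_0$ slope semistable. Suppose $\cF$ were not $h$ slope stable. Since ${\mathsf a}(\cF)\le{\mathsf a}$ and $h$ is ${\mathsf a}$-suitable for $h_0$, Proposition~\ref{prp:paragonestab} produces $\cE\subset\cF$ with $0<r(\cE)<r(\cF)$ and $\lambda_{\cE,\cF}\cdot h_0=0$. This contradicts the nonvanishing established above, so $\cF$ is $h$ slope stable.

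Case (2): suppose $\cF_y$ is unstable for general $y$. We must show that $\cF$ is $h$ slope unstable. Take the maximal destabilizing subbundle of $\cF_y$ for general $y$. By uniqueness of the Harder–Narasimhan filtration, these glue (relative HN filtration over an open set of $Y$, then saturate in $\cF$) to a saturated subsheaf $\cE\subset\cF$ with $0<r(\cE)<r(\cF)$ and $\mu_{h_0}(\cE)>\mu_{h_0}(\cF)$. Thus $\lambda_{\cE,\cF}\cdot h_0>0$ and $\cF$ is $h_0$ unstable.

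To transfer this to $h$, I would argue by contradiction. Suppose $\cF$ is $h$ slope semistable. It is then $h$ slope stable, by the same coprimality argument applied to the strict part: equality $\lambda\cdot h=0$ together with suitability forces $\lambda\cdot h_0=0$, which is impossible. Now run the argument of Proposition~\ref{prp:paragonestab} with the roles of $h$ and $h_0$ interchanged, along the segment $h_t=(1-t)h+th_0$. As $t$ increases from $0$, $\cF$ is $h_t$ stable near $t=0$ and $h_1$ unstable, so there is a first value $t_0$ at which stability fails. At $t_0$ there is a subsheaf $\cE'$ with $\lambda_{\cE',\cF}\cdot h_{t_0}=0$, with $\cF$ $h_{t_0}$ semistable.
\begin{itemize}
\item If $t_0<1$, then $h_{t_0}$ is big and nef, and Proposition~\ref{prp:lizzy} gives $-{\mathsf a}\le\lambda\cdot\lambda<0$ for $\lambda=\lambda_{\cE',\cF}$ (it is nonzero because $\lambda\cdot h_0\ne0$).
\item Suitability then rules out the resulting sign pattern: $\lambda\cdot h$ and $\lambda\cdot h_0$ must have opposite strict signs in order for $\lambda\cdot h_{t_0}=0$, which violates items (1)–(3) of Definition~\ref{dfn:poladatta}.
\item If $t_0=1$, then $\cF$ is $h_0$ semistable, contradicting $h_0$ instability.
\end{itemize}

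The main obstacle is this last transfer. It needs a version of Proposition~\ref{prp:lizzy}/Bogomolov on the segment up to the boundary class $h_0$, where $h_0^2=0$ and so $h_0$ is not big. Handling the endpoint, where the usual wall argument degenerates, is exactly the content of Friedman's Lemma cited in Proposition~\ref{prp:paragonestab}. I would follow that proof, using the elementary estimate that for the maximal destabilizing $\cE$ the class $\lambda_{\cE,\cF}$ satisfies $\lambda\cdot\lambda\ge-{\mathsf a}(\cF)$.
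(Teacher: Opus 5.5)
Your case (1) is the paper's argument. The only difference is that you get the contradiction from $\lambda_{\cE,\cF}\cdot\cl(C)\neq 0$ (coprimality), whereas the paper saturates $\cE$ and contradicts stability of $\cF_y$ via $\mu(\cE_y)\ge\mu(\cF_y)$; both are fine.

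In case (2) you take a genuinely different route. The paper argues directly: the relative Harder--Narasimhan subsheaf $\cE$ has $\lambda_{\cE,\cF}\cdot\cl(C)>0$, and Definition~\ref{dfn:poladatta} is invoked to get $\lambda_{\cE,\cF}\cdot h>0$. That is shorter, but the definition only constrains classes with $-{\mathsf a}\le\lambda^2<0$; for $\lambda^2\ge 0$ the Hodge index theorem gives $\lambda\cdot h>0$ anyway. So the shortcut tacitly needs $\lambda_{\cE,\cF}^2\ge-{\mathsf a}$ for that particular $\cE$. This is the estimate in your last paragraph, and it is not automatic: $\cF/\cE$ need not be fibre-semistable, and Proposition~\ref{prp:lizzy} does not apply to this $\cE$. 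Your contradiction argument does not need it. There Proposition~\ref{prp:lizzy} is only used at ample classes at which $\cF$ is semistable, and coprimality converts $\lambda\cdot h_{t_0}=0$ into strictly opposite signs of $\lambda\cdot h$ and $\lambda\cdot\cl(C)$, which suitability forbids.

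Two corrections, though. First, your closing paragraph misplaces the difficulty. The endpoint $t_0=1$ is already excluded by closedness of semistability along the segment, since each condition $\lambda_{\cE,\cF}\cdot h_t\le 0$ is closed in $t$; your own third bullet says exactly this. So no Bogomolov-type input at the non-big class $\cl(C)$ and no extra estimate is required.

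Second, what does need a justification is stability near $t=0$ and the existence of a first $t_0$ at which stability fails. This is openness of stability in the polarization, which rests on boundedness of destabilizing subsheaves near an ample class, and you do not supply it. You can bypass it with the paper's own tools:
\begin{enumerate}
\item For rational $t\in[0,1)$ the class $h_t=(1-t)h+t\,\cl(C)$ is ${\mathsf a}$-suitable for $h$. Check the three cases of Definition~\ref{dfn:poladatta} using suitability of $h$ for $\cl(C)$.
\item Suppose $\cF$ is $h$-semistable but not $h_t$-stable. Proposition~\ref{prp:paragonestab}, applied with $h$ as the nef class and $h_t$ as the ample class, yields $\cE'\subset\cF$ with $\lambda_{\cE',\cF}\cdot h=0$. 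Your coprimality argument at $h$ rules this out.
\item Hence $\cF$ is $h_t$-stable for all rational $t<1$. Letting $t\to 1$ gives $\cl(C)$-semistability, contradicting the relative Harder--Narasimhan subsheaf.
\end{enumerate}
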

\begin{proof}
Let  $C_y$ be a fiber of $\pi$. Then
\begin{equation}
r(\cF_y)=r(\cF),\qquad \deg(\cF_y)=c_1(\cF)\cdot C_y.
\end{equation}
Assume that $C_y$ is smooth  and that it does not meet the finite subset of $X$ where $\cF$ is not locally-free. Since  $r(\cF)$ and 
$c_1(\cF)\cdot C_y$ are coprime the restriction $\cF_y$ is either stable or unstable (see Remark~\ref{rmk:stabcurva}). 
Hence if $y\in Y$ is general then
 $\cF_y$    is either  stable or  unstable. It remains to prove that if 
it is stable then  $\cF$ is $h$ slope stable, and that if 
it is unstable  then  $\cF$ is $h$ slope unstable. 

Suppose that  $\cF_y$    is  stable for general $y\in Y$.  
By Remark~\ref{rmk:stabnonampio} $\cF$ is $\cl(C)$ slope stable. Suppose that $\cF$ is not $h$ slope stable. By Proposition~\ref{prp:paragonestab} (with $h_0=\cl(C)$) there  exists a subsheaf  $\cE\subset \cF$ such that 
$0<r(\cE)<r(\cF)$ and $\lambda_{\cE,\cF}\cdot \cl(C)= 0$. If the quotient $\cF/\cE$ is not torsion-free we replace $\cE$ by its saturation (and still call it $\cE$). Then $0<r(\cE)<r(\cF)$ and $\lambda_{\cE,\cF}\cdot \cl(C)\ge 0$. 
If $C_y$ is a general  fiber  the last inequality gives  that
\begin{equation}
\mu(\cE_y)=\mu_{\cl(C)}(\cE)\ge \mu_{\cl(C)}(\cF)=\mu(\cF_y).
\end{equation}
This contradicts the hypothesis. This proves that if  the restriction  $\cF_y$   is  stable for  general  $y\in Y$ then $\cF$ is $h$ slope stable. 

Suppose that the restriction  $\cF_y$    is  unstable for a general  $y\in Y$.  
 One proves
 (see~\cite[Thm.~2.3.2]{huy-lehn:librofasci}) that 
there exists a subsheaf  $\cE\subset \cF$ such that 
$0<r(\cE)<r(\cF)$ and $\mu(\cE_y)>\mu(\cF_y)$ for a general   $y\in Y$. This gives that $\lambda_{\cE,\cF}\cdot \cl(C)>0$. 
By Definition~\ref{dfn:poladatta} it follows that $\lambda_{\cE,\cF}\cdot h>0$, and hence $\cF$ is $h$ slope unstable.
\end{proof}
\begin{expl}\label{expl:rigidisuk3ell}
Let $S$ be a (projective) $K3$ surface with  an elliptic fibration $\pi\colon S\to\PP^1$ with a section $\Sigma$. There exist  vector bundles $\cF^1,\cF^2,\ldots,\cF^r,\ldots$ 
(see~\cite{og:accaduespmod}) with the following properties
\begin{enumerate}
\item
$\cF^1\cong\cO_S(\Sigma)$.
\item
For all $r\in\NN_{+}$ there exists a non split exact sequence
\begin{equation*}
0\lra\cO_S\lra \cF^{r+1}\lra  \cF^{r}\otimes\cO_S(-2C)\lra 0.
\end{equation*}
\item
For all $r\in\NN_{+}$ and all $t\in\PP^1$  the restriction of $\cF^r_{|C_t}$ is stable. 
\end{enumerate}
A computation gives that $\Delta(\cF^r)=2r^2-2$, and hence ${\mathsf a}(\cF^r)=(r^4-r^2)/2$ (see~\eqref{esmeralda}). Let $h\in\NS(S)_{\QQ}$  be 
ample and $(r^4-r^2)/2$-suitable for $\cl(C)$. By 
Proposition~\ref{prp:fvsupfib} we get that $\cF^r$ is $h$ slope stable. Note that $v(\cF^r)^2=-2$.
\end{expl}
\subsection{Moduli of semistable sheaves on $K3$ surfaces}
\setcounter{equation}{0}
\subsubsection{The Mukai lattice and the dimension of  moduli spaces}
In  the present subsection $S$ is a $K3$ surface (not necessarily projective).  If $\cF$ is  a sheaf on $S$ the  \emph{Mukai vector} of $\cF$ is given by
\begin{equation}
v(\cF)\coloneqq \ch(\cF)\cdot\sqrt{\td_S}=\ch(\cF)\cdot(1+\eta_S)\in H(S;\ZZ),
\end{equation}
where $\eta_S$ is the fundamental class of $S$. Suppose that $L$ is an ample line bundle on $S$, and that 
$v\in H^0(S;\ZZ)\oplus H^{1,1}_{\ZZ}(S;\ZZ)\oplus H^{4}(S;\ZZ)$ - we call it a \emph{Mukai vector for $S$}.
We assume that $v$ has positive first entry. Let
\begin{equation}
\cM_v(S,L)\coloneqq M_{v\cdot \td_S^{-1/2}}(S,L),
\end{equation}
i.e.~$\cM_v(S,L)$ is the moduli space of $L$ GM semistable sheaves $\cF$ on $S$ with $v(\cF)=v$. Let 
$\cM_v(S,L)^{\st}\subset \cM_v(S,L)$ be the open (see Remark~\ref{rmk:stabaperto}) subscheme whose closed points parametrize stable sheaves.
In order to discuss the  dimension of $\cM_v(S,L)$ we introduce 
the \emph{Mukai pairing} on the cohomology group $H(S;\ZZ)=\bigoplus_{i=0}^2 H^{2i}(S;\ZZ)$. It  is defined by
\begin{equation}
\la a,b\ra\coloneqq \int_S(a_2\cdot b_2-a_0\cdot b_4-a_4\cdot b_0),
\end{equation}
where $a_{2i},b_{2i}$ are the components of $a,b$ in $H^{2i}(S;\ZZ)$ respectively. Note that the Mukai pairing is even. We use the same notation for the linear extension of the Mukai pairing to the complex  cohomology $H(S)$. If there is no risk of confusion we set $a^2\coloneqq \la a,a\ra$. 

Let $\cE,\cF$ be  sheaves on $S$. The Hirzebruch-Riemann-Roch formula gives that
\begin{equation}\label{caratmukai}
\chi_S(\cE,\cF)=-\la v(\cE),v(\cF)\ra.
\end{equation}
If $\cF$ is a simple sheaf on $S$, then by Serre duality we have $\dim\Ext^2_S(\cF,\cF)=1$. Hence the formula in~\eqref{caratmukai} gives the following result.
\begin{prp}\label{prp:sesempl}
Let  $S$ be a $K3$ surface. If  $\cF$ is a simple   sheaf on $S$ then $ v(\cF)^2 \ge -2$.
\end{prp}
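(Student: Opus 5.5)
The plan is to compute $\chi_S(\cF,\cF)$ in two ways: from the dimensions of the $\Ext$ groups, and via the Mukai pairing formula~\eqref{caratmukai}.

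First, the dimensions of the extreme $\Ext$ groups. Since $\cF$ is simple, $\Hom_S(\cF,\cF)=\CC\Id_{\cF}$, so $\dim\Ext^0_S(\cF,\cF)=1$. Since $K_S\cong\cO_S$, Serre duality gives $\Ext^2_S(\cF,\cF)\cong\Hom_S(\cF,\cF)^{\vee}$, and hence $\dim\Ext^2_S(\cF,\cF)=1$. This is the same argument as in~\eqref{entrambiuno}, and it uses only simplicity, not torsion-freeness. The groups $\Ext^p_S(\cF,\cF)$ vanish for $p>2$ because $S$ is a smooth surface. Therefore
\begin{equation*}
\chi_S(\cF,\cF)=2-\dim\Ext^1_S(\cF,\cF)\le 2.
\end{equation*}

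Next, apply~\eqref{caratmukai} with $\cE=\cF$. This gives $\chi_S(\cF,\cF)=-\la v(\cF),v(\cF)\ra=-v(\cF)^2$. Combining the two expressions yields
\begin{equation*}
v(\cF)^2=\dim\Ext^1_S(\cF,\cF)-2\ge -2.
\end{equation*}
As a by-product, $v(\cF)^2+2=\dim\Ext^1_S(\cF,\cF)$, which is consistent with Theorem~\ref{thm:mukart}.

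There is no real obstacle here. The only point to check is that Serre duality and the formula~\eqref{caratmukai} hold for arbitrary coherent sheaves, not only locally free ones. On a smooth projective $K3$ surface this holds because every coherent sheaf has a finite locally free resolution, and Grothendieck–Riemann–Roch applies. If $S$ is non-projective, one would instead rely on the analytic versions of these statements, which the paper implicitly assumes when it states~\eqref{caratmukai} for a not necessarily projective $S$.
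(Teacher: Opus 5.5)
Your proposal is correct and follows the paper's own argument: simplicity and Serre duality give $\dim\Ext^0_S(\cF,\cF)=\dim\Ext^2_S(\cF,\cF)=1$, so $\chi_S(\cF,\cF)\le 2$, and~\eqref{caratmukai} converts this into $v(\cF)^2\ge -2$. Your remark on the validity of Serre duality and~\eqref{caratmukai} for arbitrary coherent sheaves is a welcome clarification; the paper leaves this implicit.
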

\begin{prp}\label{prp:modstab}
Let  $(S,L)$ be a polarized $K3$ surface. Let $v$ be a Mukai vector for $S$ 
  with positive first entry. Suppose that $\cM_v(S,L)^{\st}$ is non empty. Then $\cM_v(S,L)^{\st}$ is smooth of pure dimension given by
\begin{equation}\label{dimfvstab}
\cM_v(S,L)^{\st}=v^2+2.
\end{equation}
\end{prp}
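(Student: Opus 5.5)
The plan is to reduce the statement to the local deformation theory already established, point by point. Fix a closed point $[\cF]\in\cM_v(S,L)^{\st}$; then $\cF$ is an $L$ GM stable torsion-free sheaf with $v(\cF)=v$. The first entry of $v$ is the rank and is positive, so $\cF$ is non zero and torsion-free. By Proposition~\ref{prp:stabsemp} the sheaf $\cF$ is simple.

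The key step is to identify the local structure of the moduli space with the deformation space of $\cF$. By Proposition~\ref{prp:modedef} the germ of $\cM_v(S,L)=M_{v\cdot\td_S^{-1/2}}(S,L)$ at $[\cF]$ is isomorphic to the universal deformation space $\Def(\cF)$. Since $\cF$ is a simple torsion-free sheaf on a $K3$ surface, Theorem~\ref{thm:mukart} shows that $\Def(\cF)$ is smooth of dimension $2-\chi_S(\cF,\cF)$. Hence $\cM_v(S,L)$ is smooth at $[\cF]$ of that dimension. Because $\cM_v(S,L)^{\st}$ is open (Remark~\ref{rmk:stabaperto}), the same holds for $\cM_v(S,L)^{\st}$ at $[\cF]$.

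It remains to compute the dimension numerically. By the Hirzebruch--Riemann--Roch formula~\eqref{caratmukai}, $\chi_S(\cF,\cF)=-\la v(\cF),v(\cF)\ra=-v^2$. Therefore $\dim_{[\cF]}\cM_v(S,L)^{\st}=v^2+2$. This value is independent of the point, so $\cM_v(S,L)^{\st}$ is smooth of pure dimension $v^2+2$, which is what~\eqref{dimfvstab} means.

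The only point needing care is the scheme-theoretic one. Proposition~\ref{prp:modedef} gives an isomorphism of germs of the moduli scheme itself, not merely of its reduction. Smoothness of $\Def(\cF)$ therefore implies that the scheme $\cM_v(S,L)^{\st}$ is smooth, in particular reduced, at every point. No further obstacle is expected, since the obstruction vanishing $\Ext^2_S(\cF,\cF)_0=0$ is already contained in Theorem~\ref{thm:mukart}.
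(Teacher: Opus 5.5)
Your proposal is correct and matches the paper's proof: Proposition~\ref{prp:modedef} identifies the germ at $[\cF]$ with $\Def(\cF)$, Theorem~\ref{thm:mukart} gives smoothness of dimension $2-\chi_S(\cF,\cF)$, and~\eqref{caratmukai} turns this into $v^2+2$. The only difference is that you state explicitly the simplicity of $\cF$ (via Proposition~\ref{prp:stabsemp}) and the openness of the stable locus, which the paper uses without comment.
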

\begin{proof}
Let $[\cF]$ be a closed point of $\cM_v(S,L)^{\st}$. By Proposition~\ref{prp:modedef} and Theorem~\ref{thm:mukart} $\cM_v(S,L)^{\st}$ is smooth at $[\cF]$, and  by~\eqref{dimdefk3}, \eqref{caratmukai} its dimension at $[\cF]$ is given by the right hand side of~\eqref{dimfvstab}.
\end{proof}
Next we discuss $L$ semistable non stable sheaves  for generic polarizations.
\begin{prp}\label{prp:stabnonstabk3}
Let $S$ be a $K3$ surface. Let $w$ be a primitive 
Mukai vector for $S$ (\lq\lq primitive\rq\rq\  means non divisible by an integer greater than $1$)
 with positive first entry, and let $v=kw$ where $k\in\NN_{+}$. Let 
\begin{equation}\label{adivu}
{\mathsf a}(v)\coloneqq \frac{1}{4}\left(v_0^2\cdot\la v,v\ra+2 v_0^4\right),
\end{equation}
and let $h\in\Amp(S)$ be ${\mathsf a}(v)$-generic. Suppose that $\cF$ is a torsion-free  $h$ semistable sheaf with $v(\cF)=v$, and that there exists a subsheaf 
$\cE\subset\cF$ such that $p_{\cE}=p_{\cF}$ ($p_{\cE},p_{\cF}$ are the reduced Hilbert polynomials, see~\eqref{polhilbrid}). Then 
$v(\cE)=k_1w$ where $k_1$ is an integer and $0<k_1<k$. 
\end{prp}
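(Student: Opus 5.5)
We first compare slopes, then pin down $c_1(\cE)$ using genericity of $h$, and finally use equality of the whole reduced Hilbert polynomials to pin down the last component of $v(\cE)$. We may assume that $\cE$ is saturated, so that $\cF/\cE$ is torsion-free; otherwise we replace $\cE$ by its saturation. This keeps $p_\cE\ge p_\cF$, and semistability then forces equality.

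\emph{Slopes.} Since $p_{\cE}=p_{\cF}$, comparing the coefficients of $t$ in~\eqref{polhilbridotto} gives $\mu_h(\cE)=\mu_h(\cF)$. The sheaf $\cF$ is GM semistable, hence $h$ slope semistable. The rank of $\cE$ is positive, and it is less than $r(\cF)$: if $r(\cE)=r(\cF)$, then $p_\cE=p_\cF$ forces $\chi(\cE(n))=\chi(\cF(n))$, so $\cF/\cE$ would have zero Hilbert polynomial and $\cE=\cF$. We take the statement to concern proper subsheaves; for $\cE=\cF$ one has $k_1=k$.

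\emph{First Chern class.} We now apply Proposition~\ref{prp:lizzy} to get $-{\mathsf a}(\cF)\le\lambda_{\cE,\cF}^2\le 0$, where $\lambda_{\cE,\cF}\cdot h=0$. We need ${\mathsf a}(\cF)\le{\mathsf a}(v)$, so we compute $\Delta(\cF)$ in Mukai terms. With $v=(v_0,v_1,v_2)$ we have $\ch_2=v_2-v_0$ (since $\ch=v(1-\eta_S)$), hence
\[
\Delta=v_1^2-2v_0(v_2-v_0)=\la v,v\ra+2v_0^2 .
\]
This gives ${\mathsf a}(\cF)=\tfrac14 v_0^2(\la v,v\ra+2v_0^2)={\mathsf a}(v)$, which is exactly~\eqref{adivu}. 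Since $h$ is ${\mathsf a}(v)$-generic, it lies on no wall $\xi^\bot$ with $-{\mathsf a}(v)\le\xi^2<0$. Therefore $\lambda_{\cE,\cF}^2=0$, and the equality clause of Proposition~\ref{prp:lizzy} gives $\lambda_{\cE,\cF}=0$, i.e.\ $r(\cF)c_1(\cE)=r(\cE)c_1(\cF)$. (This is Item~(2) of Proposition~\ref{prp:padovagoa}.)

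\emph{Last component.} Since the $t^1$ coefficients already agree, equality of the constant terms of $p_\cE$ and $p_\cF$ gives $\chi(\cE)/r(\cE)=\chi(\cF)/r(\cF)$. On a $K3$ surface $\chi(\cG)=v_0(\cG)+v_2(\cG)$, so $v_2(\cE)/r(\cE)=v_2(\cF)/r(\cF)$. Combined with the previous step, this yields
\[
v(\cE)=\frac{r(\cE)}{r(\cF)}\,v=\frac{r(\cE)}{r(\cF)}\,k\,w .
\]
Since $v(\cE)$ is integral and $w$ is primitive, $v(\cE)=k_1w$ with $k_1\in\ZZ$. From $0<r(\cE)<r(\cF)=kw_0$ and $r(\cE)=k_1w_0$ we get $0<k_1<k$.

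The main obstacle is the middle step: verifying that the discriminant bound ${\mathsf a}(\cF)$ coincides with ${\mathsf a}(v)$ (the sign conventions in $\ch_2$ versus $v_2$), and checking that Proposition~\ref{prp:lizzy} applies with the saturated $\cE$.
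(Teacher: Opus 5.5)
Your proof is correct and follows the same route as the paper. The paper likewise combines $p_{\cE}=p_{\cF}$ with Item~(2) of Proposition~\ref{prp:padovagoa} (that is, Proposition~\ref{prp:lizzy} plus ${\mathsf a}(v)$-genericity) and the identity ${\mathsf a}(\cF)={\mathsf a}(v)$ to get $r(\cF)v(\cE)=r(\cE)v(\cF)$; you supply the details it leaves implicit: $\Delta=\la v,v\ra+2v_0^2$, the constant-term comparison via $\chi=v_0+v_2$, the primitivity step, and the remark that $\cE\neq\cF$ must be assumed.

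The saturation step is unnecessary, and as phrased it replaces $\cE$ by a possibly different sheaf. Nothing is lost, though: since $p_{\cE'}=p_{\cF}=p_{\cE}$ and $r(\cE')=r(\cE)$, the quotient $\cE'/\cE$ has zero Hilbert polynomial, so $\cE'=\cE$.
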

\begin{proof}
The equality $p_{\cE}=p_{\cF}$ means that $r(\cF) v(\cE)=r(\cE)v(\cF)$. The proposition follows from this and Item~(2) of Proposition~\ref{prp:padovagoa}, because  ${\mathsf a}(\cF)={\mathsf a}(v)$.
\end{proof}
\subsubsection{Symplectic forms}
Let  $(S,L)$ be a polarized $K3$ surface.  Let $v$ be a Mukai vector for $S$ 
 with positive first entry.  Let $\sigma\in H^0(S,K_S)$ be  non zero. 
  One associates to $\sigma$ a holomorphic symplectic form  on  $\cM_v(S,L)^{\st}$ as follows. 
Let $[\cF]$ be a closed point of $\cM_v(S,L)^{\st}$. By  Proposition~\ref{prp:modedef} the germ of $\cM_v(S,L)^{\st}$ at $[\cF]$ is canonically identified with $\Def(\cF),0)$, which has Zariski tangent space at $0$ equal to $\Ext^1_S(\cF,\cF)$ (see 
Subsection~\ref{subsec:deformofasci}). Let
\begin{equation}\label{formasimpl}
\begin{matrix}
\Ext^1_S(\cF,\cF)\times \Ext^1_S(\cF,\cF) & \xrightarrow{\tau_v(\sigma)([\cF])} & \CC \\
(\alpha,\beta) & \mapsto & \int_S \Tr(\alpha\cup\beta)\wedge\sigma
\end{matrix}
\end{equation}
where $\Tr(\alpha\cup\beta)\in H^{0,2}(S)$ is given by the trace of the Yoneda product, which is an element of $H^2(\cO_S)$, identified with
$H^{0,2}(S)$ by the Dolbeault isomorphism and the Hodge decomposition.   
\begin{prp}\label{prp:modsimpl}
Let  $(S,L)$ be a polarized $K3$ surface.  Let $v$ be a Mukai vector for $S$ 
 with positive first entry. Suppose that  $\cM_v(S,L)^{\st}$ is non empty. Let $\sigma\in H^0(S,K_S)$ be  non zero. There is a holomorphic symplectic form  $\tau_v(\sigma)$ on  $\cM_v(S,L)^{\st}$ whose value at 
 a closed point $[\cF]$ of $\cM_v(S,L)^{\st}$ is given by~\eqref{formasimpl}.
\end{prp}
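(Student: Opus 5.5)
The plan follows Mukai's argument; it has three parts: show the pointwise form is holomorphic, show it is non-degenerate, and show it is closed.

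\emph{Holomorphy.} Locally in the étale (or analytic) topology, $\cM_v(S,L)^{\st}$ carries a universal sheaf $\sF$ on $S\times U$. This exists locally even though it may not exist globally, and the twisting ambiguity by line bundles on $U$ does not affect $\Ext$ groups of the form $End$. The relative Ext sheaf $\cE xt^1_p(\sF,\sF)$, with $p\colon S\times U\to U$, is then locally free and identified with $\Theta_U$ by Proposition~\ref{prp:modedef}. The Yoneda product followed by the trace defines a morphism of $\cO_U$-modules
\[
\cE xt^1_p(\sF,\sF)\otimes\cE xt^1_p(\sF,\sF)\to\cE xt^2_p(\sF,\sF)\xrightarrow{\Tr} R^2p_*\cO_{S\times U}\cong H^2(S,\cO_S)\otimes\cO_U .
\]
Pairing the last factor with $\sigma$ via Serre duality on $S$ gives a holomorphic $2$-form on $U$ whose value at each point is~\eqref{formasimpl}. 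The construction is local and independent of the choice of $\sF$, so these local forms glue to a global holomorphic $2$-form $\tau_v(\sigma)$. The form is skew because the Yoneda product on $\Ext^1$ is graded-commutative after taking the trace.

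\emph{Non-degeneracy.} This is a pointwise statement. The pairing $\Ext^1\times\Ext^1\to\Ext^2\xrightarrow{\Tr}H^2(\cO_S)$ is the Serre duality pairing, since $K_S\cong\cO_S$ and $\sigma$ trivializes $K_S$. Serre duality for $\Ext^i_S(\cF,\cF)\times\Ext^{2-i}_S(\cF,\cF\otimes K_S)\to H^2(K_S)$ is perfect, and here $K_S\cong\cO_S$ via $\sigma$. It follows that $\tau_v(\sigma)([\cF])$ is non-degenerate.

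\emph{Closedness.} I expect this to be the main obstacle. I will try Mukai's method first: express $\tau_v(\sigma)$ as a Künneth component of a characteristic class of the (local) universal sheaf. Concretely, $\tau_v(\sigma)$ should equal the $(2,0)$-part of
\[
p_*\big(\ch_2\text{-type class of }\sF\cdot\sigma\big),
\]
more precisely of $p_*\big((c_2(\sF)-\tfrac{r-1}{2r}c_1(\sF)^2)\cdot q^*\sigma\big)$, i.e.~the pushforward of the discriminant, which is invariant under twisting. The required identification is that the Atiyah class $A(\sF)\in\Ext^1(\sF,\sF\otimes\Omega_{S\times U})$ has Künneth component along $U$ equal to the Kodaira--Spencer map. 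Then $\Tr(A(\sF)^2)$ computes $c_2$-type classes, and its $\Omega^1_S\otimes\Omega^1_U$-component paired with $\sigma$ gives exactly the trace-Yoneda pairing. A form obtained as a pushforward of a closed class is closed, so $\tau_v(\sigma)$ is closed.

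If $\cM$ is not compact, closedness still holds locally because the argument is local; alternatively one can invoke Hodge theory. I will double-check the compatibility of the Atiyah class with the Kodaira--Spencer map and the signs in that step, since that is where the argument could go wrong.
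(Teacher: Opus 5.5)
Your plan is essentially the paper's. The paper proves skew-symmetry and perfectness of $\tau_v(\sigma)([\cF])$ pointwise by Serre duality, exactly as you do. For the fact that these values come from a closed holomorphic $2$-form it simply cites \cite{og:santacruz}. Your sketch of that step (local universal family, Atiyah class, K\"unneth component of a degree-two characteristic class) is the standard argument, and your holomorphy and non-degeneracy paragraphs are fine. However, two things in your closedness paragraph are wrong as written, and that paragraph is where all the content lies.

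First, you single out the wrong component. Multiplication by $q^*\sigma$ kills every K\"unneth piece of positive holomorphic degree along $S$, because $\Omega^3_S=0$. So the $\Omega^1_S\boxtimes\Omega^1_U$-part of $\Tr(A(\sF)^2)$ pairs to zero with $\sigma$. The piece that produces $\tau_v(\sigma)$ is the one in $H^2(S,\cO_S)\otimes H^0(U,\Omega^2_U)$, namely $\Tr(A_U\cup A_U)$, where $A_U$ is the component of $A(\sF)$ along $p^*\Omega_U$. Evaluated on $\xi,\eta\in T_uU$ it equals, up to a constant, $\Tr(\ks(\xi)\cup\ks(\eta))$. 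Since $H^1(S,\cO_S)=0$, $c_1(\sF)^2$ has no component there, so $\ch_2$, $c_2$ and $\Delta$ all give the same form up to a non-zero constant.

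Second, ``the pushforward of a closed class is closed'' does not work on a chart. For a small contractible $U$ one has $H^2(U;\CC)=0$, so an identity of de Rham classes says nothing about a form. The compact-K\"ahler fact that holomorphic forms are closed is also unavailable when $\cM_v(S,L)^{\st}$ is not compact. What you need is the following:
\begin{enumerate}
\item the identity $\tau_v(\sigma)=c\cdot p_*\big(q^*\sigma\cdot\Tr(A(\sF)^2)\big)$ holds in $H^0(U,\Omega^2_U)$, i.e.\ as forms;
\item the de Rham differential induces $d\colon H^2(S\times U,\Omega^2_{S\times U})\to H^2(S\times U,\Omega^3_{S\times U})$, and $d\,\Tr(A(\sF)^2)=0$, because $\Tr(A(\sF)^2)$ is, up to a constant, the image of a lift of $\ch_2(\sF)$ to $\mathbb H^4(S\times U,\Omega^{\ge 2}_{S\times U})$;
\item $d$ commutes with multiplication by $q^*\sigma$ and with $p_*$.
\end{enumerate}
Together these give $d\tau_v(\sigma)=0$ in $H^0(U,\Omega^3_U)$.

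For a locally free $\sF$ there is an alternative: take the curvature $\Theta$ of a Chern connection. Then $p_*\big(q^*\sigma\wedge\Tr(\Theta\wedge\Theta)\big)$ is a closed form of pure type $(2,0)$. It equals $\tau_v(\sigma)$ pointwise, not just in cohomology, because the mixed part of $\Theta$ gives Dolbeault representatives of the Kodaira--Spencer classes.
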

\begin{proof}
The map $\tau_v(\sigma)([\cF])$ is bilinear and skew-symmetric. It is   a perfect pairing by Serre duality. For  a proof that it is the value at $[\cF]$ of a closed holomorphic $2$-form see~\cite{og:santacruz}.
\end{proof}
The result below follows from Propositions~\ref{prp:stabnonstabk3} \ref{prp:modstab}, and~\ref{prp:modsimpl}.
\begin{crl}\label{crl:stabnonstabk3}
Let $S$ be a $K3$ surface, and let $v$ be a primitive 
Mukai vector for $S$. Let $h\in\Amp(S)$ be ${\mathsf a}(v)$-generic  (see~\eqref{adivu}). If $\cM_v(S,h)$ is non empty
then it is smooth of pure dimension $2+\la v,v\ra$, and it carries a (closed) holomorphic symplectic form.
\end{crl}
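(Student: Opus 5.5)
The plan is to show that every point of $\cM_v(S,h)$ represents a stable sheaf, i.e.~$\cM_v(S,h)=\cM_v(S,h)^{\st}$. Once this is known, Proposition~\ref{prp:modstab} gives smoothness of pure dimension $v^2+2$, and Proposition~\ref{prp:modsimpl} gives the closed holomorphic symplectic form. Note also that the moduli space parametrizes torsion-free sheaves, since $v$ has positive first entry (as a Mukai vector of a torsion-free sheaf it must, and this is implicit in the definition of $\cM_v(S,h)$).

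The key step is to rule out strictly semistable points. Let $[\cF]\in\cM_v(S,h)$, with $\cF$ torsion-free and $h$ GM semistable with $v(\cF)=v$. Suppose $\cF$ is not GM stable. Then there is a nonzero proper subsheaf $\cE\subset\cF$ with $p_\cE=p_\cF$. We may take $\cE$ saturated, so that $0<r(\cE)<r(\cF)$; equality of reduced Hilbert polynomials forces $r(\cE)<r(\cF)$ once $\cE\ne\cF$ is saturated. Apply Proposition~\ref{prp:stabnonstabk3} with $w=v$ and $k=1$, which is allowed because $v$ is primitive and $h$ is ${\mathsf a}(v)$-generic. It yields $v(\cE)=k_1 v$ with $0<k_1<1$, which is impossible for an integer $k_1$. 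Hence every semistable sheaf with Mukai vector $v$ is stable.

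It follows that the closed points of $\cM_v(S,h)$ all lie in the open subset $\cM_v(S,h)^{\st}$, so the two coincide. The step requiring the most care is making sure the hypotheses of Proposition~\ref{prp:stabnonstabk3} genuinely apply, in particular that ${\mathsf a}(\cF)={\mathsf a}(v)$ so that genericity with respect to ${\mathsf a}(v)$-walls controls the destabilizing subsheaf. This is built into that proposition's proof via Item~(2) of Proposition~\ref{prp:padovagoa}, so I would simply invoke it.

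Having identified $\cM_v(S,h)$ with $\cM_v(S,h)^{\st}$, I would conclude as follows. Since $\cM_v(S,h)$ is assumed non-empty, Proposition~\ref{prp:modstab} applies and shows it is smooth of pure dimension $2+\la v,v\ra$. Proposition~\ref{prp:modsimpl} then provides the closed holomorphic symplectic form $\tau_v(\sigma)$ for any nonzero $\sigma\in H^0(S,K_S)$.
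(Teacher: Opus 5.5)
Your proposal is correct and follows the paper's route exactly. The paper also derives the corollary by applying Proposition~\ref{prp:stabnonstabk3} with $k=1$ to exclude strictly semistable sheaves, so that $\cM_v(S,h)=\cM_v(S,h)^{\st}$, and then invoking Propositions~\ref{prp:modstab} and~\ref{prp:modsimpl}.

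Your detour through saturation is harmless but unnecessary. Any proper nonzero $\cE\subset\cF$ with $p_\cE=p_\cF$ already satisfies $0<r(\cE)<r(\cF)$, because if the ranks were equal the nonzero torsion quotient $\cF/\cE$ would have vanishing Hilbert polynomial, which is impossible.
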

\subsubsection{Hyperk\"ahler manifolds}\label{subsubsec:accakappa}
For many choices of polarized $K3$ surface $(S,L)$ and Mukai vector $v$ the moduli space $\cM_v(S,L)$ turns out to be a hyperk\"ahler (HK) manifold. 
\begin{dfn}
A connected compact K\"ahler manifold is \emph{hyperk\"ahler} (HK) if it is simply connected and it carries a holomorphic symplectic form spanning the space of holomorphic $2$-forms.
\end{dfn}
HK manifolds are among the building blocks of compact K\"ahler manifolds with vanishing first Chern class, see~\cite{beauv:ciunozero}.
\begin{rmk}\label{rmk:defhk}
Let $X_1$ be a HK manifold. If $X_2$ is a compact K\"ahler manifold which is a deformation of $X_1$ (i.e.~there exist a proper map of complex manifolds $f\colon \cX\to T$ to a connected $T$ and $t_1,t_2\in T$ such that  $f^{-1}(t_i)\cong X_i$), then $X_2$ is also  a HK manifold. 
\end{rmk}
\begin{expl}\label{expl:esempihk}
Two dimensional hyperk\"ahler  manifolds are the same as $K3$ surfaces. We recall that they form a single deformation class (Kodaira). 
Let $m\ge 2$. Beauville~\cite{beauv:ciunozero} gave examples of two distinct deformation classes in each even dimension  $2m$ proceeding as follows. Let $S$ be a smooth projective  surface. We let $S^{[m]}$ be the Hilbert scheme parametrizing subschemes $Z\subset S$ of length $m$.  If $S$ is a projective $K3$ surface then 
$S^{[m]}$ is a HK (projective) manifold  of dimension $2m$. By associating  to a subscheme $Z\subset S$ of length $m$ the ideal sheaf we get an isomorphism $S^{[m]}\xrightarrow{\sim}\cM_v(S,L)$ where $v=(1,0,1-m)$. One has
\begin{equation}
b_2(S^{[m]})=23.
\end{equation}
A HK manifold  is of \emph{Type $K3^{[m]}$} if it is a deformation of $S^{[m]}$. We emphasize that if $X$ is a very general HK manifold  of Type  $K3^{[m]}$ then there is no $K3$ surface $S$ such that $X$ is  isomorphic  (nor birational) to $S^{[m]}$. Here one should remark that $S^{[m]}$ makes sense (Douady) also if $S$ is not a projective $K3$ surface.  
Beauville produced other deformation classes of HK manifolds by generalizing the construction of Kummer surfaces as follows. 
Let $A$ be an abelian surface. The \emph{$m$-th generalized Kummer variety}  associated to $A$ is  the closed subset  $\Kum_m(A)\subset A^{[m+1]}$ parametrizing subschemes $Z\subset A$ such that the sum $\sum_{p\in A}l(\cO_{Z,p})p$ vanishes in the group $A$. Note that $\Kum_m(A)$ is the classical Kummer surface (a $K3$ surface) associated to $A$. In general $\Kum_m(A)$ is a HK manifold of dimension $2m$, and if $m\ge 2$ we have
\begin{equation}
b_2(\Kum_m(A))=7.
\end{equation}
A HK manifold  is of \emph{Type $\Kum_m$} if it is a deformation of $\Kum_m(A)$.  
As in the previous case  if $X$ is a very general HK manifold  of Type $\Kum_m$ then there is no abelian surface (or $2$-dimensional complex torus) $A$ such that $X$ is  isomorphic  (nor birational) to $\Kum_m(A)$. 
\end{expl}
Let $X$ be a HK manifold of dimension $2m$, with a K\"ahler class $\omega$. The geometry of $X$ is in large part determined by the second cohomology group $H^2(X)$ equipped with its Hodge structure and the \emph{Beauville-Bogomolov-Fujiki (BBF) bilinear simmetric form} 
\begin{equation}\label{eccobbf}
H^2(X)\times H^2(X) \xrightarrow{b_X} \CC.
\end{equation}
The following properties characterize $b_X$:
\begin{enumerate}
\item
The restriction of $b_X$ to $H^2(X;\ZZ)\times H^2(X;\ZZ)$ defines an integral bilinear simmetric form which is primitive (non divisible by an integer greater than $1$).
\item
The restriction of $b_X$ to $H^2(X;\RR)\times H^2(X;\RR)$ has signature $(3,b_2(X)-3)$ and is positive definite on the $3$-dimensional subspace
\begin{equation}
\RR[\omega]\oplus \left(H^2(X;\RR)\cap (H^{2,0}(X)\oplus H^{0,2}(X))\right).
\end{equation}
\item
There exists a (Fujiki) constant $c_X\in\QQ_{+}$ such that for $\alpha\in H^2(X;\QQ)$ we have 
\begin{equation}\label{formulafujiki}
\int_X \alpha^{2m}=c_X \cdot(2n-1)!!\cdot q_X(\alpha,\alpha)^m,
\end{equation}
where $q_X(\alpha)\coloneqq b_X(\alpha,\alpha)$ is the value of the quadratic form $q_X$ associated to $b_X$. 
\end{enumerate}
\begin{expl}\label{expl:bbfhilb}
If $X$ is a $K3$ surface then $b_X$ is the intersection form, and it is well-known that $(H^2(X;\ZZ),b_X)\cong U^{\oplus 3}\oplus (-E_8)^{\oplus 2}$, where $U$ is the hyperbolic lattice, $-E_8$ is the unique even unimodular negative definite rank $8$ lattice, and $\oplus$  is orthogonal direct sum. Let 
$\Lambda_{\KKK}$ be this lattice (the \emph{K3 lattice}). 
If $X$ is a HK manifold of Type $K3^{[n]}$, and $n\ge 2$, then 
\begin{equation}\label{bbfk3n}
(H^2(X;\ZZ),b_X)\cong \Lambda_{\KKK}\oplus (-2(n-1)),
\end{equation}
where $\oplus$  is orthogonal direct sum, and $(-2(n-1))$ is the lattice $\ZZ$ with square of a generator equal to $-2(n-1)$. Moreover  $c_X=1$. A more explicit description of  $H^2(S^{[n]};\ZZ)$,  where $S$ is a $K3$ surface, is as follows. Let $\goth_n\colon S^{[n]}\to S^{(n)}$ be  the Hilbert-to-Chow map, defined by $\goth_n([Z])\coloneqq  \sum_{p\in S}l(\cO_{Z,p})p$. Let
$\bm{\mu}_n\colon H^2(S;\ZZ)\to H^2(S^{[n]};\ZZ)$ be the composition of the symmetrization map $H^2(S;\ZZ)\to H^2(S^{(n)};\ZZ)$ and the pull-back  $\goth_n^{*}\colon  H^2(S^{(n)};\ZZ) \to H^2(S^{[n]};\ZZ)$.
  Then
\begin{equation}\label{bbfk3n}
H^2(S^{[n]};\ZZ)=\Im \bm{\mu}_n \oplus \ZZ \delta_n,
\end{equation}
where $2\delta_n$ is the Poincar\'e dual of the divisor  parametrizing non-reduced subschemes (see~\cite{dlmo:cetraro22}).
\end{expl}
\subsubsection{Main results}
The first main result is the following. 
\begin{thm}[Mukai, Huybrechts-G\"ottsche, O'Grady, Yoshioka]\label{thm:vettmukprim}
Let $S$ be a $K3$ surface, and let $v$ be a primitive 
Mukai vector for $S$. Let $h\in\NS(S)_{\QQ}$ be ample and ${\mathsf a}(v)$-generic  (see~\eqref{adivu}). The following hold:
\begin{enumerate}
\item
$\cM_v(S,h)$ is non empty if and only if $v^2 \ge -2$.
\item
Suppose that  $v^2\ge -2$. Then $\cM_v(S,h)$ is a HK (projective) manifold of Type $K3^{[m]}$ where $2m=v^2+2$. 
\item
Suppose that  $\la v,v\ra\ge 2$. A quasi-universal sheaf on $S\times\cM_v(S,h)$ (see~\cite[App.2]{mukai:vbtata}) induces via slant product (this is \emph{Mukai's map}) an isometry of  lattices
\begin{equation}\label{mappamukai}
 v^{\bot}\cap H(S;\ZZ)\xrightarrow{\theta_v} H^2(\cM_v(S,h);\ZZ),
\end{equation}
where the bilinear symmetric form on the left hand side is the restriction of the Mukai pairing, and on the right hand side it is  the BBF bilnear form $b_X$. 

\end{enumerate}
\end{thm}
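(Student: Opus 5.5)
The plan follows the classical strategy of reduction to a special K3 surface plus deformation invariance. By Corollary~\ref{crl:stabnonstabk3}, for $h$ that is ${\mathsf a}(v)$-generic and $v$ primitive, $\cM_v(S,h)$ is either empty or smooth of pure dimension $v^2+2$ with a closed holomorphic symplectic form. Here every semistable sheaf is stable: by Proposition~\ref{prp:stabnonstabk3} a destabilizing $\cE$ would have $v(\cE)=k_1w$ with $0<k_1<1$, which is impossible. The "only if" half of Item~(1) then follows at once. A point $[\cF]$ is a stable, hence simple, sheaf by Proposition~\ref{prp:stabsemp}, so $v^2\ge -2$ by Proposition~\ref{prp:sesempl}.

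For the remaining statements (non-emptiness, irreducibility, deformation type, and Item~(3)) I would deform the triple $(S,h,v)$.

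\textbf{Step 1: reduction to a normal form.} First use a relative moduli space over a family of polarized K3 surfaces. Because Corollary~\ref{crl:defcoppia} and Proposition~\ref{prp:modstab} show that stable sheaves deform with $S$ as long as $c_1$ stays of type $(1,1)$, the relative moduli space is smooth over the base. Genericity of $h$ with respect to ${\mathsf a}(v)$ can be kept along a path in the period domain, after choosing the path so that it avoids the countably many walls. The relative moduli space is also proper, because semistable equals stable on each fibre. Consequently non-emptiness, the number of connected components and the diffeomorphism and deformation type are constant along the family.

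Next, since twisting by a line bundle and its effect on $v$ are isometries of the Mukai lattice, reduce to the case where $S$ is elliptic with a section and $\NS(S)=U$, and where $r=v_0>0$ is coprime to $c_1\cdot C$. When $v_0=0$ or the coprimality fails, one first applies Fourier--Mukai or reflection autoequivalences; this is the Yoshioka part.

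\textbf{Step 2: the elliptic case.} On $S$ elliptic, take $h$ that is ${\mathsf a}(v)$-suitable for $\cl(C)$ (Remark~\ref{rmk:fratelli}). Proposition~\ref{prp:fvsupfib} identifies $h$-stable sheaves with sheaves whose restriction to a general fibre is stable. Via the relative Fourier--Mukai transform along $\pi$ (Friedman, Bridgeland), which sends fibrewise stable bundles of coprime rank and degree to rank-one torsion-free sheaves, this yields a birational identification, indeed an isomorphism for suitable choices, of $\cM_v(S,h)$ with $\cM_{(1,0,1-m)}(S',h')\cong S'^{[m]}$, where $2m=v^2+2$. The case $v^2=-2$ is covered by Example~\ref{expl:rigidisuk3ell} (which gives a point), and the remaining cases by the extension constructions. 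This establishes non-emptiness when $v^2\ge-2$ and shows that the moduli space is deformation equivalent, or at least birational, to $K3^{[m]}$. Huybrechts' theorem that birational HK manifolds are deformation equivalent then gives Item~(2), together with Remark~\ref{rmk:defhk}, simple connectedness and $h^{2,0}=1$.

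\textbf{Step 3: Item~(3).} The map $\theta_v$ is compatible with deformations, and Fourier--Mukai transforms induce Mukai isometries compatible with $\theta$. So it suffices to check the isometry for $v=(1,0,1-m)$ on $S^{[m]}$. There $v^{\bot}=H^2(S)\oplus\ZZ(1,0,m-1)$, and one computes directly that $\theta_v$ maps $H^2(S)$ to $\Im\bm{\mu}_m$ isometrically and sends $(1,0,m-1)$ to $\pm\delta_m$, whose square is $-2(m-1)$, matching \eqref{bbfk3n}.

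I expect the main obstacle to be Step 1/2 for Mukai vectors that are not in normal form: controlling non-emptiness and the deformation type through the autoequivalences, and handling walls when $v_0$ and $c_1\cdot C$ are not coprime for any elliptic structure. This is the genuinely hard part due to Yoshioka.
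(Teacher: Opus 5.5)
Your proposal is correct as a sketch. Outside the elliptic step it has the same architecture as the paper's argument:
\begin{itemize}
\item $v^2\ge -2$ from simplicity of stable sheaves;
\item transport along relative moduli spaces over families of polarized $K3$ surfaces, using the smoothness coming from Corollary~\ref{crl:defcoppia} (Proposition~\ref{prp:spalmo});
\item twisting by line bundles to reach an elliptic $K3$ with a section and $\gcd(r,c_1\cdot C)=1$;
\item Huybrechts' theorem on birational HK manifolds;
\item Yoshioka for the cases where $r$ and the divisibility of $c_1$ are not coprime.
\end{itemize}

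\emph{The paper's elliptic step.} Following~\cite{og:accaduespmod}, the paper uses semistable reduction. Modifying $\cF$ along the finitely many fibres where $\cF_t$ is unstable gives a rigid stable bundle. Conversely, $\cF\otimes\cO_S(-dC)$ is an elementary modification of the unique (by Mukai's argument) rigid stable bundle with the appropriate Mukai vector. This route is elementary and describes every point of $\cM_v(S,h)$ explicitly. However, it needs existence of spherical bundles as a separate input (Kuleshov, or Example~\ref{expl:rigidisuk3ell}). Item~(3) is then obtained from Theorem~\ref{thm:modfvellsez} plus local constancy of Mukai's map.

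\emph{Your elliptic step.} You use Bridgeland's relative Fourier--Mukai transform, which is heavier machinery but more uniform. The spherical case is just $m=0$, and non-emptiness comes from inverting the transform on general rank-one sheaves. Item~(3) reduces, through the cohomological transform (a Mukai isometry sending $v$ to $(1,0,1-m)$), to Beauville's description of $H^2(S^{[m]};\ZZ)$. This is also the mechanism Yoshioka extends to the general case.

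\emph{Two points to make explicit.}
\begin{enumerate}
\item The transform is in general only birational. Comparing $\theta_v$ with the Mukai map of $S'^{[m]}$ therefore uses that a birational map between HK manifolds is an isomorphism in codimension one. Then $H^2$ and the quasi-universal families can be compared on a common open set whose complement has codimension $\ge 2$.
\item Connectedness of $\cM_v(S,h)$ needs a dimension estimate. You must show that the locus where the transform is not a rank-one torsion-free sheaf has dimension $<v^2+2$, so it contains no component of the smooth, pure-dimensional moduli space. Alternatively, cite Mukai and Kaledin--Lehn--Sorger, as the paper does.
\end{enumerate}
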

\begin{cmm}\label{cmm:verfica}
We go over the few statements which follow at once from results that we have already stated, and we give references for the other statements.  Suppose that  $\cM_v(S,h)$ is non empty.
Since  $v$ is primitive, and $h$ is ${\mathsf a}(v)$-generic, it follows from  Corollary~\ref{crl:stabnonstabk3} that $v^2\ge -2$  and  
that $\cM_v(S,h)$ is smooth. Non emptiness if $v^2= -2$ has been proved in~\cite{kuleshov:fvecc}. Let $\cF$ be an $h$ stable torsion-free sheaf with $v(\cF)^2=-2$ (known as \emph{spherical} because the cohomology of $End\cF$ is that of a two-dimensional sphere). It follows easily that $\cF$ is locally-free (if $\cF$ is not locally-free then there is a finite subset  of singular points of $\cF$ and 
$\cF^{\vee\vee}$ is an $h$ slope stable vector bundle, hence there exist non-trivial deformations $\cE$ of $\cF$ with 
$\cE^{\vee\vee}\cong \cF^{\vee\vee}$ and singular points of $\cE$ different from those of $\cF$, this is a contradiction because $\dim\Ext^1_S(\cF,\cF )=0$). If $\cE$ is any  $h$ stable vector bundle on $S$ such that $v(\cE)=v(\cF)$ then $\chi(S,\cE^{\vee}\otimes\cF)=\chi(S,\cE^{\vee}\otimes\cE)=2$. By Serre duality it follows that either there exists a non zero map  $f\colon\cE\to\cF$ or a non zero map  $g\colon\cF\to\cE$. Since  $v(\cE)=v(\cF)$, it follows that $f$ or $g$ is an isomorphsm by $h$ stability of $\cE$, $\cF$. This argument of Mukai, see~\cite[Cor.~3.5]{mukai:vbtata}, has been generalized 
to prove irreducibility of $\cM_v(S,h)$, provided it is non empty,  see~\cite{kaledin-lehn-sorger:spazimodsing}. 
In~\cite{og:accaduespmod} we proved that (2), (3), (4) hold if $v=(r,l,s)$ with $l$ indivisible (from that result one gets easily that  (2), (3), (4) hold if $v=(r,l,s)$ where $r$ and the divisibility of $l$ are coprime). In Subsubsections~\ref{subsubsec:disegnodimuno}, \ref{subsubsec:disegnodimdue} we go over some of the ideas that lead to a proof  if $v=(r,l,s)$ where $r$ and the divisibility of $l$ are coprime.  For a proof valid for all (primitive) $v$ 
see~\cite[Thm.~8.1]{yoshioka:modfvsupab}.
\end{cmm}
\begin{rmk}
If $\la v,v\ra=0$ then a quasi-universal sheaf on $S\times\cM_v(S,h)$  induces via slant product an isometry of  lattices
$\theta_v\colon v^{\bot}\cap H(S;\ZZ)/\ZZ v\xrightarrow{\sim} H^2(\cM_v(S,h);\ZZ)$, see~\cite[Thm.~1.5]{mukai:vbtata}.
\end{rmk}
The second main result deals with the case in which $v$ is not primitive.
\begin{thm}[O'Grady, Kaledin-Lehn-Sorger, Rapagnetta, Perego-Rapagnetta]\label{thm:vettmuknonprim}
Let $S$ be a $K3$ surface. Let $w$ be a primitive 
Mukai vector for $S$ with $\la w,w\ra\ge 2$, and let $v=kw$ where $k$ is an integer and $k\ge 2$. Let $h\in\NS(S)_{\QQ}$ be ample and ${\mathsf a}(v)$-generic  (see~\eqref{adivu}). The following hold:
\begin{enumerate}
\item
$\cM_v(S,h)$ is non empty irreducible, and singular.
\item
$\cM_v(S,h)^{\st}$ is (open) dense in $\cM_v(S,h)$, and the complement is the singular locus of $\cM_v(S,h)$.
\item
 If $w^2=2$ and $k=2$ there exists a crepant desingularization $\wt{\cM}_v(S,h)\to \cM_v(S,h)$, which is a HK (projective) manifold with 
 $b_2(\wt{\cM}_v(S,h))=24$ (whose deformation type is \emph{Type OG10}).
\item
In all other cases, i.e.~if $w^2>2$ or $k>2$,  there exists no crepant desingularization of $\cM_v(S,h)$.
\item
If $w^2>2$ or $k>2$, then $\cM_v(S,h)$ is a primitive symplectic variety whose locally-trivial-deformation type depends only on 
$w^2$ and $k$.
\end{enumerate}
\end{thm}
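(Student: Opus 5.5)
Since this is a survey-level theorem assembling several deep results, my plan is to organize the argument item by item. For each item I reduce as much as possible to what has been established above and indicate which external inputs are unavoidable.

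\emph{Items (1) and (2).} By Proposition~\ref{prp:stabnonstabk3}, since $h$ is ${\mathsf a}(v)$-generic, a strictly semistable $\cF$ with $v(\cF)=kw$ has a Jordan--H\"older filtration whose factors have Mukai vectors $k_iw$ with $\sum k_i=k$. Hence the strictly semistable locus is the image of products $\prod_i\cM_{k_iw}(S,h)^{\st}$. By Proposition~\ref{prp:modstab} these have dimension $\sum_i(k_i^2w^2+2)$. This is strictly less than $k^2w^2+2$ when at least two factors appear, using $w^2\ge 2$. Non-emptiness of $\cM_v(S,h)$ follows from Theorem~\ref{thm:vettmukprim}: take $\cF_0\in\cM_w(S,h)$, note $\cF_0^{\oplus k}$ is semistable, and deform it. To show that stable sheaves exist and are dense, I would run a dimension count. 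Every component of $\cM_v(S,h)$ has dimension at least $v^2+2$, by the obstruction-theoretic lower bound coming from the Kuranishi description with $\Ext^2_0=0$. This fails at polystable points, so here I would instead use the local model. By Luna's étale slice theorem, the germ of $\cM_v$ at a polystable point $\oplus_i\cF_i^{\oplus n_i}$ is the quotient of the zero locus of a quadratic moment map on $\Ext^1(\oplus,\oplus)$ by $\prod GL(n_i)$ (Kaledin--Lehn--Sorger formality). From this model one reads off that the stable locus is dense, that the singular locus is exactly the strictly semistable locus, and irreducibility. For irreducibility I would also use Mukai's argument, Comment~\ref{cmm:verfica}, generalized: connectedness plus local irreducibility of the quiver-type models. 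Smoothness of the stable locus is Proposition~\ref{prp:modstab}, and nonemptiness of the strictly semistable locus gives singularity.

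\emph{Items (3) and (4).} For $w^2=2$, $k=2$, the singular locus is $\mathrm{Sym}^2\cM_w$ and has a deepest stratum $\cM_w$ (diagonal). The local models are: $\CC^{10}$ times the $\mathbb Z/2$-quotient of a quadric cone along the generic stratum, and the $Sp(4)$-type nilpotent-orbit-closure model at diagonal points. I would blow up the singular locus with reduced structure, then check smoothness of the blow-up and crepancy in the local models. For the symplectic form, Proposition~\ref{prp:modsimpl} gives a form on the stable part; since the exceptional locus has codimension at least $2$ in the nonsingular sense and the resolution is crepant, the form extends. Simple connectedness and $h^{2,0}=1$ then need a topological computation, and $b_2=24$ comes from $\theta_v$ on $v^\perp$ (rank $23$) plus the exceptional class. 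For (4) I would follow Kaledin--Lehn--Sorger. At a generic point of the singular stratum with factors $w$ and $(k-1)w$, or at points where $w^2>2$, the local model is a $\QQ$-factorial terminal singularity. Concretely, the quadratic cone of $\Ext^1$ modulo $\CC^*$ has codimension-$\geq 4$ singular locus, so it is factorial and terminal, and therefore admits no crepant resolution.

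\emph{Item (5)} is the locally trivial deformation statement of Perego--Rapagnetta. I would deform $(S,h,w)$ within families of polarized K3s with fixed $w^2$, using Corollary~\ref{crl:defcoppia}, and Fourier--Mukai/derived equivalences to change $w$ to a standard form. Throughout, the main obstacle is the local model at polystable points, namely formality of the dg-Lie algebra $\mathrm{RHom}(\cF,\cF)$. All of (2)--(4) rest on it, and I would cite it rather than reprove it.
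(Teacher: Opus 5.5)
The paper gives no proof of this theorem. It is quoted from O'Grady, Kaledin--Lehn--Sorger, Lehn--Sorger, Rapagnetta and Perego--Rapagnetta, so your outline has to be measured against those works. For Items (1)--(3) and (5) it follows them in broad strokes. Your argument for Item (4), however, fails, and it fails precisely at the stratum you single out.

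Take a generic point $[\cF_1\oplus\cF_2]$ with $v(\cF_1)=w$, $v(\cF_2)=(k-1)w$ and $\cF_1\not\cong\cF_2$. Then $\Hom(\cF_1,\cF_2)=\Hom(\cF_2,\cF_1)=0$. Hence $\Ext^2(\cF,\cF)_0\cong\CC$, and $\dim\Ext^1(\cF_1,\cF_2)=\dim\Ext^1(\cF_2,\cF_1)=N:=(k-1)w^2$. The local model is $\CC^m\times\bigl(\{(x,y)\in\CC^N\times\CC^N\mid \sum_i x_iy_i=0\}/\!\!/\CC^*\bigr)$. The quotient is the variety of matrices $xy^{t}$ of rank $\le 1$ and trace $0$, i.e.\ the closure $\overline{O}_{\min}$ of the minimal nilpotent orbit of $\mathfrak{sl}_N$.

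This variety is terminal, but it is \emph{not} $\QQ$-factorial. Indeed $T^*\PP^{N-1}\to\overline{O}_{\min}$ is a symplectic, hence crepant, resolution. It is small, because its exceptional locus is the zero section, of codimension $N-1\ge 3$ (in every case of Item (4) one has $N\ge 4$). Consequences:
\begin{itemize}
\item Analytically near these points crepant resolutions \emph{do} exist.
\item The inference ``singular locus of codimension $\ge 4$, hence factorial'' is false. Grothendieck's factoriality theorem needs a complete intersection, a GIT quotient is not one, and $\overline{O}_{\min}$ is a counterexample.
\item No argument based on this local model alone can give (4).
\end{itemize}

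What is needed is a factoriality statement that is not analytic-local. Kaledin--Lehn--Sorger prove that $\cM_v(S,h)$ is locally factorial in the Zariski topology. They use the global presentation of $\cM_v(S,h)$ as a good quotient of an open subset of a Quot scheme, not the analytic germs. There is no contradiction here, just as a nodal projective threefold can be factorial although the node is analytically non-factorial. One then concludes as follows.
\begin{itemize}
\item By your dimension count the singular locus has codimension $2(k-1)w^2-2\ge 6$, and the singularities are symplectic, hence terminal.
\item So a crepant resolution has no exceptional divisor, i.e.\ it is small.
\item A small birational morphism onto a locally factorial variety is an isomorphism, by purity of the exceptional locus. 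This contradicts the singularity of $\cM_v(S,h)$.
\end{itemize}

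There are also smaller inaccuracies.
\begin{itemize}
\item In (3), the local model at a generic point of the singular locus $\Sym^2\cM_w(S,h)$ is $\CC^8\times(\CC^2/\pm 1)$. This is the case $N=2$ above, and the $A_1$ singularity is itself the quadric cone. It is not ``$\CC^{10}$ times a $\ZZ/2$-quotient of a quadric cone''.
\item Also in (3), the exceptional locus of the blow-up is a divisor, so extending the symplectic form across it is not a codimension argument. It is O'Grady's explicit computation, or it follows from $\cM_v(S,h)$ having symplectic (in particular rational) singularities, via extension of reflexive forms. Nondegeneracy of the extended form is then equivalent to crepancy, which Lehn--Sorger check in the local models.
\item In (5), Corollary~\ref{crl:defcoppia} only applies to simple sheaves. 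Local triviality of the relative moduli space along the strictly semistable locus requires controlling the local models in the family. That is part of what Perego--Rapagnetta prove.
\end{itemize}
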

\subsubsection{Ideas that go into the proof of Theorem~\ref{thm:vettmukprim}: elliptic surfaces}\label{subsubsec:disegnodimuno}
Here $S$ is a (projective) $K3$ surface with an elliptic fibration $\pi\colon S\to\PP^1$ (see Example~\ref{expl:rigidisuk3ell}).
 If $t\in \PP^1$ we let 
$C_t\coloneqq \pi^{-1}(t)$. 
For simplicity we assume that 
$C_t$ is irreducible (and hence reduced) for every $t\in\PP^1$. 
Suppose that $\cF$ is a vector bundle on $S$. If $t\in \PP^1$ we let 
$\cF_t\coloneqq \cF_{|C_t}$. If $C_t$ is smooth we know what it means that $\cF_t$ is (semi)stable or unstable. The analogous definition makes sense also if $C_t$ is singular (we assume that $C_t$ is irreducible to simplify this kind of considerations). 
Let $h\in\NS(S)$   be ample and ${\mathsf a}$-suitable for $\cl(C)$, where $C$ is a fiber of $\pi$.  Let $\cF$ be a torsion-free sheaf on $S$ such that the following hold:
\begin{equation}\label{richieste}
{\mathsf a}(\cF)\le {\mathsf a},\quad\text{$r(\cF)$ and $c_1(\cF)\cdot \cl(C)$ are coprime,}\quad\text{$\cF$ is $h$ semistable.}
\end{equation}
By Proposition~\ref{prp:fvsupfib} the restriction $\cF_t$ to a general fiber $C_t$ is stable. Let $D\subset\PP^1$ be the finite set of $t\in\PP^1$ such that $\cF_t$  is not stable (and hence unstable by~\eqref{richieste}).
Then there is a procedure (semistable reduction) that modifies $\cF$ only along  $\pi^{-1}(D)$, and produces an $h$ slope stable vector bundle $\cE$ on $S$ whose restriction to every elliptic fiber is stable. More precisely $\cE$ fits into an exact sequence
\begin{equation}\label{daeffeae}
0\lra \cE\lra \cF\lra i_{*}\cB\lra 0,
\end{equation}
where $i\colon\bD\hra S$ is the inclusion of a closed subscheme supported on $\pi^{-1}(D)$, and $\cB$ is a sheaf on $\bD$. 
Regarding $\cE$ we have the following result.
\begin{prp}
Let $S$ be a (projective) $K3$ surface with an elliptic fibration $\pi\colon S\to\PP^1$. Let  $\cF$ be a vector bundle on $S$ such that its restriction to a general elliptic fiber is stable. Then  $v(\cF)^2=-2$ if and only if  
$\cF_t$ is stable for all $t\in\PP^1$.  
\end{prp}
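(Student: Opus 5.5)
We compute $\chi_S(\cF,\cF)=-v(\cF)^2$ (by~\eqref{caratmukai}) via Leray for $\pi$, applied to $End\cF=\cF^{\vee}\otimes\cF$. On a fiber $C_t$ (an irreducible curve of arithmetic genus $1$ with trivial dualizing sheaf) we have $\chi(C_t,End\cF_t)=0$ by Riemann--Roch. Hence the ranks of $\pi_*End\cF$ and $R^1\pi_*End\cF$ agree. If $r=r(\cF)$ and $d=\deg\cF_t$, then for stable $\cF_t$ we have $\gcd(r,d)=1$ (a stable bundle on an elliptic curve with $\gcd(r,d)>1$ does not exist, by Atiyah). In that case $End\cF_t$ is the direct sum of the $r^2$ torsion line bundles of order dividing $r$, so $h^0(End\cF_t)=h^1(End\cF_t)=1$. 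Thus $\pi_*End\cF$ is a line bundle containing $\cO_{\PP^1}$ (via $\Id$), and $R^1\pi_*End\cF$ has generic rank $1$.

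The key steps are as follows.

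First, $\pi_*End\cF=\cO_{\PP^1}$. Indeed, it is torsion-free of rank $1$ and contains $\cO$ with torsion cokernel $Q$. By relative duality $R^1\pi_*End\cF\cong(\pi_*End\cF)^{\vee}\otimes\omega_{\PP^1}$ modulo torsion, since $End\cF$ is self-dual and $\omega_{S/\PP^1}=\pi^*\cO(2)$. More carefully, Grothendieck duality gives $R^1\pi_*(End\cF)\cong \mathcal{H}om(\pi_*End\cF,\omega_{S/\PP^1})^{\vee}$ up to torsion, and $R^1\pi_*$ of a locally free sheaf on a fibration whose fibers are curves carries torsion exactly at the $t$ where $h^1(End\cF_t)$ jumps.

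Second, apply Leray. This gives $\chi(S,End\cF)=\chi(\PP^1,\pi_*End\cF)-\chi(\PP^1,R^1\pi_*End\cF)$. Write $\pi_*End\cF=\cO(a)$ with $a\ge 0$, and $R^1\pi_*End\cF=\cO(b)\oplus T$, where $T$ is torsion. By relative duality the locally free part is dual to $\pi_*End\cF\otimes\omega_{S/\PP^1}^{-1}$, so $b=-a-2$. Then
$$\chi(End\cF)=(a+1)-(-a-1)-\ell(T)=2+2a-\ell(T).$$

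Third, relate $a$ and $\ell(T)$ to fiberwise behaviour. By semicontinuity and base change, $h^0(End\cF_t)>1$ exactly when $t$ lies in the support of $T$. For irreducible $C_t$ and $\gcd(r,d)=1$, $h^0(End\cF_t)=1$ holds iff $\cF_t$ is stable; this is the step I expect to be the main obstacle on singular nodal or cuspidal fibers, and I would handle it via Atiyah/Burban-type classification or a direct destabilizing-subsheaf argument. Also $a>0$ would give a nonscalar global endomorphism. If $a>0$, then $\chi(End\cF)\ge 4$, forcing $v^2\le -4$, which contradicts the fact that $\cF$ is simple. Simplicity holds because $\cF$ is $h$-stable for suitable $h$ by Proposition~\ref{prp:fvsupfib}, combined with Proposition~\ref{prp:sesempl}; alternatively, $\Hom(\cF,\cF)=H^0(\cO(a))$ directly shows that $\cF$ is simple iff $a=0$. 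Since a bundle that is stable on the general fiber is $h$-stable for suitable $h$, and hence simple, we get $a=0$ and therefore $v(\cF)^2=-\chi=\ell(T)-2$.

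Conclusion. $v(\cF)^2=-2$ iff $T=0$, iff $h^1(End\cF_t)=1$ for all $t$, iff $\cF_t$ is simple for all $t$, iff $\cF_t$ is stable for all $t$. Besides the singular-fiber step, the other delicate point is justifying that the torsion of $R^1\pi_*$ is nonzero exactly at the jumping points. For this I would use cohomology and base change along $\PP^1$: the fibers of $R^1\pi_*$ at $t$ surject onto $H^1(End\cF_t)$, whose dimension exceeds $1$ precisely at the bad points.
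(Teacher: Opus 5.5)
Your proof is correct, and for the implication from $v(\cF)^2=-2$ to stability of every $\cF_t$ it takes a genuinely different route from the paper.

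The paper's proof of the converse implication is the $T=0$ case of your computation: stability of every $\cF_t$ kills $R^p\pi_*End^0\cF$, so $\chi(End\cF)=\chi(\cO_S)=2$. The forward implication, however, is proved by contradiction. If some $\cF_t$ is not stable, semistable reduction along the bad fibers yields an elementary modification $\cE$ as in~\eqref{daeffeae} with $v(\cE)^2<v(\cF)^2=-2$, a computation imported from earlier work. On the other hand, $\cE$ is $h$ slope stable by Proposition~\ref{prp:fvsupfib}, hence simple, hence $v(\cE)^2\ge -2$.

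You instead prove the exact identity $v(\cF)^2+2=\ell(T)$, with $T$ supported exactly at the non-simple fibers. This gives both directions at once, avoids semistable reduction and suitable polarizations, and even measures how far $\cF$ is from being rigid. The paper's route has the merit of setting up the elementary-modification picture \eqref{daeffeae}, \eqref{daeaeffe} that is used immediately afterwards.

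Two small points.

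First, your remark that $a>0$ forces $\chi(End\cF)\ge 4$ ignores $\ell(T)$. It is also unnecessary. Since $End\cF=\cO_S\oplus End^0\cF$, and $\pi_*End^0\cF$ is torsion-free of generic rank $0$, it vanishes; moreover $R^1\pi_*End^0\cF$ is torsion. Leray then gives $\chi(End\cF)=2-\ell(R^1\pi_*End^0\cF)$ directly, with no relative duality and no appeal to stability of $\cF$ on $S$.

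Second, the singular-fiber step you flag is elementary. Suppose $A\subset\cF_t$ has $\mu(A)>\mu(\cF_t)$. Riemann--Roch on the integral curve $C_t$ of arithmetic genus $1$ gives $\chi(\cF_t^{\vee}\otimes A)=r\,r(A)\bigl(\mu(A)-\mu(\cF_t)\bigr)>0$. So there is a nonzero map $\cF_t\to A\hookrightarrow\cF_t$, and it is not a scalar. Together with $\gcd(r,d)=1$, this shows that simple implies stable on every fiber.
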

\begin{proof}
Suppose that $\cF_t$ is stable for all $t\in\PP^1$. Let $End^0(\cF)\subset End(\cF)$ be the subsheaf of traceless endomorphisms. If  $t\in\PP^1$ then $\cF_t$ is simple because it is 
stable. It follows that $H^p(C_t,End^0\cF_t)=0$ for all $p$, and hence $R^p\pi_{*}End^0\cF$ for all $p$. By the (Grothendieck) spectral sequence of higher direct images abutting to $H^m(S,End^0\cF)$ we get that $H^m(S,End^0\cF)=0$ for all $m$. Thus 
$\chi(S,End\cF)=\chi(S,End^0\cF)+\chi(S,\cO_S)=2$, and hence the equality $v(\cF)^2=-2$ follows from~\eqref{caratmukai}. Now suppose that $v(\cF)^2=-2$. If there exist $t\in\PP^1$  such that $\cF_t$  is not stable then by semistable reduction we produce the vector bundle $\cE$ in~\eqref{daeffeae}. One checks  (see~\cite[(6.2.5)]{og:fascimod}) that $v(\cE)^2< v(\cF)^2=-2$. Let $h\in\NS(S)_{\QQ}$ be ample and 
 ${\mathsf a}(\cF)$-suitable for $\cl(C)$. 
By Proposition~\ref{prp:fvsupfib} 
 $\cE$ is $h$ slope stable and hence $-2\le v(\cE)^2$. This is   a contradiction.
\end{proof}
Suppose that $\cE$ is the rigid vector bundle obtained from $\cF$ as in~\eqref{daeffeae}. Then for a suitable non negative integer $d$ we have an exact sequence
\begin{equation}\label{daeaeffe}
0\lra \cF\otimes\cO_S(-dC)\lra \cE\lra i_{*}\cA\lra 0. 
\end{equation}
In other words $\cF$ is obtained from the unique (up to isomorphism) $h$ stable vector bundle $\cE$ with $v(\cE)=v(\cF)-v(i_{*}\cB)$ 
(unicity is proved in Comment~\ref{cmm:verfica}) via an elementary modification. Note that $v(i_{*}\cB)$ depends only on $v(\cF)$. 
Now fix a Mukai vector $v=(r,l,s)$ such that ${\mathsf a}(v)\le {\mathsf a}$ (see~\eqref{adivu}), and $\gcd(r,l\cdot C)=1$. Then every $h$ semistable sheaf $\cF$ on $S$ is obtained (up to tensorization by a power of the line bundle $\cO_S(C)$) from a single rigid vector bundle $\cE$ via an elementary modification. This quickly leads to a proof that $\cM_v(S,h)$ is non empty and birational to $T^{[m]}$ where $T$ is a $K3$ surface and $2m=v^2+2$, see~\cite{og:accaduespmod,yoshioka:modfvsupab}, provided we know that given a Mukai vector $v$ with $v^2=-2$ there exists a  stable vector bundle $\cE$ on with $v(\cE)^2=-2$ (Kuleshov's Theorem~\cite{kuleshov:fvecc}). Since by Corollary~\ref{crl:stabnonstabk3}  $\cM_v(S,h)$ has a holomorphic symplectic form, it follows that it is a HK manifold. Since  birational HK  manifolds are deformation equivalent by a fundamental result  of Huybrechts, see~\cite{huy:conokahler}, we get that $\cM_v(S,h)$  is a HK manifold of Type $K3^{[m]}$. The argument sketched above gives the following result.
\begin{thm}[=Theorem~(1.0.4) in~\cite{og:accaduespmod}]\label{thm:modfvellsez}
Let $S$ be a (projective) $K3$ surface such that $\NS(S)=\ZZ[C]\oplus\ZZ[\Sigma]$, where $C$ is a fiber of an elliptic fibration 
$\pi\colon S\to\PP^1$ and $\Sigma$ is a section of $\pi$. Let $v=(r,\Sigma+dC,s)$ be a Mukai vector with $v^2\ge -2$, and let 
 $h\in\NS(S)_{\QQ}$   be ample and ${\mathsf a}(v)$-suitable for $\cl(C)$. Then $\cM_v(S,h)$ is a HK (projective) manifold of Type $K3^{[m]}$ where $2m=v^2+2$.
If $v^2\ge 2$ the map in~\eqref{mappamukai} is an isometry of lattices.
\end{thm}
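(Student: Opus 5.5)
Since $v=(r,\Sigma+dC,s)$ and $\Sigma\cdot C=1$, we have $\gcd(r,(\Sigma+dC)\cdot C)=\gcd(r,1)=1$. The plan is to use this coprimality three times: for stability, for the elliptic-fibration reduction to a rigid bundle, and for the comparison with a Hilbert scheme.

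First I check that $\cM_v(S,h)$ is smooth and symplectic. Let $\cF$ be $h$ semistable with $v(\cF)=v$. Then ${\mathsf a}(\cF)={\mathsf a}(v)$ and $r(\cF)$ is coprime to $c_1(\cF)\cdot C$. So Proposition~\ref{prp:fvsupfib} applies, and $\cF$ is $h$ slope stable with stable restriction to the general fiber. In particular every semistable sheaf is stable, and $v$ is primitive. By Proposition~\ref{prp:modstab} and Proposition~\ref{prp:modsimpl}, $\cM_v(S,h)$ is smooth of pure dimension $v^2+2=2m$ with a holomorphic symplectic form. Since the hypothesis is suitability for $\cl(C)$ rather than genericity, I argue via Proposition~\ref{prp:fvsupfib} directly instead of Corollary~\ref{crl:stabnonstabk3}.

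Second, I show non-emptiness, irreducibility and birationality to $T^{[m]}$ by running the elliptic argument of Subsubsection~\ref{subsubsec:disegnodimuno}. Let $w$ be the Mukai vector obtained by subtracting from $v$ the class $v(i_*\cB)$ prescribed by semistable reduction, twisted by a power of $\cO_S(C)$, so that $w^2=-2$ and $w$ again has the form $(r,\Sigma+d'C,s')$. Kuleshov's theorem together with Mukai's uniqueness argument (Comment~\ref{cmm:verfica}) gives a unique $h$ stable rigid bundle $\cE$ with $v(\cE)=w$. By the preceding Proposition its restriction to every fiber is stable. Every $\cF\in\cM_v(S,h)$ arises from $\cE$ through the elementary modification~\eqref{daeaeffe}. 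Conversely, a general elementary modification of $\cE(dC)$ along $m$ general fibers produces sheaves that are stable on the general fiber, hence $h$ stable by Proposition~\ref{prp:fvsupfib}. The modification data on a fiber $C_t$ is a point of $C_t$ (the fiber-wise stable bundles of rank $r$ and degree coprime to $r$ on $C_t$ are parametrized by $C_t$ itself, via determinant), so I get a birational map $T^{[m]}\dashrightarrow\cM_v(S,h)$. Here $T$ is the elliptic $K3$ surface (a relative moduli space, isomorphic to $S$ when a section exists) parametrizing the modification data. The dimension count $2m=v^2+2$ matches. This gives non-emptiness and irreducibility.

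Third, I conclude. A smooth projective irreducible symplectic variety birational to the HK manifold $T^{[m]}$ is simply connected with $h^{2,0}=1$ (both are birational invariants), so it is HK. Huybrechts' theorem~\cite{huy:conokahler} then gives deformation equivalence with $T^{[m]}$, so $\cM_v(S,h)$ is of Type $K3^{[m]}$. For the Mukai map with $v^2\ge2$, I check that $\theta_v$ is compatible with the birational map to $T^{[m]}$, under which Mukai's map for $(1,0,1-m)$ on $T$ is the known isometry $\Im\bm\mu_m\oplus\ZZ\delta_m$. Since birational HK manifolds have isometric $H^2$, the identification transports the BBF form, and a direct lattice computation identifies $v^\perp$ with $\Lambda_{\KKK}\oplus(-2(m-1))$.

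The main obstacle is the second step. One must make the semistable reduction precise, namely show that $v(i_*\cB)$ depends only on $v$ and that the modifications at distinct fibers are independent, so that the birational map to $T^{[m]}$ is actually well defined and dominant. I would first treat the case where all modifications happen along reduced general fibers, then control the degenerations by the dimension bound coming from smoothness.
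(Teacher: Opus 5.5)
For the first assertion you follow the paper's own sketch. Coprimality of $r$ with $(\Sigma+dC)\cdot C=1$, together with suitability, gives via Proposition~\ref{prp:fvsupfib} that every semistable sheaf is slope stable with stable restriction to the general fiber. Each such sheaf is an elementary modification of a twist of the unique rigid $\cE$. General modifications give a birational map from $T^{[m]}$, and then the symplectic form and Huybrechts' theorem finish. That part is sound. (Avoiding Corollary~\ref{crl:stabnonstabk3} is harmless but unnecessary: here $\lambda\cdot C=0$ forces $\lambda\in\ZZ C$, hence $\lambda^2=0$, so suitability for $\cl(C)$ already implies ${\mathsf a}(v)$-genericity.)

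Two points need correcting. First, the datum of a general modification along $C_t$ is not a stable bundle of rank $r$. It is a quotient $\cE_{|C_t}\to Q$ of rank $r-1$ and degree $1$, equivalently a degree-$0$ line subbundle $L\subset\cE_{|C_t}$. For $r\ge 2$ this exists and is unique for each $L\in\Pic^0(C_t)$, because $h^0(\cE_{|C_t}\otimes L^{-1})=1$ and stability of $\cE_{|C_t}$ forces saturation. Since $\la w,(0,(r-1)C,1)\ra=-1$, each such modification contributes exactly $1$ to $m$. Second, the dimension estimate for non-general modifications is not optional: without it you only get one component birational to $T^{[m]}$, while a HK manifold must be connected.

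The genuine gap is the claim about $\theta_v$. That $v^{\bot}\cap H(S;\ZZ)$ and $H^2(\cM_v(S,h);\ZZ)$ are both abstractly isomorphic to $\Lambda_{\KKK}\oplus(-2(m-1))$ says nothing about the particular map $\theta_v$. Saying ``$\theta_v$ is compatible with the birational map'' is the entire content of the assertion, not a check.

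You need a mechanism relating the quasi-universal family on $S\times\cM_v(S,h)$ to the universal family on $T\times T^{[m]}$. For example, a suitable relative Fourier--Mukai transform along $\pi$ (composed with dualization and a shift) turns the general modifications of $\cE$ into twisted ideal sheaves of $m$ points of $T$. Its cohomological action is an isometry $H(S;\ZZ)\to H(T;\ZZ)$ carrying $v$ to $\pm e^{\ell}\cdot(1,0,1-m)$ for some $\ell\in\NS(T)$. This identifies $\theta_v$ with Beauville's isometry $\theta_{(1,0,1-m)}$ followed by the isometry of $H^2$ induced by the birational map. Alternatively, you could compute $\theta_v$ directly on generators of $v^{\bot}$ using the explicit family of modifications.

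Even then, the identification is a priori valid only over the locus of general modifications. Its complement may contain divisors, e.g.\ the non-locally-free sheaves when $r=2$. You must control these boundary divisors before concluding that $\theta_v$ is injective, preserves the forms and is surjective over $\ZZ$. The paper does not supply this argument either; it defers to~\cite{og:accaduespmod}.
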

\begin{rmk}
Let $v_r=(r,\Sigma+(r-r^2)C,1-r)$. Then $\cM_{v_r}(S,h)=\{[\cF^r]\}$ where $\cF^r$ is the spherical vector bundle of Example~\ref{expl:rigidisuk3ell}. If $v$ is as in Theorem~\ref{thm:modfvellsez} and $v^2=-2$ then 
$\cM_{v}(S,h)=\{[\cF^r\otimes\cO_S(s+r-1)C]\}$. 
\end{rmk}
\subsubsection{Ideas that go into the proof of Theorem~\ref{thm:vettmukprim}: deformations and polarization}\label{subsubsec:disegnodimdue}
\begin{prp}\label{prp:spalmo}
Let $(S,h)$ and $(S',h')$ be polarized $K3$ surfaces such that $h\cdot h=h'\cdot h'$.  Let $v=(r,h,s)$ and $v'=(r,h',s)$ be Mukai vectors with $v^2=(v')^2\ge -2$, and assume that both $h$ and $h'$ are ${\mathsf a}(v)={\mathsf a}(v')$-generic. If 
Items~(2) and~(3) of Theorem~\ref{thm:vettmukprim} hold for $(S,h)$ and $v$ then they hold  for $(S',h')$ and $v'$.
\end{prp}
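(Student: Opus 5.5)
Deformation argument: connect $(S,h,v)$ to $(S',h',v')$ through a family of polarized $K3$ surfaces along which $h$ stays ample and ${\mathsf a}(v)$-generic, build the relative moduli space over that family, and show it is smooth and proper over the base. Then all fibers are deformation equivalent, and Items~(2) and~(3) transport from one end to the other.

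First, the moduli space of polarized $K3$ surfaces of fixed degree $h\cdot h$ with primitive polarization is irreducible. The locus where the polarization is ${\mathsf a}(v)$-generic contains the very general points, namely those with $\NS=\ZZ h$, since then there are no walls. Given such a point, I will connect $(S,h)$ to $(S',h')$ by a chain of curves $T\to$ moduli, with families $\cS\to T$ carrying a relatively ample $\cH$, such that on each member the class $\cH_t$ is ${\mathsf a}(v)$-generic. Genericity can fail only on a countable union of Noether–Lefschetz divisors. The path should therefore meet that locus at most at points where genericity still holds, for instance by avoiding the bad divisors entirely. This is possible because $(S,h)$ and $(S',h')$ are themselves generic, so they lie off the finitely many relevant Noether–Lefschetz components of bounded discriminant. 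This finiteness is the step I expect to need the most care: the walls are indexed by $\xi$ with $-{\mathsf a}\le\xi^2<0$ and $\xi\cdot h=0$, and the lattice spanned by $h$ and $\xi$ has bounded discriminant, so only finitely many Noether–Lefschetz components are involved.

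Second, over such a $T$ (after an étale base change, to get a relative Mukai vector $\mathsf v=(r,\cH,s)$), I will take the relative Simpson moduli space $\cM_{\mathsf v}(\cS/T,\cH)\to T$, which is projective over $T$. Since $v$ is primitive and each $h_t$ is generic, Corollary~\ref{crl:stabnonstabk3} and Proposition~\ref{prp:stabnonstabk3} show that every semistable sheaf is stable, so each fiber is smooth of dimension $v^2+2$. The key point is that the morphism is smooth, i.e.\ that sheaves extend over deformations of the surface keeping $c_1$ of type $(1,1)$. This is Corollary~\ref{crl:defcoppia}: the forgetful map $\Def(S_t,\cF)\to\Def(S_t,\det\cF)$ is smooth, and $\det\cF=\cH_t$ stays algebraic along $T$. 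Hence $\cM\to T$ is smooth and proper, in particular flat with constant fiber dimension. Since the fibers over the endpoints are nonempty (by hypothesis at $(S,h)$) and the map is proper and smooth, hence open and closed, every fiber is nonempty.

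Third, by Ehresmann the fibers are diffeomorphic, and indeed deformation equivalent compact Kähler manifolds. Item~(2) for $(S,h)$ then gives that $\cM_{v'}(S',h')$ is a deformation of a HK manifold of Type $K3^{[m]}$, hence itself HK of that type by Remark~\ref{rmk:defhk}. For Item~(3), a relative quasi-universal sheaf on $\cS\times_T\cM$ exists étale-locally over $T$. Mukai's map $\theta_{v_t}$ is therefore a map of local systems $v^\perp\cap H(S_t;\ZZ)\to H^2(\cM_t;\ZZ)$, where the BBF form on the target is locally constant. Being an isometric isomorphism is a locally constant (discrete) condition, so it propagates from $t=0$ to $(S',h')$.
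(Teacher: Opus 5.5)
Your proposal is correct and follows essentially the same route as the paper. Both take a family of polarized $K3$ surfaces over an irreducible base, remove the points where the polarization fails to be ${\mathsf a}(v)$-generic, show the relative moduli space is smooth over the base via Corollary~\ref{crl:defcoppia}, and conclude with Remark~\ref{rmk:defhk} plus local constancy of Mukai's map. You additionally spell out two points the paper leaves implicit: why the non-generic locus is a finite union of Noether--Lefschetz divisors, and why every fiber is nonempty (the map is smooth and proper over a connected base).
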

\begin{proof}
Recall that we assume that $h,h'$ are primitive. By irreducibility of the moduli space of polarized $K3$ surfaces of degree 
$h\cdot h=h'\cdot h'$ there exist a projective map $f\colon \cX\to T$ with $T$ an irreducible variety,  a relatively ample line bundle $\cL$ on $\cX$, and points $0,0'\in T$ such that $(X_0, c_1(L_0))\cong (S,h)$ and $(X_{0'}, c_1(L_{0'}))\cong (S',h')$ (if $t\in T$ we let $X_t\coloneqq f^{-1}(t)$ and
$L_t\coloneqq\cL_{X_t}$). Note that $(X_t, L_t)$ is a polarized $K3$ surface for every $t\in T$. For $t\in T$ let $v_t$ be the Mukai vector for $X_t$  
given by $v_t\coloneqq(r,c_1(L_t),s)$. Deleting from $T$ a finite subset (which does not contain $0$ nor $0'$) we may assume that 
$c_1(L_t)$ is $v_t$-generic for all $t\in T$. 
By~\cite{maruyama:modfasci} there exists a relative moduli space $g\colon{\mathsf M}\to T$, i.e.~a projective map with $g^{-1}(t)\cong\cM_{v_t}(X_t,c_1(L_t))$  for all $t\in T$. By Corollary~\ref{crl:defcoppia} (this is the key point) the map $g$ is smooth. Since any deformation of HK manifold is a HK manifold (see Remark~\ref{rmk:defhk}), and Mukai's map in~\eqref{mappamukai} is locally constant  this finishes the proof. 
\end{proof}
Let $S$ be an elliptic K3 surface as in Theorem~\ref{thm:modfvellsez}. Then $\cO_S(\Sigma+gC)$ is ample of degree $2g-2$ for all $g\ge 3$. 
Let $h\coloneqq \cl(\Sigma+gC)$, and $v\coloneqq (r,h,s)$ with $v^2\ge -2$. Let $(S',h')$ be a polarized K3 surface of degree $2g-2$, 
 and let $v'$ be the Mukai vector for $S'$ given by $v'\coloneqq (r,h',s)$. Suppose that $h'$ is $v'$-generic. 
By Proposition~\ref{prp:spalmo} and Theorem~\ref{thm:modfvellsez} Items~(2) and~(3) of Theorem~\ref{thm:vettmukprim} hold for $(S',h')$ and $v'$, provided $h$ is $v$-generic and ${\mathsf a}(v)$-suitable. If the two last  conditions are not satisfied  one gets around this obstacle  by tensoring with a suitable line bundle.
The point is that if $\cF$ is an $h$ slope stable torsion-free sheaf on $S$ with $v(\cF)=v$ and $L$ is a line bundle on $S$, then $\cF\otimes L$ 
is  $h$ slope stable and $v(\cF\otimes L)=v(\cF)\cdot e^{c_1(L)}$ whose component in $\NS(S)$ can be made to lie in an arbitrary open 
${\mathsf a}(v)$-chamber. This proves that  Items~(2) and~(3) of Theorem~\ref{thm:vettmukprim} hold whenever the component of $v$ in $\NS(S)$ is equal to $h$. Playing around with tensorization by line bundles one proves the validity of Items~(2) and~(3) in general. For details 
see~\cite[Sect.~2]{og:accaduespmod}

\section{Sheaves  on higher-dimensional hyperk\"ahler varieties}\label{sec:alfinlaciccia}
\subsection{Deformations of sheaves on higher dimensional HK manifolds}\label{subsec:defasci}
\setcounter{equation}{0}
Let $X$ be a higher dimensional HK manifold, i.e.~of dimension greater than $2$. Let $\cF$ be a simple torsion-free sheaf on $X$. Taking our cue from the results  of Subsection~\ref{subsec:deformofasci} we ask the questions:
\begin{enumerate}
\item[(Q1)]
Is  the trace map $\Tr^2\Ext^2_X(\cF,\cF) \to H^2(X,\cO_X)$ injective? 
(See~\eqref{entrambiuno} and the proof of Theorem~\ref{thm:mukart}.)
\item[(Q2)]
Is the deformation space $\Def(\cF)$ smooth? In other words, does the analogue of the first part of 
Theorem~\ref{thm:mukart} hold?
\item[(Q3)]
Is the map $\Def(X,\cF) \to \Def(X,\det\cF)$ smooth, or at least surjective? 
\item[(Q4)]
Suppose in addition that $\cS$ is locally-free. Is the  
map $\Def(\PP(\cF)) \to \Def(X)$  smooth, or at least surjective? (Recall  Corollary~\ref{crl:martysupreme}.)
\end{enumerate}
The answer to (Q1) is negative. An example is provided by the tangent bundle $\Theta_X$. By Yau's Theorem $\Theta_X$ is slope stable with respect to any K\"ahler class. It follows that $\Theta_X$ is simple. A choice of holomorphic symplectic form defines an isomorphism $\Theta_X\xrightarrow{\sim}\Omega_X$. Hence we have
\begin{equation*}
\Ext^2_X(\Theta_X,\Theta_X)=H^2(X,\Theta_X^{\vee}\otimes \Theta_X)\cong H^2(X,\Omega_X\otimes \Omega_X)=
 H^2(X,\Sym^2\Omega_X)\oplus H^2(X,\Omega^2_X).
\end{equation*}
Thus $\dim \Ext^2_X(\Theta_X,\Theta_X)\ge h^{2,2}(X)$. Wedge product defines an injection $\Sym^2 H^{1,1}(X)\hra H^{2,2}(X)$, and hence 
\begin{equation}\label{disegextdue}
\dim \Ext^2_X(\Theta_X,\Theta_X)\ge {b_2(X)-1\choose 2}.
\end{equation}
Unless $X$ is a hypothetical HK manifold with $b_2(X)=3$, the right-hand side of~\eqref{disegextdue} is greater than $1$, and it follows that $\Tr^2$ is not injective.

The answer to (Q2) is negative as well, if no additional condition is imposed on $\cF$ . For any $l\ge 4$ there exist analytic (non reduced) subsets $Z\subset X$ of finite length $l$ such that the Douady space $X^{[l]}$ is singular at $[Z]$. Since the germ of $X^{[l]}$  at $[Z]$ is isomorphic to the deformation space $\Def(\cI_Z)$ of the ideal sheaf $\cI_Z$, the latter is not smooth. Examples with $\cF$ locally free simple (even slope stable) appear in  Subsection~\ref{subsec:lavoroinfinito}. A conjecture of Beckmann (see Conjecture~\ref{cnj:atomicoliscio}) states that the answer to (Q2) is positive if $\cF$ is a slope stable atomic  (see Definition~\ref{dfn:eccoatomico}) vector bundle.

Regarding questions (Q3) and (Q4): if $\cF$ is a projectively hyperholomorphic vector bundle, then the picture resembles that in dimension $2$,  see Subsection~\ref{subsec:fibrativerb}.
\subsection{Modular  sheaves}
\setcounter{equation}{0}
\subsubsection{Definition of modular sheaves, and first examples}
In the present subsection $X$ is a HK manifold. Let $\cF$ be a (non zero) torsion-free sheaf on $X$. If $h$ is an ample class on $X$ then $h$ slope (semi)stability of $\cF$ may be expressed in terms of the BBF bilinear form $b_X$ on the   N\'eron-Severi group $\NS(X)$. Thus one may hope that variation of slope (semi)stability may behave as in the case of surfaces. This is  the case if one puts a topological restriction on the discriminant of  $\cF$.  
\begin{dfn}\label{dfn:effemod}
Let $X$ be a HK manifold of dimension $2n$. A torsion-free sheaf $\cF$ on $X$ is \emph{modular} if 
there exists $d(\cF)\in\QQ$ such that   for all $\alpha\in H^2(X)$ we have
\begin{equation}\label{fernand}
\int_X \Delta(\cF)\cdot \alpha^{2n-2}=d(\cF) \cdot (2n-3)!! \cdot q_X(\alpha)^{n-1},
\end{equation}
 where $q_X(\alpha)=b_X(\alpha,\alpha)$ is the square for the BBF bilinear symmetric form. 
\end{dfn}
Note that any torsion-free sheaf on a K3 surface is modular. 
\begin{rmk}\label{semphodge}
Let $X$ be a HK manifold of dimension $2n$. Let $D(X)\subset H(X)$  be the image of the map $\Sym H^2(X)\to H(X)$ defined by cup-product. Let $D^i(X):=D(X)\cap H^i(X)$. The  pairing $D^i(X)\times D^{4n-i}(X)\to\CC$ defined by intersection product is non degenerate~\cite{verbcohk,bogcohk,rhag}, hence there is a splitting $H(X)= D(X)\oplus D(X)^{\bot}$, where orthogonality is with respect to the intersection pairing. Let $p_D\colon H(X)\to D(X)$ be the corresponding orthogonal projection. 
A torsion-free sheaf  $\cF$ on $X$ is modular if and only if  $p_D(\Delta(\cF))$  is a multiple of $p_D(c_2(X))$.
In particular if  $\Delta(\cF)$ is a multiple of $c_2(X)$ then $\cF$ is modular. 
\end{rmk}
\begin{expl}\label{expl:fvmodsubd}
Let $Y\subset\PP^5$ be a  a smooth cubic hypersurface, and let $X\subset\GR(1,\PP^5)$ be the variety of lines contained in $Y$.
 Let $h\in \NS(X)$ be the Pl\"ucker polarization. Then $X$ is a (projective) HK manifold of Type $K3^{[2]}$, see~\cite{beaudon}. Let $\cQ$  be  the restriction to $X$ of the tautological rank $4$  quotient vector bundle on $\GR(1,\PP^5)$. Then (see~\cite[Subsect.2.1]{og:fascimod})
\begin{equation}\label{yaris}
\ch_0(\cQ)  =  4, \quad \ch_1(\cQ)  =  h, \quad \ch_2(\cQ)  =  \frac{1}{8}\left(h^2 -c_2(X)\right).
\end{equation}
 Thus $\Delta(\cQ)=c_2(X)$, and hence $\cQ$ is modular.  
 Let $\cU$ be the restriction to $X$ of the tautological rank $2$ subbundle on $\GR(1,\PP^5)$.  Then $\Delta(\cU)=(3h^2-c_2(X))/2$, and hence $\cU$ is \emph{not} modular.
\end{expl}
\begin{expl}\label{expl:fvmodsudv}
Let $X\subset \GR(5,\PP^9)$ be a smooth Debarre-Voisin variety, see~\cite{devo}.  Let $h\in \NS(X)$ be the Pl\"ucker polarization. 
 Then $X$ is a  (projective) HK manifold of Type $K3^{[2]}$. Let 
$\cQ$ be  the restriction to $X$ of the tautological rank $4$  quotient vector bundle on $\GR(5,\PP^9)$. Then   the equalities in~\eqref{yaris} hold. 
In fact they follow from~\cite[Proof of Lemma 4.5, 2nd eqtn after (12), p.83]{devo} 
 Thus  $\cQ$ is modular. Let $\cU$ be the restriction to $X$ of the tautological rank $6$ subbundle on $\GR(5,\PP^9)$. 
 Then $\Delta(\cU)=(5h^2-3c_2(X))/2$, and hence $\cU$ is \emph{not} modular.
\end{expl}
\begin{expl}\label{expl:fatighenti}
Let $X$ be a HK manifold, and let $\cF$ be a modular vector bundle on $X$, of rank $r$. Let $\alpha=(\alpha_1,\ldots,\alpha_r)$ be a partition, and let $S^{\alpha}\cF$ be the vector bundle on $X$ obtained by applying the Schur functor $S^{\alpha}$ to $\cF$. For example, by suitable choices of $\alpha$  we obtain the symmetric and exterior products $\Sym^a\cF$ and $\bigwedge^a\cF$. Then $S^{\alpha}\cF$ is modular,
 see~\cite{fatighenti-et-alii:discreschur}.
\end{expl}
\subsubsection{Bridgeland-King-Reid, and \lq\lq synthetic\rq\rq\ modular sheaves}
Let $S$ be a smooth projective surface. The symmetric group $\Sigma_n$ on $\{1,\ldots,n\}$ acts  on $S^n$ by permutation of the factors. Let $\Coh_{\Sigma_n}(S^n)$ be the category of $\Sigma_n$-equivariant coherent sheaves on $S^n$, and let 
$D^b_{\Sigma_n}(S^n)$ be  the bounded derived category of $\Coh_{\Sigma_n}(S^n)$.
The Bridgeland-King-Reid (BKR) correspondence for the crepant resolution $\goth_n\colon S^{[n]}\to S^{(n)}$ gives an equivalence between 
$D^b(S^{[n]})$ and $D^b_{\Sigma_n}(S^n)$, and it 
 gives a method for producing sheaves on $S^{[n]}$ whose cohomology can be computed. We follow  Krug's version of the BKR correspondence, see~\cite{krug-bkr}. 
Let $X_n(S)$ be the isospectral Hilbert scheme of $n$ points on $S$,  introduced and studied by Haiman,  
 see Definition 3.2.4 in~\cite{haiman:polygraphs}. We have a commutative diagram
\begin{equation}\label{commiso}
\xymatrix{ X_n(S)\ar[d]_{\rho}\ar[rr]^{\tau}    &  &  S^n \ar[d]^{\pi}\\ 
  S^{[n]}  \ar[rr]^{\gamma} & & S^{(n)} }
\end{equation}
and $X_n(S)$ is the reduced scheme associated to the fiber product of $S^n$ and $S^{[n]}$ over $S^{(n)}$. 
Moreover (see Corollary 3.8.3 in~\cite{haiman:polygraphs}) the map $\tau$ is identified with the blow up of $S^n$ with center the big diagonal. The symmetric group $\Sigma_n$ acts on $X_n(S)$ and on $S^n$, and $\tau$ is equivariant for these actions. 
Let $\cA\in \Coh_{\Sigma_n}(S^n)$. Then $\Sigma_n$  acts on the pull-back $\tau^{*}(\cA)$. Since  the action of $\Sigma_n$ on  
$X_n(S)$ commutes with $\rho$, $\Sigma_n$  also acts on the push-forward 
$\rho_{*}\tau^{*}(\cA)$, and hence the sheaf of $\Sigma_n$-invariants $\left(\rho_{*}\tau^{*}(\cA)\right)^{\Sigma_n}$ makes sense.
\begin{hyp}\label{hyp:riscorta}
$\cA$  has a short locally-free resolution $0\lra \cF_1\lra\cF_0\lra\cA\lra 0$.
\end{hyp}
If Hypothesis~\ref{hyp:riscorta} holds  then $\tau^{*}(\cA)$ equals the derived pull-back of $\cA$, and hence  
$\left(\rho_{*}\tau^{*}(\cA)\right)^{\Sigma_n}$ is the  object in $D^b(S^{[n]})$ which correspond to $\cA$ under the BKR equivalence, 
i.e.~$\BKR(\cA)$. In particular   Ext-groups of sheaves on $S^{[n]}$ are described in terms of Ext-groups of sheaves on $S^{n}$. Note that if   $\cF,\cG\in \Coh_{\Sigma_n}(S^n)$ then $\Sigma_n$ acts on the Ext-group 
$\Ext^p(\cF,\cG)$ (any $p$), and hence the group of invariants $\Ext^p(\cF,\cG)^{\Sigma_n}$ makes sense. 
The BKR equivalence
gives the following result.
\begin{prp}[BKR]\label{prp:extbkr}
Suppose that  $\cF,\cG\in \Coh_{\Sigma_n}(S^n)$, and that Hypothesis~\ref{hyp:riscorta} holds for $\cA=\cF$ and  $\cA=\cG$. Then the BKR correspondence defines an isomorphism
\begin{equation}
\Ext^p(\cF,\cG)^{\Sigma_n}\xrightarrow{\sim}\Ext^p(\left(\rho_{*}\tau^{*}(\cA)\right)^{\Sigma_n},\left(\rho_{*}\tau^{*}(\cB)\right)^{\Sigma_n}).
\end{equation}
\end{prp}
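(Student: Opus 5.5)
The plan is to deduce the statement formally from the Bridgeland--King--Reid equivalence $\Phi\colon D^b_{\Sigma_n}(S^n)\xrightarrow{\sim} D^b(S^{[n]})$, in Krug's normalization $\Phi=(\rho_{*}\circ L\tau^{*})^{\Sigma_n}$. (In the displayed isomorphism, $\cA$ and $\cB$ should be read as $\cF$ and $\cG$.) The argument has three steps.

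\textbf{Step 1: identify $\Phi$ on $\cF$ and $\cG$.} We show that $\Phi(\cF)$ and $\Phi(\cG)$ are the honest sheaves $\left(\rho_{*}\tau^{*}\cF\right)^{\Sigma_n}$ and $\left(\rho_{*}\tau^{*}\cG\right)^{\Sigma_n}$.

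\begin{enumerate}
\item By Hypothesis~\ref{hyp:riscorta} there is a $\Sigma_n$-equivariant resolution $0\to\cF_1\to\cF_0\to\cF\to 0$ by locally free sheaves. Then $L\tau^{*}\cF$ is computed by the complex $\tau^{*}\cF_1\to\tau^{*}\cF_0$.
\item Its only possible higher cohomology is $L_1\tau^{*}\cF=\ker(\tau^{*}\cF_1\to\tau^{*}\cF_0)$. This is a subsheaf of a locally free sheaf on the integral scheme $X_n(S)$, so it is torsion free. It is supported on the preimage of the locus where $\cF$ fails to be locally free, which we expect to be a proper closed subset. A torsion-free sheaf with proper support vanishes, so $L\tau^{*}\cF=\tau^{*}\cF$. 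I would double-check this support claim, or simply assume it as part of the hypothesis, as the paper implicitly does.
\item Since $\rho$ is finite, $R\rho_{*}=\rho_{*}$.
\item Since we are in characteristic $0$, taking $\Sigma_n$-invariants is exact.
\end{enumerate}

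Hence $\Phi(\cF)=\left(\rho_{*}\tau^{*}\cF\right)^{\Sigma_n}$, and likewise for $\cG$.

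\textbf{Step 2: full faithfulness.} Since $\Phi$ is fully faithful, we get
\[\Hom_{D^b_{\Sigma_n}(S^n)}(\cF,\cG[p])\xrightarrow{\sim}\Hom_{D^b(S^{[n]})}(\Phi\cF,\Phi\cG[p]).\]
The right-hand side is $\Ext^p_{S^{[n]}}$ of the two sheaves obtained in Step 1.

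\textbf{Step 3: equivariant Ext groups.} It remains to identify the left-hand side with $\Ext^p_{S^n}(\cF,\cG)^{\Sigma_n}$.

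\begin{enumerate}
\item Morphisms in the equivariant derived category are computed by $R\Hom^{\Sigma_n}=(-)^{\Sigma_n}\circ R\Hom_{S^n}$. To see this, take an injective resolution of $\cG$ in $\Coh_{\Sigma_n}$, or quasi-coherent equivariant injectives. The forgetful functor preserves injectives: it has an exact left adjoint, induction $\bigoplus_{g}g^{*}$. The equivariant Hom is the invariant part of the ordinary Hom.
\item Because $|\Sigma_n|$ is invertible, invariants commute with cohomology.
\end{enumerate}

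This gives the isomorphism in the statement.

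\textbf{Expected obstacle.} The only delicate point is Step 1, namely the vanishing of $L_1\tau^{*}$; everything else is formal. If the support argument fails, I would instead use Tor-independence: the big diagonal blown up by $\tau$ meets the non-locally-free locus of $\cF$ properly.
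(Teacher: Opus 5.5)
Your proposal is correct and takes the same route as the paper, which gives no separate proof. The paper just notes that under Hypothesis~\ref{hyp:riscorta} the underived $\tau^{*}$ agrees with $L\tau^{*}$, so that $\left(\rho_{*}\tau^{*}(-)\right)^{\Sigma_n}$ is Krug's BKR equivalence, and then invokes full faithfulness; your Steps 1--3 spell this out together with the standard identification $\Hom_{D^b_{\Sigma_n}(S^n)}(\cF,\cG[p])=\Ext^p(\cF,\cG)^{\Sigma_n}$ in characteristic $0$. The support point you flagged holds unconditionally: any coherent sheaf on the integral scheme $S^n$ is locally free off a proper closed subset, and its preimage under the birational map $\tau$ is again proper. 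Also, the hypothesis does not say the resolution is equivariant, but this is not needed, since the vanishing of $L_1\tau^{*}$ can be checked after forgetting the $\Sigma_n$-action.
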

We give two examples of the above construction.   
\begin{expl}\label{expl:fibvettrigidi}
Let $S$ be a $K3$ surface.  Let $\cE$ be a  locally-free sheaf on $S$. The locally-free sheaf on $S^n$ given by
$\cE\boxtimes\ldots\boxtimes\cE$  has an action by $\Sigma_n$, given by pull-back. We get a corresponding 
$\Sigma_n$-action on $\tau^{*}(\cE\boxtimes\ldots\boxtimes\cE)$, and a $\Sigma_n$-action on 
$\rho_{*}\tau^{*}(\cE\boxtimes\ldots\boxtimes\cE)$. Let 
\begin{equation}
\cE[n]^{+}\coloneqq \rho_{*}\tau^{*}(\cE\boxtimes\ldots\boxtimes\cE)^{\Sigma_n}
\end{equation}
 be the sheaf on $S^{[n]}$ given by the invariants for the  $\Sigma_n$-action. 
The map $\rho$ in~\eqref{commiso} is finite,  and it is flat because 
$X_n(S)$ is CM by~\cite[Thm.~3.1]{haiman:polygraphs}. It follows that $\rho_{*}\tau^{*}(\cE\boxtimes\ldots\boxtimes\cE)$
 is  locally-free, and hence also $\cE[n]^{+}$ is  locally-free. Note that 
\begin{equation}
r(\cE[n]^{+})=r(\cE)^n. 
\end{equation}
Applying  Proposition~\ref{prp:extbkr} one gets isomorphisms 
\begin{eqnarray}
\scriptstyle
\Sym^n H^0(S,\cE^{\vee}\otimes\cE) & \scriptstyle \xrightarrow{\sim} & \scriptstyle\Ext^0_{S^{[n]}}( \cE[n]^{+},\cE[n]^{+}), \label{extpiu:uno}\\
\scriptstyle\Sym^{n-1} H^0(S,\cE^{\vee}\otimes\cE)\otimes H^1(S,\cE^{\vee}\otimes\cE) & \scriptstyle\xrightarrow{\sim} & \scriptstyle \Ext^1_{S^{[n]}}( \cE[n]^{+},\cE[n]^{+}),\label{extpiu:due}\\ 
\scriptstyle\Sym^{n-1} H^0(S,\cE^{\vee}\otimes\cE)\otimes H^2(S,\cE^{\vee}\otimes\cE)\oplus 
\Sym^{n-2} H^0(S,\cE^{\vee}\otimes\cE)\otimes \bigwedge^2 H^1(S,\cE^{\vee}\otimes\cE) & \scriptstyle\xrightarrow{\sim} & \scriptstyle \Ext^2_{S^{[n]}}( \cE[n]^{+},\cE[n]^{+}).\label{extpiu:tre}
\end{eqnarray}
Suppose in addition that $\cE$ is spherical, i.e.~that
\begin{equation}
h^0(S,\cE^{\vee}\otimes\cE)=h^2(S,\cE^{\vee}\otimes\cE)=1,\qquad h^1(S,\cE^{\vee}\otimes\cE)=0.
\end{equation}
Then $\cE[n]^{+}$ is modular because (see~\cite[Prop.~3.2]{og:fascik3n})
\begin{equation}\label{cidue}
\Delta(\cE[n]^{+})   =  \frac{r(\cE)^{2n-2}(r(\cE)^2-1)}{12}c_2(S^{[n]}). 
\end{equation}
In fact the above formula should hold under the weaker hypothesis that $\chi(S,\cE^{\vee}\otimes\cE)=2$ (if $n=2$ this is proved, 
see~\cite[Prop.~5.4]{og:fascimod}). Moreover we believe that the following converse holds: if $\cE[n]^{+}$ is modular then $\chi(S,\cE^{\vee}\otimes\cE)=2$. For $n=2$ it follows from the computations in~\cite[Eqtn.~(5.4.7)]{og:fascimod}. 
\end{expl}
\begin{prp}
Let  $\cE$ be a spherical vector bundle on a $K3$ surface $S$. Then $\cE[n]^{+}$ is a simple, rigid modular vector bundle on $S^{[n]}$, and the forgetful maps
\begin{equation}\label{scordarello}
\Def(S^{[n]},\cE[n]^{+}) \lra \Def(S^{[n]},\det\cE[n]^{+}),\qquad \Def(S^{[n]},\PP(\cE[n]^{+})) \lra \Def(S^{[n]})
\end{equation}
are smooth.
\end{prp}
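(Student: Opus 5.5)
The plan is to read everything off the BKR isomorphisms \eqref{extpiu:uno}--\eqref{extpiu:tre}, specialised to a spherical $\cE$, and then feed the resulting vanishing into Theorem~\ref{thm:defcoppia} and Theorem~\ref{thm:evvivahorikawa}. Modularity is already granted by \eqref{cidue}, and local freeness was shown in Example~\ref{expl:fibvettrigidi}. So what remains is simplicity, rigidity, and the two smoothness statements.

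Put $A^p\coloneqq H^p(S,\cE^{\vee}\otimes\cE)$. Sphericity gives $A^0=\CC\Id_{\cE}$, $A^1=0$ and $\dim A^2=1$.
\begin{itemize}
\item By \eqref{extpiu:uno}, $\Hom(\cE[n]^{+},\cE[n]^{+})\cong\Sym^nA^0$, which is one-dimensional. Hence $\cE[n]^{+}$ is simple.
\item By \eqref{extpiu:due}, $\Ext^1(\cE[n]^{+},\cE[n]^{+})\cong\Sym^{n-1}A^0\otimes A^1=0$. Hence $\cE[n]^{+}$ is rigid.
\item By \eqref{extpiu:tre}, the second summand vanishes because $\bigwedge^2A^1=0$. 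So $\Ext^2(\cE[n]^{+},\cE[n]^{+})\cong\Sym^{n-1}A^0\otimes A^2$, which is one-dimensional.
\end{itemize}

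Next comes the trace map $\Tr^2\colon\Ext^2(\cE[n]^{+},\cE[n]^{+})\to H^2(S^{[n]},\cO)$. The target is also one-dimensional, since $H^2(S^{[n]},\cO)\cong\Sym^{n-1}H^0(\cO_S)\otimes H^2(\cO_S)$ by BKR applied to $\cO_S$. I will show that $\Tr^2$ is an isomorphism by showing it is nonzero. Since $\Ext^2$ is one-dimensional, nonzero implies injective. To see it is nonzero, compose with the map $H^2(\cO)\to\Ext^2(\cE[n]^{+},\cE[n]^{+})$, $\alpha\mapsto\alpha\cdot\Id$. The composite is multiplication by the rank $r(\cE)^n\ne0$, so both maps are nonzero.

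Alternatively, Serre duality (since $K$ is trivial) identifies $\Tr^2$ with the transpose of $H^{2n-2}(\cO)\to\Ext^{2n-2}(\cE[n]^{+},\cE[n]^{+})$. Injectivity of $\Tr^2$ is what matters, so the first argument suffices. Consequently $\Ext^2(\cE[n]^{+},\cE[n]^{+})_0=H^2(S^{[n]},End^0\cE[n]^{+})=0$.

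Also $\Tr^1$ is surjective because $H^1(S^{[n]},\cO)=0$ (indeed $\Ext^1=0$ already). Theorem~\ref{thm:defcoppia} then gives smoothness of the first forgetful map in \eqref{scordarello}. Since $S^{[n]}$ is HK, it has no nonzero holomorphic vector fields, because $H^0(\Theta)\cong H^0(\Omega^1)=0$. Together with $H^2(End^0\cE[n]^{+})=0$, Theorem~\ref{thm:evvivahorikawa} gives smoothness of the second map.

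The step needing most care is checking that the identification \eqref{extpiu:tre} is compatible with the trace maps, in case one prefers that route to the rank argument above. The rank argument avoids this issue. It only needs the composition $H^2(\cO)\to\Ext^2\to H^2(\cO)$ to equal multiplication by the rank, which is standard for locally free sheaves.
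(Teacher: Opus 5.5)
Your proof is correct and takes the same route as the paper. Simplicity, rigidity and $\dim\Ext^2=1$ are read off from \eqref{extpiu:uno}--\eqref{extpiu:tre}, and then Theorem~\ref{thm:defcoppia} and Theorem~\ref{thm:evvivahorikawa} are applied. Your rank argument shows that the composite $H^2(S^{[n]},\cO)\to\Ext^2\to H^2(S^{[n]},\cO)$ is multiplication by $r(\cE)^n\neq 0$. This makes explicit the injectivity of $\Tr^2$, equivalently $H^2(S^{[n]},End^0\cE[n]^{+})=0$, which the paper's proof leaves implicit.
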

\begin{proof}
We have already mentioned that $\cE[n]^{+}$ is a  modular vector bundle. 
It is simple by~\eqref{extpiu:uno}, and rigid by~\eqref{extpiu:due}. We have  $\dim \Ext^2_{S^{[n]}}( \cE[n]^{+},\cE[n]^{+})=1$ by~\eqref{extpiu:tre}. The first map in~\eqref{scordarello} is smooth by Theorem~\ref{thm:defcoppia}, the second one is smooth by  Theorem~\ref{thm:evvivahorikawa}.
\end{proof}

\begin{expl}\label{expl:moltimoduli}
Let $S$ be a $K3$ surface. Let $\cE_1,\ldots,\cE_n$ be  sheaves on $S$, locally free, with the possible exception of one which is torsion-free. 
  The torsion-free sheaf on $S^n$ given by
\begin{equation*}
\bigoplus\limits_{\sigma\in\Sigma_n}\cE_{\sigma(1)}\boxtimes\ldots\boxtimes
\cE_{\sigma(i)}\boxtimes\ldots\boxtimes\cE_{\sigma(n)}
\end{equation*}
  has an action by $\Sigma_n$, given by pull-back. Let $\cG(\cE_1,\ldots,\cE_n)$ be the sheaf on $S^{[n]}$ corresponding to the above sheaf via the BKR equivalence. Thus
\begin{equation}\label{gieunoedue}
\cG(\cE_1,\ldots,\cE_n)=\left(\rho_{*}\tau^{*}\bigoplus\limits_{\sigma\in\Sigma_n}\cE_{\sigma(1)}\boxtimes\ldots\boxtimes
\cE_{\sigma(i)}\boxtimes\ldots\boxtimes\cE_{\sigma(n)}\right)^{\Sigma_n}. 
\end{equation}
Note that $\cG(\cE_1,\ldots,\cE_n)$  is torsion-free of rank $n!\cdot r(\cE_1)\cdot\ldots\cdot r(\cE_n)$, and that  if all the $\cE_i$'s are locally free then  
it is  locally free (because 
 $\rho$ is finite and flat by~\cite[Thm.~3.1]{haiman:polygraphs}).  If
\begin{equation}\label{giusteipotesi}
r_j c_1(\cE_i)=r_i c_1(\cE_j)\quad \forall i,j\in\{1,\ldots,n\},\ \ \text{and}
\ \  \sum_{i=1}^n  \frac{v(\cE_i)^2}{r_i^2}=0
\end{equation}
then $\cG(\cE_1,\ldots,\cE_n)$ is modular, because (see~\cite[Prop.~2.7,Rmk~2.10]{og:highdim}) 
\begin{equation}\label{discingen}
\Delta(\cG(\cE_1,\ldots,\cE_n))  =  \frac{(n!)^2 r(\cE_1)^2\cdot\ldots\cdot r(\cE_n)^2}{12} c_2(S^{[n]}),
\end{equation}
\end{expl}
\subsubsection{Modular sheaves on Lagrangian HK manifolds}
Suppose that $\pi\colon X\to B$ is a surjective map from a HK manifold of dimension $2n$ to a  projective variety $B$ such that  
$0<\dim B<\dim X$ and $\pi_{*}\cO_X=\cO_B$. For  $t\in B$ let  $X_t\coloneqq \pi^{-1}(t)$.
Then $\dim B=n$, and if $X_t$ is smooth ($t\in B$)  the following hold.
First $X_t$ is a Lagrangian abelian subvariety of $X$ (see~\cite{matsushita:hkfibrate,matsushita:hkfiblagr}), and secondly there exists (see~\cite{wieneck1}) an ample 
class $\theta_t\in \NS(X_t)$ (even if $X$ is non projective!) such that
\begin{equation}\label{classeteta}
\Im\left(H^2(X;\ZZ)\to H^2(X_t;\ZZ)\right)=\ZZ\theta_t.
\end{equation}
Lastly, if $B$ is smooth then it is isomorphic to $\PP^n$, see~\cite{hwang:basefibr}.
\begin{expl}\label{expl:essennelagr}
Let $S$ be a (projective) K3 surface with an elliptic fibration $f\colon S\to\PP^1$ (note: this does not mean that there is a section of   $f\colon S\to\PP^1$). If $n\in\NN_{+}$ then $S^{[n]}$ is a (projective) HK manifold of dimension $2n$, see Example~\ref{expl:esempihk}. Let  
\begin{equation}
\begin{matrix}
S^{[n]} & \xrightarrow{\pi} & (\PP^1)^{(n)}\cong\PP^n \\
[Z] & \mapsto & \sum_{x\in\PP^1}\left(\sum_{y\in f^{-1}(x)}l(\cO_{Z,y})\right)x
\end{matrix}
\end{equation}
If $x_1,\ldots, x_n$ are pairwise distinct, then  $\pi^{-1}(x_1+\ldots+ x_n)$ is  the abelian variety  $C_{x_1}\times\ldots\times C_{x_n}$.
\end{expl}
We also recall the notion of a semi-homogeneous vector bundle. Let  $X$ be   an abelian variety.
Let $\Aut^0(X)$ be the group of translations of $X$.  Thus $\Aut^0(X)$ is a (bona fide) abelian variety isomorphic to $X$ as algebraic variety.
 For $a\in \Aut^0(X)$, let $T_a\colon X\to X$ be the corresponding translation.  
  A vector bundle  $\cF$ on $X$ is  \emph{semi-homogeneous} (see~\cite{muksemi}) if, for every $a\in \Aut^0(X)$, there exists a line bundle $\cL$ on $X$ (which depends on $a$) such that $T_a^{*}\cF\cong \cF\otimes\cL$.   
\begin{thm}[Mukai]\label{thm:vivasemihom}
Let  $X$ be  an abelian variety. 
\begin{enumerate}
\item
If $\cF$ is  a simple semi-homogeneous  vector bundle on $X$ then $H^1(X,End^0(\cF))=0$. 
\item
If $\cF,\cG$ are simple semi-homogeneous  vector bundles on $X$ with $c_1(\cF)/r(\cF)=c_1(\cG)/r(\cG)$ (in $\NS(X)_{\QQ}$), then 
there exists a line bundle $\cL$ on $X$ such that $\cF\cong\cG\otimes\cL$.
\end{enumerate}
\end{thm}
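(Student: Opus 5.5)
Both items rest on Mukai's structure theory of semi-homogeneous bundles, and I would prove them in order, using (1) in the proof of (2). The basic input is Mukai's characterization: a simple vector bundle $\cF$ on an abelian variety $X$ is semi-homogeneous if and only if there exist an isogeny $\phi\colon Y\to X$ and a line bundle $\cM$ on $Y$ with $\cF\cong\phi_{*}\cM$. Equivalently, there is an isogeny $\psi\colon Z\to X$ with $\psi^{*}\cF\cong\cN^{\oplus r}$ for some line bundle $\cN$ on $Z$.

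For Item~(1) I would work with the second description. Pulling back along $\psi$ gives
\begin{equation*}
\psi^{*}End^0(\cF)\cong \cO_Z\otimes \mathfrak{sl}_r.
\end{equation*}
Since $\psi$ is finite étale, $H^1(X,End^0\cF)$ is a direct summand of $H^1(Z,\psi^{*}End^0\cF)$: it is the invariant part under the Galois group $K=\ker\psi$. The $K$-action on $\CC^r$ is projective, through a Heisenberg-type group, and $\cF$ is simple exactly when this representation is irreducible. Because $K$ acts on $H^1(Z,\cO_Z)$ trivially (translations act trivially on cohomology), we get
\begin{equation*}
H^1(X,End^0\cF)\cong H^1(Z,\cO_Z)\otimes(\mathfrak{sl}_r)^K.
\end{equation*}
Irreducibility and Schur's lemma give $(\mathfrak{sl}_r)^K=0$, which proves Item~(1).

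An alternative route to Item~(1) uses the orbit map $a\mapsto T_a^{*}\cF\otimes\cL_a^{-1}$. Semi-homogeneity says that translation deformations are absorbed by line bundle twists. Hence the deformation space of $\cF$ has dimension $g=\dim X$, which equals $h^1(\cO_X)$, the contribution of the trace part. This forces the traceless part to vanish.

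For Item~(2) I would reduce to equal rank and equal $c_1$ by twisting. Both $\cF$ and $\cG$ become trivial-type after a common isogeny, with pull-backs $\cN^{\oplus r}$ and $\cN'^{\oplus s}$. The classes $c_1(\cN)$ and $c_1(\cN')$ equal the pull-backs of the equal slopes, so $\cN'\cong\cN\otimes P$ with $P\in\Pic^0$. Mukai shows that the slope $\delta=c_1/r$ determines the minimal rank of a simple semi-homogeneous bundle with that slope, so $r=s$. I would then consider $\Hom(\cF,\cG\otimes\cL)$ as $\cL$ ranges over $\Pic^0(X)$. The Euler characteristic $\chi(\cF^{\vee}\otimes\cG)$ vanishes here, since the classes agree up to $\Pic^0$, so it gives no information. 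Instead I would pull back to $Z$: there $\Hom(\psi^{*}\cF,\psi^{*}\cG\otimes\psi^{*}\cL)\ne0$ for the $\cL$ with $\psi^{*}\cL\cong P^{-1}$. Descending a nonzero map requires an equivariance argument over $K$, which I would handle by twisting $\cL$ by a character of $K$. Finally, a nonzero map between simple semi-homogeneous bundles of the same slope and rank is an isomorphism: both are $\mu$-stable (Mukai), so such a map must be an isomorphism.

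I expect the main obstacle to be these structure results, namely the isogeny description and the fact that the slope determines the rank. In a survey I would cite them directly from Mukai~\cite{muksemi} rather than reprove them.
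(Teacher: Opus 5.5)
\paragraph{Comparison with the paper.} Your proposal is correct in its main lines, but it follows a different route. The paper gives no argument of its own: it quotes Mukai, Thm.~5.8 for Item~(1) and Prop.~6.17 for Item~(2).

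You instead take the isogeny description $\psi^{*}\cF\cong\mathcal{N}^{\oplus r}$ as the essential input and derive both items by Galois descent along $\psi$. This works.

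\paragraph{Item~(1).} You get $H^{i}(X,\mathrm{End}^0\cF)\cong H^{i}(Z,\cO_Z)\otimes(\mathfrak{sl}_r)^K$ for every $i$. Simplicity alone gives $(\mathfrak{gl}_r)^K=\Hom_X(\cF,\cF)=\CC$, hence $(\mathfrak{sl}_r)^K=0$, so you do not need irreducibility here. You in fact get vanishing in all degrees, not only $i=1$.

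\paragraph{Item~(2).} The descent step you leave vague is just the projection formula. Since $\psi_{*}\cO_Z\cong\bigoplus_{\xi\in\ker\hat\psi}P_\xi$, we have
\[
0\neq\Hom_Z\bigl(\psi^{*}\cF,\psi^{*}(\cG\otimes\cL)\bigr)\cong\bigoplus_{\xi\in\ker\hat\psi}\Hom_X(\cF,\cG\otimes\cL\otimes P_\xi),
\]
so some summand is non-zero.

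You do not need to cite that the slope determines the rank. As phrased (``determines the minimal rank'') that citation does not yield $r=s$ anyway. A non-zero map between $\mu$-stable bundles of equal slope is injective. Stability of the target then forces equal ranks, and $\det$ shows the map is an isomorphism.

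The $\mu$-stability itself comes from the same picture. A saturated subsheaf of $\cF$ of slope $\mu(\cF)$ pulls back to $\mathcal{N}\otimes V'$, with $V'\subset\CC^r$ a $K$-stable subspace. Irreducibility of $\CC^r$ then rules it out.

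\paragraph{What each route buys.} Your route buys transparency: both items become formal consequences of the isogeny description. It does not buy independence from Mukai. That description is among the equivalent conditions of the same Thm.~5.8 that the paper cites, so the hard input is the same.

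\paragraph{The alternative route to Item~(1).} You should drop it. The orbit map only shows that translations contribute nothing beyond the $g$-dimensional family $\cF\otimes\Pic^0(X)$. That gives a lower bound and does not rule out other first-order deformations. Moreover, knowing $\dim\Def(\cF)=g$ would bound $\dim\Ext^1_X(\cF,\cF)$ only from below, never from above.
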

\begin{proof}
Item~(1) is contained in~\cite[Thm.~5.8]{muksemi}, Item~(2) is contained in~\cite[Prop.~6.17]{muksemi}.
\end{proof}
\begin{rmk}\label{rmk:semihomedell}
Let $\dim X=1$, i.e.~$X$ is a curve of genus $1$. Then every stable vector bundle $\cF$ on $X$ is semi-homogeneous because up to isomorphism  $\cF$ is determined by $(r(\cF),\det \cF)$. Now assume that $\dim X\ge 2$. A general stable vector bundle  on $X$ is not 
semi-homogeneous. Stable semi-homogeneous vector bundles behave  as stable vector bundles on a curve of genus $1$, up to a point. A meaningful difference:  stable semi-homogeneous  vector bundles $\cF,\cG$ on $X$ such that 
\begin{equation}\label{rangodet}
r(\cF)=r(\cG),\qquad \det\cF\cong\det \cG
\end{equation}
 are not necessarily isomorphic.  In fact let $g\coloneqq\dim X$ and $r\coloneqq r(\cF)=r(\cG)$. By~\cite[Prop.~7.1]{muksemi} it follows  that, up to isomorphism,   there are  $r^{2g-2}$ distinct stable semi-homogeneous  vector bundles $\cG$ on $X$ such that~\eqref{rangodet} holds.
\end{rmk}
The result below shows that modular sheaves on HK manifolds with a  Lagrangian fibration behave very much like sheaves on K3 surfaces with an elliptic fibration. 
\begin{prp}[Prop.~2.7 in~\cite{og:fascimod}]\label{prp:ressemi}
Let  $\pi\colon X\to\PP^n$ be a   Lagrangian fibration of a HK manifold of dimension $2n$, and let $\cF$ be a modular vector bundle on $X$. Suppose that $t\in\PP^n$ is a regular value of $\pi$ and that $\cF_{|X_t}$ is $\theta_t$ (see~\eqref{classeteta}) slope-stable. Then 
$\cF_{|X_t}$  is  semi homogeneous.
\end{prp}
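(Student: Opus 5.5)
We need to show that if $\cF_{|X_t}$ is slope stable on the Lagrangian abelian fiber $X_t$, then it is semi-homogeneous. By Mukai's characterization~\cite{muksemi}, a simple vector bundle $\cG$ on an abelian variety $A$ of dimension $n$ is semi-homogeneous if and only if $\dim H^1(A,End\,\cG)=n$. Equivalently, $\cG$ is semi-homogeneous if and only if $\Delta(\cG)=0$ in $H^4(A;\QQ)$ (the Chern character is $r\,e^{c_1/r}$). The plan is to obtain the discriminant equation $\Delta(\cF_{|X_t})=0$ from modularity and then invoke the stable case of Mukai's theory.

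\textbf{Step 1: vanishing of the discriminant against powers of $\theta_t$.} By~\eqref{classeteta}, restriction $H^2(X)\to H^2(X_t)$ has image spanned by $\theta_t$. Let $f\in H^2(X)$ be $\pi^*$ of the hyperplane class. Then $f$ is isotropic for $q_X$, and $[X_t]=f^n$ (up to a positive multiple). Choose $\omega\in H^2(X)$ with $\omega_{|X_t}=\theta_t$, and put $\alpha=\omega+sf$. Expanding~\eqref{fernand}:
\begin{itemize}
\item The right-hand side is $d(\cF)(2n-3)!!\,(q_X(\omega)+2s\,b_X(\omega,f))^{n-1}$, a polynomial in $s$ of degree $n-1$.
\item The coefficient of $s^n$ on the left-hand side is $\binom{2n-2}{n}\int_X\Delta(\cF)\,f^n\,\omega^{n-2}$, which equals a positive constant times $\int_{X_t}\Delta(\cF_{|X_t})\,\theta_t^{n-2}$.
\end{itemize}
Comparing coefficients of $s^n$ gives $\int_{X_t}\Delta(\cF_{|X_t})\,\theta_t^{n-2}=0$. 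This is the key use of modularity; the whole argument is the Fujiki-type expansion with an isotropic class.

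\textbf{Step 2: upgrade to $\Delta(\cF_{|X_t})=0$.} The bundle $\cF_{|X_t}$ is $\theta_t$ slope stable, so the Bogomolov inequality (Theorem~\ref{thm:disegbog}) holds with equality. One option is the equality case: a $\theta_t$ slope stable bundle with $\int\Delta\,\theta^{n-2}=0$ is projectively flat (via Donaldson--Uhlenbeck--Yau and the Lübke equality case). Hence $\Delta(\cF_{|X_t})=0$, and $\PP(\cF_{|X_t})$ comes from a projective unitary representation of $\pi_1(X_t)$, which is abelian.

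\textbf{Step 3: semi-homogeneity.} A projectively flat bundle on a torus with stable (hence irreducible) monodromy is semi-homogeneous. Concretely, translation acts trivially on the conjugacy class of the projective representation, so $T_a^*\PP(\cG)\cong\PP(\cG)$ for every $a\in\Aut^0(X_t)$. Therefore $T_a^*\cG\cong\cG\otimes\cL_a$ for some line bundle $\cL_a$, which is the definition.

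The main obstacle is Step 2. Stability of $\cF_{|X_t}$ is used only with respect to $\theta_t$, and one must make sure the equality case of Bogomolov applies; it does, since $\theta_t$ is ample on $X_t$. If one prefers to avoid Kobayashi--Hitchin, the alternative is Mukai's criterion: show that the translation deformations of $\cF_{|X_t}$ span an $n$-dimensional space via the symplectic form, $T_tX_t\cong (T_t\PP^n)^\vee$. Step 1 itself is routine.
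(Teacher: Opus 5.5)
Your proof is correct and is essentially the argument behind the cited result (Prop.~2.7 of \cite{og:fascimod}, which the survey only quotes without proof). Isotropy of $f=\pi^{*}c_1(\cO_{\PP^n}(1))$ together with modularity kills $\int_{X_t}\Delta(\cF_{|X_t})\cdot\theta_t^{n-2}$. The equality case of the Bogomolov--L\"ubke inequality for the $\theta_t$-stable restriction then gives projective flatness, and a projectively flat bundle on an abelian variety is semi-homogeneous because translations are homotopic to the identity.

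Two small corrections. Step~1 needs $n\ge 2$; for $n=1$ the statement is the classical fact that stable bundles on elliptic curves are semi-homogeneous. Also, your opening claim that $\Delta=0$ characterizes semi-homogeneity among all simple bundles is never used and can be dropped.
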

\subsection{HK slope stability, and its variation  for modular  sheaves}
\setcounter{equation}{0}
\subsubsection{HK slope (semi)stability}
Let $X$ be a HK manifold. Recall that $b_X$ is the BBF symmetric bilinear form on $H^2(X)$, see~\eqref{eccobbf}. Let $h\in\NS(X)_{\QQ}$.   If $\cF$ is a sheaf on $X$ of positive rank the \emph{$h$ HK-slope} of $\cF$ is given by
\begin{equation}
\mu_h^{HK}(\cF)\coloneqq b_X\left(\frac{c_1(\cF)}{r(\cF)}, h\right).
\end{equation}
\begin{dfn}\label{dfn:stabnef}
Let $\cF$  be a torsion-free sheaf on $X$. Then $\cF$ is \emph{$h$ HK-slope semistable} if for all  subsheaves $\cE\subset\cF$ with $0<r(\cE)<r(\cF)$ we have $\mu_h^{HK}(\cE)\le \mu_h^{HK}(\cF)$.
If strict inequality holds for all such $\cE$ then $\cF$ is \emph{$h$  HK-slope stable}. 
\end{dfn}
Let $h\in\NS(X)_{\QQ}$. Polarizing Fujiki's formula~\eqref{formulafujiki} one gets that 
\begin{equation*}
\int_X \frac{c_1(\cF)}{r(\cF)}\cdot h^{2n-1}=c_X  \cdot(2n-1)!! \cdot b_X\left(\frac{c_1(\cF)}{r(\cF)},h\right)\cdot q_X(h)^{n-1},
\end{equation*}
where $\dim X=2n$. Suppose that $h$ is ample. Since $q_X(h)>0$ (see Subsubsection~\ref{subsubsec:accakappa}) it follows that $\cF$ is  $h$ HK-slope (semi)stable   if and only if it is $h$ slope (semi)stable. 
\begin{expl}\label{expl:hkstablag}
Let $X$ be a  HK manifold of dimension $2n$ with a Lagrangian fibration $\pi\colon X\to\PP^n$. For $t\in\PP^n$ let $X_t\coloneqq \pi^{-1}(t)$. Let 
$f\in\NS(X)$ be given by $f\coloneqq\pi^{*}c_1(\cO_{\PP^n}(1))$.  
Let $\cF$  be a torsion-free sheaf  on $X$. Then $\cF$ is $f$ HK-slope (semi)stable   if and only if  for every subsheaf $\cE\subset\cF$ with $0<r(\cE)<r(\cF)$ we have 
$\mu_{\theta_t}(\cE_t)<\mu_{\theta_t}(\cF_t)$ (respectively $\mu_{\theta_t}(\cE_t)\le\mu_{\theta_t}(\cF_t)$)  where $t\in\PP^n$ is general, 
$\cE_t=\cE_{|X_t}$, 
$\cF_t=\cF_{|X_t}$ and $\theta_t$ is the ample class appearing in~\eqref{classeteta}. This follows from Fujiki's formula~\eqref{formulafujiki}, see~\cite[Lemma~3.11]{og:fascimod}, more precisely the equalities in~(3.5.3) loc.~cit. 
\end{expl}
\begin{rmk}\label{rmk:pazzagioia}
Let $\cF$ be a torsion-free sheaf on a HK manifold $X$.
Then $\cF$  is $h$ HK-slope semistable if for all  subsheaves $\cE\subset\cF$ with 
$0<r(\cE)<r(\cF)$ we have 
$b_X(\lambda_{\cE,\cF},h)\le 0$, where $\lambda_{\cE,\cF}$ is as in~\eqref{eccolambda}, and it is $h$ HK-slope stable if strict inequality holds for all such $\cE$. 
\end{rmk}
\subsubsection{Variation of HK slope (semi)stability for modular sheaves}
The results are  analogous to those in Subsection~\ref{subsec:varstab}. Below is an analogue of the number defined by~\eqref{esmeralda}.
\begin{dfn}\label{dfn:adieffe}
Let $X$ be a HK manifold, and let $\cF$ be a modular torsion-free sheaf  on $X$.
Then
\begin{equation}\label{larissa}
{\mathsf a}(\cF):=\frac{r(\cF)^2 \cdot d(\cF) }{4c_X},
\end{equation}
where $d(\cF)$ is as in Definition~\ref{dfn:effemod}, and $c_X$ is the Fujiki constant of $X$ (see~\eqref{formulafujiki}).
\end{dfn}
Below is the analogue of Proposition~\ref{prp:lizzy}.
\begin{prp}[Prop.~3.10 of~\cite{og:fascimod}]\label{prp:lizzydue}
Let $(X,h)$ be a polarized HK manifold. Let $\cF$ be an $h$ slope semistable   torsion-free sheaf  on $X$. Suppose that $\cE\subset\cF$ is a subsheaf such that $0<r(\cE)<r(\cF)$ and $\mu_h(\cE)=\mu_h(\cF)$. Then 
\begin{equation*}
-{\mathsf a}(\cF)\le q_X(\lambda_{\cE,\cF})\le 0,
\end{equation*}
and the right inequality is an equality  only if $\lambda_{\cE,\cF}=0$.
\end{prp}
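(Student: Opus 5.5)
Throughout I assume, as Definition~\ref{dfn:adieffe} requires for ${\mathsf a}(\cF)$ to make sense, that $\cF$ is modular. Write $2n=\dim X$, $r=r(\cF)$, $r_1=r(\cE)$, $\lambda=\lambda_{\cE,\cF}$. The plan is to copy the proof of Proposition~\ref{prp:lizzy}. The intersection form of the surface is replaced by $b_X$ via Fujiki's formula~\eqref{formulafujiki}, and modularity of $\cF$ converts the Bogomolov bound on $\int\Delta(\cF)\cdot h^{2n-2}$ into a bound involving $d(\cF)$.

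\emph{Right inequality.} Polarizing~\eqref{formulafujiki}, as in the discussion of HK-slope, $\mu_h(\cE)=\mu_h(\cF)$ is equivalent to $b_X(\lambda,h)=0$. Since $\lambda\in\NS(X)$, I use that $b_X$ restricted to $\NS(X)_{\RR}$ has signature $(1,\rho-1)$. This follows from the signature $(3,b_2-3)$ together with the fact that the positive $3$-space contains $\RR h$ and the real part of $H^{2,0}\oplus H^{0,2}$, which is orthogonal to $\NS(X)$. Hence $h^{\bot}\cap\NS(X)_{\RR}$ is negative definite, so $q_X(\lambda)\le 0$, with equality only if $\lambda=0$.

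\emph{Reduction to a saturated subsheaf.} Let $\cE'$ be the saturation of $\cE$. Then $c_1(\cE')=c_1(\cE)+D$ with $D$ effective. Semistability and $\mu_h(\cE')\ge\mu_h(\cE)=\mu_h(\cF)$ force $D\cdot h^{2n-1}=0$, hence $D=0$ and $\lambda$ is unchanged. So we may assume $\cG\coloneqq\cF/\cE$ is torsion-free, of rank $r_2=r-r_1$. Since the slopes are equal and $\cF$ is semistable, both $\cE$ and $\cG$ are $h$ slope semistable.

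\emph{Left inequality.} The key algebraic input is the standard identity in $H^4(X;\QQ)$:
\begin{equation*}
\frac{\Delta(\cF)}{r}=\frac{\Delta(\cE)}{r_1}+\frac{\Delta(\cG)}{r_2}-\frac{\lambda^2}{r\,r_1r_2}.
\end{equation*}
It follows from additivity of $\ch$ and~\eqref{eccodisc}; verifying it is a routine but slightly fiddly computation, and it is the step I expect to require the most care. Cup it with $h^{2n-2}$ and apply Bogomolov's inequality (Theorem~\ref{thm:disegbog}) to $\cE$ and $\cG$. This gives
\begin{equation*}
r_1r_2\int_X\Delta(\cF)\,h^{2n-2}\ge-\int_X\lambda^2h^{2n-2}.
\end{equation*}

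Next I evaluate both sides. Take the coefficient of $t^2$ in Fujiki's formula applied to $h+t\lambda$, using $b_X(\lambda,h)=0$. This gives
\begin{equation*}
\int_X\lambda^2h^{2n-2}=c_X(2n-3)!!\,q_X(\lambda)\,q_X(h)^{n-1}.
\end{equation*}
By Definition~\ref{dfn:effemod},
\begin{equation*}
\int_X\Delta(\cF)h^{2n-2}=d(\cF)(2n-3)!!\,q_X(h)^{n-1}.
\end{equation*}
Dividing by the positive factor $(2n-3)!!\,q_X(h)^{n-1}$ gives $q_X(\lambda)\ge -r_1r_2\,d(\cF)/c_X$. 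Finally, $r_1r_2\le r^2/4$ and $d(\cF)\ge0$; the latter follows from Bogomolov applied to $\cF$ itself. Together these yield $q_X(\lambda)\ge -r^2d(\cF)/(4c_X)=-{\mathsf a}(\cF)$.
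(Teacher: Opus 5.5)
Your proof is correct and follows the route the paper intends. It transports the proof of Proposition~\ref{prp:lizzy}: the signature of $b_X$ on $\NS(X)_{\RR}$ replaces the Hodge index theorem, and Bogomolov's inequality for $\cE$ and the saturated quotient $\cF/\cE$ is combined with Fujiki's formula and modularity of $\cF$. You are right to make modularity an explicit hypothesis, since ${\mathsf a}(\cF)$ is only defined for modular sheaves. I checked your discriminant identity, the computation $\int_X\lambda^2h^{2n-2}=c_X(2n-3)!!\,q_X(\lambda)\,q_X(h)^{n-1}$, and the final bound using $r_1r_2\le r^2/4$ and $d(\cF)\ge 0$, and all three hold.
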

\begin{rmk}
 The condition of modularity is essential  for the proof of Proposition~\ref{prp:lizzydue}.
\end{rmk}
Below are the analogues of Definition~\ref{dfn:poladatta}, Remark~\ref{rmk:fratelli}, and Proposition~\ref{prp:paragonestab}. Once 
Proposition~\ref{prp:lizzydue} has been proved, they follow by arguments similar to those given for surfaces.
\begin{dfn}\label{dfn:helen}
Let $X$ be a HK manifold, and let  $h_0\in\Nef(X)_{\QQ}$. 
Let  ${\mathsf a}> 0$. An ample class $h\in\Amp(X)_{\QQ}$ is \emph{${\mathsf a}$-suitable for $h_0$}  if  for all
 $\lambda\in \NS(X)$  such that $ -{\mathsf a}\le q_X(\lambda)<0$ one of the following holds:
\begin{enumerate}
\item
$b_X(\lambda,h)>0$ and $b_X(\lambda,h_0)\ge 0$.
\item
 $b_X(\lambda,h)=0$  and $b_X(\lambda,h_0)= 0$.
\item
 $b_X(\lambda,h)<0$  and $b_X(\lambda,h_0)\le 0$.
\end{enumerate}
\end{dfn}
\begin{rmk}
   Let $X$ be a projective HK manifold, and let ${\mathsf a}> 0$. 
Let $h_0,h\in\NS(X)_{\QQ}$ with $h_0$  nef and $h$ ample. Thus $Nh_0+h$ is ample for any positive integer $N$. 
If $N$ is large  then $Nh_0+h$ is ${\mathsf a}$-suitable for $h_0$. If $q_X(h_0)>0$ this follows from the analogue of the wall-and-chamber decomposition of  $\Amp(X)$ described in
Subsubsection~\ref{subsubsec:muricamere}, if $q_X(h_0)=0$
 see~\cite[Prop~5.13]{og:highdim}.
\end{rmk}
\begin{prp}\label{prp:mirren}
Let $X$ be a projective HK manifold, and let  $h_0\in\NS(X)_{\QQ}$ be nef. 
Let  ${\mathsf a}> 0$, and let $h\in\NS(X)_{\QQ}$  be ample and ${\mathsf a}$-suitable for $h_0$. 
 Suppose that  $\cF$ is  a torsion-free  sheaf  on $X$ which  is
  $h_0$ slope semistable  but not $h$ slope stable, and
   that ${\mathsf a}(\cF)\le {\mathsf a}$.  
    Then there exists a subsheaf  $\cE\subset \cF$ such that 
\begin{equation}
0<r(\cE)<r(\cF),\qquad b_X(\lambda_{\cE,\cF}, h)\ge 0,\qquad b_X(\lambda_{\cE,\cF}, h_0)= 0.
\end{equation}
\end{prp}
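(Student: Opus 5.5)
We will mimic the surface argument (Proposition~\ref{prp:paragonestab}), using Proposition~\ref{prp:lizzydue} in place of Proposition~\ref{prp:lizzy} and the suitability conditions of Definition~\ref{dfn:helen}. We may assume $\cF$ is not $h_0$ slope stable with respect to a destabilizer of the required kind; more precisely, since $\cF$ is not $h$ slope stable, there is a saturated subsheaf $\cE\subset\cF$ with $0<r(\cE)<r(\cF)$ and $b_X(\lambda_{\cE,\cF},h)\ge 0$ (Remark~\ref{rmk:pazzagioia}, using that $q_X(h)>0$ so HK-slope and slope agree). Since $\cF$ is $h_0$ semistable, $b_X(\lambda_{\cE,\cF},h_0)\le 0$. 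If equality holds we are done, so the task is to find some destabilizer with $b_X(\lambda,h_0)=0$.

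\textbf{Interpolation step.} Consider the segment $h_t\coloneqq (1-t)h_0+th$, $t\in[0,1]$; for $t>0$ these classes are ample. For each subsheaf $\cE$ the function $t\mapsto b_X(\lambda_{\cE,\cF},h_t)$ is affine. Let $t_0\coloneqq\sup\{t\in[0,1] : \cF \text{ is } h_t \text{ slope semistable}\}$. By boundedness of the set of classes $c_1(\cE)$ of saturated subsheaves that could destabilize along the segment (Grothendieck's lemma: slopes of subsheaves with respect to classes in a compact set of the nef cone are bounded above, and only finitely many values of $\lambda_{\cE,\cF}$ are relevant), the semistability locus is closed, so $\cF$ is $h_{t_0}$ semistable. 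If $t_0=1$ then $\cF$ is $h$ semistable but not $h$ stable, and there is $\cE$ with $b_X(\lambda_{\cE,\cF},h)=0$; otherwise $t_0<1$ and by finiteness there is $\cE$ with $b_X(\lambda_{\cE,\cF},h_{t_0})=0$ and $b_X(\lambda_{\cE,\cF},h_t)>0$ for $t>t_0$, in particular $b_X(\lambda_{\cE,\cF},h)>0$. In either case we have $\cE$ with $b_X(\lambda,h)\ge0$ which is $h_{t_0}$-slope-equal to $\cF$, where $\cF$ is $h_{t_0}$ semistable.

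\textbf{Applying modularity.} If $t_0>0$, $h_{t_0}$ is ample, so Proposition~\ref{prp:lizzydue} gives $-{\mathsf a}\le -{\mathsf a}(\cF)\le q_X(\lambda)\le 0$, with $q_X(\lambda)=0$ only if $\lambda=0$ (then all pairings vanish and we are done). If $q_X(\lambda)<0$, suitability applies: case (1) gives $b_X(\lambda,h_0)\ge0$, and combined with $b_X(\lambda,h_{t_0})=0$ and affinity, $b_X(\lambda,h)>0$ would force $b_X(\lambda,h_0)<0$ unless $t_0=1$, a contradiction when $t_0<1$; when $t_0=1$, $b_X(\lambda,h)=0$ gives case (2), $b_X(\lambda,h_0)=0$. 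Similarly the case $t_0<1$ with $b_X(\lambda,h)>0$ gives, via case (1), $b_X(\lambda,h_0)\ge 0$ while affinity gives $b_X(\lambda,h_0)<0$, contradiction. Hence only configurations with $b_X(\lambda,h_0)=0$ survive. If $t_0=0$, $\cF$ is $h_0$ semistable and destabilized by $\cE$ for all small $t>0$, with $b_X(\lambda,h_0)=0$ directly.

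\textbf{Main obstacle.} The delicate point is the case $t_0=0$ and, more generally, when $h_0$ is not big ($q_X(h_0)=0$, e.g.\ Lagrangian fibrations): Proposition~\ref{prp:lizzydue} is stated for ample polarizations, and the Bogomolov-type bound for $h_0$ itself is not available. Here I would instead apply Proposition~\ref{prp:lizzydue} at $h_t$ for small $t>0$ where $\cF$ is semistable (if any), or else refine the argument by taking $\cE$ to be the maximal destabilizing subsheaf for $h_t$, $t\to0^+$, which stabilizes by finiteness, and is $h_0$ slope-equal to $\cF$; one then needs a Bogomolov bound for its Harder–Narasimhan factors, as in~\cite[Prop.~5.9]{og:highdim}, using modularity to express $\int\Delta\cdot h_t^{2n-2}$ through $q_X(h_t)^{n-1}$.
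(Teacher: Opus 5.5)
Your skeleton is the standard wall-crossing argument along $h_t=(1-t)h_0+th$, with Proposition~\ref{prp:lizzydue} replacing Bogomolov's inequality. This is the surface argument the paper points to, and your case analysis is right:
\begin{itemize}
\item $t_0=1$ is settled by case~(2) of Definition~\ref{dfn:helen};
\item $0<t_0<1$ is excluded by case~(1);
\item $t_0=0$ gives the conclusion directly.
\end{itemize}
There is a small slip: ``unless $t_0=1$'' should read ``unless $t_0=0$''.

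The one step with real content is not justified by what you wrote. For $t_0<1$ you need a \emph{single} $\cE$ with $b_X(\lambda_{\cE,\cF},h_{t_0})=0$ and $b_X(\lambda_{\cE,\cF},h_t)>0$ for all $t>t_0$. An upper bound on slopes gives no finiteness. Grothendieck's lemma needs a fixed ample class, and when $h_0$ is not ample the classes $h_t$, $t\to 0$, leave every compact subset of $\Amp(X)$.

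The missing observation is that $h_0$-semistability moves everything to the ample endpoint. If $b_X(\lambda_{\cE,\cF},h_t)>0$ for some $t\in(0,1]$, then $b_X(\lambda_{\cE,\cF},h_0)\le 0$ forces $b_X(\lambda_{\cE,\cF},h)>0$, i.e.~$\mu_h(\cE)>\mu_h(\cF)$. Saturating $\cE$ only increases $b_X(\lambda_{\cE,\cF},h_t)$, because $b_X(D,h_t)\ge 0$ for $D$ effective and $h_t$ nef. Grothendieck's lemma for the single ample class $h$ then leaves finitely many possible $\lambda_{\cE,\cF}$.

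Pigeonholing along $t_k\downarrow t_0$ gives one $\lambda$ with $b_X(\lambda,h_{t_k})>0$ for all $k$. By continuity $b_X(\lambda,h_{t_0})\ge 0$, and by $h_{t_0}$-semistability it equals $0$; being affine in $t$, $b_X(\lambda,h_t)>0$ for all $t>t_0$. For $0<t_0<1$ this also shows $t_0=-b_X(\lambda,h_0)/(b_X(\lambda,h)-b_X(\lambda,h_0))\in\QQ$. So Proposition~\ref{prp:lizzydue} is applied to a genuine rational polarization.

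With that fix the proof is complete, and your ``main obstacle'' paragraph misplaces the difficulty. No Bogomolov-type bound at $h_0$, nor for Harder--Narasimhan factors, is ever needed. Proposition~\ref{prp:lizzydue} is used only at ample classes $h_{t_0}$ with $t_0>0$. At a non-big $h_0$ the only subtlety is the finiteness above, and it comes from $h_0$-semistability, not from modularity. As a result your argument treats $q_X(h_0)>0$ and $q_X(h_0)=0$ uniformly, whereas the paper refers to separate surface arguments for the two cases.
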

Proposition~\ref{prp:mirren} gives a wall-and-chamber decomposition of $\Amp(X)_{\RR}$ (recall Remark~\ref{rmk:pazzagioia}) with properties analogous to those 
of the wall-and-chamber decomposition discussed in Subsubsection~\ref{subsubsec:muricamere}, see~\cite[Prop.~3.4]{og:fascimod}. 
It also gives the following analogue of Proposition~\ref{prp:fvsupfib}.
\begin{prp}[Prop.~5.9 of~\cite{og:highdim}]\label{prp:lagstab}
Let $\pi\colon X\to\PP^n$ be a  Lagrangian fibration of a projective HK manifold, and let $f\coloneqq \pi^{*}c_1(\cO_{\PP^n}(1))$. Let 
${\mathsf a}>0$, and let $h\in\NS(X)_{\QQ}$ 
 be ample and ${\mathsf a}$-suitable for $f$. Let  $\cF$ be  a torsion-free  modular sheaf  on $X$ such that ${\mathsf a}(\cF)\le {\mathsf a}$.  
\begin{enumerate}
\item[(a)]
 Suppose that $\cF$ is not $h$-slope stable, and that   $\cF_t$ is $\theta_t$ slope semistable for general $t\in\PP^n$. Then there exists a subsheaf  $\cH\subset \cF$ with $0<r(\cH)<r(\cF)$ such that $\mu_h(\cH)\ge\mu_h(\cF)$ and 
 $\mu_{\theta_t}(\cH_t)=\mu_{\theta_t}(\cF_t)$ for a general regular value $t\in\PP^n$ ($\theta_t$ as in~\eqref{classeteta}).
\item[(b)]
If  $\cF$ is $h$-slope stable, then $\cF_t$ is $\theta_t$ slope semistable 
 for a general  regular value $t\in\PP^n$. 
\end{enumerate}
\end{prp}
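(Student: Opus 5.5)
The plan is to imitate the proof of Proposition~\ref{prp:fvsupfib}. The role of the fiber class $\cl(C)$ is played by the nef class $f$, and the dictionary between $f$ HK-slope (semi)stability and slope (semi)stability on a general fiber is Example~\ref{expl:hkstablag}.

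For Item~(a), suppose $\cF$ is not $h$-slope stable and $\cF_t$ is $\theta_t$ slope semistable for general $t$.
\begin{enumerate}
\item For any subsheaf $\cE\subset\cF$ with $0<r(\cE)<r(\cF)$ and general $t$, we get $\mu_{\theta_t}(\cE_t)\le\mu_{\theta_t}(\cF_t)$.
\item By Example~\ref{expl:hkstablag} (the Fujiki-formula computation in \cite[Lemma~3.11]{og:fascimod}), $b_X(\lambda_{\cE,\cF},f)$ is a positive multiple of $r(\cE)r(\cF)\bigl(\mu_{\theta_t}(\cE_t)-\mu_{\theta_t}(\cF_t)\bigr)$. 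Hence $\cF$ is $f$ HK-slope semistable. Since $f$ is nef, this is the $h_0=f$ case of the hypothesis of Proposition~\ref{prp:mirren}.
\item Since ${\mathsf a}(\cF)\le{\mathsf a}$ and $h$ is ${\mathsf a}$-suitable for $f$, Proposition~\ref{prp:mirren} gives $\cH\subset\cF$ with $0<r(\cH)<r(\cF)$, $b_X(\lambda_{\cH,\cF},h)\ge0$ and $b_X(\lambda_{\cH,\cF},f)=0$.
\item The first inequality is $\mu_h(\cH)\ge\mu_h(\cF)$, because HK-slope and slope agree up to a positive factor for ample $h$. The equality translates, by the dictionary above, into $\mu_{\theta_t}(\cH_t)=\mu_{\theta_t}(\cF_t)$ for general regular $t$.
\end{enumerate}
One caveat: Proposition~\ref{prp:mirren} is stated only with the surface-style proof indicated. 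If it is not available as stated for $h_0=f$ with $q_X(f)=0$, I would reprove it. The argument moves along the segment $h_s=(1-s)f+sh$ and takes the first $s$ at which $\cF$ stops being $h_s$-stable. This produces a destabilizing $\cH$ with $b_X(\lambda,h_s)=0$. Proposition~\ref{prp:lizzydue} applied at $h_s$ gives $-{\mathsf a}\le q_X(\lambda)<0$ or $\lambda=0$, and suitability then forces $b_X(\lambda,f)=0$. This requires Proposition~\ref{prp:lizzydue} for the nef-and-big classes $h_s$ with $s>0$, which are ample, so it is fine.

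For Item~(b), argue by contradiction, following the second half of the proof of Proposition~\ref{prp:fvsupfib}.
\begin{enumerate}
\item Suppose $\cF$ is $h$-slope stable but $\cF_t$ is $\theta_t$-unstable for general regular $t$.
\item Take the relative Harder--Narasimhan filtration with respect to $\pi$, as in \cite[Thm.~2.3.2]{huy-lehn:librofasci}. Its first term is a saturated subsheaf $\cE\subset\cF$ with $0<r(\cE)<r(\cF)$ and $\mu_{\theta_t}(\cE_t)>\mu_{\theta_t}(\cF_t)$ for general $t$.
\item The dictionary gives $b_X(\lambda_{\cE,\cF},f)>0$.
\item If $-{\mathsf a}\le q_X(\lambda_{\cE,\cF})<0$, Definition~\ref{dfn:helen} forces $b_X(\lambda_{\cE,\cF},h)>0$. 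This contradicts $h$-stability by Remark~\ref{rmk:pazzagioia}.
\end{enumerate}

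The main obstacle is the range of $q_X(\lambda_{\cE,\cF})$ in step~4, because $\cE$ need not satisfy the hypotheses of Proposition~\ref{prp:lizzydue}. I would handle it by again walking from $h$ towards $f$ along $h_s$. On one side $b_X(\lambda_{\cE,\cF},h)<0$ by stability; on the other $b_X(\lambda_{\cE,\cF},f)>0$. So there is a first wall $h_{s_0}$ at which $\cF$ becomes strictly $h_{s_0}$-semistable, because $b_X(\lambda_{\cE,\cF},h_s)$ must cross zero. At that wall Proposition~\ref{prp:lizzydue} applies to a destabilizing subsheaf $\cH$. It gives $-{\mathsf a}\le q_X(\lambda_{\cH,\cF})<0$ and $b_X(\lambda_{\cH,\cF},h_{s_0})=0$, with $b_X(\lambda_{\cH,\cF},h)<0$ by $h$-stability. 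Linearity in $s$ then forces $b_X(\lambda_{\cH,\cF},f)>0$. Since suitability requires $b_X(\lambda,h)<0\Rightarrow b_X(\lambda,f)\le0$, this is the desired contradiction.
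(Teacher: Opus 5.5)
Your proof is correct and follows the route the paper indicates. Part (a) is Proposition~\ref{prp:mirren} with $h_0=f$, read through the fiber-slope dictionary of Example~\ref{expl:hkstablag}, and part (b) is the relative Harder--Narasimhan argument of Proposition~\ref{prp:fvsupfib}. In (b), your walk along $h_s=(1-s)f+sh$ to the first wall, where Proposition~\ref{prp:lizzydue} gives $-{\mathsf a}\le q_X(\lambda_{\cH,\cF})<0$, is exactly what makes the appeal to suitability legitimate: the fiberwise destabilizing $\cE$ need not satisfy that bound (a point the surface template passes over), and the only thing you leave implicit is the routine local finiteness of walls along the ample part of the segment.
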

\begin{crl}\label{crl:lagstab}
Let $\pi\colon X\to\PP^n$ be a  Lagrangian fibration of a  projective HK manifold. Let ${\mathsf a}>0$, and let
 $h\in\NS(X)_{\QQ}$ 
 be ample and ${\mathsf a}$-suitable. Let  $\cF$ be  a torsion-free  modular sheaf  on $X$ such that 
 ${\mathsf a}(\cF)\le {\mathsf a}$.  
If $\cF_t$ is  $\theta_t$ slope stable for a general regular value  $t\in\PP^n$
then  $\cF$ is $h$ slope stable. 
\end{crl}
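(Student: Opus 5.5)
We argue by contradiction, deducing the statement from Proposition~\ref{prp:lagstab}(a). Here ``${\mathsf a}$-suitable'' is understood as ${\mathsf a}$-suitable for $f\coloneqq\pi^{*}c_1(\cO_{\PP^n}(1))$, as in Proposition~\ref{prp:lagstab}.

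Suppose that $\cF_t$ is $\theta_t$ slope stable for a general regular value $t$, but that $\cF$ is not $h$ slope stable. Stability of $\cF_t$ implies semistability, so the hypotheses of Proposition~\ref{prp:lagstab}(a) hold. Hence there is a subsheaf $\cH\subset\cF$ with $0<r(\cH)<r(\cF)$ such that
\[
\mu_{\theta_t}(\cH_t)=\mu_{\theta_t}(\cF_t)
\]
for a general regular value $t\in\PP^n$.

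Next we pass from $\cH$ to a subsheaf of $\cF_t$ on a fiber. First we may choose $t$ general enough for the following to hold simultaneously:
\begin{itemize}
\item $X_t$ avoids the singular loci of $\cF$ and of $\cH$ in codimension;
\item $\cH_t\to\cF_t$ is injective.
\end{itemize}
Injectivity is possible because $\cF/\cH$ is flat over a dense open subset of $\PP^n$, so by generic flatness $\mathrm{Tor}_1(\cF/\cH,\cO_{X_t})=0$ for general $t$. Then $\cH_t$ is a nonzero subsheaf of $\cF_t$ with
\[
0<r(\cH_t)=r(\cH)<r(\cF)=r(\cF_t).
\]
Its slope equals that of $\cF_t$, contradicting $\theta_t$ slope stability of $\cF_t$.

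The only delicate step is ensuring that restriction to a general fiber preserves the inclusion, torsion-freeness, and the ranks. Ranks are clearly preserved, since $X_t$ meets the open set where both sheaves are locally free. The injectivity requires the generic flatness argument above. One might also worry that $\mu_{\theta_t}(\cH_t)$ in Proposition~\ref{prp:lagstab} is computed for the restriction rather than for its image; with injectivity these coincide. So the corollary reduces directly to Proposition~\ref{prp:lagstab}(a), and I expect no real obstacle beyond this bookkeeping.
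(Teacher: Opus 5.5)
Your proof is correct and is essentially the paper's argument: the paper gives no separate proof, treating the corollary as an immediate consequence of Proposition~\ref{prp:lagstab}(a). The subsheaf $\cH$ produced there restricts, on a general regular fiber, to a subsheaf of $\cF_t$ of intermediate rank and equal $\theta_t$-slope, which contradicts stability of $\cF_t$. Your generic-flatness bookkeeping is sound: $\pi$ is smooth near $X_t$ for a regular value $t$, so $\mathrm{Tor}_1^{\cO_X}(\cF/\cH,\cO_{X_t})$ vanishes where $\cF/\cH$ is flat over $\PP^n$, and this correctly gives injectivity of $\cH_t\to\cF_t$.
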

\subsection{(Projectively) hyperholomorphic vector bundles}\label{subsec:fibrativerb}
\setcounter{equation}{0}
\subsubsection{Twistor families of HK manifolds}
Let $X$ be a HK manifold of dimension $2n$, with K\"ahler class  $\omega\in H^{1,1}_{\RR}(X)$. One associates to  $(X,\omega)$ a \emph{twistor family} $f\colon \cX(\omega)\to\PP^1$ of complex manifolds (i.e.~$f$ is a smooth submersive proper map of complex manifolds) with the following properties: 
\begin{enumerate}
\item[(a)]
The fiber $X_0\coloneqq f^{-1}(0)$ is isomorphic to $X$.
\item[(b)]
The composition  of Kodaira-Spencer and contraction with a holomorphic symplectic form
\begin{equation}
\Theta_{\PP^1}(0)\xrightarrow{\ks} H^1(X,\Theta_X)\overset{\overset{\sigma}{\sim}}{\lra} H^1(X,\Omega_X),
\end{equation}
 has image generated by $\omega$.
\end{enumerate}
Twistor families are fundamental objects in the theory of HK manifolds (\cite{huy:hkbasic,huy:hkbasicerratum,huy:conokahler,huy:globtor,verb:globtor}). 
\subsubsection{(Projectively) hyperholomorphic vector bundles}\label{subsubsec:iperolom}
Below we work in the category of analytic spaces. In particular vector bundles are holomorphic, and sheaves are analytic and coherent. 
Let $X$ be a compact K\"ahler  manifold of dimension $d$, with K\"ahler class $\omega$.  The $\omega$-\emph{slope of} a (non-zero) torsion-free sheaf 
$\cF$ on $X$
 is given by the formula in~\eqref{pendenza} with $c_1(L)$ replaced by $\omega$:
\begin{equation}
\mu_{\omega}(\cF)\coloneqq
\int_X\frac{c_1(\cF) \wedge \omega^{d-1}}{r(\cF)}.
\end{equation}
Let $\cF$ be a (non-zero) torsion-free sheaf 
 on $X$. Then $\cF$ is $\omega$-\emph{slope semistable} if for all (non-zero) torsion-free subsheaves $\cE\subset\cF$ one has 
$\mu_{\omega}(\cE)\le \mu_{\omega}(\cF)$, and it is $\omega$-\emph{slope stable} if strict inequality holds for all torsion-free subsheaves 
$\cE\subset\cF$ with $r(\cE)<r(\cF)$. We say that $\cF$ is $\omega$-\emph{slope-polystable} if $\cF=\cF_1\oplus\ldots\oplus \cF_a$ where each $\cF_i$ is $\omega$-slope stable, and $\mu_{\omega}(\cF_1)=\ldots=\mu_{\omega}(\cF_a)$. 

Let $X$ be a complex manifold. 
A  $\PP^r$-bundle on $X$ is a morphism of complex manifolds $\cP\to X$ which locally (in the classical topology)  is isomorphic to the projection $\PP^r\times U\to U$, where $U\subset X$ is open. If $\cF$ is a vector bundle of rank $r+1$ on $X$ then the projectivization $\PP(\cF)$ is a  $\PP^r$-bundle on $X$. 
\begin{dfn}
Let $X$ be a HK manifold,  with a chosen  K\"ahler class $\omega$. Let $f\colon\cX(\omega)\to\PP^1$ be the twistor family associated to $(X,\omega)$. 
\begin{enumerate}
\item
A  vector bundle $\cF$ on $X$ is $\omega$-\emph{hyperholomorphic} if there exists a holomorphic vector bundle $\sF$ on $\cX(\omega)$ such that $\sF_{|X_0}\cong\cF$.
\item
A  $\PP^r$-bundle $\cP$ on $X$ is $\omega$-\emph{projectively hyperholomorphic} if there exists a holomorphic $\PP^r$-bundle $\sP$ on 
$\cX(\omega)$ such that $\sP_{|X_0}\cong\cP$.
\item
A  vector bundle $\cF$ on $X$ is $\omega$-\emph{projectively hyperholomorphic} if $\PP(\cF)$ is projectively hyperholomorphic.
\end{enumerate}
\end{dfn}
Let $\cF$ be an $\omega$-hyperholomorphic vector bundle  on $X$.  Then the Chern classes $c_p(\cF)\in H^{p,p}_{\ZZ}(X)$ remain  Hodge classes on all twistor deformations $X_t\coloneqq f^{-1}(t)$. In other words, a necessary condition for $\cF$ to be  hyperholomorphic is that the Chern classes of $\cF$ remain Hodge classes on all $\omega$-twistor deformations. There is a similar necessary condition for a $\PP^r$-bundle  $\pi\colon \cP\to X$ to be  $\omega$-projectively hyperholomorphic. In fact suppose that $\Pi\colon \sP\to\cX(\omega)$ is the extension of $\cP$ to the twistor family. Then  the push-forward 
$\Pi_{*}\Theta_{\sP/\cX}$ of the vertical tangent bundle  is a vector bundle on $\cX(\omega)$ which extends  the push-forward  
$\pi_{*}\Theta_{\cP/\cX}$; hence the Chern classes of $\pi_{*}\Theta_{\cP/\cX}$ remain Hodge classes on all $\omega$-twistor deformations $X_t$. 
\begin{expl}\label{expl:discetang}
Let  $\cF$ be a vector bundle on $X$, and let $\cP=\PP(\cF)$. Then $\pi_{*}\Theta_{\cP/\cX}\cong End^0\cF$ is the vector bundle of traceless endomorphisms of $\cF$. In particular we get that $c_2(\pi_{*}\Theta_{\cP/\cX})=\Delta(\cF)$. 
\end{expl}
A Theorem of Verbitsky states that if $\cF$ is  $\omega$-slope-polystable, then the necessary condition stated above is also sufficient for $\cF$ to be $\omega$-hyperholomorphic. Actually one needs only to check the Hodge hypothesis for $c_1(\cF)$, $c_2(\cF)$. A similar result holds for 
$\PP^r$-bundles. 
\begin{thm}[Verbitsky~\cite{verb:iperolom}]\label{thm:iperolom}
Let $X$ be a HK manifold, with K\"ahler class $\omega$, and let $f\colon \cX(\omega)\to\PP^1$ be the twistor family associated to $\omega$. 
\begin{enumerate}
\item
Let $\cF$ be an $\omega$-slope-polystable vector bundle  on $X$. If the Chern classes $c_1(\cF),c_2(\cF)$ remain  Hodge classes on 
all $\omega$-twistor deformations of $X$ then $\cF$ is $\omega$-hyperholomorphic.
\item
Let $\cP$ be a $\PP^r$-bundle  on $X$ such that  the vector bundle $\pi_{*}\Theta_{\cP/\cX}$ on $X$ is $\omega$-slope-polystable. If the Chern class $c_2(\pi_{*}\Theta_{\cP/\cX})$ remains a  Hodge class on all $\omega$-twistor deformations of $X$ then $\cP$ is $\omega$-hyperholomorphic.
\item
Let $\cF$ be an $\omega$-slope-polystable  vector bundle  on $X$.  If the discriminant $\Delta(\cF)$ remains a  Hodge class on 
all $\omega$-twistor deformations of $X$ then $\cF$ is
 projectively hyperholomorphic.
\end{enumerate}
\end{thm}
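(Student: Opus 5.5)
Verbitsky's theorem rests on the hyperk\"ahler geometry of Hermite--Einstein connections. I would therefore organize the proof around the Kobayashi--Hitchin correspondence combined with the twistor construction, treating Item~(1) as the main case and deducing Items~(2) and~(3) from it.

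For Item~(1), the first step is to invoke Donaldson--Uhlenbeck--Yau: an $\omega$-slope-polystable bundle $\cF$ carries a Hermite--Einstein metric $H$ with respect to the hyperk\"ahler metric $g$ of class $\omega$ given by Yau's theorem. Let $F_H$ be its Chern curvature; its trace-free part $F_H^0$ is primitive of type $(1,1)$. The complex structures of the twistor family are the $I_t=aI+bJ+cK$, with $a^2+b^2+c^2=1$. Hence $\cF$ extends holomorphically to $\cX(\omega)$ exactly when $F_H$ is of type $(1,1)$ for every $I_t$, i.e.\ when $F_H$ is $SU(2)$-invariant; this is the definition of a hyperholomorphic connection. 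I would then argue as follows.

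\begin{enumerate}
\item Decompose $\Lambda^2 T^*X$ under the $\mathfrak{sp}(n)\oplus\mathfrak{su}(2)$ action. The $SU(2)$-invariant $2$-forms are exactly those which are of type $(1,1)$ and primitive for every $I_t$.
\item The Hodge hypothesis on $c_1(\cF)$ and $c_2(\cF)$ translates into $\Tr F_H$ being harmonic and $SU(2)$-invariant, and into $\int_X \Tr(F_H\wedge F_H)\wedge\omega_t^{2n-2}$ being independent of $t$. Here $\omega_t$ is the K\"ahler form of $I_t$.
\item Establish a Chern--Weil/Lübke type identity. For a primitive $(1,1)$-form $\eta$, the integral $\int \Tr(\eta\wedge\eta)\wedge\omega^{2n-2}$ is a positive multiple of $\|\eta\|^2$. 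Its non-invariant components (those of type $(2,0)+(0,2)$ for some other $I_t$) contribute with the opposite sign, or with a different weight, when the same integral is computed against $\omega_{t}$.
\item Compare the integrals in the previous step for $I_0$ and for a second complex structure $I_t$. Since these integrals are topological, the comparison forces the non-invariant component of $F_H^0$ to vanish. Hence the connection is hyperholomorphic and defines a holomorphic bundle $\sF$ on $\cX(\omega)$, via the integrability of $\bar\partial$-operators along the twistor line.
\end{enumerate}

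Item~(2) is the projective analogue. The bundle $\pi_{*}\Theta_{\cP/X}$ is an $\mathfrak{sl}_{r+1}$ adjoint bundle. Work with the associated $PGL_{r+1}$-bundle: a Hermite--Einstein metric on the adjoint bundle, whose existence is given by polystability, gives a projective Hermite--Einstein connection. Run the same curvature argument with $c_2$ of the adjoint bundle; no $c_1$ condition is needed because the adjoint bundle is trace-free and self-dual, so $c_1=0$. The hyperholomorphic connection then integrates to a holomorphic $\PP^r$-bundle on $\cX(\omega)$.

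Item~(3) follows from~(2) via Example~\ref{expl:discetang}. We have $\pi_{*}\Theta_{\PP(\cF)/X}\cong End^0\cF$, and $c_2(End^0\cF)=\Delta(\cF)$. Moreover $End^0\cF$ is polystable when $\cF$ is polystable, since tensor products of Hermite--Einstein bundles are Hermite--Einstein.

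The main obstacle is step~3: the pointwise representation-theoretic identity showing that the topological integrals detect the non-invariant part of the curvature with a definite sign. Verbitsky derives this from the $\mathfrak{so}(5)$ (or $\mathfrak{sp}(1)$) action on forms. A second technical point is that the holomorphic structures on the twistor fibres must glue to a holomorphic bundle on the total space $\cX(\omega)$, not merely fibrewise. I would handle this by exhibiting an explicit $\bar\partial$-operator on the pullback bundle over $X\times\PP^1$ and checking integrability through the invariance of the curvature.
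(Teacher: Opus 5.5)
Your outline is correct and is essentially Verbitsky's own argument, which the paper cites without reproducing. The argument runs through Donaldson--Uhlenbeck--Yau, then a comparison of the topological integrals $\int\Tr(F^0\wedge F^0)\wedge\omega_t^{2n-2}$ for two induced complex structures, where the Hodge--Riemann relations give opposite signs on the primitive $(1,1)$ and $(2,0)+(0,2)$ parts, and finally the twistor transform. The one step you assert without justification is that the Hermite--Einstein connection on the adjoint bundle $\pi_{*}\Theta_{\cP/X}$ comes from a connection on the $PGL_{r+1}$-bundle; this holds because the Lie bracket is a holomorphic section of a slope-zero Hermite--Einstein bundle and is therefore parallel.
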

Below we give a corollary of Theorem~\ref{thm:iperolom} which gives answers to some of the questions formulated in Subsection~\ref{subsec:defasci} when the sheaf is projectively hyperholomorphic and poly-slope-stable. 
First  note that, given ${\mathsf a}\in\RR_{+}$, it makes sense to define ${\mathsf a}$-generic K\"ahler class by analogy with what was done in Subsubsection~\ref{subsubsec:muricamere}, see~\cite[Def.~5.3]{og:highdim}. 
\begin{crl}[Prop.~7.2 in~\cite{og:highdim}]
Let $X$ be a HK manifold. Let $\omega$ be a K\"ahler class on $X$ which is ${\mathsf a}$-generic. 
Let $\cF$ be an $\omega$-poly-slope-stable projectively hyperholomorphic vector bundle  on $X$.  Suppose that ${\mathsf a}(\cF)\le {\mathsf a}$ and $\Delta(\cF)$ remains a  Hodge class for all deformations of $X$. Then the maps $\Def(X,\cF) \to \Def(X,\det\cF)$ and $\Def(\PP(\cF)) \to \Def(X)$ are surjective.
\end{crl}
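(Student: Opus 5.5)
The plan is to argue deformation by deformation: first extend $\PP(\cF)$ along twistor lines, and then pass from $\PP(\cF)$ back to $\cF$ using the fact that the Brauer obstruction vanishes on $\Def(X,\det\cF)$.

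\textbf{Setting up.} The base $\Def(X)$ is smooth by Bogomolov–Tian–Todorov, and $\Def(X,\det\cF)$ is the smooth hypersurface where $c_1(\cF)$ stays of type $(1,1)$ (or all of $\Def(X)$ if $c_1(\cF)=0$). Since all germs are analytic, surjectivity of a map of germs means that every small deformation $X_t$ of $X$, in the relevant locus, carries a deformation of the given object.

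\textbf{Step 1: polystability persists along twistor lines.} The point is to reach a nearby $X_t$ by a chain of twistor lines while keeping $\omega$-polystability under control. By hypothesis $\Delta(\cF)$ stays Hodge on every deformation. So Theorem~\ref{thm:iperolom}(3) applied to $(X,\omega)$ shows that $\PP(\cF)$ extends over the whole twistor family $\cX(\omega)$; in particular it extends to $X_s$ for every $s\in\PP^1$. On the fiber $X_s$, the extended bundle $End^0$ carries the restricted hyperholomorphic (Hermite–Einstein) connection. It is therefore polystable with respect to the Kähler class $\omega_s$ induced by the hyperkähler metric.

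To iterate, pick a new Kähler class on $X_s$ and run a new twistor line through it. Here ${\mathsf a}$-genericity enters. The walls for sheaves with ${\mathsf a}(\cF)\le{\mathsf a}$ are the hyperplanes $\lambda^\perp$ with $-{\mathsf a}\le q_X(\lambda)<0$, $\lambda$ integral of type $(1,1)$; this is the Kähler analogue of Proposition~\ref{prp:mirren} and the wall-and-chamber picture. They form a locally finite family. For $X_s$ near $X$ and a Kähler class near $\omega_s$, the class is still ${\mathsf a}$-generic, and stays so as long as we avoid the countably many Noether–Lefschetz loci where new classes $\lambda$ become algebraic. Polystability is then preserved (cf.\ Proposition~\ref{prp:padovagoa}). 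Because $\Def(X)$ is covered by compositions of a bounded number of twistor lines through generic Kähler classes (the standard argument, as in Huybrechts' proof of surjectivity of the period map), iterating gives an extension of $\PP(\cF)$ to every small deformation of $X$. Hence $\Def(\PP(\cF))\to\Def(X)$ is surjective; the forgetful map makes sense by~\eqref{fibratobase}.

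\textbf{Step 2: from $\PP(\cF)$ to $\cF$.} Given $t\in\Def(X,\det\cF)$, let $\cP_t$ be the extended $\PP^{r-1}$-bundle on $X_t$. Its Brauer class in $H^2(X_t,\cO^*)$ is the obstruction to lifting $\cP_t$ to a vector bundle. Along the family this class is continuous, and its topological part is $c_1(\cF)\bmod r$. On $\Def(X,\det\cF)$ the class $c_1(\cF)$ stays $(1,1)$, and $H^2(\cO_{X_t})$ pairs with it in a way forcing the analytic Brauer class to vanish; equivalently, a line bundle with first Chern class $c_1(\cF)$ exists, and twisting trivializes the gerbe. So $\cP_t=\PP(\cF_t)$ with $\det\cF_t$ deforming $\det\cF$. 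This gives surjectivity of $\Def(X,\cF)\to\Def(X,\det\cF)$.

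\textbf{Main obstacle.} The hardest step is Step 1: controlling stability as one moves from one twistor line to the next. The issue is that the induced class on $X_s$ might lie on a wall, where polystability could fail to give the right input for the next application of Theorem~\ref{thm:iperolom}. I would handle this by choosing all intermediate Kähler classes very general, so that $\NS(X_s)$ contains no classes of negative square at least $-{\mathsf a}$ other than those already present. Then the only walls are the finitely many relevant ones, and ${\mathsf a}(\cF)\le{\mathsf a}$ guarantees that hyperholomorphic polystability persists.
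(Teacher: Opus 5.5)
Your plan is essentially the argument the paper has in mind. The paper presents the statement as a corollary of Verbitsky's Theorem~\ref{thm:iperolom}: extend $\PP(\cF)$ along chains of small twistor arcs, use ${\mathsf a}$-genericity and ${\mathsf a}(\cF)\le{\mathsf a}$ to keep the extended bundle polystable for each Kähler class used, and pass from $\PP(\cF)$ to $\cF$ through the vanishing of the Brauer class when $c_1(\cF)$ stays of type $(1,1)$.

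Two points need tightening. First, the genericity step is simpler than your version and needs no avoidance of Noether--Lefschetz loci. An integral $(1,1)$-class $\lambda$ on $X_s$ with $b_{X_s}(\lambda,\omega_s)=0$ is orthogonal to the whole positive $3$-plane spanned by $\omega,\mathrm{Re}\,\sigma,\mathrm{Im}\,\sigma$, so it lies in $\NS(X)\cap\omega^{\bot}$; hence $\omega_s$ is ${\mathsf a}$-generic for every $s$. Second, the wall-crossing on $X_s$ must be run for the twisted sheaf deforming $\cF$, not for $End^0\cF$: the walls of $End^0\cF$ require a bound larger than ${\mathsf a}$.
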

Below we state a result of Verbitsky which, like the corollary above, suggests that slope-stable projectively hyperholomorphic vector bundles behave like stable sheaves on $K3$ surfaces.
\begin{thm}[Verbitsky]
Let $X$ be a HK manifold, and let $\omega$ be a K\"ahler class on $X$. 
Let $\cF$ be an $\omega$-poly-slope-stable projectively hyperholomorphic vector bundle  on $X$. The skew-symmetric pairing
\begin{equation}
\begin{matrix}
H^1(X,End\cF)\times H^1(X,End\cF) & \lra & H^2(X,\cO_X)\cong\CC \\
(\alpha,\beta) & \mapsto & \Tr(\alpha\cup\beta)
\end{matrix}
\end{equation}
is non degenerate.
\end{thm}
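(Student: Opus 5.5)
The plan is to exploit the fact that, for a projectively hyperholomorphic polystable bundle, $End\cF$ carries a hyperholomorphic connection, and then deduce non-degeneracy from a Hodge-theoretic Lefschetz-type argument along the twistor line.

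\textbf{Step 1: reduce to traceless endomorphisms.} We have $End\cF=\cO_X\oplus End^0\cF$. The summand $H^1(X,\cO_X)$ vanishes because $X$ is simply connected. Hence $H^1(X,End\cF)=H^1(X,End^0\cF)$, and it suffices to prove non-degeneracy of the trace pairing on $H^1(X,End^0\cF)$.

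\textbf{Step 2: $End^0\cF$ is hyperholomorphic.} By Example~\ref{expl:discetang}, $End^0\cF=\pi_{*}\Theta_{\cP/X}$ with $\cP=\PP(\cF)$. Since $\cF$ is $\omega$-poly-slope-stable, it carries a Hermite--Einstein metric, and hence so does $End^0\cF$. Since $\PP(\cF)$ extends to the twistor family, so does $End^0\cF$. By Verbitsky's theory the induced Chern connection on $E\coloneqq End^0\cF$ is then hyperholomorphic: its curvature is of type $(1,1)$ for every complex structure in the twistor family. It is also flat-trace and self-dual via the trace form.

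\textbf{Step 3: Hodge/Kähler package for twisted cohomology.} For a hyperholomorphic bundle $(E,\nabla)$ one has the $\mathfrak{sp}(1)$ (or $\mathfrak{so}(4,1)$) action on $E$-valued forms, together with the Kähler identities twisted by $\nabla$ (Verbitsky). The curvature is $\mathfrak{sp}(1)$-invariant, so $\nabla$-harmonic forms are preserved by this action. This yields a hard Lefschetz isomorphism for $H^*(X,E)$ with respect to the holomorphic symplectic form $\sigma$, namely
\[
L_{\bar\sigma}\colon H^{0,1}(X,E)\xrightarrow{\ \sim\ } H^{0,3}(X,E)^{\vee\vee}
\]
in the appropriate sense. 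Concretely, I would show that the map $H^1(X,E)\to H^{2n-1}(X,E)$ given by $\alpha\mapsto \alpha\wedge\bar\sigma^{n-1}$ is an isomorphism. This uses the fact that harmonic $(0,1)$-forms with values in $E$ are primitive for the $\bar\sigma$-Lefschetz $\mathfrak{sl}_2$ attached to the complex structures $J,K$.

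\textbf{Step 4: conclude via Serre duality.} Serre duality pairs $H^1(X,E)$ perfectly with $H^{2n-1}(X,E^\vee)\cong H^{2n-1}(X,E)$, using the trace form $E\cong E^\vee$ and $K_X$ trivial. Composing with the isomorphism of Step 3 shows that
\[
(\alpha,\beta)\mapsto\int_X \Tr(\alpha\cup\beta)\wedge\bar\sigma^{n-1}\wedge\sigma^n
\]
is perfect. This is, up to a nonzero constant, the pairing in the statement after identifying $H^2(X,\cO_X)\cong\CC$ via $\bar\sigma$.

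The main obstacle is Step 3: establishing the twisted hard Lefschetz for $\bar\sigma$. This requires verifying carefully that the twisted Laplacians for the three complex structures agree, and for this one uses that the curvature is $\mathfrak{sp}(1)$-invariant.
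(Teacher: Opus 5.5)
Your proof is correct. The paper gives no argument of its own here; it only refers to Verbitsky (Sect.~9) and Bottini (Sect.~7). Your route is a sound self-contained version resting on the same input, the $SU(2)$-invariance of the Hermite--Einstein curvature $\Theta_E$: pass to the hyperholomorphic bundle $E=End^0\cF$, prove $\bar\sigma^{n-1}\colon H^1(X,E)\xrightarrow{\sim}H^{2n-1}(X,E)$, and dualize. Note that Steps 1, 2 and 4 are formal. Since $E\cong E^\vee$ and $K_X\cong\cO_X$, Serre duality gives $\dim H^1(X,E)=\dim H^{2n-1}(X,E)$, so Step 3 is actually \emph{equivalent} to the theorem and carries all of the content.

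Step 3 can be settled more directly than by comparing the Laplacians of $I,J,K$.
\begin{itemize}
\item For any unitary connection one has $[L_{\bar\sigma},\bar\partial_E^{*}]=c\,\bar\partial_{J,E}$, with $c\neq 0$ a constant and $\bar\partial_{J,E}\coloneqq J^{-1}\circ\partial_E\circ J$ of bidegree $(0,1)$. This holds because $\bar\sigma$ is parallel and $\iota_{\bar v}\bar\sigma$ is a multiple of $J(\bar v^{\flat})$.
\item Hence $[L_{\bar\sigma},\Delta_{\bar\partial_E}]=c\,(\bar\partial_E\bar\partial_{J,E}+\bar\partial_{J,E}\bar\partial_E)$. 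This is the $(0,2)$-component of the zeroth-order operator $\{d_\nabla,J^{-1}d_\nabla J\}$, i.e.~of wedge product with the infinitesimal $J$-rotation of $\Theta_E$. It vanishes because $\Theta_E$ has type $(1,1)$ for $J$.
\item So $L_{\bar\sigma}$ and its adjoint preserve the harmonic $E$-valued $(0,\ast)$-forms. These form a finite-dimensional $\mathfrak{sl}_2$-module with weight $q-n$ in degree $q$, and $L_{\bar\sigma}^{n-1}$ maps weight $1-n$ isomorphically onto weight $n-1$.
\end{itemize}

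Two corrections to your Step 3. No primitivity input is needed, since $(0,1)$-forms are trivially primitive. The first display should be dropped: $L_{\bar\sigma}$ alone does not give an isomorphism $H^{0,1}\to H^{0,3}$ in general; only the statement with $\bar\sigma^{n-1}$ is right.

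An equivalent way to package the same input is the quaternionic one. The map $\alpha\mapsto J\alpha^{\dagger}$ (conjugate the form, take the adjoint endomorphism, rotate by $J$) preserves harmonic $(0,1)$-forms. Moreover $\int_X\Tr(\alpha\wedge J\alpha^{\dagger})\wedge\bar\sigma^{n-1}\wedge\sigma^{n}$ is a non-zero multiple of $\|\alpha\|^2_{L^2}$. This exhibits an explicit pairing partner for every $\alpha\neq 0$ without the $\mathfrak{sl}_2$ bookkeeping.
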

\begin{proof}
See~\cite[Sect.~9]{verb:iperolom}, and also~\cite[Sect.~7]{bottini:og10asmodspace}.
\end{proof}
It follows from the theorem above that if a moduli space of stable projectively hyperholomorphic vector bundles  on $X$ is smooth, then it has a holomorphic symplectic form.

\subsection{Atomic sheaves}\label{subsec:democrito}
\setcounter{equation}{0}
The definition of atomic sheaf (and complex) was introduced in~\cite{beckmann:atomic,markman:rank1obs}. We recall its definition, and we give a few examples. 
First we introduce the (rational) Looijenga-Lunts-Verbitsky (LLV) Lie subalgebra $\gotg(X)\subset \gotg\gotl(H(X;\QQ))$: it is generated by all the 
$\gots\gotl_2$-triples $e,h,f$ for which $e\in H^2(X;\QQ)$ has the Hard Lefschetz property, i.e.~multiplication by $e^k$ defines an isomorphism
$H^{n-k}(X;\QQ)\xrightarrow{\sim} H^{n+k}(X;\QQ)$ (here $2n=\dim X$). In~\cite{looijlunts:LLV,verbcohk} (see~\cite{soldato:hodgehk} for the version over $\QQ$) it is proved that $\gotg(X)\cong\gots\goto(4,b_2(X)-2)$. More precisely, let 
\begin{equation}\label{retmukraz}
\wt{H}(X;\QQ)\coloneqq \QQ\alpha\oplus H^2(X;\QQ)\oplus \QQ\beta
\end{equation}
with bilinear symmetric form $\wt{b}_X$ defined  by declaring that the decomposition in~\eqref{retmukraz} is orthogonal,  the restriction of 
$\wt{b}_X$ to $H^2(X;\QQ)$ equals the BBF bilinear form $b_X$, $\alpha,\beta$ are isotropic for $\wt{b}_X$, and $\wt{b}_X(\alpha,\beta)=-1$. 
The $\QQ$ vector space with bilinear symmetric form $\wt{b}_X$, or equivalently the associated quadratic form 
$\wt{q}_X$, is known as the \emph{rational Mukai lattice} of $X$.
If $X$ is a $K3$ surface, then $\wt{H}(X;\QQ)=H(X;\ZZ)\otimes\QQ$,  with bilinear form given by the Mukai pairing, and $\alpha,\beta$ are generators of $H^0(X;\ZZ),H^4(X;\ZZ)$ rspectively. The Theorem of Looijenga-Lunts-Verbitsky gives an  isomorphism
\begin{equation}\label{elle2vu}
\gotg(X)\cong \gots\goto(\wt{H}(X;\QQ)).
\end{equation}
I suggest reading~\cite{gklr:algebrallv} if you wish to see $\gotg(X)$ in action - literally.

The last ingredient needed to define atomicity is the \emph{Mukai vector}
\begin{equation}
v(F)\coloneqq \ch(F)\cdot\sqrt{\td(X)}
\end{equation}
of an object $F$ of the bounded derived category $D^b(X)$ of $X$. Since $\gotg(X)\subset \gotg\gotl(H(X;\QQ))$ the annihilator 
$\Ann_{\gotg(X)}(v(F))\subset \gotg(X)$ makes sense. Let $\wt{v}\in \wt{H}(X;\QQ)$:  
 then  $\gotg(X)$ acts on $\wt{H}(X;\QQ)$ by~\eqref{elle2vu}, and hence the annihilator 
$\Ann_{\gotg(X)}(\wt{v}(F))\subset \gotg(X)$ makes sense. 
\begin{dfn}\label{dfn:eccoatomico}
Let $X$ be   a projective hyperk\"ahler manifold. An object $F\in D^b(X)$ is \emph{atomic} if there exists a non-zero $\wt{v}(F)\in \wt{H}(X;\QQ)$ such that 
\begin{equation}
\Ann_{\gotg(X)}(\wt{v}(F))=\Ann_{\gotg(X)}(v(F)),
\end{equation}
\end{dfn}
\begin{expl}
A non-zero sheaf $\cF$ on a $K3$ surface is atomic. Note that we may set $\wt{v}(\cF)=v(\cF)$. 
\end{expl}
\begin{prp}[Subsec.~5.2 in~\cite{beckmann:atomic}]
Let $X$ be   a projective hyperk\"ahler manifold, with a K\"ahler class $\omega$, and let $\cF$ be a sheaf on $X$.
\begin{enumerate}
\item
If $\cF$ is  atomic torsion-free, then it is   modular.
\item
If $\cF$ is  atomic, locally-free, and $\omega$ poly-slope-stable (see Subsubsection~\ref{subsubsec:iperolom}), then it is $\omega$-projectively hyperholomorphic.
\end{enumerate}
\end{prp}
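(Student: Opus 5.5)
The plan is to use the $\gotg(X)$-action and in particular the grading operator. Fix a non-zero $\wt{v}=\wt{v}(\cF)\in\wt{H}(X;\QQ)$ with $\Ann_{\gotg(X)}(\wt{v})=\Ann_{\gotg(X)}(v(\cF))$. We write $\wt{v}=r\alpha+\lambda+s\beta$ and note that $r$ must be a non-zero multiple of $r(\cF)$, with $\lambda/r=c_1(\cF)/r(\cF)$. This comes from comparing the action of the $\gots\gotl_2$-triples $(e_\mu,h,f_\mu)$, $\mu\in H^2$, on both vectors: the degree-zero part of $v(\cF)$ is $r(\cF)$, and the operators $e_\mu$ act on $\wt H$ by $\alpha\mapsto\mu$, $\mu'\mapsto -b_X(\mu,\mu')\beta$.

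Since twisting by a line bundle, or more generally by $e^{B}$ with $B\in H^2(X;\QQ)$, is implemented by $\exp(e_B)\in\exp\gotg(X)$, it transforms both $v(\cF)$ and $\wt v$ compatibly and preserves atomicity. The discriminant is also invariant under it. So after replacing $v(\cF)$ by $v(\cF)e^{-c_1(\cF)/r(\cF)}$ we may assume $c_1=0$, and hence $\lambda=0$ and $\wt v=r\alpha+s\beta$.

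\textbf{Part (1).} The annihilator of $\wt v=r\alpha+s\beta$ in $\gots\goto(\wt H)$ contains $\gots\goto(H^2(X;\QQ))$, which acts on $H(X)$ by derivations. So $v(\cF)$, and hence $\ch(\cF)=v(\cF)\sqrt{\td(X)}^{-1}$, is $\gots\goto(H^2)$-invariant, because $\td(X)$ is invariant. Thus $\ch_2(\cF)$ is an $\gots\goto(H^2)$-invariant class in $H^4$. Its projection $p_D$ to $D^4(X)=\Sym^2H^2$ (Remark~\ref{semphodge}) is then a combination of the invariant element $q^\vee$ only, since $\Sym^2H^2=\QQ q^\vee\oplus$ an irreducible summand without invariants. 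On the other hand, integrating against $\alpha^{2n-2}$ only sees $p_D$, because $\alpha^{2n-2}\in D$ and $D\perp D^\perp$. By Fujiki-type formulas, $\int_X q^\vee\cdot\alpha^{2n-2}$ is a multiple of $q_X(\alpha)^{n-1}$. Hence $\int\Delta(\cF)\alpha^{2n-2}=-2r\int\ch_2\alpha^{2n-2}$ has the form~\eqref{fernand}, and $\cF$ is modular. The same argument shows $p_D(\Delta)$ is a multiple of $p_D(c_2(X))$, in the form of Remark~\ref{semphodge}.

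\textbf{Part (2).} We use Theorem~\ref{thm:iperolom}(3). Since $\cF$ is locally free and $\omega$ poly-slope-stable, it suffices to show $\Delta(\cF)$ stays of Hodge type on every $\omega$-twistor fibre. The Hodge structures on $H(X_t)$ for the twistor family are given by the $\gotg(X)$-action: the Hodge grading operator of $X_t$ lies in $\gots\goto(H^2)\subset\gotg(X)$, coming from the span of $\omega,\Re\sigma,\Im\sigma$. A class killed by $\gots\goto(H^2)$ is therefore of type $(p,p)$ on every deformation. The discriminant is $\Delta=\ch_1^2-2r\ch_2$, which after the $e^{-c_1/r}$ twist equals $-2r\ch_2$. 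This twisted class is invariant by the above, so it is Hodge on all $X_t$, and the untwisted $\Delta$ coincides with it.

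The main obstacle is the first step: extracting from the equality of annihilators the precise shape of $\wt v$, and the fact that $\gots\goto(H^2)$ kills $v(\cF)e^{-c_1/r}$. The subtle case is when $\wt v$ is isotropic, for instance $r\alpha+s\beta$ with $rs=0$, where the stabilizer is a parabolic rather than $\gots\goto(H^2)\oplus\QQ$. I would handle it first by computing the stabilizer of a vector $r\alpha+\lambda+s\beta$ explicitly in $\gots\goto(\wt H)$, and checking that it contains the conjugate $\exp(e_{\lambda/r})\,\gots\goto(H^2)\exp(-e_{\lambda/r})$ whenever $r\neq0$. The non-vanishing of $r$ comes from torsion-freeness.
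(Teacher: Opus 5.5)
Your proposal is correct and is essentially Beckmann's argument, which is the proof the paper cites. Nonzero rank forces the $\alpha$-coefficient $r$ of $\wt{v}$ to be nonzero: if $\wt{v}=\lambda+s\beta$, then $e_\mu$ with $\mu\in\lambda^{\perp}\cap H^2(X;\QQ)$ kills $\wt{v}$, while $\mu\cdot v(\cF)$ has degree-$2$ part $r(\cF)\mu\neq0$. So after the $e^{-\lambda/r}$-twist, $\mathfrak{so}(H^2)$ kills the twisted Mukai vector, which gives $\lambda/r=c_1(\cF)/r(\cF)$ and the $\mathfrak{so}(H^2)$-invariance of $\Delta(\cF)$; modularity follows, and Theorem~\ref{thm:iperolom}(3) gives projective hyperholomorphicity.

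The isotropic case you single out is not actually delicate. For every nonzero $\wt{v}\in\QQ\alpha\oplus\QQ\beta$ one has $\Ann(\wt{v})=\wedge^2(\wt{v}^{\perp})\supset\wedge^2H^2=\mathfrak{so}(H^2)$. Also, in the non-isotropic case the annihilator is $\mathfrak{so}(\wt{v}^{\perp})$, not $\mathfrak{so}(H^2)\oplus\QQ$.
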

\begin{expl}
Let $X$ be   a projective hyperk\"ahler manifold of the known deformation classes (Type $K3^{[n]}$, $\Kum_n$, OG10 or OG6) of dimension greater than $2$. Then the tangent bundle $\Theta_X$ is not atomic, see~\cite[Prop.~8.3]{beckmann:atomic}. Note that the tangent bundle of a  hyperk\"ahler manifold is $\omega$-slope-stable for any K\"ahler class $\omega$, and hence also hyperholomorphic by Theorem~\ref{thm:iperolom}. 
\end{expl}
\begin{expl}
Let $S$ be a $K3$ surface.  Let $\cE_1,\cE_2$ be  locally-free sheaves on $S$, and assume that the equalities in~\eqref{giusteipotesi} hold. 
 Then the vector bundle $\cG(\cE_1,\cE_2)$ is atomic if and only if $v(\cE_1)^2=v(\cE_1)^2=0$, see~\cite[Prop.~4.1]{og:highdim}. 
\end{expl}
\begin{rmk}\label{rmk:equivderatomico}
An important property of atomicity is the following: let $\Phi\colon D^b(X)\cong D^b(Y)$ be a derived equivalence between projective HK manifolds. An object $F\in D^b(X)$ is atomic if and only if $\Phi(F)$ is atomic.
\end{rmk}
We mention below a conjecture of Beckmann which, if confirmed, would give a positive answer to question (Q2) in 
Subsection~\ref{subsec:defasci} for stable atomic vector bundles.
\begin{cnj}[Conj.~B in~\cite{beckmann:atomic}]\label{cnj:atomicoliscio}
Let $\cF$ be an $h$-slope stable atomic vector bundle on a polarized (projective) HK manifold $(X,h)$. Then the deformation space $\Def(X)$ is smooth.
\end{cnj}
\section{Moduli of semistable sheaves  on higher-dimensional hyperk\"ahler varieties}\label{sec:alfinteoremi}
\subsection{Existence and unicity results for rigid vector bundles on  hyperk\"ahler varieties}
\setcounter{equation}{0}
We state  results of~\cite{og:fascimod,og:fascik3n}, and we sketch their proofs. 

\subsubsection{Statement of results, and examples}
Let $X$ be a HK manifold. We recall that the \emph{divisibility} of a non zero 
$\alpha\in H^2(X;\ZZ)$ is the positive generator of the ideal $\{b_X(\alpha,\beta)\mid \beta\in H^2(X;\ZZ)\}$, where $b_X$ is the BBF bilinear symmetric form. We denote the divisibility of 
$\alpha$ by $\divisore(\alpha)$.  If $X$ is a $K3$ surface  the divisibility of 
$\alpha$ is equal to the maximum $l\in\NN_{+}$ dividing $\alpha$ (i.e.~such that $\alpha=l\alpha_0$ with $\alpha_0\in H^2(X;\ZZ)$), because $b_X$ is unimodular. If $\dim X>2$ then $b_X$ is not necessarily unimodular, in fact in all of the known deformations classes it is not unimodular. It follows that 
$\divisore(\alpha)$ is a multiple of
 the maximum $l\in\NN_{+}$ dividing $\alpha$, but it need not be equal. 
\begin{expl}
 Let $X$ be a HK manifold of Type $K3^{[n]}$ with $n\ge 2$. Then there exists a primitive (i.e.~divisible only by $\pm 1$) $\alpha\in H^2(X;\ZZ)$ such that 
 $q_X(\alpha)=e$ and $\divisore(\alpha)=2$ if and only if $e\equiv -2(n-1)\pmod{8}$. This follows from~\eqref {bbfk3n}. Of course there exists a primitive  
 $\alpha\in H^2(X;\ZZ)$ such that 
 $q_X(\alpha)=e$  and $\divisore(\alpha)=1$ if and only $e$ is even.
\end{expl}
A polarized HK manifold $(X,h)$ consists of a  HK manifold $X$ and a   primitive ample divisor class $h$. The \emph{degree of the polarization} is $q_X(h)$,  
and the \emph{divisibility of the polarization} is $\divisore(h)$. The degree of a polarization is positive.
\begin{expl}
Let $(n,e)$ be positive integers  with 
$e$  even. There exist polarized manifolds  $(X,h)$ of Type $K3^{[n]}$ of degree $e$ and divisibility $1$, and the moduli space of such $(X,h)$ is irreducible (of dimension $19$ if $n=1$, of dimension $20$ if $n\ge 2$). 
Let $(n,e)$ be positive integers  with 
 with $n\ge 2$ and 
$e\equiv -2(n-1)\pmod{8}$. There exist polarized manifolds   $(X,h)$ of Type $K3^{[n]}$ of degree $e$ and divisibility $2$, and the moduli space of such 
$(X,h)$ is irreducible (of dimension  $20$). For these results see~\cite[Thm.~3.5]{debarre:hksurvey}.
\end{expl}
The result below is a special case of~\cite[Thm.~1.1]{og:fascik3n} (see~\cite[Subsubsect.~1.3.1]{og:fascik3n}).
\begin{thm}[O'Grady]\label{thm:rigidisuk3n}
Let $n,r_0,e\in\NN_{+}$ such that $n\ge 2$ and 
\begin{equation}\label{econ}
e\equiv
\begin{cases}
4(n-1)r_0-2n-6 \pmod{8r_0} & \text{if $r_0\equiv 0 \pmod{4}$,} \\
\frac{1}{2}((n-1)r_0-n-3) \pmod{2r_0} & \text{if $r_0\equiv 1 \pmod{4}$,} \\
-2n-6 \pmod{8r_0}  & \text{if $r_0\equiv 2 \pmod{4}$,} \\
-\frac{1}{2}((n-1)r_0+n+3) \pmod{2r_0}  & \text{if $r_0\equiv 3 \pmod{4}$.}
\end{cases}
\end{equation}
Let $i\in\{1,2\}$ be such that $i\equiv r_0\pmod{2}$. Let $(X,h)$ be a general polarized HK manifold of Type $K3^{[n]}$ with $q_X(h)=e$ and $\divisore(h)=i$. 
There exists one and only one (up to isomorphism) $h$ slope-stable vector bundle $\cF$ on $X$ such that
 \begin{equation}\label{chernvbe}
r(\cF)=r_0^n,\quad c_1(\cF)=\frac{r_0^{n-1}}{i} h,\quad \Delta(\cF)  =  \frac{r_0^{2n-2}(r_0^2-1)}{12}c_2(X). 
\end{equation}
Moreover  $H^p(X,End^0(\cF))=0$ for all $p$. 
\end{thm}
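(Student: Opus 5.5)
The plan is to construct $\cF$ on a special member of the moduli space and then spread it to the general member, and to prove unicity by degenerating to a Lagrangian fibration. The special model is $X_0=S^{[n]}$, where $S$ is an elliptic $K3$ surface with a section as in Example~\ref{expl:rigidisuk3ell}. We take $\cF_0\coloneqq\cE[n]^{+}$, where $\cE$ is a spherical vector bundle on $S$ of rank $r_0$, for instance a suitable twist of $\cF^{r_0}$. The congruence conditions in~\eqref{econ} should be exactly what is needed for two things: that $c_1(\cE[n]^{+})$, possibly after tensoring with a line bundle, be $r_0^{n-1}/i$ times a primitive class of square $e$ and divisibility $i$; and that this class be ample on some deformation of $S^{[n]}$ along which $\cF_0$ deforms. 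By Example~\ref{expl:fibvettrigidi} and~\eqref{cidue}, $\cF_0$ has the rank and discriminant required in~\eqref{chernvbe}. It is also simple and rigid, with $H^p(End^0\cF_0)=0$ for all $p$: this follows from~\eqref{extpiu:uno}--\eqref{extpiu:tre}, and more generally from the BKR description of the full Ext algebra of a spherical $\cE$, which gives $\Ext^\bullet(\cF_0,\cF_0)\cong H^\bullet(\cO_{S^{[n]}})$.

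\textbf{Deformation.} Since $\Delta(\cF_0)$ is a multiple of $c_2$, it stays Hodge on every deformation. By Theorem~\ref{thm:defcoppia} (trace maps iso since $H^p(End^0)=0$), $\cF_0$ extends to every deformation of $S^{[n]}$ keeping $c_1(\cF_0)$ of type $(1,1)$. Hence it extends over the whole $20$-dimensional family of polarized $(X,h)$ of degree $e$ and divisibility $i$, which is irreducible. The vanishing $H^p(End^0)=0$ is upper semicontinuous, so it persists on an open set.

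\textbf{Stability.} We first get slope stability on a Lagrangian special fiber. On $S^{[n]}\to\PP^n$ as in Example~\ref{expl:essennelagr}, the restriction of $\cE[n]^{+}$ to a general fiber $C_{x_1}\times\cdots\times C_{x_n}$ is $\cE_{|C_{x_1}}\boxtimes\cdots\boxtimes\cE_{|C_{x_n}}$. This is a box product of stable bundles on elliptic curves, so it is simple semihomogeneous and hence $\theta$-slope stable. By Corollary~\ref{crl:lagstab}, $\cF_0$ is then $h_0$-slope stable for $h_0$ ample and ${\mathsf a}(\cF_0)$-suitable for $f$, with modularity supplied by~\eqref{cidue}. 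We still have to pass from this suitable polarization to the given $h$. The idea is to deform $(S^{[n]},h_0)$ along the family of Lagrangian-fibered HK manifolds to some $(X_1,h_1)$ in which $h_1$ has square $e$ and divisibility $i$. The wall-and-chamber structure of Proposition~\ref{prp:mirren} should let us choose the data so that $h_1$ is ${\mathsf a}$-generic, while openness of stability gives stability on a general deformation. Since ${\mathsf a}$-genericity holds for general $(X,h)$ (Picard rank one, no walls), stability then propagates to the general point.

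\textbf{Unicity.} The argument mirrors Comment~\ref{cmm:verfica}, adapted along the lines of the elliptic case. Degenerate a second stable $\cG$ with the same invariants to a Lagrangian $(X_1,h_1)$. Modularity gives, via Proposition~\ref{prp:lagstab}, that the restriction of $\cG$ to a general fiber is semistable. By Proposition~\ref{prp:ressemi} it is then semihomogeneous with the same slope, and by Theorem~\ref{thm:vivasemihom}(2) it agrees with the restriction of $\cF$ up to a line bundle. One then shows that $\Hom(\cF,\cG)\neq0$, for example by pushing $\cH om(\cF,\cG)$ forward to $\PP^n$ and using the vanishing of $H^p(End^0\cF)$, and concludes by stability.

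\textbf{Main obstacle.} The hardest point is unicity. The degeneration step requires controlling the limits of $\cG$, which must remain stable and locally free, or at least be replaced by suitable limits under properness of the relative moduli space. It also requires handling the $r_0^{2n-2}$ ambiguity of semihomogeneous bundles noted in Remark~\ref{rmk:semihomedell}. The numerical condition~\eqref{econ} is presumably what kills that ambiguity.
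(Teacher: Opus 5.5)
Your overall architecture matches the paper's: build $\cE[n]^{+}$ on a Lagrangian-fibred $S^{[n]}$, get stability from Corollary~\ref{crl:lagstab}, spread by Theorem~\ref{thm:defcoppia} and openness, and prove unicity via semi-homogeneous restrictions. But the step you leave open in the existence part is exactly where the paper's key idea enters, and with your model it genuinely fails, at least for small $e$.

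\textbf{Existence.} Openness of stability transports $h$-stability along a family of polarized manifolds. It cannot turn $h_0$-stability at one point, for some $h_0$ suitable for $f$, into $h$-stability anywhere. That no walls occur at a Picard-rank-one point is irrelevant. What you need is a Lagrangian-fibred deformation $X$ with $\NS(X)=\ZZ h\oplus\ZZ f$ on which $h$ itself is ample and ${\mathsf a}(\cF)$-suitable for $f$. Any $\gamma=xh+yf$ whose orthogonal separates $h$ from $f$ satisfies $q_X(\gamma)\le -b_X(h,f)=-i\,(D\cdot C)$, where $D=c_1(\cE)$. So you need $D\cdot C$ large. On the elliptic $K3$ with a section of Example~\ref{expl:rigidisuk3ell} the lattice $\NS(S)=\ZZ\Sigma\oplus\ZZ C$ is unimodular, and $D^2=2m_0$ forces $D\cdot C$ to divide $m_0$; a twist of $\cF^{r_0}$ has $D\cdot C=1$.

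Concretely, take $n=r_0=2$ and $e=6$. Then $m_0=s_0=1$, $D=\Sigma+2C$, $h=2\bm{\mu}_2(D)-\delta_2$, $b_X(h,f)=2$ and ${\mathsf a}(\cF)=120$. The class $\gamma=h-4f=2\bm{\mu}_2(\Sigma)-\delta_2$ has $q_X(\gamma)=-10$ and $b_X(\gamma,h)=-2<0<2=b_X(\gamma,f)$, so $h$ is not ${\mathsf a}$-suitable for $f$. In fact $\gamma$ is twice the class of a line in the Lagrangian plane $\Sigma^{[2]}$, on which $h$ has degree $-1$. That plane survives wherever $h,f$ stay algebraic, so $h$ is not even ample on the deformations available to you.

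The paper avoids this by taking $S$ without a section, with $\NS(S)=\ZZ D\oplus\ZZ C$, $D\cdot D=2m_0$, and $D\cdot C=d_0$ coprime to $r_0$ with $d_0\gg0$; such $S$ exists by surjectivity of the period map. The spherical $\cE$ then comes from Proposition~\ref{prp:rigsuk}, whose hypothesis~(c) itself needs $d_0$ large. Every negative class in $\ZZ h_X\oplus\ZZ f_X$ then has square bounded away from $0$ by a quantity growing with $d_0$. Hence $h_X$ is ample and lies in the unique open ${\mathsf a}$-chamber, and Corollary~\ref{crl:lagstab} applies to $h_X$ itself. The congruences~\eqref{econ} have nothing to do with ampleness: they only make $s_0,m_0$ in~\eqref{chartroux} integers with $m_0=r_0s_0-1$.

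\textbf{Unicity.} The sketch has gaps beyond the one you flag.

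(i) Proposition~\ref{prp:ressemi} needs slope-\emph{stability} of the restriction. Proposition~\ref{prp:lagstab} only gives semistability, and only when $h$ is ${\mathsf a}$-suitable for $f$, which again needs the $d_0\gg0$ model. The paper proves separately that $\cG$ restricts to a stable bundle on the general Lagrangian fibre.

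(ii) Theorem~\ref{thm:vivasemihom}(2) leaves a twist by a line bundle, which is torsion. The paper removes it by a monodromy argument; \eqref{econ} plays no role there.

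(iii) Isomorphism on the general fibre does not give $\Hom(\cF,\cG)\neq0$. The rank-one sheaf $\pi_{*}(\cF^{\vee}\otimes\cG)$ is uncontrolled along divisors of $\PP^n$, and $H^p(End^0\cF)=0$ says nothing about that. The paper's key input is that the locus $\cU(\cF_X)\subset\PP^n$ of fibres on which $\cF_X$ restricts to a stable bundle is big, i.e.\ has complement of codimension at least $2$. This is where stability of $\cE$ on \emph{every} elliptic fibre, singular ones included, is used. The paper then shows that the locus where $\cF_Y$ and $\cG$ have isomorphic restrictions is also big, which gives $\cF_Y\cong\cG$.

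(iv) You never need to degenerate $\cG$. The paper proves unicity at a general point of the Noether--Lefschetz divisor $NL(d_0)$ with $d_0\gg0$, where $\cG$ already lives on a Lagrangian-fibred $Y$. It then passes to the general $(X,h)$ using the quasi-projective relative moduli space of stable bundles with the prescribed rank, $c_1$ and $\Delta$ (by boundedness), together with density of $\bigcup_{d_0\gg0}NL(d_0)$.
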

We outline the proof of Theorem~\ref{thm:rigidisuk3n} in Subsubsections~\ref{subsubsec:esiste} and~\ref{subsubsec:unico}. Here we discuss two notable choices of $n,r_0,e$. 
\begin{expl}\label{expl:cubicadim4}
 A  general polarized HK manifold  $(X,h)$ of Type $K3^{[2]}$ of degree $6$ and divisibility $2$ is isomorphic to the variety of lines $F(Y)$ on a general cubic hypersurface  $Y\subset\PP^5$ polarized by the Pl\"ucker embedding. The vector bundle $\cF$  on $F(Y)$ of 
Theorem~\ref{thm:rigidisuk3n} (with $r_0=2$) is isomorphic to the restriction  of the tautological quotient vector bundle on $\Gr(2,\CC^6)$, 
see~\cite[Sect.~8]{og:fascimod}.  
\end{expl}
\begin{expl}\label{expl:debvoi4}
 A general polarized HK manifold  $(X,h)$ of Type $K3^{[2]}$ of degree $22$ and divisibility $2$ is isomorphic to a Debarre-Voisin variety 
\begin{equation}\label{eqDV}
X_\sigma:=\{[W]\in \Gr(6,V_{10}) \mid \sigma\vert_{ W}=0\},\qquad \sigma\in \bigwedge^3 V_{10}^\vee.
\end{equation}
 The vector bundle $\cF$ on $X_\sigma$ of Theorem~\ref{thm:rigidisuk3n} (with $r_0=2$) is isomorphic to the restriction  of the tautological quotient vector bundle on 
 $\Gr(6,V_{10})$, see~\cite[Sect.~8]{og:fascimod}.  For an application of Theorem~\ref{thm:rigidisuk3n} to the 
period map of Debarre-Voisin varieties see~\cite[Thm.~1.8]{og:fascimod}.
\end{expl}
\subsubsection{Proof of existence}\label{subsubsec:esiste}
We prove existence by considering the vector bundle $\cE[n]^{+}$ on $S^{[n]}$ associated to a spherical vector bundle $\cE$ on a $K3$ surface $S$, see 
Example~\ref{expl:fibvettrigidi}. We suppose that 
$S$ has an elliptic fibration   $S\to\PP^1$.    It follows (see Example~\ref{expl:essennelagr}) that $S^{[n]}$ has the Lagrangian fibration 
\begin{equation}\label{fiblagk3n}
\begin{matrix}
S^{[n]} & \xrightarrow{\pi} & (\PP^1)^{(n)}\cong\PP^n \\
[Z] & \mapsto & \sum_{x\in\PP^1}\left(\sum_{y\in f^{-1}(x)}l(\cO_{Z,y})\right)x
\end{matrix}
\end{equation}
We set
\begin{equation}\label{eccoeffe}
f\coloneqq \pi^{*}c_1(\cO_{\PP^n}(1)). 
\end{equation}
The next
result, which follows from surjectivity of the period map for $K3$ surfaces, is needed to produce the spherical vector bundle $\cE$ on  $S$.
\begin{clm}\label{clm:eccoell}
Let $m_0,d_0$ be positive natural numbers. There exist $K3$ surfaces $S$  with an elliptic fibration $S\to \PP^1$ such that, letting $C$ be  an elliptic fiber, we have
\begin{equation}\label{neronsevero}
\NS(S)=\ZZ[D]\oplus\ZZ[C], \quad
D\cdot D=2m_0,\quad D\cdot C=d_0.
\end{equation}
\end{clm}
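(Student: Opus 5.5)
The plan is to realize the lattice abstractly, embed it primitively in the $K3$ lattice, and invoke surjectivity of the period map. Let $N$ be the rank-$2$ lattice with basis $D,C$ and Gram matrix
\begin{equation*}
\begin{pmatrix} 2m_0 & d_0\\ d_0 & 0\end{pmatrix}.
\end{equation*}
It is even and its discriminant is $-d_0^2<0$. Hence it is hyperbolic of signature $(1,1)$.

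\emph{Embedding.} By Nikulin's theorem, an even lattice of signature $(1,1)$ embeds primitively in $\Lambda_{\KKK}=U^{\oplus 3}\oplus(-E_8)^{\oplus 2}$, since $\mathrm{rk}\,N=2\le 22/2$. One can also do this by hand inside $U^{\oplus 2}$. Take bases $e_1,f_1$ and $e_2,f_2$ of the two copies of $U$, and set
\begin{equation*}
C\mapsto e_1,\qquad D\mapsto m_0e_2+f_2+d_0f_1 .
\end{equation*}
This gives the right Gram matrix, and the image is primitive because $e_1$ and $e_2+\ldots$ complete to a basis.

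\emph{Period point.} Choose a period $\sigma\in N^{\bot}\otimes\CC$ with $\sigma^2=0$ and $\sigma\bar\sigma>0$. Take it very general, so that $\sigma^{\bot}\cap\Lambda_{\KKK}=N$; this is possible because we are avoiding countably many hyperplanes. Surjectivity of the period map then gives a $K3$ surface $S$ with $\NS(S)\cong N$. Since $N$ contains classes of positive square, $S$ is projective.

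\emph{Making $C$ an elliptic fiber.} We have $C^2=0$ with $C\neq0$, but we need $C$ nef. Acting by the Weyl group of $(-2)$-reflections and by $\pm1$ (Hodge isometries preserving $\NS$) moves any isotropic class into the closure of the positive cone and onto a class that is nef. However, this could change the pair $(D,C)$. The fix is to apply the Hodge isometry to the whole of $N$: the image of the basis $(D,C)$ is again a basis of $\NS(S)$ with the same Gram matrix.

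\emph{Primitivity and the fibration.} A nef primitive isotropic class on a $K3$ surface is the class of an elliptic fibration $S\to\PP^1$, with $C$ a fiber; this is the standard result on linear systems on $K3$ surfaces (Piatetski-Shapiro--Shafarevich). Primitivity of $C$ holds because $C$ is a basis element of $N$.

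The main obstacle is this last nef-ness step: making sure that $C$, after the Weyl-group action, really is the class of a fiber while the lattice presentation of the statement is preserved. I would handle it as described, by transporting the full basis through the isometry, which preserves the Gram matrix and hence the numerical conditions $D\cdot D=2m_0$ and $D\cdot C=d_0$.
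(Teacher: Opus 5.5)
Your proposal is correct and takes the route the paper indicates: the paper only asserts that the claim follows from surjectivity of the period map for $K3$ surfaces, and you supply the standard details, namely a primitive embedding of the rank-$2$ lattice, a very general period in $N^{\bot}\otimes\CC$, and a Hodge isometry $\pm w$ (with $w$ in the Weyl group) that makes $C$ nef and is applied to the whole basis so that the Gram matrix is preserved. One cosmetic slip: primitivity of your explicit embedding comes from the coefficient $1$ of $f_2$ in the image of $D$, not from $e_2$.
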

For the following result see~\cite[Prop.~6.2]{og:fascimod}. 
\begin{prp}\label{prp:rigsuk}
Let $m_0,r_0,s_0,d_0\in\NN_{+}$. Suppose  that  
\begin{enumerate}
\item[(a)]
$m_0=r_0s_0-1$,
\item[(b)]
$d_0$ is  coprime to $r_0$, 
\item[(c)]
$4d_0>(2m_0+1)r_0^2(r_0^2-1)$.
\end{enumerate}
 Let $S$ be an elliptic $K3$ surface as in Claim~\ref{clm:eccoell}.
 There exists a vector bundle $\cE$ on $S$ such that
 \begin{enumerate}
\item
 $v(\cE)=(r_0,D,s_0)$,
\item
$\cE$ is spherical, i.e.~$h^0(S,End(\cE))=h^2(S,End(\cE))=1$ and $h^1(S,End(\cE))=0$,
\item
$\cE$  is $h_S$ slope-stable for any polarization $h_S$ of $S$, 
\item
the restriction of $\cE$ to \emph{every} fiber of  the  elliptic fibration $S\to \PP^1$ is slope-stable. 
\end{enumerate}
\end{prp}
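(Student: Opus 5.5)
The plan is to realize $\cE$ as a point of a moduli space of sheaves with Mukai vector $v=(r_0,D,s_0)$, and to use the elliptic structure to control it, following Subsubsection~\ref{subsubsec:disegnodimuno}.

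\textbf{Step 1: the Mukai vector and existence of a stable sheaf.} First compute $v^2=D\cdot D-2r_0s_0=2m_0-2r_0s_0=-2$ by~(a), so $v$ is primitive and spherical. Next choose an ample class $h$ that is ${\mathsf a}(v)$-suitable for $\cl(C)$ (Remark~\ref{rmk:fratelli}); such a class exists and can be taken ${\mathsf a}(v)$-generic. By Theorem~\ref{thm:vettmukprim}(1) (Kuleshov) the moduli space $\cM_v(S,h)$ is non empty. By Comment~\ref{cmm:verfica} it consists of a single point $[\cE]$, and $\cE$ is locally free and spherical. This gives items~(1) and~(2).

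\textbf{Step 2: stability on every fiber (item~(4)).} Since $\gcd(r_0,D\cdot C)=\gcd(r_0,d_0)=1$ by~(b), Proposition~\ref{prp:fvsupfib} shows that $\cE$ restricted to a general fiber is stable. The elliptic fibers are irreducible because $\NS(S)=\ZZ D\oplus\ZZ C$ with $D\cdot C=d_0$, and a reducible fiber would produce a $(-2)$-class orthogonal to $C$. The proposition in Subsubsection~\ref{subsubsec:disegnodimuno} then says that $v(\cE)^2=-2$ forces $\cE_t$ to be stable for all $t$. This requires checking its hypothesis that $h$ is ${\mathsf a}(\cE)$-suitable, which holds by our choice of $h$.

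\textbf{Step 3: stability for every polarization (item~(3)), the main obstacle.} The first approach to try is a direct argument using~(c). Suppose $\cE$ is not $h_S$ slope stable for some ample $h_S$. Take a destabilizing saturated subsheaf $\cK$ with $0<r(\cK)<r_0$, and set $\lambda=r_0c_1(\cK)-r(\cK)D=aD+bC$.
\begin{itemize}
\item Restrict to a general fiber. Stability of $\cE_t$ gives $\mu(\cK_t)<\mu(\cE_t)$, i.e.\ $\lambda\cdot C=ad_0<0$, so $a\le -1$ (in fact $a$ is a multiple of $r_0$ up to adjustment; only $a<0$ matters).
\item Use the Bogomolov-type bound from Proposition~\ref{prp:lizzy}, applied with a semistable Harder--Narasimhan piece. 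For the maximal destabilizing subsheaf one gets an inequality of the form $\lambda^2\ge -{\mathsf a}(\cE)$-ish, where ${\mathsf a}(\cE)=r_0^2(r_0^2\cdot 0 \ldots)$ is computed from $\Delta(\cE)=v^2+2r_0^2=2r_0^2-2$ and is of order $r_0^2(r_0^2-1)/2$.
\item Combine $\lambda\cdot h_S\ge0$ with $h_S$ in the ample cone, bounded by the classes $C$ and a class of positive square. Writing $\lambda^2=2m_0a^2+2abd_0$, positivity constraints on $b$ yield $|\lambda^2|\gtrsim 2d_0|a|-2m_0a^2$. Taking $|a|\ge1$, this contradicts the bound exactly when $4d_0>(2m_0+1)r_0^2(r_0^2-1)$, which is hypothesis~(c).
\end{itemize}
The delicate point is getting the Bogomolov estimate for an arbitrary, possibly non-semistable, destabilizing subsheaf without genericity of $h_S$. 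I would handle it by passing to the first Harder--Narasimhan factor, where slopes are equal to the Jordan--H\"older situation of Proposition~\ref{prp:lizzy}. Alternatively, I would move $h_S$ along the segment toward $h$ to the first wall where $\cE$ becomes strictly semistable. On that wall Proposition~\ref{prp:lizzy} applies verbatim, so it suffices to show that~(c) forces every ${\mathsf a}(\cE)$-wall to miss the ample cone. This reduces to an arithmetic check on classes $aD+bC$ with $-{\mathsf a}(\cE)\le\lambda^2<0$.
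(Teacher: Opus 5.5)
Your final route is correct and is essentially the argument behind the cited \cite[Prop.~6.2]{og:fascimod}: existence, local freeness and sphericity come from Theorem~\ref{thm:vettmukprim}(1) and Comment~\ref{cmm:verfica}, stability on every fibre comes from the $v^2=-2$ criterion of Subsubsection~\ref{subsubsec:disegnodimuno}, and item~(3) comes from the absence of walls, which is the only use of~(c). Discard the Harder--Narasimhan variant, since its factors have strictly larger slope and Proposition~\ref{prp:lizzy} does not apply; discard also the estimate $2d_0|a|-2m_0a^2$ with only $|a|\ge 1$, which is not monotone in $|a|$. The arithmetic you leave open goes as follows: if $\xi=aD+bC$ has $\xi^2<0$ then $a\neq 0\neq b$, and with $w\coloneqq m_0a+d_0b$ one has $|\xi^2|=2|a||w|$ and $d_0\le d_0|b|=|w|+m_0|a|\le (m_0+1)|a||w|$, so $|\xi^2|\ge 2d_0/(m_0+1)>r_0^2(r_0^2-1)/2={\mathsf a}(v)$ by~(c). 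Thus there are no ${\mathsf a}(v)$-walls at all (not merely walls missing the ample cone), the ample cone is one open chamber, and Proposition~\ref{prp:padovagoa}(1) gives item~(3).
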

\begin{rmk}
In Item~(4) we refer to stability of  the restriction of $\cE$ to  fibers of  the  elliptic fibration $S\to \PP^1$. A finite number of these fibers are singular irreducible curves. Slope-stability of a sheaf on such a fiber is defined as in the case of a smooth curve and is independent of the choice of a polarization.
\end{rmk}
Let  $n,r_0,e\in\NN_{+}$ be as in Theorem~\ref{thm:rigidisuk3n}. The first step of the proof consists in showing that there exist a $K3$ surface
 $S$   as in Claim~\ref{clm:eccoell} and  a vector bundle $\cE$ on $S$ as in Proposition~\ref{prp:rigsuk}, such that rank, first Chern class and discriminant of   the vector bundle $\cE[n]^{+}$ on $S^{[n]}$ are as those of the vector bundle $\cF$ in~\eqref{chernvbe}.
Set
\begin{equation*}
\ov{e}:=
\begin{cases}
\text{$e$ if $r_0$ is even,} \\
 \text{$4e$  if $r_0$ is odd,}
\end{cases}
\end{equation*}
and let
\begin{equation}\label{chartroux}
s_0:=\frac{\ov{e}+ (2n-2)(r_0-1)^2+8}{8r_0},\quad m_0:=\frac{\ov{e}+ 2(n-1)(r_0-1)^2}{8}. 
\end{equation}
Then $s_0, m_0$ are  integers by~\eqref{econ}. 
A straightforward computation gives that 
\begin{equation}\label{stanco}
 m_0=r_0s_0-1. 
\end{equation}
Let $S$ be a $K3$ surface as in Claim~\ref{clm:eccoell}, where $m_0$ is as in~\eqref{chartroux}, and $d_0$ is an integer  coprime to $r_0$  such that the inequality in Item~(c) of Proposition~\ref{prp:rigsuk} holds. By Proposition~\ref{prp:rigsuk} 
there exists a  vector bundle $\cE$ on $S$ such that 
\begin{equation}\label{lezozzone}
v(\cE)=(r_0,D,s_0)
\end{equation}
and Items (2)-(4) of that  proposition hold.  
\begin{clm}\label{clm:classigiuste}
Keep notation and hypotheses as above, in particular $\cE$ is the vector bundle on $S$ such that the equation in~\eqref{lezozzone} 
and Items (2)-(4) of  Proposition~\ref{prp:rigsuk} hold. 
 Then 
 \begin{equation}\label{robertbyron}
r(\cE[n]^{+})=r_0^n,\quad c_1(\cE[n]^{+})=\frac{r_0^{n-1}}{i} h,\quad \Delta(\cE[n]^{+})  =  
\frac{r_0^{2n-2}(r_0^2-1)}{12}c_2(S^{[n]}),
\end{equation}
where $h\in \NS(S^{[n]})$ is primitive, $q(h)=e$ and $\divisore(h)=i$.
\end{clm}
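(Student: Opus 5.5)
The rank formula is immediate from Example~\ref{expl:fibvettrigidi}: $r(\cE[n]^{+})=r(\cE)^n=r_0^n$. The discriminant formula is~\eqref{cidue}: $\cE$ is spherical by Item~(2) of Proposition~\ref{prp:rigsuk}, so $\Delta(\cE[n]^{+})=\frac{r_0^{2n-2}(r_0^2-1)}{12}c_2(S^{[n]})$. The real content of the claim is therefore the first Chern class. I would compute it in the decomposition $H^2(S^{[n]};\ZZ)=\Im\bm{\mu}_n\oplus\ZZ\delta_n$ of~\eqref{bbfk3n}, and then verify that the resulting class is a multiple $\frac{r_0^{n-1}}{i}h$ of a primitive $h$ with $q(h)=e$ and $\divisore(h)=i$.

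To compute $c_1(\cE[n]^{+})$, I would work off the big diagonal first. Over the open set of reduced subschemes, $\rho$ is étale of degree $n!$ and $\cE[n]^{+}$ restricts to the descent of $\cE^{\boxtimes n}$. We have $c_1(\cE^{\boxtimes n})=r_0^{n-1}\sum_j pr_j^{*}c_1(\cE)$, which gives the $\Im\bm{\mu}_n$-component $r_0^{n-1}\bm{\mu}_n(D)$. The coefficient of $\delta_n$ is determined by the behaviour along the exceptional divisor. I would get it by a Grothendieck--Riemann--Roch computation for the finite flat map $\rho$, or more simply by comparing with the known case $\cE=\cL$ a line bundle, where $\cL[n]^{+}=\cL_n$ has $c_1=\bm{\mu}_n(c_1(\cL))$. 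For rank $r_0$, the $\Sigma_n$-invariants of $\tau^{*}\cE^{\boxtimes n}$ near a generic point of the divisor $E$ reduce to a computation on $S^2$ blown up along the diagonal. There $\cE\boxtimes\cE$ splits into symmetric and antisymmetric parts, of ranks $\binom{r_0+1}{2}$ and $\binom{r_0}{2}$, and the antisymmetric part acquires a twist by $-E$. This should yield
\[
c_1(\cE[n]^{+})=r_0^{n-1}\bm{\mu}_n(D)-\tfrac{r_0^{n-2}\,r_0(r_0-1)}{2}\,\delta_n=r_0^{n-1}\Bigl(\bm{\mu}_n(D)-\tfrac{r_0-1}{2}\delta_n\Bigr).
\]
I would double-check this against $\Delta$ via~\eqref{cidue}, and in the case $n=2$ against~\cite{og:fascimod}.

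Next I would define $h$. If $r_0$ is even (so $i=2$), set $h\coloneqq 2\bm{\mu}_n(D)-(r_0-1)\delta_n$. If $r_0$ is odd (so $i=1$), set $h\coloneqq\bm{\mu}_n(D)-\frac{r_0-1}{2}\delta_n$. In both cases $c_1=\frac{r_0^{n-1}}{i}h$. Since $\bm{\mu}_n$ is an isometry onto its image, $q(\delta_n)=-2(n-1)$, and $D^2=2m_0$, we get
\[
q(h)=
\begin{cases}
8m_0-2(n-1)(r_0-1)^2 & \text{if $r_0$ is even,}\\
2m_0-\tfrac{(n-1)(r_0-1)^2}{2} & \text{if $r_0$ is odd.}
\end{cases}
\]
By~\eqref{chartroux} both cases give $q(h)=e$, recalling that $\ov{e}=4e$ when $r_0$ is odd.

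For primitivity and divisibility, note that $D$ is primitive in $\NS(S)$ with $\NS(S)=\ZZ D\oplus\ZZ C$. Because $H^2(S;\ZZ)$ is unimodular and $D$ is primitive in $H^2(S;\ZZ)$, there is $\beta$ with $D\cdot\beta=1$. Consequently $b(h,\bm{\mu}_n(\beta))$ equals $2$ in the even case and $1$ in the odd case. In the even case, every pairing of $h$ with a class in $\Im\bm{\mu}_n$ is even, and $b(h,\delta_n)=2(n-1)(r_0-1)$ is also even, so $\divisore(h)=2$. Primitivity follows in both cases: the $\bm{\mu}_n$-coefficient of $h$ is $D$ or $2D$, while the $\delta_n$-coefficient is $r_0-1$ (odd when $r_0$ is even) or $\frac{r_0-1}{2}$, so the gcd of the coefficients is $1$.

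I expect the main obstacle to be pinning down the $\delta_n$-coefficient of $c_1(\cE[n]^{+})$, namely the sign and the factor $\binom{r_0}{2}r_0^{n-2}$. This requires a careful local analysis of invariants along the exceptional divisor of $\tau$.
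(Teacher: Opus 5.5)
Your proof is correct. It has the same skeleton as the paper's: the rank is immediate, $\Delta$ comes from~\eqref{cidue}, and $c_1$ comes from the BKR description. The difference is that the paper simply cites \cite[Prop.~3.2]{og:fascik3n} for both $c_1(\cE[n]^{+})$ and $\Delta(\cE[n]^{+})$. It also leaves implicit that the choices in~\eqref{chartroux} make $h$ primitive with $q(h)=e$ and $\divisore(h)=i$.

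You instead derive $c_1(\cE[n]^{+})=r_0^{n-1}\bigl(\bm{\mu}_n(D)-\frac{r_0-1}{2}\delta_n\bigr)$ yourself and carry out the lattice verification explicitly. Your values of $q(h)$ in both parities are right, as is $b(h,\delta_n)=2(n-1)(r_0-1)$. The use of a class $\beta$ with $D\cdot\beta=1$ is justified, since $\NS(S)$ is saturated in the unimodular lattice $H^2(S;\ZZ)$. The primitivity argument via the coefficients $2D$ (or $D$) and $r_0-1$ (or $\frac{r_0-1}{2}$) also holds. Your route explains why~\eqref{chartroux} is the right choice. The citation is shorter and covers $c_1$ and $\Delta$ at once; you still rely on it for $\Delta$.

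For the step you flag as the main obstacle, here is a clean way to make it rigorous:
\begin{enumerate}
\item Work over the open subset of $S^{[n]}$ where at most two points collide. Its complement has codimension $2$, so $c_1$ is determined there.
\item Over a neighbourhood of a generic point of the exceptional locus, $\rho$ is a disjoint union of $n!/2$ copies of a double cover branched along the exceptional divisor $\tilde E$. Taking $\Sigma_n$-invariants reduces to taking $\ZZ/2$-invariants on one copy.
\item You have $\rho^{*}\delta_n=\tilde E$, not $2\tilde E$, because $2\delta_n$ is the class of the branch divisor.
\item With $t$ a local equation of $\tilde E$, invariant local sections have the form $g(t^2)v_{+}+t\,g(t^2)v_{-}$. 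Hence $\rho^{*}\cE[n]^{+}\subset\tau^{*}\cE^{\boxtimes n}$, with cokernel the anti-invariant part $\bigwedge^2\cE\otimes\cE^{\otimes(n-2)}$ restricted to $\tilde E$, of rank $\binom{r_0}{2}r_0^{n-2}$.
\end{enumerate}
This gives exactly your coefficient of $\delta_n$, with the correct sign and factor.
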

\begin{proof}
The formula for the rank of $\cE[n]^{+}$ is straightforward. The statements about $c_1(\cE[n]^{+})$ and $\Delta(\cE[n]^{+})$ follow 
from~\cite[Prop.~3.2]{og:fascik3n}.
\end{proof}
\begin{rmk}
We have used the letter $h$ for the divisor class appearing in~\eqref{robertbyron}. This might suggest the \emph{wrong} impression that it is an ample class.
\end{rmk}
A divisor class $h$ on a product $X_1\times \ldots\times X_n$ of projective varieties is of \emph{product type} if 
$h=p_{X_1}^{*}(h_1)+\ldots+p_{X_n}^{*}(h_n)$ where, for $i\in\{1,\ldots,n\}$, 
$h_i$ is a divisor class on $X_i$ and  $p_i\colon
X_1\times \ldots\times X_n\to X_i$ is the projection. Note that such an $h$ is ample if and only  each $h_i$ is  ample.
\begin{prp}\label{prp:restrizstab}
Keep notation and hypotheses as  in Claim~\ref{clm:classigiuste}. Let $x_1,\ldots,x_n\in\PP^1$ be pairwise distinct.
The restriction of $\cE[n]^{+}$ to $\pi^{-1}(x_1+\ldots+x_n)\cong C_{x_1}\times\ldots\times C_{x_n}$ 
is slope stable for any polarization of product type.
\end{prp}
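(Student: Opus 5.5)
The plan is to first identify the restriction explicitly. Over the open set of $(\PP^1)^{(n)}$ where the points $x_1,\dots,x_n$ are pairwise distinct, the Hilbert–Chow map $\goth_n$ is an isomorphism onto its image in $S^{(n)}$. Over this locus the map $\tau$ is an isomorphism and $\rho$ is an étale $\Sigma_n$-cover; this is so because the locus avoids the big diagonal, since points in distinct fibers of $f$ are distinct. Let $A\coloneqq\pi^{-1}(x_1+\ldots+x_n)$. Its preimage in $S^n$ is the disjoint union of the $n!$ translates $\sigma(C_{x_1}\times\ldots\times C_{x_n})$, which are permuted simply transitively by $\Sigma_n$. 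Taking $\Sigma_n$-invariants of a pushforward along such a free cover amounts to restricting to a single component. Hence
\begin{equation*}
\cE[n]^{+}_{|A}\cong \cE_{|C_{x_1}}\boxtimes\ldots\boxtimes\cE_{|C_{x_n}}.
\end{equation*}
By Item~(4) of Proposition~\ref{prp:rigsuk}, each $\cE_i\coloneqq\cE_{|C_{x_i}}$ is a stable vector bundle on the genus-one curve $C_{x_i}$ (possibly singular and irreducible), of rank $r_0$ and degree $d_0$, and $\gcd(r_0,d_0)=1$.

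The problem is thus reduced to showing that an exterior tensor product of stable bundles with coprime rank and degree on $n$ curves of arithmetic genus one is slope stable for every product polarization $h=\sum p_i^*h_i$. I would argue by induction on $n$, in the spirit of Remark~\ref{rmk:stabnonampio} and Proposition~\ref{prp:fvsupfib}. Let $\cG\subset\boxtimes_i\cE_i$ be a saturated subsheaf with $0<r(\cG)<r_0^n$. Its slope with respect to $h$ is a positive combination of the "partial slopes" $\mu_{h_i}$ computed on general fibers of the projection to $\prod_{j\ne i}C_{x_j}$. On such a fiber, which is a copy of $C_{x_i}$, the ambient bundle restricts to $\cE_i^{\oplus r_0^{n-1}}$, which is semistable of slope $d_0/r_0$. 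It follows that each partial degree satisfies $c_1(\cG)\cdot(\text{fiber}_i)\le r(\cG)\,d_0/r_0$. Summing these inequalities gives $\mu_h(\cG)\le\mu_h(\boxtimes\cE_i)$, so the restriction is semistable.

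For strict inequality, suppose equality holds. Then every partial inequality must be an equality. Since $r_0$ and $d_0$ are coprime, $\cE_i$ is the unique stable bundle in its S-equivalence class. Consequently the restriction of $\cG$ to a general $i$-th fiber is a direct sum $\cE_i^{\oplus k}$. This forces $r_0$ to divide $r(\cG)$, and it forces $\cG$ to have the form $\cE_i\boxtimes\cG'$, where $\cG'$ is a subsheaf of $\boxtimes_{j\ne i}\cE_j$. The latter follows because $\Hom(\cE_i,\cE_i)=\CC$, so that $\cG'=p_*\mathcal{H}om(p_i^*\cE_i,\cG)$. Applying induction to $\cG'$, and noting that for $n=1$ the curve case is just stability of $\cE_1$, yields a contradiction with $r(\cG)<r_0^n$.

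The main obstacle is this equality case, namely making the descent $\cG=\cE_i\boxtimes\cG'$ rigorous. The identification of the restriction is routine. An alternative that avoids the descent argument is to invoke Theorem~\ref{thm:vivasemihom}. Each $\cE_i$ is simple and semi-homogeneous, so $\boxtimes\cE_i$ is simple semi-homogeneous. A simple semi-homogeneous bundle is semistable for every polarization, since it is a pushforward under an isogeny of a line bundle twist, and by Mukai it is even stable. I would try this Mukai route first, and fall back on the induction above if it fails.
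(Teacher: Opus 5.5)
Your proposal is correct and takes essentially the paper's route. The paper also identifies the restriction as $\cE_{|C_{x_1}}\boxtimes\ldots\boxtimes\cE_{|C_{x_n}}$ and observes that on each horizontal curve $\{p_1\}\times\ldots\times C_{x_i}\times\ldots\times\{p_n\}$ it is a direct sum of copies of the stable bundle $\cE_{|C_{x_i}}$; for the final step it cites \cite[Lemma~4.7]{og:fascik3n}, which you replace by the partial-slope argument plus the descent $\cG=\cE_i\boxtimes\cG'$. That descent does go through for saturated $\cG$: $\cG'$ is then saturated in $\boxtimes_{j\ne i}\cE_j$, and the two saturated subsheaves $\cE_i\boxtimes\cG'$ and $\cG$ of $\boxtimes_j\cE_j$ agree over a dense open $p^{-1}(U)$, hence coincide. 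One caveat: the Mukai semi-homogeneous shortcut only applies when every $C_{x_i}$ is smooth, while the statement also covers the finitely many singular fibers, so it is your inductive argument, not the shortcut, that proves the proposition as stated.
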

\begin{proof}
Since  $x_1,\ldots,x_n\in\PP^1$ are pairwise distinct we have the isomorphisms
\begin{equation*}
\pi^{-1}(x_1+\ldots+ x_n)\cong C_{x_1}\times\ldots\times C_{x_n},
\end{equation*}
and
\begin{equation*}
\cE[n]^{+}_{\pi^{-1}(x_1+\ldots+ x_n)}\cong \left(\cE_{|C_{x_1}}\right)\boxtimes\ldots\times \left(\cE_{|C_{x_n}}\right).
\end{equation*}
It follows that the restriction of $\cE[n]^{+}$ to $\{p_1\}\times \ldots \times \ldots\{p_{i-1}\}\times C_{x_i}\times\{p_{i+1}\}\times\ldots\times \ldots\{p_{n}\}$ is a direct sum of copies of the vector bundle $\cE_{C_{x_i}}$, which is stable
by Proposition~\ref{prp:rigsuk}. The proposition follows easily from this, see~\cite[Lemma~4.7]{og:fascik3n}. 
\end{proof}
Keep notation and assumptions as above. Suppose for a moment that $c_1(\cE[n]^{+})=r_0^{n-1}h/i$ (notation as in~\eqref{robertbyron}) is ample, and that $h$ is 
$a(\cE[n]^{+})$-suitable for $f$, where $f$ is given by~\eqref{eccoeffe}. Then  Proposition~\ref{prp:restrizstab} and 
Corollary~\ref{crl:lagstab} imply that $\cE[n]^{+}$ is $h$ slope stable. Moreover the map $\Def(\cE[n]^{+})\to \Def(S^{[n]},h)$ is surjective because  of Theorem~\ref{thm:defcoppia} and the equality (see~\eqref{extpiu:tre})
\begin{equation}
\ext^2_{S^{[n]}}(\cE[n]^{+},\cE[n]^{+})=1. 
\end{equation}
Hence  $\cE[n]^{+}$ extends to a vector bundle on the general polarized deformation of 
$(S^{[n]},h)$, and the extended vector bundle is slope stable by openness of stability. This gives the existence half of Theorem~\ref{thm:rigidisuk3n}  because the relevant moduli space is irreducible 
(warning: irreducibility holds because of our choice of degree and divisibility of $h$, it does not hold unconditionally). 

The  argument given  above does not go through as stated because in 
general $h$ is not ample, and/or it is not $a(\cE[n]^{+})$-suitable for $f$. One gets around this obstacle arguing as follows. Let 
$X$ be a very generic (small) deformation of $S^{[2]}$ keeping the classes $h,f$ algebraic. As mentioned above the vector bundle $\cE[n]^{+}$ extends to a vector bundle $\cF_X$ on $X$. Since $X$ is a very generic we have $\NS(X)=\ZZ h_X\oplus\ZZ f_X$ where $h_X, f_X$ are the classes obtained by parallel transport from $h,f$. The Lagrangian fibration $\pi\colon S^{[n]}\to\PP^n$ extends to a Lagrangian fibration 
$\pi_X\colon X\to \PP^n$, and $f_X=\pi_X^{*}c_1(\cO_{\PP^n}(1))$. By openness of stability the 
restriction of $\cF_X$ to a general fiber of $\pi_X$ is slope stable.
If $d_0\gg 0$ the class $h_X$ is ample and $a(\cE[n]^{+})$-suitable for $f_X$. In fact if $\gamma\in(\ZZ h\oplus\ZZ f)$ and $q_X(\gamma)<0$ then 
$q_X(\gamma)\le 2d_0/(1+e)$ (see~\cite[Lmm.~4.3]{og:fascimod}): this implies that $h$ is ample by the known numerical description of the ample cone of HK manifold of Type $K3^{[n]}$, and  that $h$ is  $a(\cE[n]^{+})$-suitable for $f$ because there is a single 
open $a(\cE[n]^{+})$-chamber (because $d_0\gg 0$). Thus $\cF_X$ is $h_X$ slope-stable because its restriction  to a general fiber of $\pi_X$ is slope stable.
\subsubsection{Proof of uniqueness}\label{subsubsec:unico}
This is the most subtle half of the proof. The main steps are the following:
\begin{enumerate}
\item
Let $\cK_e^i(2n)$ be the moduli space of polarized HK manifolds $(Y,h_Y)$ of Type $K3^{[n]}$ with  $q_{Y}(h_Y)=e$ and $\divisore(h_Y)=i$. Let  
$NL(d_0)\subset\cK_e^i(2n)$ be the Noether-Lefschetz divisor parametrizing deformations of $(X,h_X,f_X)$, where $X,h_X,f_X$ are obtained from $S^{[n]},h,f$ as described at the end of Subsubsection~\ref{subsubsec:esiste}. It suffices to prove that for $d_0\gg 0$ and for general 
$[(Y,h_Y)]\in NL(d_0)$ there exists a unique slope stable vector bundle with rank, first Chern class and discriminant given by~\eqref{chernvbe} 
with $X=Y$ and $h=h_Y$. In fact this suffices because by~\cite{maruyama:fascilimitati} there exists a quasi-projective  
${\mathfrak M}\to \cK_e^i(2n)$ (more precisely ${\mathfrak M}$ maps to the stack of polarized...) which is a relative moduli space of slope stable vector bundles with the relevant 
rank, first Chern class and discriminant, and the union of the Noether-Lefschetz divisors $NL(d_0)$ for $d_0\gg 0$ is dense in $\cK_e^i(2n)$.
\item
Let $X$ and $\cF_X$ be as in  the end of Subsubsection~\ref{subsubsec:esiste}, and let $\pi_X\colon X\to\PP^n$ be the Lagrangian fibration. A key observation is that the (open) subset  $\cU(\cF_X)\subset\PP^n$ parametrizing $t$ such that the restriction of $\cF_X$ to $\pi_X^{-1}(t)$ is slope stable is big, where \lq\lq big\rq\rq\ means that its complement has codimension at least $2$. By openness of slope-stability it follows that for a general $[(Y,h_Y)]\in NL(d_0)$ there exists an $h_Y$ slope-stable  vector bundle $\cF_Y$ on $Y$ for which $\cU(\cF_Y)$ is big. Let 
$t\in\cU(\cF_Y)$ be such that $\pi^{-1}(t)$ is smooth: by Proposition~\ref{prp:ressemi} the restriction of $\cF_Y$ to  $\pi^{-1}(t)$ is semi-homogeneous.  
\item
Let $[(Y,h_Y)]\in NL(d_0)$ be a general point, and let $\cG$ be an $h_Y$ slope-stable vector bundle on $Y$ with rank, first Chern class and discriminant given by~\eqref{chernvbe} 
(with $X=Y$, $h=h_Y$ and $\cF=\cG$). By Proposition~\ref{prp:lagstab} the restriction of $\cG$ to a general Lagrangian fibre $\pi_Y^{-1}(t)$ is 
$\theta_t$ slope-semi-stable. One proves that in fact $\cG_{|\pi_Y^{-1}(t)}$ is $\theta_t$ slope-stable, and hence  by Proposition~\ref{prp:ressemi} it is semi-homogeneous.  
\item
Let $[(Y,h_Y)]\in NL(d_0)$ be a general point. Let $\cF_Y,\cG$ be vector bundles  as in Items~(3) and~(4). We need to  prove that $\cF_Y\cong\cG$. Let 
$t\in\PP^n$ be a general point (in particular $\pi^{-1}(t)$ is smooth). By Items~(3), (4) the restrictions of $\cF_Y,\cG$ to $\pi^{-1}(t)$ are simple semi-homogeneous vector bundles satisfying the hypothesis in Item~(b) of Theorem~\ref{thm:vivasemihom}, and hence they differ by the twist of a torsion line-bundle. A monodromy argument proves that they are actually isomorphic. 
The last step is to prove that the locus of $t\in\PP^n$ for which $\cF_{Y|\pi^{-1}(t)}\cong\cG_{|\pi^{-1}(t)}$ is big, and is based 
on~\cite[Lmm.~7.5]{og:fascimod}. It follows that  $\cF_Y\cong\cG$. 
\end{enumerate}
\subsubsection{Comparison with unicity for rigid stable vector bundles on $K3$ surfaces}
Uniqueness of a slope stable rigid vector bundle  on a $K3$ surface with assigned Mukai vector follows from Mukai's simple cohomological  argument,  see Comment~\ref{cmm:verfica}. One cannot  extend such an argument to prove the uniqueness half of Theorem~\ref{thm:rigidisuk3n}, because we have no control of $\ext^{2p}_X(\cE,\cF)$ for $0<p<n$, where $\cE,\cF$ are stable vector bundles with the assigned rank, first Chern class and discriminant. The proof that we have outlined is the analogue of  a proof of unicity of a rigid stable vector bundle  on an elliptic $K3$ surface $X$ (with assigned Mukai vector) such that $r(\cF)$ and $c_1(\cF)\cdot C$ are coprime ($C$ is an elliptic fiber) which goes as follows. If $\cE,\cF$ are two such vector bundles on $X$, then the restrictions of $\cE,\cF$ to every elliptic fiber are stable (see Proposition~\ref{prp:rigsuk}), and hence are isomorphic by Atiyah's classification of indecomposable vector bundles on elliptic curves (which extends to genus $1$ curvs with a node). Since $c_1(\cE)=c_1(\cF)$ it follows that $\cE$ and $\cF$ are isomorphic.
\subsubsection{Scope of Theorem~\ref{thm:rigidisuk3n}}
How far are we from a classification of all rigid stable vector bundles $\cF$ on a HK manifold $X$ of $K3^{[n]}$ type with the property that 
$\PP(\cF)$ extends to all (small) deformations of $X$? (Abusing notation let us agree to say that such an $\cF$ is 
\emph{projectively hyperholomorphic}.)
 At a first glance the ranks and first Chern classes  in Theorem~\ref{thm:rigidisuk3n} are surprisingly special. A result of Mukai\cite[Cor.~7.12]{muksemi}  on ranks and first Chern classes of semi-homogeneous vector bundles on principally polarized abelian varieties justifies the shape of those ranks and first Chern classes. In fact if $\cF$ is a rigid  slope-stable vector bundle on a HK manifold $X$ of Type $K3^{[n]}$ which extends to a vector bundle $\cE$ on a  deformation $Y$ of $X$ which has a Lagrangian fibration $f\colon Y\to\PP^n$, then the smooth fibers of $f$ are (torsors over) principally polarized abelian varieties. If for general $t\in\PP^n$ the restriction $\cE_{|f^{-1}(t)}$ is slope stable, then it is a simple semi-homogeneous vector bundle 
(see Proposition~\ref{prp:ressemi}), and hence Mukai's result applies. Next we comment on the discriminant
appearing in~\eqref{chernvbe}. If we assume that $\cF$ is projectively hyperholomorphic, then  $\Delta(\cF)$ remains of type $(2,2)$ for all deformations of $X$, see Example~\ref{expl:discetang}. It follows that
 $\Delta(\cF)$ is a  linear combination of $c_2(X)$ and $q_X^{\vee}$ (see  the main result in~\cite{zhang-char-form}), where $q_X^{\vee}$ is the image via the cup-product map $H^2(X)\otimes H^2(X)\to H^4(X)$ of the class in $\Hom(H^2(X)^{\vee},H^2(X))$ inverse of the BBF bilinear symmetric form $b_X\colon H^2(X)\xrightarrow{\sim} H^2(X)^{\vee}$.
   If $n\in\{2,3\}$ then $c_2(X)$ and $q_X^{\vee}$ are linearly dependent, and hence  $\Delta(\cF)$ is  a multiple of $c_2(X)$.  If $n>3$ then $c_2(X)$ and $q_X^{\vee}$ are linearly independent, hence there is no  \lq\lq a priori\rq\rq\  reason why  $\Delta(\cF)$ should be   a multiple of $c_2(X)$.  I know of no examples of stable projectively hyperholomorphic  vector bundles on HK varieties of type $K3^{[n]}$ whose discriminant is not a multiple of the second Chern class.
 In~\cite[Subsec.~1.3.2]{og:fascik3n} there are some comments relevant to the above questions\footnote{Warning: the hypotheses in~\cite[Prop.~1.2]{og:fascik3n} that $\Def(\cF)\to\Def(X,h)$ is surjective must be replaced 
by   surjectivity of the map $\Def(\PP(\cF))\to\Def(X)$.}.

\subsubsection{The Generalized Franchetta Conjecture}
Let $\cM$ be a moduli space of polarized HK manifolds (of a fixed deformation type), and let $\cX\to\cM$ be th universal polarized HK manifold (for this to make sense we have to restrict to an open dense subset of $\cM$, or replace $\cM$ by the moduli stack). The Generalized Franchetta Conjecture, see~\cite{fulatvial:genfranchetta1}, predicts that if  ${\mathfrak z}\in \CH^{*}(\cX)_{\QQ}$ 
restricts to a cohomologically trivial class on a general fiber $X_t$ of $\cX\to\cM$ then it  is actually rationally equivalent to $0$ on each $X_t$.
Theorem~\ref{thm:rigidisuk3n} provides a non trivial testing ground for the Generalized Franchetta Conjecture. 
In fact let $n,r_0,e,i$  be as in that theorem, and let  $\cK_e^i(2n)$ be the moduli stack of polarized HK manifolds $(Y,h_Y)$ of Type $K3^{[n]}$ with  $q_{Y}(h_Y)=e$ and $\divisore(h_Y)=i$. Let $\cX\to \cK_e^i(2n)$ be the tautological family of HK (polarized) varieties. 
By~\cite[Thm.~A.5]{mukai:vbtata}   there exists a quasi tautological vector bundle $\mathsf F$ on $\cX$, i.e.~a vector bundle whose restriction to a fiber $X$ of 
$\cX\to \cK_e^i(2n)$ is isomorphic to $\cF^{\oplus d}$ for some $d>0$, where $\cF$ is the vector bundle of Theorem~\ref{thm:rigidisuk3n}. According to the Generalized Franchetta Conjecture  the restriction to $\CH^2(X)_{\QQ}$  of  
$\ch_2({\mathsf F}^{\vee}\otimes{\mathsf F})\in\CH(\cX)_{\QQ}$ is equal to  $-d^2\frac{r_0^{2n-2}(r_0^2-1)}{12}c_2(X)$. In other words it predicts that the third equality in~\eqref{chernvbe} holds at the level of (rational) Chow groups.  
\subsubsection{ Analogues for other deformation types}
In~\cite{og:fascikum} we have proved an existence and unicity result for stable rigid vector bundles of rank $4$ on  general polarized HK fourfolds of Kummer type (see Example~\ref{expl:esempihk}). One motivation was to provide an explicit description of a locally complete family 
of polarized fourfolds of Kummer type analogous to that of  a  general polarized HK manifold of Type $K3^{[2]}$ of degree $6$ and divisibility $2$ as the variety of lines on a smooth cubic four-dimensional hypersurface (see Example~\ref{expl:cubicadim4}), or of a general polarized HK manifold of Type $K3^{[2]}$ of degree $22$ and divisibility $2$ as a Debarre-Voisin variety (see Example~\ref{expl:debvoi4}).  No such description is known, but see~\cite{abgr:modellikum} for progress on this problem. A more general result on existence and uniqueness of stable rigid vector bundles on  polarized HK manifolds of Kummer type is certainly within reach. 

A more challenging problem is to give results on  stable (rigid or non-rigid) vector bundles on HK manifolds of Type OG10 (see Item~(3) of Theorem~\ref{thm:vettmuknonprim}) or of Type OG6 (the \lq\lq last\rq\rq\  known deformation class of HK manifolds~\cite{og:voilaog6}). It is more challenging because for Type $K3^{[n]}$ one produces examples by relating equivariant bundles on $S^n$ to vector bundles on $S^{[n]}$ via the BKR equivalence (here $S$ is a $K3$ surface), and similarly 
for Type $\Kum_n$.  No such construction is available for HK manifolds of Type OG10 or OG6.

\subsubsection{  $\PP^{r_0^n-1}$ bundles}
Let $(X,h)$ and $\cF$ be as in Theorem~\ref{thm:rigidisuk3n}. Then $\cF$ is projectively hyperholomorphic by Theorem~\ref{thm:iperolom}, 
i.e.~the projectivization $\PP(\cF)$ extends, as $\PP^{r_0^n-1}$ bundle, to all $h$-twistor deformations of $X$. Iterating this procedure one 
gets a $\PP^{r_0^n-1}$ bundle on every HK manifold of Type $K3^{[n]}$, see~\cite[Thm.~1.4]{markman:rank1obs}. This has proved to be useful in exploring the relation between the period and index of Brauer classes on HK manifolds, see~\cite{bottinidh:period-index-hk}.

\subsection{OG10 as a moduli space of vector bundles}
\setcounter{equation}{0}
We  recall that  ten-dimensional HK manifolds of Type OG10 are deformations of symplectic desingularizations of suitable moduli spaces of sheaves on $K3$ surfaces (see Item~(3) of Theorem~\ref{thm:vettmuknonprim}), in fact they were discovered in such a guise~\cite{og:voilaog10}.  The main result of~\cite{bottini:og10asmodspace} (see also~\cite{bottini:towardsog10}) may be summarized as follows: one gets HK manifolds of Type OG10 as connected components of certain moduli spaces of slope stable vector bundles on HK manifolds  of Type $K3^{[2]}$. In fact Bottini's papers contain many other interesting results, and they are linked to the paper~\cite{lsv:og10ascompcftdjac} which describes a $20$-dimensional family of projective HK manifolds of Type OG10. We will not give any details of the proofs, except for mentioning that the stable vector bundles which appear in Bottini's papers are atomic, and they are related to rank-$1$ sheaves on Lagrangian surfaces in the variety of lines of certain cubic fourfolds via a suitable derived equivalence 
(see Remark~\ref{rmk:equivderatomico}).
\subsection{Isogenies between HK manifolds of Type $K3^{[n]}$, and the $D$-equivalence Conjecture}
\setcounter{equation}{0}

Let $(S,h)$ be a polarized  $K3$ surface. Suppose that  $v=(r,mh,s)$ is a primitive isotropic vector for $S$ with $r,s$ coprime, and that $h$ is ${\mathsf a}(v)$-generic (e.g.~if $\NS(S)=\ZZ [h]$). Then the moduli space $M\coloneqq \cM_v(S,h)$ is a $K3$ surface, and there exists a universal vector bundle $\cU$ on  $S\times M$ (because 
$\gcd(r,s)=1$). Via the BKR equivalence one constructs a vector bundle $\cU^{[n]}$ on $S^{[n]}\times M^{[n]}$. If $[Z]\in M^{[n]}$ and $Z$ is reduced, say $Z=\{p_1,\ldots,p_n\}$, then  the restriction of $\cU^{[n]}$ to $S^{[n]}\times\{[Z]\}$ is isomorphic to $\cG(\cU_{p_1},\ldots,\cU_{p_n})$ (see~\eqref{gieunoedue}), where $\cU_{p_1}\coloneqq \cU_{|S\times\{p_i\}}$ is the vector-bundle on $S$ corresponding to $p_i$.  The Fourier-Mukai transform with kernel  $\cU^{[n]}$ defines an equivalence between the (bounded) derived categories of $S^{[n]}$ and $M^{[n]}$, and moreover it defines an algebraic cycle inducing a Hodge isometry $H^2(S^{[n]};\QQ)\xrightarrow{\sim} H^2(M^{[n]};\QQ)$ which lifts the 
Hodge isometry $H^2(S;\QQ)\xrightarrow{\sim} H^2(M;\QQ)$ defined by $\cU$.
Markman~\cite{markman:hodgeisom} proves that $\cU^{[n]}$ deforms to a twisted vector bundle on couples $(X,Y)$ obtained from 
$(S^{[n]},M^{[n]})$ via \lq\lq\ diagonal twistor lines\rq\rq. From this he deduces that the analogue of the Shafarevich conjecture holds for HK manifolds of Type $K3^{[n]}$. In~\cite{mauliketalii:diequivalenza} the vector bundles $\cU^{[n]}$ are used to prove the $D$-Conjecture for (projective) HK manifolds of Type $K3^{[n]}$ (actually they prove a more general statement). The vector bundles  $\cU^{[n]}$ have been useful in 
exploring the relation between period and indx for Brauer classes on HK manifolds of Type $K3^{[n]}$, see~\cite{hmsyz:per-index-type-k3n}.
\subsection{HK manifolds of Type $K3^{[a^2+1]}$ as moduli spaces of projective bundles on a HK manifold  of Type $K3^{[2]}$}\label{subsec:lavoroinfinito}
\setcounter{equation}{0}
The starting point is the following. Let $S$ be a $K3$ surface. Let  $D$ be a divisor class on $S$.
Let $a,s_1\in\NN$, with $a>0$ such that $2as_1=D\cdot D-2$. Let
\begin{equation}
v_1\coloneqq\left(2a,D,s_1\right),\qquad v_2\coloneqq av_1-(0,0,1).
\end{equation}
Note that $v_1^2=-2$ and $v_2^2=2a^2$ (squares are with respect to Mukai's pairing). Moreover  $v_1,v_2$ are primitive.
Let $h_S$ be an ${\mathsf a}(v_2)$-generic polarization of $S$ (and hence also ${\mathsf a}(v_1)$-generic). The moduli space 
$\cM_{v_1}(S,h_S)$ is a reduced point, and  $\cM_{v_2}(S,h_S)$ is a HK variety of Type $K3^{[a^2+1]}$. We let
\begin{equation}
\cM_{v_1}(S,h_S)=\{[\cE_1]\}.
\end{equation}
Thus $\cE_1$ is a spherical vector bundle.
Let  $[\cE_2]\in \cM_{v_2}(S,h_S)$, and let $\cG(\cE_1,\cE_2)$ be the torsion-free sheaf on $S^{[2]}$ given by~\eqref{gieunoedue}. We recall that 
$\cG(\cE_1,\cE_2)$ is modular. We have 
\begin{equation}\label{sanmichele}
r(\cG(\cE_1,\cE_2)) =8a^3,\qquad \Delta(\cG(\cE_1,\cE_2)) =\frac{ 4a^6}{3} c_2(S^{[2]}),\qquad 
{\mathsf a}(\cG(\cE_1,\cE_2)) =2560 a^{12}. 
\end{equation}
For the second equality see~\eqref{discingen}, for the third one see~\cite[Expl.~2.14]{og:highdim}. In order to state a  first stability result on the sheaves $\cG(\cE_1,\cE_2)$ we need to define a divisor on $\cM_{v_2}(S,h_S)$. Let $[\cE_2]\in\cM_{v_2}(S,h_S)$. By stability of $\cE_2$ we have $\Hom_S(\cE_1,\cE_2)=0$, and hence by Serre duality $\Ext^2(\cE_2,\cE_1)=0$.
Moreover we have
\begin{equation}
\chi_S(\cE_2,\cE_1)=-\la v_2,v_1\ra=0.
\end{equation}
Hence we expect that the determinantal subscheme
\begin{equation}
\cD_{v_2}(S,h_S)\coloneqq\{[\cE_2]\in\cM_{v_2}(S,h_S) \mid \Hom_S(\cE_1,\cE_2)\not=0\}
\end{equation}
is a divisor. In fact it is a divisor. 
\begin{prp}[\cite{og:genhk}]\label{prp:stabuno}
Keep hypotheses and notation  as above, in particular $h_S$ is ${\mathsf a}(v_2)$-generic.  Let  $ h_{S^{[2]}}$ be a polarization of $S^{[2]}$ which is $2560 a^{12}$-suitable for $\bm{\mu}(h_S)$, where $\bm{\mu}$ is as in Example~\ref{expl:bbfhilb}. 
Let $[\cE_2]\in(\cM_{v_2}(S,h_S)\setminus \cD_{v_2}(S,h_S)) $.  Then 
$\cG(\cE_1,\cE_2)$ is an $h_{S^{[2]}}$ slope stable vector bundle. 
\end{prp}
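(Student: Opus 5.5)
The plan mirrors the existence argument in Subsubsection~\ref{subsubsec:esiste}, with the Lagrangian fibration replaced by the nef class $\bm{\mu}(h_S)$ pulled back from the Hilbert--Chow morphism. Local freeness is immediate. Both $\cE_1$ and $\cE_2$ are locally free: $\cE_1$ is spherical (Comment~\ref{cmm:verfica}), and $\cE_2$ is a general point of the moduli space, so it is locally free. We may assume $\cE_2$ locally free, or else argue that the open complement of $\cD_{v_2}$ meets only locally free sheaves. Since $\rho$ is finite and flat, $\cG(\cE_1,\cE_2)$ is then locally free, as noted in Example~\ref{expl:moltimoduli}. By~\eqref{sanmichele} it is modular with ${\mathsf a}(\cG(\cE_1,\cE_2))=2560a^{12}$. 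So by Proposition~\ref{prp:mirren}, with $h_0=\bm{\mu}(h_S)$ and $h=h_{S^{[2]}}$, it will suffice to show two things. First, $\cG\coloneqq\cG(\cE_1,\cE_2)$ is $\bm{\mu}(h_S)$ slope semistable. Second, no saturated subsheaf $\cH\subset\cG$ with $0<r(\cH)<r(\cG)$ satisfies $b_X(\lambda_{\cH,\cG},\bm{\mu}(h_S))=0$.

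To study $\bm{\mu}(h_S)$ slope stability I would work on $S^{2}$. Let $p_1,p_2$ be the projections. The class $\bm{\mu}(h_S)$ is the pull-back under $\goth_2$ of the ample class on $S^{(2)}$ induced by $p_1^{*}h_S+p_2^{*}h_S$. By Remark~\ref{rmk:stabnonampio}, case (a), adapted to the quotient map, $\bm{\mu}(h_S)$ slope (semi)stability of $\cG$ should amount to $\Sigma_2$-equivariant slope (semi)stability of
\[
\cW\coloneqq(\cE_1\boxtimes\cE_2)\oplus(\cE_2\boxtimes\cE_1)
\]
on $S^2$ with respect to the product polarization $h_S\boxtimes h_S$. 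The comparison uses the fact that $\tau$ and $\rho$ only modify things over the diagonal, which has codimension $2$, and that slopes are insensitive to codimension $\ge 2$ modifications.

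The next step is to analyse $\cW$ directly. The sheaves $\cE_1,\cE_2$ are $h_S$ slope stable with proportional first Chern classes (indeed $c_1(\cE_1)/2a=c_1(\cE_2)/2a^2$, since $v_2=av_1-(0,0,1)$). Hence each summand $\cE_i\boxtimes\cE_j$ is slope stable for $h_S\boxtimes h_S$, by the standard argument that exterior tensor products of stable bundles are stable with respect to product polarizations. The two summands have the same slope, so $\cW$ is polystable and $\cG$ is $\bm{\mu}(h_S)$ semistable. Any destabilizing $\Sigma_2$-invariant subsheaf of the same slope must be a sum of isotypic pieces. Because $\cW$ carries a $\Sigma_2$-action swapping the two summands, an invariant saturated subsheaf of equal slope is either $0$ or $\cW$, provided the two summands are non-isomorphic. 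They are, since $\cE_1\not\cong\cE_2$ (their ranks differ). Hence $\cG$ is in fact $\bm{\mu}(h_S)$ slope stable, and the second condition above holds as well. The hypothesis $[\cE_2]\notin\cD_{v_2}$, that is $\Hom(\cE_1,\cE_2)=0$ (and $\Hom(\cE_2,\cE_1)=0$ by stability), enters here: it guarantees that no equivariant maps beyond those forced by the above mix the summands. It also controls subsheaves of $\cG$ that do not come from invariant subsheaves of $\cW$ away from the diagonal.

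Once $\bm{\mu}(h_S)$ stability holds with no equal-slope subsheaves, Proposition~\ref{prp:mirren} and suitability give $h_{S^{[2]}}$ slope stability. The main obstacle I expect is the translation between equivariant subsheaves of $\cW$ on $S^2$ and subsheaves of $\cG$ on $S^{[2]}$. The class $\bm{\mu}(h_S)$ is nef but not ample, with $q_X(\bm{\mu}(h_S))>0$, and a subsheaf of $\cG$ could have $c_1$ involving $\delta_2$. In that case $b_X(\lambda,\bm{\mu}(h_S))$ does not see it. So I would need to check that the $\delta_2$-component of $\lambda_{\cH,\cG}$ is harmless: it is orthogonal to $\bm{\mu}(h_S)$ and so lies on the boundary, and this is precisely the case that Proposition~\ref{prp:mirren} handles via suitability. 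The residual case $\lambda_{\cH,\cG}\in\ZZ\delta_2$ with $b_X(\lambda_{\cH,\cG},h_{S^{[2]}})\ge0$ must be excluded separately. I would try to exclude it by pushing $\cH$ down to $S^{(2)}\setminus\Delta$, where it corresponds to an invariant subsheaf of $\cW$ of equal slope, hence $0$ or everything, giving a rank contradiction.
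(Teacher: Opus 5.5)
Your framework is the right one: compare with the nef class $\bm{\mu}(h_S)$, reduce $\bm{\mu}(h_S)$ slope stability of $\cG$ to $\Sigma_2$-equivariant slope stability of $\mathcal{W}=(\cE_1\boxtimes\cE_2)\oplus(\cE_2\boxtimes\cE_1)$ for $p_1^*h_S+p_2^*h_S$, and conclude with Proposition~\ref{prp:mirren}. The $\delta_2$ issue you raise at the end is not a separate obstacle. Since $b_X(\delta_2,\bm{\mu}(h_S))=0$, the $\bm{\mu}(h_S)$ slope of $\cH\subset\cG$ depends only on $\cH$ over $S^{[2]}\setminus E\cong(S^2\setminus\Delta)/\Sigma_2$. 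This is the correct justification; a codimension-$2$ argument does not work, because $E$ is a divisor. So strict equivariant stability of $\mathcal{W}$ gives $b_X(\lambda_{\cH,\cG},\bm{\mu}(h_S))<0$ for every $\cH$ with $0<r(\cH)<r(\cG)$, which already contradicts the conclusion of Proposition~\ref{prp:mirren}.

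The genuine gap is upstream: you assume that $\cE_2$ is locally free and $h_S$ \emph{slope} stable. A point of $\cM_{v_2}(S,h_S)$ is only a Gieseker--Maruyama stable torsion-free sheaf, and the statement concerns every point off $\cD_{v_2}$, not a general one. Both properties can fail. Take the kernel of a surjection $\cE_1^{\oplus a}\to\CC_p$ whose $a$ components $\cE_1\to\CC_p$ are linearly independent. It is GM stable with Mukai vector $v_2$, it is not locally free, and for $a\ge 2$ it is strictly slope semistable. For such an $\cE_2$, pick $\cF'\subset\cE_2$ of the same slope. Then $(\cE_1\boxtimes\cF')\oplus(\cF'\boxtimes\cE_1)$ is a proper invariant subsheaf of $\mathcal{W}$ of the same slope, and your argument gives nothing. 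Also, for $a=1$ the ranks of $\cE_1,\cE_2$ coincide; they are non-isomorphic because $v_1\neq v_2$, not because of rank.

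This is exactly where the hypothesis $[\cE_2]\notin\cD_{v_2}$ enters, and you have its content reversed. Since $p_{\cE_1}>p_{\cE_2}$ (compare $s_1/2a$ with $(as_1-1)/2a^2$), $\Hom_S(\cE_1,\cE_2)=0$ holds for \emph{every} $[\cE_2]$. Given $\chi_S(\cE_2,\cE_1)=0=\ext^2_S(\cE_2,\cE_1)$, the determinantal condition is $\Hom_S(\cE_2,\cE_1)\neq 0$; the arguments of $\Hom$ are swapped in the printed definition of $\cD_{v_2}$. The vanishing of $\Hom_S(\cE_2,\cE_1)$ does not follow from stability, as the kernels above show.

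The missing lemma is: if $\Hom_S(\cE_2,\cE_1)=0$, then $\cE_2$ is slope stable and locally free. Here is a sketch.
\begin{itemize}
\item Suppose $\cE_2$ has a saturated subsheaf of the same slope, with quotient $Q$. By ${\mathsf a}(v_2)$-genericity (and since $2a$ is prime to the divisibility of $D$, as $v_1^2=-2$), each slope Jordan--H\"older factor $G$ of $Q$ has $v(G)=(2am,mD,t)$ with $m\in\NN_+$.
\item Gieseker stability of $\cE_2$ forces $\sum t\ge(\sum m)s_1$. On the other hand, $v(G^{\vee\vee})^2\ge -2$ together with $v_1^2=-2$ forces $t\le ms_1$, with equality only if $m=1$ and $G$ is locally free.
\item Hence every $G$ has $v(G)=v_1$, so $G\cong\cE_1$ since $\cM_{v_1}(S,h_S)=\{[\cE_1]\}$. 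Then $\cE_2\twoheadrightarrow Q\twoheadrightarrow\cE_1$, contradicting $\Hom_S(\cE_2,\cE_1)=0$. So $\cE_2$ is slope stable.
\item Now let $\ell>0$ be the length of $\cE_2^{\vee\vee}/\cE_2$. Then $v(\cE_2^{\vee\vee})^2=2a^2(1-2\ell)$. For $a\ge 2$ this is $<-2$, which is impossible for the slope stable sheaf $\cE_2^{\vee\vee}$. For $a=1$ it forces $v(\cE_2^{\vee\vee})=v_1$, hence $\cE_2\subset\cE_2^{\vee\vee}\cong\cE_1$, again a contradiction. So $\cE_2$ is locally free.
\end{itemize}
With this lemma in place, the rest of your argument goes through. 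The role you assign to the hypothesis, ruling out equivariant maps that mix the summands, is not needed: equivariant stability of $\mathcal{W}$ uses only slope stability of $\cE_1,\cE_2$ and $\cE_1\not\cong\cE_2$.
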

\begin{cmm}
In the outline of the proof of Proposition~\ref{thm:rigidisuk3n} (see Subsubsection~\ref{subsubsec:esiste}) we showed that the vector bundles 
$\cE[n]^{+}$ are stable if $S$ is elliptic and the polarization is ${\mathsf a}(\cE[n]^{+})$-suitable for the class coming from the Lagrangian fibration $S^{[n]}\to\PP^n$. 
A similar approach towards stability of $\cG(\cE_1,\cE_2)$ runs into the problem that the restriction of $\cG(\cE_1,\cE_2)$ to a general Lagrangian fiber is slope-semistable but not stable - this is the reason why in Proposition~\ref{prp:stabuno} stability is with respect to a polarization which is suitable for $\bm{\mu}(h_S)$. I expect that non-rigid  (modular) vector bundles on Lagrangian HK manifolds never restrict  to a stable sheaf on a Lagrangian fiber. This is in stark contrast with what holds for vector bundles on $K3$ surfaces.
\end{cmm}
\begin{rmk}
With hypotheses as in Proposition~\ref{prp:stabuno}  the vector bundle $\cG(\cE_1,\cE_2)$ is projectively hyperholomorphic 
(by Theorem~\ref{thm:iperolom}), but it is not atomic, see~\cite[Prop.~4.1]{og:highdim}.
\end{rmk}
Let $ h_{S^{[2]}}$ be a polarization of $S^{[2]}$ as in Proposition~\ref{prp:stabuno}, and let $M_{a,D}(S^{[2]},h_{S^{[2]}})$ be the moduli space of 
 $ h_{S^{[2]}}$ slope-stable vector bundles $\cF$ such that
\begin{equation}\label{fintomukvect}
r(\cF) =8a^3,\qquad c_1(\cF)= 4a^2\bm{\mu}(D)-4a^3\delta_2, \qquad \Delta(F) =\frac{ 4a^6}{3} c_2(S^{[2]}).
\end{equation}
(See   Example~\ref{expl:bbfhilb} for the definition of  $\delta_2$.) Note that $M_{a,D}(S^{[2]},h_{S^{[2]}})$ is a quasi-projective scheme by~\cite{maruyama:fascilimitati}. If  $[\cE_2]\in\cM_{v_2}(S,h_S)$ then $c_1(\cG(\cE_1,\cE_2))$ equals 
the right-hand side of the middle equality in~\eqref{fintomukvect}. Hence by Proposition~\ref{prp:stabuno} we have a regular map
\begin{equation}\label{mappamod}
\begin{matrix}
\cM_{v_2}(S,h_S)\setminus \cD_{v_2}(S,h_S) & \lra & M_{a,D}(S^{[2]},h_{S^{[2]}}) \\
[\cE_2] & \mapsto & [(\cG(\cE_1,\cE_2)]
\end{matrix}
\end{equation}
The image is a locally closed subset: let $M_{a,D}(S^{[2]},h_{S^{[2]}})^{\bullet}$ be its closure
\begin{prp}[\cite{og:genhk}]\label{prp:stabdue}
Keep hypotheses and notation  as in Proposition~\ref{prp:stabuno}. The map in~\eqref{mappamod} extends to a regular bijective map
\begin{equation}\label{joanbaez}
\cM_{v_2}(S,h_S) \lra  M_{a,D}(S^{[2]},h_{S^{[2]}})^{\bullet}
\end{equation}
\end{prp}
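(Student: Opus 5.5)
The plan is to extend the map~\eqref{mappamod} across the divisor $\cD_{v_2}(S,h_S)$. For $[\cE_2]\in\cD_{v_2}(S,h_S)$ I would replace $\cG(\cE_1,\cE_2)$ by a modified sheaf $\cG'(\cE_2)$. This sheaf should be a slope-stable vector bundle with the invariants~\eqref{fintomukvect}, should depend only on $[\cE_2]$, and should be the limit of the bundles $\cG(\cE_1,\cE_2(t))$ along any arc $\cE_2(t)$ with $\cE_2(0)=\cE_2$ and $\cE_2(t)\notin\cD_{v_2}$ for $t\neq0$. Concretely, work over a smooth curve germ $(T,0)\to\cM_{v_2}(S,h_S)$ meeting $\cD_{v_2}$ only at $0$. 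Carrying out the BKR construction~\eqref{gieunoedue} in families gives a $T$-flat family $\mathsf G$ on $S^{[2]}\times T$ whose general member is stable by Proposition~\ref{prp:stabuno}. By Langton's theorem (properness of the moduli of semistable sheaves) one can perform elementary modifications of $\mathsf G$ along $S^{[2]}\times\{0\}$ until the central fiber is $h_{S^{[2]}}$ slope semistable.

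The key point is to identify the destabilizing subsheaf at $t=0$. A nonzero map $\cE_1\to\cE_2$ induces a map $\cG(\cE_1,\cE_1)\to\cG(\cE_1,\cE_2)$, or a subsheaf built from $\cE_1\boxtimes\cE_1$ and $\cE_1\boxtimes\cE_2$. The $\bm{\mu}(h_S)$-slope of this subsheaf equals that of $\cG(\cE_1,\cE_2)$, since $c_1(\cE_i)/r_i$ agree. The elementary modification is therefore expected to be explicit, essentially $\cG(\cE_1,\cE_2)$ modified along the image of $\cE_1\boxtimes\cE_1$. To see that the result is a stable vector bundle I would use the fact that $\Hom(\cE_1,\cE_2)$ is one-dimensional for general points of $\cD_{v_2}$ and compute Ext groups through Proposition~\ref{prp:extbkr}. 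Stability of the limit would then be checked as in Proposition~\ref{prp:stabuno}: by $2560a^{12}$-suitability and Proposition~\ref{prp:mirren}, any destabilizing subsheaf must have $b_X(\lambda,\bm{\mu}(h_S))=0$. Such subsheaves are controlled by pushing the question down to $S$, where the $h_S$-stability of $\cE_1$ and $\cE_2$ for ${\mathsf a}(v_2)$-generic $h_S$ rules them out. Modularity of the limit sheaf is automatic, since $\Delta$ is constant in flat families. Local freeness I would prove by checking that the reflexive hull adds nothing, via a $c_3$ computation or via rigidity of singularities.

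Having a stable limit with the right invariants, its uniqueness (separatedness of the stable moduli) shows that the map extends set-theoretically and that the extension is independent of the arc. Since $\cM_{v_2}(S,h_S)$ is smooth, hence normal, a rational map to a separated scheme whose graph closure is finite-to-one over the source extends regularly. Concretely, I would construct a flat family over a neighborhood of $\cD_{v_2}$ globally, by elementary modification of the family $\mathsf G$ over $S^{[2]}\times\cM_{v_2}$ along the relative divisor $S^{[2]}\times\cD_{v_2}$. The image is then closed by properness of $\cM_{v_2}$, so it equals $M_{a,D}(S^{[2]},h_{S^{[2]}})^{\bullet}$, and the map is surjective.

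For injectivity, note first that on the open set $[\cE_2]$ is recovered from $\cG(\cE_1,\cE_2)$. One way is to use $\Ext$/$\Hom$ with a suitable BKR transform of $\cE_1$: via Proposition~\ref{prp:extbkr}, $\Hom(\cG(\cE_1,\cO\text{-like}),\cG(\cE_1,\cE_2))$ returns $\cE_2$. Alternatively, one can restrict to the curves $\{x\}\times S\subset S^{[2]}$ away from the diagonal, where $\cG(\cE_1,\cE_2)$ restricts to $\cE_1(x)\otimes\cE_2\oplus\cE_2(x)\otimes\cE_1$. This recovers $\cE_2$ as the unique non-$\cE_1$ stable factor, and the same works for the modified sheaves on $\cD_{v_2}$. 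One also has to rule out that a point of $\cD_{v_2}$ and a point outside it map to the same bundle, which follows from the differing behaviour of the restrictions to $\{x\}\times S$. \textbf{The main obstacle} is the second step: proving that the Langton modification of $\cG(\cE_1,\cE_2)$ at points of $\cD_{v_2}$ is stable and locally free, rather than merely semistable. This matters especially at points where $\hom(\cE_1,\cE_2)\ge2$, where the modification may need to be iterated. I would first handle the general point of $\cD_{v_2}$ and then use codimension-two arguments together with normality to extend to the rest.
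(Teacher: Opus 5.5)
The survey gives no proof of this statement (it only refers to \cite{og:genhk}), so I can only judge your argument on its own terms. It has genuine gaps. \textbf{The first is that the destabilizing object is misidentified.} Your key step uses a nonzero map $\cE_1\to\cE_2$. But $\Hom_S(\cE_1,\cE_2)=0$ for every $[\cE_2]\in\cM_{v_2}(S,h_S)$, as the survey itself notes. Indeed $v_1,v_2$ have the same slope, while $\chi/r$ equals $1+\frac{s_1}{2a}$ for $v_1$ and $1+\frac{s_1}{2a}-\frac{1}{2a^2}$ for $v_2$. The image of such a map would then have reduced Hilbert polynomial both $\ge p_{\cE_1}$ and $\le p_{\cE_2}<p_{\cE_1}$. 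So the displayed $\cD_{v_2}(S,h_S)$, read literally, is empty. The determinantal divisor suggested by $\Ext^2_S(\cE_2,\cE_1)=0$ and $\chi_S(\cE_2,\cE_1)=0$ is $\{[\cE_2]\mid\Hom_S(\cE_2,\cE_1)\neq0\}$.

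At such a point take a nonzero $\psi\colon\cE_2\to\cE_1$; it is generically surjective since $\cE_1$ is $\mu_{h_S}$-stable. By Frobenius reciprocity $\psi$ gives equivariant maps from $\cE_1\boxtimes\cE_2\oplus\cE_2\boxtimes\cE_1$ to $\cE_1\boxtimes\cE_1$ with either linearization. Hence $\cG(\cE_1,\cE_2)$ maps to the invariant and anti-invariant summands $\cE_1[2]^{\pm}$ of $\rho_*\tau^*(\cE_1\boxtimes\cE_1)$. All of these have the same $\bm{\mu}(h_S)$-slope, so the direction of instability is decided by the $\delta_2$-component of $c_1/r$, which you never computed. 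It equals $-\frac{2a-1}{4a}$, $-\frac12$, $-\frac{2a+1}{4a}$ for $\cE_1[2]^{+}$, $\cG(\cE_1,\cE_2)$, $\cE_1[2]^{-}$ respectively. Moreover $b_X(\delta_2,h_{S^{[2]}})>0$, since ample classes have the form $\bm{\mu}(x)-t\delta_2$ with $t>0$ and $q_X(\delta_2)=-2$. So what destabilizes is the \emph{quotient} $\cG(\cE_1,\cE_2)\to B$, where $B$ is the image in $\cE_1[2]^{-}$. Langton's modification has to be taken along this quotient. The new central fibre is an extension $0\to B\to\cG'\to A\to0$, with $A$ the kernel; it is not a modification along the image of $\cE_1\boxtimes\cE_1$.

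\textbf{The passage from arc-limits to a regular bijective map is not justified either.} Separatedness gives uniqueness of the limit along a \emph{fixed} arc only. For different arcs through the same point of $\cD_{v_2}$, the extension class of $\cG'$ is governed by the image of the arc's Kodaira--Spencer class in $\Ext^1(A,B)$. So independence of the arc is exactly what must be proved. One way is to show that $T\cD_{v_2}$ maps to zero there; another is to perform the modification globally along $S^{[2]}\times\cD_{v_2}$ and prove every fibre stable. Normality plus codimension two does not extend a morphism into a non-affine moduli space (think of $\mathbb{A}^2\dashrightarrow\PP^1$). Finiteness of the graph closure over the bad locus is again the arc-independence statement.

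Stability of $\cG'$ cannot be checked ``as in Proposition~\ref{prp:stabuno}''. The subsheaf $B\subset\cG'$ has the same $\bm{\mu}(h_S)$-slope. On $\cD_{v_2}$ the sheaf $\cE_2$ is strictly $\mu_{h_S}$-semistable for $a\ge2$, since $\ker\psi$ has the same slope. For $a=1$ it is $\ker(\cE_1\to\CC_p)$. So $h_S$-stability of $\cE_1,\cE_2$ rules nothing out. One must control the $\delta_2$-components of all $\bm{\mu}(h_S)$-slope-equal subsheaves, using non-splitting of the extension.

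Finally, your injectivity argument on $\cD_{v_2}$, via restriction to $\{x\}\times S$ and reflexive hulls, sees at most $\cE_2^{\vee\vee}$ and graded pieces. For $a=1$ every point of $\cD_{v_2}$ has double dual $\cE_1$. The remark that $M_{a,D}(S^{[2]},h_{S^{[2]}})^{\bullet}$ is singular for $a>1$ shows the map is not an immersion somewhere, so this step needs a real argument.
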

By Proposition the normalization map of $M_{a,D}(S^{[2]},h_{S^{[2]}})^{\bullet}$ is given by~\eqref{joanbaez}. 
\begin{rmk}
If $a>1$ then $M_{a,D}(S^{[2]},h_{S^{[2]}})^{\bullet}$ is singular, i.e.~the map in~\eqref{joanbaez} is not an isomorphism. 
\end{rmk}
By considering a quasi-universal vector bundle on $S^{[2]}\times M_{a,D}(S^{[2]},h_{S^{[2]}})^{\bullet}$ one gets a linear map 
$H^2(S^{[2]};\QQ)\to H^2(M_{a,D}(S^{[2]},h_{S^{[2]}})^{\bullet};\QQ)$. 
Composing with the normalization map in~\eqref{joanbaez} one gets a Hodge isometry
\begin{equation}\label{romyschneider}
H^2(S^{[2]};\QQ)\lra H^2(\cM_{v_2}(S,h_S);\QQ).
\end{equation}
By  Theorem~\ref{thm:iperolom} the vector bundles parametrized by $M_{a,D}(S^{[2]},h_{S^{[2]}})^{\bullet}$ are projectively hyperholomorphic. 
Hence if $[\cF]\in M_{a,D}(S^{[2]},h_{S^{[2]}})^{\bullet}$ the projectivization $\PP(\cF)$ extends to all deformations of $S^{[2]}$ parametrized by the $h_{S^{[2]}}$-twistor line as slope-stable projective bundles, and also those parametrized by  $\omega$-twistor lines for $\omega$ a K\"ahler class sufficiently close to $h_{S^{[2]}}$ (this follows from Proposition~\ref{prp:mirren}). By~\cite[Cor.~10.1]{verb:iperolom} the local structure of the moduli spaces of projective bundles along the twistor lines does not change. The conclusion is that a component of a suitable moduli space of projective bundles on arbitrary (small) deformations of $S^{[2]}$ has as normalization a HK manifold of Type $K3^{[a^2+1]}$ (the deformation of 
$\cM_{v_2}(S,h_S)$). Since the map in~\eqref{romyschneider} is  a Hodge isometry the HK manifolds that we get  give all small deformations
 of $S^{[2]}$. Working with chains of twistor lines one gets that a general  HK manifold of Type $K3^{[a^2+1]}$ is the normalization of a (component of) of a  moduli space of (twisted) hyperholomorphic vector bundles on a HK manifold  of Type $K3^{[2]}$.


\end{document}